\theoremstyle{plain}
\newtheorem{theorem}{Theorem}[section]
\newtheorem{proposition}[theorem]{Proposition}
\newtheorem{lemma}[theorem]{Lemma}
\newtheorem{conjecture}[theorem]{Conjecture}
\newtheorem{corollary}[theorem]{Corollary}
\theoremstyle{remark}
\newtheorem{remark}[theorem]{Remark}
\newtheorem{example}[theorem]{Example}
\author{Carmelo Cisto}
\address[Cisto]{Universit\'{a} di Messina, Dipartimento di Scienze  Matematiche e Informatiche, Scienze Fisiche e Scienze della Terra, Viale Ferdinando Stagno D'Alcontres 31, 98166 Messina, Italy}
\email{carmelo.cisto@unime.it}
\author{Pedro A. García-Sánchez}
\address[Garc\'ia-S\'anchez]{Departamento de \'Algebra and IMAG, Universidad de Granada, E-18071 Granada, Espa\~na}
\email{pedro@ugr.es}
\author{David Llena}
\address[Llena]{Departamento de Matem\'{a}ticas, Universidad de Almeria, E-04120 Almeria,  Espa\~na}
\email{dllena@ual.es}
\title{Ideal extensions of free commutative monoids}
      \keywords{Gap absorbing monoid, ideal extension of a free monoid, sets of lenghts of factorizations, catenary degree, $\omega$-primality}
		\subjclass[2020]{20M14, 20M12, 20M13}
\begin{document}

\begin{abstract}
We introduce a new family of monoids, which we call gap absorbing monoids. Every gap absorbing monoid is an ideal extension of a free commutative monoid. For a gap absorbing monoid $S$ we study its set of atoms and Betti elements, which allows us to show that the catenary degree of $S$ is at most four and that the set of lengths of any element in $S$ is an interval. We also give bounds for the $\omega$-primality of any ideal extension of a free commutative monoid. For ideal extensions $S$ of $\mathbb{N}^d$, with $d$ a positive integer, we show that $\omega(S)$ is finite if and only if $S$ has finitely many gaps.
\end{abstract}

\maketitle

\centerline{\emph{Dedicated to the memory of N. Baeth}}

\section*{Introduction}
  
Let $\mathbb{N}$ be the set of non-negative integers. Let $I$ be a non-empty subset of $\mathbb{N}$. The set $\mathbb{N}^{(I)}$ is the set of all sequences $\mathbf{n}=(n_i)_{i\in I}$ such that $n_i=0$ but for finitely many $i\in I$, that is, if we define the \emph{support} of $\mathbf{n}$ as $\operatorname{Supp}(\mathbf{n})=\{ i\in I : n_i\neq 0\}$, then the cardinality of the support of $\mathbf{n}$ is finite. 

We denote by $\mathbf{e}_i$ the sequence with a one in the $i$th position and having the rest of the entries equal to zero. Notice that $\mathbf{n}=(n_i)_{i\in I}=\sum_{i\in I} n_i\mathbf{e}_i$. The monoid $(\mathbb{N}^{(I)},+)$ (with $+$ the componentwise addition) can be seen as the free monoid on the set $\{\mathbf{e}_i\}_{i\in I}$.
		
For $X,Y\subseteq \mathbb{N}^{(I)}$, set \[X+Y=\{x+y : x\in X, y\in Y\},\] and for $i> 0$, $iX$ is defined inductively by $i X=(i-1)X+X$ ($0X=\{\mathbf{0}\}$). We denote by $\langle X\rangle$ the monoid generated by $X$, that is,
\[\langle X\rangle = \left\{ \sum_{i=1}^n \lambda_i \mathbf{x}_i : n\in \mathbb{N}, \lambda_i\in \mathbb{N}, \mathbf{x}_i\in X, \text{ for all } i \in \{1,\dots,n\}\right\}=\bigcup_{n\in \mathbb{N}} nX.\]
We also say that $X$ generates $\langle X\rangle$ (or that $X$ is a generating set of $\langle X\rangle$).

A subset $E$ of $\mathbb{N}^{(I)}$ is an \emph{ideal} if $E+\mathbb{N}^{(I)}\subseteq E$. Notice that in this case $(E\cup\{\mathbf{0}\},+)$ is a submonoid of $(\mathbb{N}^{(I)},+)$. We say that a submonoid $S$ of $\mathbb{N}^{(I)}$ is an \emph{ideal extension} of $\mathbb{N}^{(I)}$ if $S=E\cup\{\mathbf 0\}$ for some ideal $E$ of $\mathbb{N}^{(I)}$. If $\mathbb{N}^{(I)}\setminus S$ has finitely many elements, then in \cite{Baeth} these monoids are called finite-complement ideals. One of the goals of this manuscript is to extend some of the results presented in that paper concerning non-unique factorization invariants.

Given $\mathbf{m}=(m_i)_{i\in I},\mathbf{n}=(n_i)_{i\in I}\in \mathbb{N}^{(I)}$, we write $\mathbf{m}\le \mathbf{n}$ if $m_i\le n_i$ for all $i\in I$ (the usual partial order on $\mathbb{N}^{(I)}$), that is, $\mathbf{m}\le \mathbf{n}$ if and only if $\mathbf{n}-\mathbf{m}=(n_i-m_i)_{i\in I}\in \mathbb{N}^{(I)}$. For $\mathbf{m},\mathbf{n}\in \mathbb{N}^{(I)}$, we denote \[\llbracket \mathbf{m},\mathbf{n} \rrbracket=\{\mathbf{x}\in \mathbb{N}^{(I)} : \mathbf{m}\leq \mathbf{x}\leq \mathbf{n}\}.\] 
In particular, $\operatorname{B}(\mathbf{n})=\llbracket \mathbf{0},\mathbf{n}\rrbracket$, where $\mathbf{0}$ is the sequence in $\mathbb{N}^{(I)}$ having all its entries equal to zero. Note that the cardinality of $\operatorname{B}(\mathbf{n})$ is $\lvert \operatorname{B}(\mathbf{n})\rvert=\prod_{i\in I}(n_i+1)=\prod_{i\in \operatorname{Supp}(\mathbf{n})}(n_i+1)$, which is finite since the support of $\mathbf{n}$ is finite. 

Let $X\subseteq \mathbb{N}^{(I)}$. We say that $X$ is \emph{closed under intervals}  (or \emph{$\le$-convex}) if for any $\mathbf{u},\mathbf{v}\in X$ with $\mathbf{u}\le \mathbf{v}$, we have that $\llbracket \mathbf{u},\mathbf{v}\rrbracket\subseteq X$. 

Observe that if $S$ is an ideal extension of $\mathbb{N}^{(I)}$, then $S$ is closed under intervals, and so is $\mathcal{H}(S)=\mathbb{N}^{(I)}\setminus S$, the \emph{set of gaps} of $S$. Let $S^*=S\setminus\{\mathbf{0}\}$. It is not difficult to check that the set $\mathcal{A}(S)=S^*\setminus (S^*+S^*)$ is also closed under intervals. The elements in $\mathcal{A}(S)$ are called \emph{atoms}. This definition agrees with the general definition of atom on a monoid, since $S$, apart from being commutative, is also reduced (the only unit is the identity element $\mathbf{0}$) and cancellative ($\mathbf{a}+\mathbf{b}= \mathbf{a}+\mathbf{c}$ in $S$ implies $\mathbf{a}=\mathbf{c}$). As a matter of fact, it easily follows that $S$ is \emph{atomic}, that is, $S=\langle \mathcal{A}(S)\rangle$. 

We conjecture that for every ideal extension $S$ of $\mathbb{N}^{(I)}$, the set $2\mathcal{A}(S)$ is also closed under intervals. This was actually the motivation to the introduction of the concept of gap absorbing monoid. A submonoid $S$ of $\mathbb{N}^{(I)}$ is \emph{gap absorbing} if (1) whenever the sum of two gaps is in $S$, then this sum is either an atom or the sum of two atoms, and (2) the sum of a gap and an atom is either an atom or the sum of two atoms.

Many factorization invariants of an atomic monoid can be determined via a presentation of the mo\-noid, and thus it is central to know what are the Betti elements of the monoid (see for instance \cite{overview}). In Section~\ref{sec:betti-elements}, we show that in a gap absorbing monoid, Betti elements are sums of at most three atoms. With this we are able to prove that the catenary degree of a gap absorbing monoid is at most four (see Section~\ref{sec:catenary-degree}), and that the set of lengths of factorizations of every element in a gap absorbing monoid are intervals (Section~\ref{sec:delta}).

The $\omega$-primality of a monoid does not depend completely on the presentation of the monoid. We provide lower and upper bounds for the $\omega$-primality of an ideal extension of a free commutative monoid. This is done in Section~\ref{sec:omega-primality}, where we first determine what are the $\omega$-primalities of the atoms in this kind of monoids. We also prove that for the case of ideal extensions of $\mathbb{N}^d$, with $d$ a positive integer, the $\omega$-primality of $S$ is finite if and only if the monoid has finitely many gaps.

Section~\ref{sec:back-slash} introduces a family of ideal extensions of free commutative monoids that are gap absorbing. The definition of this class of monoids was partially inspired by the family of $T$-graded generalized numerical semigroups, introduced in \cite{classesGNS}, that is, monoids in $\mathbb{N}^d$ consisting of all elements having ``degree'' in a numerical semigroup $T$. Considering this notion on $\mathbb{N}^{(I)}$ and extending it further, focusing on a subset of coordinates, we introduce the family of \emph{backslash monoids}. For any monoid in this family we show that the Betti elements are always sums of two atoms and thus the catenary degree of the monoid is at most three. We also compute explicitly the elasticity, length density and the $\omega$-primality of these monoids.

We prove in Section~\ref{sec:N2} that every ideal extension of $\mathbb{N}^2$ is gap absorbing, and that every Betti element in such monoids is the sum of two atoms (which forces the catenary degree to be at most three).

Computer experiments suggest that every ideal extension of a free commutative monoid is gap absorbing. Also, we did not find any example of gap absorbing monoid whose Betti elements cannot be expressed as the sum of two atoms. Computing Betti elements is costly, since the number of atoms in this kind of monoids is extremely high, and the calculation of a presentation grows exponentially in the number of atoms. The main motivation that led us to start this research was a series of conversations with N. Baeth, who was interested in proving that the set of lengths of factorizations of elements in complement-finite ideals \cite{Baeth} are intervals. To this end, our approach was to prove that the maximum of the Delta sets should be at most one, and this maximum is reached at a Betti element of the monoid \cite{maxdelta}. We produced many examples with the \texttt{GAP} \cite{gap} package \texttt{numericalsgps} \cite{numericalsgps}, via an implementation of the algorithm proposed in \cite{gsonw}. So for us, it was a priority to see which restrictions we should impose to an element in an ideal extension of a free monoid to become a Betti element. 

It is also known that proving that the catenary degree of a monoid is at most three also forces every set of lengths of factorizations to be an interval, and this is precisely we also focused on finding upper bounds for the catenary degree. For the calculation of the catenary degree we also used the algorithm proposed in \cite{cgspr} which is also implemented in \cite{numericalsgps}, but, as mentioned above, the high number of atoms makes the problem computationally intractable for higher dimensions.

The calculation of the $\omega$-primality is performed via the sets of non-negative integer solutions of some systems of linear Diophantine equations (again we use \cite{numericalsgps} to make these computations). Even with simple examples, the process may take hours and even days. So it was worth trying to find good lower and upper bounds. 

Before we start with the first section, let us fix some notation that we are going to use extensively later. For $\mathbf{a}=(a_i)_{i\in I}$ and $\mathbf{b}=(b_i)_{i\in I}$  in $\mathbb{N}^{(I)}$, set 
\begin{align*}
\mathbf{a} \vee \mathbf{b} &=(\max(a_i,b_i))_{i\in I}\in \mathbb{N}^{(I)},\\
\mathbf{a} \wedge \mathbf{b} & =(\min(a_i,b_i))_{i\in I}\in \mathbb{N}^{(I)}.
\end{align*}
For $\mathbf{a}=(a_i)_{i\in I}\in \mathbb{N}^{(I)}$, set 
\[
\lVert \mathbf{a}\rVert_1=\sum_{i\in I}a_i.
\]

\section{Ideal extensions of free monoids}

Let $S$ be a submonoid of $\mathbb{N}^{(I)}$, with $I$ a set of non-negative integers. Recall that $S$ is an ideal extension of $\mathbb{N}^{(I)}$ if $S^*$ is an ideal of $\mathbb{N}^{(I)}$.
Set 
\[\mathcal{M}(S)=\operatorname{Minimals}_\le(S^*).\]
Then, $S$ is an ideal extension of $\mathbb{N}^{(I)}$ if and only if $S=\{\mathbf{0}\}\cup(\mathcal{M}(S)+\mathbb{N}^{(I)})$. Observe that $\mathcal{M}(S)$ is an antichain with respect to $\le$, that is, all its elements are incomparable with respect to the usual partial order on $\mathbb{N}^{(I)}$. Also,
\[
\mathcal{M}(S)\subseteq \mathcal{A}(S).
\]


Observe that $S$ is an ideal extension of $\mathbb{N}$ if and only if $S$ is an ordinary numerical semigroup, that is, $S=\{0\}\cup(m+\mathbb{N})$ for some positive integer $m$.

\begin{proposition}\label{prop:char-ideal-gaps}
    Let $S$ be a submonoid of $\mathbb{N}^{(I)}$ with $I$ a non-empty set of non-negative integers. Then $S$ is an ideal extension of $\mathbb{N}^{(I)}$ if and only if $\mathcal{M}(S)+\mathcal{H}(S)\subseteq S$.
\end{proposition}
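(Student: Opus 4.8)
The plan is to derive both implications from the characterization recalled just above the statement, namely that $S$ is an ideal extension of $\mathbb{N}^{(I)}$ if and only if $S=\{\mathbf{0}\}\cup(\mathcal{M}(S)+\mathbb{N}^{(I)})$. This lets me trade the condition ``$S^*$ is an ideal'' for a concrete generation statement that interacts cleanly with the hypothesis $\mathcal{M}(S)+\mathcal{H}(S)\subseteq S$.

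For the direct implication, I would assume $S$ is an ideal extension, so that $S^*$ is an ideal and hence $S^*+\mathbb{N}^{(I)}\subseteq S^*$. Since $\mathcal{M}(S)\subseteq S^*$ and $\mathcal{H}(S)=\mathbb{N}^{(I)}\setminus S\subseteq\mathbb{N}^{(I)}$, this gives $\mathcal{M}(S)+\mathcal{H}(S)\subseteq S^*+\mathbb{N}^{(I)}\subseteq S^*\subseteq S$. This direction is essentially immediate and uses nothing beyond the definition of an ideal.

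For the converse, I would assume $\mathcal{M}(S)+\mathcal{H}(S)\subseteq S$ and prove the set equality $S=\{\mathbf{0}\}\cup(\mathcal{M}(S)+\mathbb{N}^{(I)})$, from which the conclusion follows by the characterization. The inclusion $\{\mathbf{0}\}\cup(\mathcal{M}(S)+\mathbb{N}^{(I)})\subseteq S$ is where the hypothesis enters: given $\mathbf{m}\in\mathcal{M}(S)$ and $\mathbf{y}\in\mathbb{N}^{(I)}$, I would split according to whether $\mathbf{y}\in S$ or $\mathbf{y}\in\mathcal{H}(S)$. In the first case $\mathbf{m}+\mathbf{y}\in S$ because $S$ is closed under $+$, and in the second case $\mathbf{m}+\mathbf{y}\in S$ is exactly the assumption $\mathcal{M}(S)+\mathcal{H}(S)\subseteq S$. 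For the reverse inclusion, I would take $\mathbf{s}\in S^*$; since the set $\{\mathbf{t}\in S^* : \mathbf{t}\le\mathbf{s}\}$ is contained in the finite set $\operatorname{B}(\mathbf{s})$ and is nonempty, it has a $\le$-minimal element $\mathbf{m}$, which is then minimal in $S^*$, i.e.\ $\mathbf{m}\in\mathcal{M}(S)$; writing $\mathbf{s}=\mathbf{m}+(\mathbf{s}-\mathbf{m})$ exhibits $\mathbf{s}\in\mathcal{M}(S)+\mathbb{N}^{(I)}$.

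The main obstacle is this converse, and within it the one genuine subtlety is guaranteeing that every $\mathbf{s}\in S^*$ dominates some element of $\mathcal{M}(S)$; this rests on the finiteness of $\operatorname{B}(\mathbf{s})$ noted in the preliminaries (equivalently, that $\le$ admits no infinite strictly descending chain on $\mathbb{N}^{(I)}$), which ensures minimal elements below $\mathbf{s}$ exist. The key conceptual point is that although the hypothesis only controls sums of \emph{minimals} with gaps, decomposing an arbitrary element of $S^*$ through one of its minimal elements propagates this control to the whole of $S^*$; the remaining verifications are a routine case analysis.
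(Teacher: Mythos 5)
Your proof is correct and follows essentially the same route as the paper's: the forward direction is the same one-line ideal argument, and the converse hinges on the identical case split (decompose an element of $\mathcal{M}(S)+\mathbb{N}^{(I)}$ and distinguish whether the complementary summand lies in $S$ or in $\mathcal{H}(S)$). The only cosmetic difference is that you package the converse as the set equality $S=\{\mathbf{0}\}\cup(\mathcal{M}(S)+\mathbb{N}^{(I)})$ and you make explicit the existence of a minimal element below any $\mathbf{s}\in S^*$, which the paper uses without comment.
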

\begin{proof}
    Suppose that $S$ is an ideal extension of $\mathbb{N}^{(I)}$. Then, $\mathcal{M}(S)+\mathcal{H}(S)\subseteq S^*+\mathbb{N}^{(I)}\subseteq S^*$.

    Now suppose that $\mathcal{M}(S)+\mathcal{H}(S)\subseteq S$, and let us prove that $S^*+\mathbb{N}^{(I)}\subseteq S^*$. Take $\mathbf{s}\in S^*$ and $\mathbf{x}\in \mathbb{N}^{(I)}$. There exists $\mathbf{m}\in \mathcal{M}(S)$ with $\mathbf{m}\le \mathbf{s}$, and so $\mathbf{s}=\mathbf{m}+\mathbf{y}$ for some $\mathbf{y}\in \mathbb{N}^{(I)}$. Hence $\mathbf{s}+\mathbf{x}=\mathbf{m}+\mathbf{y}+\mathbf{x}$. If $\mathbf{y}+\mathbf{x}\in S$, then as $S$ is a reduced monoid, $\mathbf{m}+\mathbf{y}+\mathbf{x}\in S^*$. If $\mathbf{y}+\mathbf{x}\not\in S$, then $\mathbf{y}+\mathbf{x}\in \mathcal{H}(S)$, and by hypothesis $\mathbf{m}+\mathbf{y}+\mathbf{x}\in S$.
\end{proof}


Let $I$ be a subset of $\mathbb{N}$ with cardinality at least two, and let $i\in I$. For an element $\mathbf{v}=(v_j)_{j\in I\setminus\{i\}}\in \mathbb{N}^{(I\setminus\{i\})}$, set $(\mathbf{v},t)=t\mathbf{e}_i+\sum_{I\setminus\{i\}}v_j\mathbf{e}_j\in \mathbb{N}^{(I)}$.
Inspired by \cite{Li}, for $S$ an ideal extension of $\mathbb{N}^{(I)}$, $\mathbf{v}\in \mathbb{N}^{(I\setminus\{i\})}$, and a positive integer $k$, define $\pi_{\mathbf{v}}^k\in \mathbb{N}\cup\{\infty\}$ as follows: if $(\mathbf{v},t)\in k\mathcal{M}(S)+\mathbb{N}^{(I)}$ for some non-negative integer $t$, then 
\[
\pi^k_{\mathbf{v}}=\min\{t\in \mathbb{N}: (\mathbf{v},t)\in k\mathcal{M}(S)+\mathbb{N}^{(I)}\},
\]
and $\pi^k_{\mathbf{v}}=\infty$ otherwise. We extent addition from $\mathbb{N}$ to $\mathbb{N}\cup\{\infty\}$ as follows: $a+\infty=\infty$ for all $a\in \mathbb{N}\cup\{\infty\}$. Also, we extend $\le$ on $\mathbb{N}$ to $\mathbb{N}\cup\{\infty\}$ accordingly: $a\le \infty$ for all $a\in \mathbb{N}\cup\{\infty\}$.

Observe that for every positive integer $k$,
\[
k(\mathcal{M}(S)+\mathbb{N}^{(I)})=k\mathcal{M}(S)+\mathbb{N}^{(I)}.
\]

The following two technical lemmas can be viewed as generalizations of \cite[Proposition 3.8]{Li} and \cite[Proposition 3.10]{Li}, respectively.

\begin{lemma}\label{lem:pi-decreasing}
Let $\mathbf{v},\mathbf{w}\in \mathbb{N}^{(I\setminus\{i\})}$. If $\mathbf{v}\le \mathbf{w}$, then $\pi_{\mathbf{w}}^k\le \pi_{\mathbf{v}}^k$ for every positive integer $k$.
\end{lemma}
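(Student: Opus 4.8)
The plan is to exploit the single structural fact that the set $k\mathcal{M}(S)+\mathbb{N}^{(I)}$ is upward closed with respect to $\le$. Indeed, every element of this set has the form $\mathbf{a}+\mathbf{x}$ with $\mathbf{a}\in k\mathcal{M}(S)$ and $\mathbf{x}\in \mathbb{N}^{(I)}$, and adding any further element of $\mathbb{N}^{(I)}$ keeps us inside the set; hence if $\mathbf{p}\in k\mathcal{M}(S)+\mathbb{N}^{(I)}$ and $\mathbf{p}\le \mathbf{q}$, then $\mathbf{q}\in k\mathcal{M}(S)+\mathbb{N}^{(I)}$. The number $\pi^k_{\mathbf{v}}$ is, by definition, the least height in the $i$-th coordinate at which the vertical ray through $\mathbf{v}$ meets this up-set. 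Enlarging $\mathbf{v}$ to some $\mathbf{w}\ge \mathbf{v}$ shifts the ray componentwise upward in all coordinates except the $i$-th, so entry into the up-set can only become easier; this is what forces $\pi^k_{\mathbf{w}}\le \pi^k_{\mathbf{v}}$.

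Concretely, I would argue as follows. If $\pi^k_{\mathbf{v}}=\infty$ there is nothing to prove, since $\pi^k_{\mathbf{w}}\le \infty$ by the convention on $\mathbb{N}\cup\{\infty\}$. Otherwise set $t_0=\pi^k_{\mathbf{v}}\in \mathbb{N}$, so that $(\mathbf{v},t_0)\in k\mathcal{M}(S)+\mathbb{N}^{(I)}$. Since $\mathbf{v}\le \mathbf{w}$ and the two vectors share the same $i$-th coordinate $t_0$, we have $(\mathbf{v},t_0)\le (\mathbf{w},t_0)$ in $\mathbb{N}^{(I)}$. By the upward closure noted above, $(\mathbf{w},t_0)\in k\mathcal{M}(S)+\mathbb{N}^{(I)}$, and therefore $t_0$ belongs to the set over which the minimum defining $\pi^k_{\mathbf{w}}$ is taken. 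Consequently $\pi^k_{\mathbf{w}}\le t_0=\pi^k_{\mathbf{v}}$, as desired.

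There is no real obstacle here: the only points requiring care are the verification that $k\mathcal{M}(S)+\mathbb{N}^{(I)}$ is an up-set (immediate from the definition of the sumset) and the bookkeeping with the conventions $a\le \infty$ and $a+\infty=\infty$ extending $\le$ and $+$ to $\mathbb{N}\cup\{\infty\}$, which handle the degenerate case cleanly. The statement is a direct monotonicity property and should be regarded as a warm-up for the more delicate inequalities relating the values $\pi^k_{\mathbf{v}}$ across different $k$.
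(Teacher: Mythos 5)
Your proof is correct and follows essentially the same route as the paper: both arguments dispose of the case $\pi^k_{\mathbf{v}}=\infty$ trivially and otherwise observe that $(\mathbf{w},t)=(\mathbf{v},t)+(\mathbf{w}-\mathbf{v},0)$ stays in $k\mathcal{M}(S)+\mathbb{N}^{(I)}$ because that set is closed under adding elements of $\mathbb{N}^{(I)}$.
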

\begin{proof}
If there is no integer $t$ such that $(\mathbf{v},t)\in k\mathcal{M}(S)+\mathbb{N}^{(I)}$, then $\pi_{\mathbf{v}}^k=\infty$, and the result follows trivially. If there is some $t\in\mathbb{N}$ such that $(\mathbf{v},t)\in k\mathcal{M}(S)+\mathbb{N}^{(I)}$, then as $\mathbf{w}=\mathbf{v}+\mathbf{x}$ for some $\mathbf{x}\in \mathbb{N}^{(I\setminus\{i\})}$, we have that $(\mathbf{w},t)=(\mathbf{v},t)+(\mathbf{x},0)\in k\mathcal{M}(S)+\mathbb{N}^{(I)}$. It easily follows that $\pi_{\mathbf{w}}^k\le \pi_{\mathbf{v}}^k$.
\end{proof}

\begin{lemma}\label{lem:pi1+pi1}
    Let $\mathbf{v}\in \mathbb{N}^{(I\setminus\{i\})}$. Then $\pi_{\mathbf{v}}^2=\min\{ \pi_{\mathbf{a}}^1+\pi_{\mathbf{b}}^1 : \mathbf{a},\mathbf{b}\in \mathbb{N}^{(I\setminus\{i\})}, \mathbf{v}=\mathbf{a}+\mathbf{b}\}$.
\end{lemma}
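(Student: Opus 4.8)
The plan is to establish the claimed identity by proving the two inequalities separately, relying on the elementary reformulation of membership: a point $\mathbf{p}\in\mathbb{N}^{(I)}$ lies in $k\mathcal{M}(S)+\mathbb{N}^{(I)}$ if and only if there exist $\mathbf{m}_1,\dots,\mathbf{m}_k\in\mathcal{M}(S)$ with $\mathbf{m}_1+\cdots+\mathbf{m}_k\le\mathbf{p}$, the slack $\mathbf{p}-\sum_j\mathbf{m}_j$ being the $\mathbb{N}^{(I)}$ component. Throughout, the conventions $a+\infty=\infty$ and $a\le\infty$ ensure that the infinite cases are absorbed automatically by the two inequalities, so I would not treat them separately: if the left-hand side is $\infty$, the ``$\le$'' inequality forces every admissible sum on the right to be $\infty$, and if the right-hand side is $\infty$, the ``$\ge$'' inequality forbids the left-hand side from being finite.

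For the inequality $\pi_{\mathbf{v}}^2\le\min\{\pi_{\mathbf{a}}^1+\pi_{\mathbf{b}}^1\}$, I would fix any decomposition $\mathbf{v}=\mathbf{a}+\mathbf{b}$ with both $\pi_{\mathbf{a}}^1$ and $\pi_{\mathbf{b}}^1$ finite (otherwise the corresponding term is $\infty$ and there is nothing to prove). Writing $s=\pi_{\mathbf{a}}^1$ and $r=\pi_{\mathbf{b}}^1$, there are $\mathbf{m}_1,\mathbf{m}_2\in\mathcal{M}(S)$ with $\mathbf{m}_1\le(\mathbf{a},s)$ and $\mathbf{m}_2\le(\mathbf{b},r)$. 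Adding these gives $\mathbf{m}_1+\mathbf{m}_2\le(\mathbf{a},s)+(\mathbf{b},r)=(\mathbf{v},s+r)$, so $(\mathbf{v},s+r)\in 2\mathcal{M}(S)+\mathbb{N}^{(I)}$ and hence $\pi_{\mathbf{v}}^2\le s+r$. Taking the minimum over all decompositions yields the inequality.

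The reverse inequality is the step I expect to require the most care. Assuming $\pi_{\mathbf{v}}^2=t$ is finite, I would pick $\mathbf{m}_1,\mathbf{m}_2\in\mathcal{M}(S)$ with $\mathbf{m}_1+\mathbf{m}_2\le(\mathbf{v},t)$ and record the decompositions $\mathbf{m}_1=(\mathbf{a}_0,s_1)$ and $\mathbf{m}_2=(\mathbf{b}_0,s_2)$ along the $i$-th coordinate. Comparing coordinates shows $\mathbf{a}_0+\mathbf{b}_0\le\mathbf{v}$ in $\mathbb{N}^{(I\setminus\{i\})}$ and $s_1+s_2\le t$. The main obstacle is that this decomposition of the lower bound need not be an exact decomposition of $\mathbf{v}$; I would resolve it by absorbing the slack $\mathbf{c}=\mathbf{v}-\mathbf{a}_0-\mathbf{b}_0\ge\mathbf{0}$ into one factor, setting $\mathbf{a}=\mathbf{a}_0+\mathbf{c}$ and $\mathbf{b}=\mathbf{b}_0$, so that $\mathbf{a}+\mathbf{b}=\mathbf{v}$. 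Since $\mathbf{a}_0\le\mathbf{a}$, Lemma~\ref{lem:pi-decreasing} gives $\pi_{\mathbf{a}}^1\le\pi_{\mathbf{a}_0}^1\le s_1$, the last step because $(\mathbf{a}_0,s_1)=\mathbf{m}_1\in\mathcal{M}(S)\subseteq\mathcal{M}(S)+\mathbb{N}^{(I)}$; likewise $\pi_{\mathbf{b}}^1=\pi_{\mathbf{b}_0}^1\le s_2$.

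Combining these, the decomposition $\mathbf{v}=\mathbf{a}+\mathbf{b}$ satisfies $\pi_{\mathbf{a}}^1+\pi_{\mathbf{b}}^1\le s_1+s_2\le t=\pi_{\mathbf{v}}^2$, so the minimum on the right is at most $\pi_{\mathbf{v}}^2$. Together with the first inequality this proves the equality. The only genuinely delicate point is the slack-absorption argument, where one must verify that enlarging $\mathbf{a}_0$ to $\mathbf{a}$ cannot increase $\pi^1$ (handled cleanly by the monotonicity of Lemma~\ref{lem:pi-decreasing}) and that the bookkeeping along the $i$-th coordinate keeps $s_1+s_2$ below the optimal $t$.
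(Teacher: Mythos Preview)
Your proof is correct and follows essentially the same two-inequality strategy as the paper. The one difference worth noting is that the paper handles the reverse inequality more directly: rather than picking $\mathbf{m}_1,\mathbf{m}_2\in\mathcal{M}(S)$ with $\mathbf{m}_1+\mathbf{m}_2\le(\mathbf{v},t)$ and then absorbing the slack, it uses the identity $2\mathcal{M}(S)+\mathbb{N}^{(I)}=2(\mathcal{M}(S)+\mathbb{N}^{(I)})$ to write $(\mathbf{v},t)=(\mathbf{u},s)+(\mathbf{w},s')$ as an \emph{exact} sum of two elements of $\mathcal{M}(S)+\mathbb{N}^{(I)}$, so that $\mathbf{u}+\mathbf{w}=\mathbf{v}$ already and no appeal to Lemma~\ref{lem:pi-decreasing} is needed---this bypasses precisely the step you flagged as delicate.
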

\begin{proof}
    Let $\mathbf{v},\mathbf{a},\mathbf{b}\in \mathbb{N}^{(I\setminus\{i\})}$ such that $\mathbf{a}+\mathbf{b}=\mathbf{v}$. If either $\pi_{\mathbf{a}}^1$ or $\pi_{\mathbf{b}}^1$ is infinity, then the inequality $\pi_{\mathbf{v}}^2\le \pi_{\mathbf{a}}^1+\pi_{\mathbf{b}}^1$ holds trivially. So, let us suppose that both $s=\pi_{\mathbf{a}}^1$ and $t=\pi_{\mathbf{b}}^1$ are in $\mathbb{N}$. This, in particular, implies that $(\mathbf{a},s)$ and $(\mathbf{b},t)$ are in $\mathcal{M}(S)+\mathbb{N}^{(I)}$, and consequently $(\mathbf{v},s+t)\in 2(\mathcal{M}(S)+\mathbb{N}^{(I)})=2\mathcal{M}(S)+\mathbb{N}^{(I)}$. This means that $\pi_{\mathbf{v}}^2 \le s+t$. Thus, $\pi_{\mathbf{v}}^2\le \min\{ \pi_{\mathbf{a}}^1+\pi_{\mathbf{b}}^1 : \mathbf{a},\mathbf{b}\in \mathbb{N}^{(I\setminus\{i\})}, \mathbf{v}=\mathbf{a}+\mathbf{b}\}$. 

    If $\pi_{\mathbf{v}}^2=\infty$, then by the above argument, $\min\{ \pi_{\mathbf{a}}^1+\pi_{\mathbf{b}}^1 : \mathbf{a},\mathbf{b}\in \mathbb{N}^{(I\setminus\{i\})}, \mathbf{v}=\mathbf{a}+\mathbf{b}\}=\infty$. If $r=\pi_{\mathbf{v}}^2\in \mathbb{N}$, then $(\mathbf{v},r)\in 2\mathcal{M}(S)+\mathbb{N}^{(I)}=2(\mathcal{M}(S)+\mathbb{N}^{(I)})$, and so, there exists $(\mathbf{u},s),(\mathbf{w},t)\in \mathcal{M}(S)+\mathbb{N}^{(I)}$ such that $(\mathbf{v},r)=(\mathbf{u},s)+(\mathbf{w},t)$. Thus, $r=s+t\ge \pi_{\mathbf{u}}^1+\pi_{\mathbf{w}}^1\ge \min\{ \pi_{\mathbf{a}}^1+\pi_{\mathbf{b}}^1 : \mathbf{a},\mathbf{b}\in \mathbb{N}^{(I\setminus\{i\})}, \mathbf{v}=\mathbf{a}+\mathbf{b}\}$.  
\end{proof}

\begin{lemma}\label{lem:pi1-less-pi2}
    Let $\mathbf{v}\in \mathbb{N}^{(I\setminus\{i\})}$. Suppose that $\pi_{\mathbf{v}}^1<\infty$ and that $\pi_{\mathbf{v}}^2\neq 0$. Then, $\pi_{\mathbf{v}}^1<\pi_{\mathbf{v}}^2$.
\end{lemma}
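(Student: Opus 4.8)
The plan is to argue directly that the minimal $i$-th coordinate witnessing membership of $(\mathbf{v},\cdot)$ in $2\mathcal{M}(S)+\mathbb{N}^{(I)}$ can always be lowered by one while still landing in $\mathcal{M}(S)+\mathbb{N}^{(I)}=S^*$; this yields $\pi_{\mathbf{v}}^1\le \pi_{\mathbf{v}}^2-1$ as soon as $\pi_{\mathbf{v}}^2$ is a positive integer. First I would dispose of the degenerate case $\pi_{\mathbf{v}}^2=\infty$: here the hypothesis $\pi_{\mathbf{v}}^1<\infty$ gives $\pi_{\mathbf{v}}^1<\infty=\pi_{\mathbf{v}}^2$ at once. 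So I may assume $t:=\pi_{\mathbf{v}}^2\in\mathbb{N}$, and since $\pi_{\mathbf{v}}^2\neq 0$ we have $t\ge 1$.

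By definition of $\pi_{\mathbf{v}}^2$ and the identity $2(\mathcal{M}(S)+\mathbb{N}^{(I)})=2\mathcal{M}(S)+\mathbb{N}^{(I)}$, I can write $(\mathbf{v},t)=\mathbf{m}_1+\mathbf{m}_2+\mathbf{x}$ with $\mathbf{m}_1,\mathbf{m}_2\in\mathcal{M}(S)$ and $\mathbf{x}\in\mathbb{N}^{(I)}$. The key observation is to read off the $i$-th coordinate: it equals $t\ge 1$, so at least one of the three summands $\mathbf{m}_1,\mathbf{m}_2,\mathbf{x}$ has a strictly positive $i$-th entry. Subtracting $\mathbf{e}_i$ from such a summand keeps it inside $\mathbb{N}^{(I)}$ and produces a decomposition of $(\mathbf{v},t-1)$. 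Whichever summand is decreased, at least one of $\mathbf{m}_1,\mathbf{m}_2$ is left untouched, so $(\mathbf{v},t-1)\ge \mathbf{m}_j$ for some $j\in\{1,2\}$; writing $(\mathbf{v},t-1)=\mathbf{m}_j+\big((\mathbf{v},t-1)-\mathbf{m}_j\big)$ with the second term in $\mathbb{N}^{(I)}$ exhibits $(\mathbf{v},t-1)\in\mathcal{M}(S)+\mathbb{N}^{(I)}=S^*$. Hence $\pi_{\mathbf{v}}^1\le t-1<t=\pi_{\mathbf{v}}^2$, as desired.

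I do not expect a genuine obstacle here; the argument is short, and the only care required is in the bookkeeping of the two boundary cases, which is precisely what the two hypotheses are designed to eliminate. The assumption $\pi_{\mathbf{v}}^1<\infty$ is used only to settle $\pi_{\mathbf{v}}^2=\infty$, whereas $\pi_{\mathbf{v}}^2\neq 0$ forces $t\ge 1$ so that a coordinate is actually available to decrease. (If $t=0$ no reduction is possible, and strictness can genuinely fail, since $(\mathbf{v},0)$ may already lie in $2\mathcal{M}(S)+\mathbb{N}^{(I)}$.) One should also note in passing that the resulting decomposition of $(\mathbf{v},t-1)$ lands in $S^*$ rather than accidentally at $\mathbf{0}$, since it dominates the nonzero element $\mathbf{m}_j\in\mathcal{M}(S)$, so the bound $\pi_{\mathbf{v}}^1\le t-1$ is legitimate.
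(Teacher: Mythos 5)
Your proof is correct, but it takes a genuinely different route from the paper's. You fix a witness $(\mathbf{v},t)=\mathbf{m}_1+\mathbf{m}_2+\mathbf{x}$ at the minimal height $t=\pi_{\mathbf{v}}^2$ and peel off one unit of the distinguished $i$-th coordinate from whichever summand carries it; since at least one $\mathbf{m}_j$ survives intact, $(\mathbf{v},t-1)$ still lies in $\mathcal{M}(S)+\mathbb{N}^{(I)}$, giving $\pi_{\mathbf{v}}^1\le t-1$ directly. The paper instead argues by contradiction in the complementary coordinates: it first notes $\pi_{\mathbf{v}}^1\le\pi_{\mathbf{v}}^2$ from the inclusion $2\mathcal{M}(S)+\mathbb{N}^{(I)}\subseteq\mathcal{M}(S)+\mathbb{N}^{(I)}$, then invokes Lemma~\ref{lem:pi1+pi1} to write $\pi_{\mathbf{v}}^2$ as $\min\{\pi_{\mathbf{a}}^1+\pi_{\mathbf{b}}^1:\mathbf{v}=\mathbf{a}+\mathbf{b}\}$ and Lemma~\ref{lem:pi-decreasing} to force $\pi_{\mathbf{v}}^1\ge 2\pi_{\mathbf{v}}^1$ in the case of equality, contradicting $\pi_{\mathbf{v}}^2\neq 0$. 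Your argument is more elementary and self-contained (it bypasses Lemma~\ref{lem:pi1+pi1} entirely and avoids contradiction), while the paper's version leans on the two technical lemmas it has just established, which it reuses elsewhere; both correctly isolate the role of each hypothesis, and your closing remark that strictness can fail when $t=0$ matches why the paper needs $\pi_{\mathbf{v}}^2\neq 0$.
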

\begin{proof}
    From $2\mathcal{M}(S)+\mathbb{N}^{(I)}\subseteq \mathcal{M}(S)+\mathbb{N}^{(I)}$, we deduce that $\pi_{\mathbf{v}}^1\le \pi_{\mathbf{v}}^2$. If $\pi_{\mathbf{v}}^1= \pi_{\mathbf{v}}^2$, then by Lemma~\ref{lem:pi1+pi1}, we have that $\pi_{\mathbf{v}}^1=\min\{ \pi_{\mathbf{a}}^1+\pi_{\mathbf{b}}^1 : \mathbf{a},\mathbf{b}\in \mathbb{N}^{(I\setminus\{i\})}, \mathbf{v}=\mathbf{a}+\mathbf{b}\}$. Take $\mathbf{a},\mathbf{b}\in \mathbb{N}^{(I\setminus\{i\})}$ such that $\mathbf{v}=\mathbf{a}+\mathbf{b}$ and $\pi_{\mathbf{v}}^1=\pi_{\mathbf{a}}^1+\pi_{\mathbf{b}}^1$. Then, $\mathbf{a}\le \mathbf{v}$ and $\mathbf{b}\le \mathbf{v}$, which, in light of Lemma~\ref{lem:pi-decreasing}, implies $\pi_{\mathbf{a}}^1\ge \pi_{\mathbf{v}}^1$ and $\pi_{\mathbf{b}}^1\ge \pi_{\mathbf{v}}^1$. But then $\pi_{\mathbf{v}}^1=\pi_{\mathbf{a}}^1+\pi_{\mathbf{b}}^1\ge 2\pi_{\mathbf{v}}^1$, forcing $\pi_{\mathbf{v}}^1$ to be zero, in contradiction with $0\neq \pi_{\mathbf{v}}^2=\pi_{\mathbf{v}}^ 1$.
\end{proof}

With the above technical lemmas, we can describe the set of gaps and atoms of a gap absorbing monoid in terms of $\pi^1$ and $\pi^2$. We provide some extra properties that will be used later in Section~\ref{sec:N2}.

\begin{proposition}\label{prop:gaps-atoms-pi}
Let $S$ be an ideal extension of $\mathbb{N}^{(I)}$ and let \[A=\{\mathbf{v}\in \mathbb{N}^{(I\setminus\{i\})}: \pi_{\mathbf{v}}^1<\infty, \pi^2_{\mathbf{v}}\neq 0\}.\] Then,
\begin{enumerate}
    \item the set $A$ is closed under intervals,
    \item for every $\mathbf{z}\in A+A$, the set $\mathbf{z}_A=\{ \mathbf{v}\in A : \mathbf{z}-\mathbf{v}\in A\}$ is closed under intervals,
    \item for every $\mathbf{v}\in A$, if $\mathbf{v}+\mathbf{e}_j\in A$ for some $j\neq i$, then $\pi_{\mathbf{v}+\mathbf{e}_j}^2\ge \pi_{\mathbf{v}}^1$,
    \item $\mathcal{H}(S)=\{(\mathbf{v},x_\mathbf{v})\in \mathbb{N}^{(I)} : x_\mathbf{v}< \pi_{\mathbf{v}}^1\}$, and
    \item $\mathcal{A}(S)=\{(\mathbf{v},x_\mathbf{v})\in \mathbb{N}^{(I)}: \mathbf{v}\in A,  \pi^1_{\mathbf{v}}\le x_\mathbf{v} < \pi^2_{\mathbf{v}}\}$.
\end{enumerate}
\end{proposition}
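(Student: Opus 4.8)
The plan is to isolate one computational fact about membership in $S^*$ and $S^*+S^*$, derive items (4) and (5) from it, obtain (1) and (2) from monotonicity, and prove (3) by a short ideal-theoretic argument that I expect to be the only genuinely new step.

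First I would record the translation between membership and the functions $\pi^1,\pi^2$. Since $S^*=\mathcal{M}(S)+\mathbb{N}^{(I)}$ is upward closed, and $S^*+S^*=2\mathcal{M}(S)+\mathbb{N}^{(I)}$ (using the displayed identity $2(\mathcal{M}(S)+\mathbb{N}^{(I)})=2\mathcal{M}(S)+\mathbb{N}^{(I)}$) is upward closed as well, the definition of $\pi^k_{\mathbf{v}}$ as a minimal $i$-th coordinate yields, for every nonzero $(\mathbf{v},x)$,
\[
(\mathbf{v},x)\in S^*\iff x\ge \pi^1_{\mathbf{v}},\qquad (\mathbf{v},x)\in S^*+S^*\iff x\ge \pi^2_{\mathbf{v}}.
\]
The first equivalence is exactly item (4), the only point to keep aside being the identity element $\mathbf{0}\in S$, which must be treated separately. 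For item (5) I would combine the two equivalences: $\mathcal{A}(S)=S^*\setminus(S^*+S^*)$ consists of the $(\mathbf{v},x)$ with $\pi^1_{\mathbf{v}}\le x<\pi^2_{\mathbf{v}}$, and such an $x$ exists precisely when $\pi^1_{\mathbf{v}}<\infty$ and $\pi^1_{\mathbf{v}}<\pi^2_{\mathbf{v}}$; Lemma~\ref{lem:pi1-less-pi2} shows this is equivalent to $\mathbf{v}\in A$. Thus the condition $\mathbf{v}\in A$ is exactly the nonemptiness criterion for the interval of $i$-coordinates that produce atoms.

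For item (1), let $\mathbf{u}\le \mathbf{v}\le \mathbf{w}$ with $\mathbf{u},\mathbf{w}\in A$. Lemma~\ref{lem:pi-decreasing} gives $\pi^1_{\mathbf{v}}\le \pi^1_{\mathbf{u}}<\infty$ from $\mathbf{u}\le \mathbf{v}$, and $\pi^2_{\mathbf{v}}\ge \pi^2_{\mathbf{w}}\ge 1$ from $\mathbf{v}\le \mathbf{w}$ together with $\pi^2_{\mathbf{w}}\ne 0$; hence $\mathbf{v}\in A$. Item (2) then follows formally from (1) and the order-reversing nature of $\mathbf{v}\mapsto \mathbf{z}-\mathbf{v}$: if $\mathbf{u},\mathbf{w}\in \mathbf{z}_A$ and $\mathbf{u}\le \mathbf{v}\le \mathbf{w}$, then $\mathbf{v}\in A$ by (1), while $\mathbf{z}-\mathbf{w}\le \mathbf{z}-\mathbf{v}\le \mathbf{z}-\mathbf{u}$ places $\mathbf{z}-\mathbf{v}$ in the interval between $\mathbf{z}-\mathbf{u},\mathbf{z}-\mathbf{w}\in A$, so $\mathbf{z}-\mathbf{v}\in A$ by (1) again, giving $\mathbf{v}\in \mathbf{z}_A$.

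The one part needing an idea beyond the lemmas is item (3), which I expect to be the main obstacle, and I would argue it directly. We may assume $\pi^2_{\mathbf{v}+\mathbf{e}_j}<\infty$ (otherwise the inequality is trivial), so that $(\mathbf{v}+\mathbf{e}_j,r)\in S^*+S^*$ for $r=\pi^2_{\mathbf{v}+\mathbf{e}_j}$; write $(\mathbf{v}+\mathbf{e}_j,r)=s_1+s_2$ with $s_1,s_2\in S^*$. Since the $j$-th coordinate of $s_1+s_2$ equals $v_j+1\ge 1$, one summand, say $s_1$, has $j$-th coordinate at least $1$, so $s_1-\mathbf{e}_j\in \mathbb{N}^{(I)}$. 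Then $(\mathbf{v},r)=s_2+(s_1-\mathbf{e}_j)\ge s_2$, and because $S^*$ is an ideal this forces $(\mathbf{v},r)\in S^*$; by the first equivalence of the first paragraph we conclude $r\ge \pi^1_{\mathbf{v}}$, that is, $\pi^2_{\mathbf{v}+\mathbf{e}_j}\ge \pi^1_{\mathbf{v}}$. The crux is the observation that the extra unit $\mathbf{e}_j$ can always be absorbed into one factor, after which the remaining expression still dominates the other factor and hence stays in the ideal; note that this uses only $\pi^1_{\mathbf{v}}<\infty$, which holds because $\mathbf{v}\in A$.
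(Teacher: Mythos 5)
Your proof is correct, and on items (1), (2), (4) and (5) it coincides with the paper's argument: Lemma~\ref{lem:pi-decreasing} for (1), the order-reversal $\mathbf{v}\mapsto\mathbf{z}-\mathbf{v}$ for (2), and the upward-closedness of $S^*=\mathcal{M}(S)+\mathbb{N}^{(I)}$ and $S^*+S^*=2\mathcal{M}(S)+\mathbb{N}^{(I)}$ for (4) and (5). You are in fact slightly more careful than the paper on two points: you flag that $\mathbf{0}$ must be excluded from the description of $\mathcal{H}(S)$, and you invoke Lemma~\ref{lem:pi1-less-pi2} to explain why the condition $\mathbf{v}\in A$ in (5) is exactly the nonemptiness criterion for the interval $[\pi^1_{\mathbf{v}},\pi^2_{\mathbf{v}})$. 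The only genuine divergence is item (3). The paper stays in the projected space $\mathbb{N}^{(I\setminus\{i\})}$: any splitting $\mathbf{v}+\mathbf{e}_j=\mathbf{a}+\mathbf{b}$ has one summand $\le\mathbf{v}$, so $\pi^1_{\mathbf{a}}+\pi^1_{\mathbf{b}}\ge\pi^1_{\mathbf{v}}$ by Lemma~\ref{lem:pi-decreasing}, and Lemma~\ref{lem:pi1+pi1} converts this into $\pi^2_{\mathbf{v}+\mathbf{e}_j}\ge\pi^1_{\mathbf{v}}$. You instead take a witness $(\mathbf{v}+\mathbf{e}_j,r)=\mathbf{s}_1+\mathbf{s}_2$ in $S^*+S^*$ with $r=\pi^2_{\mathbf{v}+\mathbf{e}_j}$, absorb $\mathbf{e}_j$ into one summand, and use the ideal property of $S^*$ to conclude $(\mathbf{v},r)\in S^*$. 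Both hinge on the same observation that the extra unit vector lands in a single summand; your version bypasses Lemma~\ref{lem:pi1+pi1} entirely and, as you note, does not even need $\mathbf{v}\in A$ as a hypothesis, while the paper's version keeps everything expressed through the $\pi^k$ calculus it has just set up.
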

\begin{proof}
    Notice that if $\mathbf{v}, \mathbf{w}\in A$ with $\mathbf{v}\le \mathbf{w}$, and $\mathbf{u}\in \llbracket \mathbf{v},\mathbf{w}\rrbracket$, then by Lemma~\ref{lem:pi-decreasing} we have that $\pi_{\mathbf{u}}^1\le \pi_{\mathbf{v}}^1<\infty$ and $\pi_{\mathbf{u}}^2\ge \pi_{\mathbf{w}}^2>0$. Thus, $\mathbf{u}\in A$.

    Now, take $\mathbf{v},\mathbf{w}\in \mathbf{z}_A$, with $\mathbf{v}\le \mathbf{w}$, and any $\mathbf{u}\in \llbracket \mathbf{v},\mathbf{w}\rrbracket$. Then $\mathbf{z}-\mathbf{w}\le \mathbf{z}-\mathbf{u}\le \mathbf{z}-\mathbf{v}$. As $\mathbf{z}-\mathbf{w}\in A$ and $\mathbf{z}-\mathbf{v}\in A$, and $A$ is closed under intervals, we deduce that both $\mathbf{u}$ and $ \mathbf{z}-\mathbf{u}$ are in $A$, which means that $\mathbf{u}\in \mathbf{z}_A$.

    Suppose that $\mathbf{v}\in A$ and $\mathbf{w}=\mathbf{v}+\mathbf{e}_j\in A$. Let $\mathbf{a},\mathbf{b}\in \mathbb{N}^{(I\setminus\{i\})}$ be such that $\mathbf{w}=\mathbf{a}+\mathbf{b}$. Then either $\mathbf{a}\le \mathbf{v}$ or $\mathbf{b}\le \mathbf{v}$. Thus, by Lemma~\ref{lem:pi-decreasing}, we deduce that $\pi_{\mathbf{a}}^1+\pi_{\mathbf{b}}^1\ge \pi_{\mathbf{v}}^1$. In light of Lemma~\ref{lem:pi1+pi1}, we deduce that $\pi_{\mathbf{w}}^2\ge \pi_{\mathbf{v}}^1$.
    
    Given $(\mathbf{v},x_\mathbf{v})\in \mathbb{N}^{(I)}$, we have that $(\mathbf{v},x_\mathbf{v})\in S$  if and only if $x_\mathbf{v}\ge \pi_{\mathbf{v}}^1$. From this fact we deduce the equality concerning $\mathcal{H}(S)$.
    
    Notice that $\mathcal{A}(S)=S^*\setminus(S^*+S^*)=(\mathcal{M}(S)+\mathbb{N}^{(I)})\setminus 2(\mathcal{M}(S)+\mathbb{N}^{(I)}) = (\mathcal{M}(S)+\mathbb{N}^{(I)})\setminus (2\mathcal{M}(S)+\mathbb{N}^{(I)})$. Let $(\mathbf{v},t)\in \mathbb{N}^{(I)}$. Then, $(\mathbf{v},t)\in \mathcal{A}(S)$ if and only if $\pi_\mathbf{v}^1\le t<\pi_\mathbf{v}^2$. 
\end{proof}

\section{Gap absorbing monoids}

Let $S$ be a submonoid of $(\mathbb{N}^{(I)},+)$, with $\emptyset \neq I\subseteq \mathbb{N}$. 
We say that $S$ is a \emph{gap absorbing monoid} if 
\begin{enumerate}[({GA}1)]
\item$2\mathcal{H}(S) \subseteq \mathcal{H}(S)\cup \mathcal{A}(S)\cup 2 \mathcal{A}(S)$, and 
\item $\mathcal{H}(S)+\mathcal{A}(S)\subseteq \mathcal{A}(S)\cup 2\mathcal{A}(S)$. 
\end{enumerate}

\begin{remark}
    Notice that if $S$ is a gap absorbing monoid, then for every gap $\mathbf{h}$ of $S$, we have $\mathbf{h}+S^*\subseteq S$ (that is, $\mathbf{h}$ is a pseudo-Frobenius element of $S$).
\end{remark}


\begin{proposition} \label{prop:gap-absorbing-implies-ideal} 
Let $S\subseteq \mathbb{N}^{(I)}$ be a gap absorbing monoid. Then $S$ is an ideal extension of $\mathbb{N}^{(I)}$.
\end{proposition}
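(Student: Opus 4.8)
The plan is to reduce the statement to the criterion already obtained in Proposition~\ref{prop:char-ideal-gaps}. Since $S$ is by hypothesis a submonoid of $\mathbb{N}^{(I)}$ (and we may assume $I\neq\emptyset$, as otherwise $\mathbb{N}^{(I)}=\{\mathbf{0}\}$ and there is nothing to prove), that proposition applies directly: $S$ is an ideal extension of $\mathbb{N}^{(I)}$ if and only if $\mathcal{M}(S)+\mathcal{H}(S)\subseteq S$. Thus the whole task collapses to verifying this single inclusion, and notably only the second gap absorbing axiom (GA2) will be needed; condition (GA1) plays no role here.

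To verify $\mathcal{M}(S)+\mathcal{H}(S)\subseteq S$, I would argue as follows. Recall from the previous section that $\mathcal{M}(S)\subseteq \mathcal{A}(S)$, so every minimal generator is in particular an atom. Hence
\[
\mathcal{M}(S)+\mathcal{H}(S)\subseteq \mathcal{A}(S)+\mathcal{H}(S)=\mathcal{H}(S)+\mathcal{A}(S).
\]
Now (GA2) gives $\mathcal{H}(S)+\mathcal{A}(S)\subseteq \mathcal{A}(S)\cup 2\mathcal{A}(S)$, and both $\mathcal{A}(S)$ and $2\mathcal{A}(S)$ consist of sums of atoms, hence lie inside $S$. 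Chaining these inclusions yields $\mathcal{M}(S)+\mathcal{H}(S)\subseteq S$, and Proposition~\ref{prop:char-ideal-gaps} then finishes the proof.

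There is essentially no hard step: the real content sits in the equivalence of Proposition~\ref{prop:char-ideal-gaps} and in the observation $\mathcal{M}(S)\subseteq \mathcal{A}(S)$, both already available. If one preferred a self-contained argument avoiding the criterion, the same idea works directly. Given $\mathbf{s}\in S^*$ and $\mathbf{x}\in \mathbb{N}^{(I)}$, write $\mathbf{s}=\mathbf{m}+\mathbf{y}$ with $\mathbf{m}\in \mathcal{M}(S)$ a minimal point of $S^*$ lying below $\mathbf{s}$ (such an $\mathbf{m}$ exists because $\operatorname{B}(\mathbf{s})$ is finite, so the descending chains in $S^*$ below $\mathbf{s}$ terminate), and then split into two cases for $\mathbf{y}+\mathbf{x}$: if $\mathbf{y}+\mathbf{x}\in S$ then $\mathbf{s}+\mathbf{x}\in S^*$ by closure of the monoid, while if $\mathbf{y}+\mathbf{x}\in\mathcal{H}(S)$ then $\mathbf{m}+(\mathbf{y}+\mathbf{x})\in\mathcal{A}(S)\cup 2\mathcal{A}(S)\subseteq S$ by (GA2). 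The only point demanding a moment's care is the well-foundedness used to extract $\mathbf{m}$, but this is exactly what underpins the definition of $\mathcal{M}(S)$ and is implicit in Proposition~\ref{prop:char-ideal-gaps} already, so it is no real obstacle.
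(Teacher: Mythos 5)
Your argument is correct and coincides with the paper's own proof: both reduce to the criterion of Proposition~\ref{prop:char-ideal-gaps} via the chain $\mathcal{M}(S)+\mathcal{H}(S)\subseteq \mathcal{A}(S)+\mathcal{H}(S)\subseteq \mathcal{A}(S)\cup 2\mathcal{A}(S)\subseteq S$, using only (GA2). Nothing further is needed.
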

\begin{proof}
Notice that $\mathcal{M}(S)+\mathcal{H}(S)\subseteq \mathcal{A}(S)+\mathcal{H}(S)$, which by condition (GA2) is included in $S$. Thus, by Propositions~\ref{prop:char-ideal-gaps} we deduce that $S$ is an ideal extension of $\mathbb{N}^{(I)}$.
\end{proof}


\begin{conjecture}\label{conj:ideal-implies-gap-absorbing}
    Every ideal extension of $\mathbb{N}^{(I)}$ is gap absorbing.
\end{conjecture}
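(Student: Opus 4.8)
The plan is to reduce the conjecture to a purely finite-dimensional, combinatorial statement about the functions $\pi^k$, and then to an interval-covering argument. First I would reduce to the case $I$ finite. Given two gaps $\mathbf{h}_1,\mathbf{h}_2$ (or a gap $\mathbf{h}$ and an atom $\mathbf{a}$), let $J\subseteq I$ be the finite union of their supports. A short check shows that $S\cap \mathbb{N}^{(J)}$ is again an ideal extension of $\mathbb{N}^{(J)}$, that $\mathcal{H}(S)\cap\mathbb{N}^{(J)}$ and $\mathcal{A}(S)\cap\mathbb{N}^{(J)}$ are exactly the gaps and atoms of $S\cap\mathbb{N}^{(J)}$, and that an element of $\mathbb{N}^{(J)}$ has the same factorizations in $S$ and in $S\cap\mathbb{N}^{(J)}$ (any atom below it is supported on $J$). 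Hence it suffices to prove the statement for ideal extensions of $\mathbb{N}^{d}$.

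Next I would reformulate both (GA1) and (GA2) as a single statement. Since $S^*=\mathcal{M}(S)+\mathbb{N}^{(I)}$ and $S^*+S^*=2\mathcal{M}(S)+\mathbb{N}^{(I)}$, we have the disjoint decomposition $S^*=\mathcal{A}(S)\sqcup(S^*+S^*)$, and membership in $\mathcal{A}(S)\cup 2\mathcal{A}(S)$ is precisely the existence of a factorization of length at most two. As $S^*$ is an ideal, $\mathbf{a}+\mathbf{h}\in S^*$ always, and a sum of two gaps lying in $S$ also lies in $S^*$; if such a sum is an atom we are done. Therefore both (GA1) and (GA2) are equivalent to: \emph{if a sum $\mathbf{z}$ of two gaps, or of a gap and an atom, lies in $S^*+S^*$, then $\mathbf{z}\in 2\mathcal{A}(S)$}. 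Fixing a coordinate $i$ and writing $\mathbf{z}=(\mathbf{u},s)$, Proposition~\ref{prop:gaps-atoms-pi}(5) shows that $\mathbf{z}\in 2\mathcal{A}(S)$ if and only if there is a decomposition $\mathbf{u}=\mathbf{v}_1+\mathbf{v}_2$ with $\mathbf{v}_1,\mathbf{v}_2\in A$ and $\pi^1_{\mathbf{v}_1}+\pi^1_{\mathbf{v}_2}\le s\le \pi^2_{\mathbf{v}_1}+\pi^2_{\mathbf{v}_2}-2$. So the whole problem becomes: produce one such decomposition.

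Then I would exhibit two decompositions bracketing $s$ and interpolate between them. By Lemma~\ref{lem:pi1+pi1} a minimizing decomposition $(\mathbf{v}_1^0,\mathbf{v}_2^0)$ satisfies $\pi^1_{\mathbf{v}_1^0}+\pi^1_{\mathbf{v}_2^0}=\pi^2_{\mathbf{u}}\le s$, the last inequality being exactly the hypothesis $\mathbf{z}\in S^*+S^*$; so its associated interval has left endpoint $\le s$. For the right endpoint I would use the decomposition coming from the two summands: for a gap $\mathbf{h}=(\mathbf{w},y)$ and atom $\mathbf{a}=(\mathbf{v},x)$ take $(\mathbf{v},\mathbf{w})$, and for two gaps take their projections. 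Using $y<\pi^1_{\mathbf{w}}$ together with $\pi^1_{\mathbf{w}}<\pi^2_{\mathbf{w}}$ from Lemma~\ref{lem:pi1-less-pi2} (and $x<\pi^2_{\mathbf{v}}$) one checks $s\le \pi^2_{\mathbf{v}}+\pi^2_{\mathbf{w}}-2$, so its interval has right endpoint $\ge s$. Since every admissible decomposition has $\pi^2_{\mathbf{v}_1}+\pi^2_{\mathbf{v}_2}-2\ge \pi^1_{\mathbf{v}_1}+\pi^1_{\mathbf{v}_2}$ (again Lemma~\ref{lem:pi1-less-pi2}), each associated interval is nonempty, and it remains to show that the union of these intervals, as $(\mathbf{v}_1,\mathbf{v}_2)$ ranges over admissible decompositions, has no gaps, so that it contains $s$. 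When $d=2$ one has $\mathbf{v}_j\in\mathbb{N}$, the decompositions are linearly ordered by $v_1\in\{0,\dots,u\}$, and moving one unit at a time the two endpoints change in a controlled way by Lemma~\ref{lem:pi-decreasing} and Proposition~\ref{prop:gaps-atoms-pi}(3); this forces consecutive intervals to overlap and closes the case, which is the content of the $\mathbb{N}^2$ result.

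The main obstacle is exactly this interpolation for $d>2$. There $\mathbf{v}_j\in\mathbb{N}^{d-1}$, the admissible decompositions carry no natural linear order, and to connect the lower and upper decompositions I would need a lattice path of admissible decompositions along which both $\pi^1_{\mathbf{v}_1}+\pi^1_{\mathbf{v}_2}$ and $\pi^2_{\mathbf{v}_1}+\pi^2_{\mathbf{v}_2}$ vary slowly enough that consecutive intervals keep overlapping. Proposition~\ref{prop:gaps-atoms-pi}(3) controls a single coordinate step of $\pi^2$ against $\pi^1$, but coordinating this simultaneously for both parts of the decomposition across all lattice directions, while keeping both parts inside $A$ (columns with $\pi^1=\infty$ must be avoided or treated separately), is what we have not managed to do in general; this is the reason the statement is recorded as a conjecture.
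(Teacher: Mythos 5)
You should be aware that the statement you were asked to prove is recorded in the paper as Conjecture~\ref{conj:ideal-implies-gap-absorbing}: the paper offers no proof of it, and your proposal, quite correctly, does not produce one either. What you have written is essentially the paper's own reduction machinery. Your reformulation of (GA1)--(GA2) as ``any sum of two gaps, or of a gap and an atom, that lies in $S^*+S^*$ must lie in $2\mathcal{A}(S)$'' is the content of Proposition~\ref{prop:2A-interval-eq}, which reduces the conjecture to $2\mathcal{A}(S)$ being closed under intervals (restated as Conjecture~\ref{conj:ideal-2A-closed-intervals}); your translation into the existence of a decomposition $\mathbf{u}=\mathbf{v}_1+\mathbf{v}_2$ with $\pi^1_{\mathbf{v}_1}+\pi^1_{\mathbf{v}_2}\le s\le \pi^2_{\mathbf{v}_1}+\pi^2_{\mathbf{v}_2}-2$ is exactly the description \eqref{eq:exp-AS+AS} obtained from Proposition~\ref{prop:gaps-atoms-pi}(5); and your interpolation between the minimizing decomposition of Lemma~\ref{lem:pi1+pi1} and the decomposition induced by the two summands, using Lemmas~\ref{lem:pi-decreasing} and \ref{lem:pi1-less-pi2} and Proposition~\ref{prop:gaps-atoms-pi}(3) to force consecutive intervals to overlap, is precisely the argument of Section~\ref{sec:N2} (the interval lemma for $[m_z,M_z]$, Lemma~\ref{m1<m2}, and Proposition~\ref{prop:2A-interval}), which the paper only carries out for $\mathbb{N}^2$.

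The obstacle you name is the genuine one: for $d>2$ the admissible decompositions of $\mathbf{u}$ form a multidimensional set with no canonical linear order, and no one-coordinate-at-a-time path is known along which both endpoints of the associated intervals move slowly enough to keep consecutive intervals overlapping. Two smaller points you should not wave away if you pursue this: (i) in the minimizing decomposition of Lemma~\ref{lem:pi1+pi1} you must verify that both parts actually lie in $A$, i.e.\ that $\pi^2_{\mathbf{v}_j^0}\neq 0$, which does not come for free; and (ii) the bracketing decomposition built from the two summands can have a part with $\pi^1=\infty$ (a ``column'' consisting entirely of gaps), which you flag but do not resolve. Neither of these is fatal to the strategy, but both must be handled before the interpolation question even arises. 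In short: your proposal is an accurate map of where the paper stands, not a proof, and you have correctly located the open part.
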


The family of monoids presented in the next result extends the family studied in \cite[Section~4.1]{Baeth}, and they are all gap absorbing.

\begin{proposition} \label{simple-case}
Let $\emptyset\neq J\subseteq I\subseteq \mathbb{N}$. 
Given positive integers $k_j$, $j\in J$, the monoid  $S=\{0\}\cup \{k_j\mathbf{e}_j: j\in J\}+\mathbb{N}^{(I)}$ is gap absorbing.    
\end{proposition}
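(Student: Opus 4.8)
The plan is to make everything explicit in coordinates. First I would describe the three relevant sets. Since $\mathbf{n}=(n_i)_{i\in I}$ lies in $S^{*}$ exactly when $n_j\ge k_j$ for some $j\in J$, the minimal elements are $\mathcal{M}(S)=\{k_j\mathbf{e}_j : j\in J\}$, the gaps are
\[
\mathcal{H}(S)=\{\mathbf{h}\neq\mathbf{0} : h_j<k_j \text{ for all } j\in J\},
\]
and, calling a coordinate $j\in J$ of a vector $\mathbf{n}$ a \emph{threshold coordinate} if $n_j\ge k_j$, I would show that $\mathbf{n}\in\mathcal{A}(S)$ if and only if $\mathbf{n}$ has exactly one threshold coordinate $j$ and moreover $n_j<2k_j$. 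The ``if'' direction is a direct splitting argument; for ``only if'', note that if two distinct coordinates $j_1,j_2$ reach their thresholds then $\mathbf{n}=k_{j_1}\mathbf{e}_{j_1}+(\mathbf{n}-k_{j_1}\mathbf{e}_{j_1})$ decomposes $\mathbf{n}$ in $S^{*}+S^{*}$, and if a single coordinate $j$ has $n_j\ge 2k_j$ then $\mathbf{n}=k_j\mathbf{e}_j+(\mathbf{n}-k_j\mathbf{e}_j)$ does the same. The coordinates outside $J$ play no role and may be distributed freely throughout.

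The engine of the proof is a single splitting observation, together with the bounds that a gap has every $J$-coordinate $\le k_j-1$, while an atom has its unique large coordinate $j_0$ in $[k_{j_0},2k_{j_0}-1]$ and every other $J$-coordinate $\le k_j-1$. I would then prove: suppose $\mathbf{n}\in S^{*}$ satisfies $n_j\le 2k_j-2$ for every threshold coordinate except possibly one index $j_0$, for which only $n_{j_0}\le 3k_{j_0}-2$ is assumed. If $\mathbf{n}$ has a single threshold coordinate $j_0$ with $n_{j_0}<2k_{j_0}$, then $\mathbf{n}\in\mathcal{A}(S)$; otherwise $\mathbf{n}\in 2\mathcal{A}(S)$. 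For the split, assign to $\mathbf{a}$ the largest admissible atom value $\min(n_{j_0},2k_{j_0}-1)$ at $j_0$ and the below-threshold remainder to $\mathbf{b}$; a second threshold coordinate, if present, is given entirely to $\mathbf{b}$; every remaining coordinate $j$ (with $n_j\le 2k_j-2$) is split as $\lceil n_j/2\rceil+\lfloor n_j/2\rfloor$, both summands lying in $[0,k_j-1]$. The bounds force $\mathbf{a}$ to have $j_0$ as its only threshold coordinate and $\mathbf{b}$ at most the second one, so both are atoms. The degenerate case of a single threshold coordinate with $n_{j_0}\ge 2k_{j_0}$ is handled separately by writing $n_{j_0}=p+q$ with $k_{j_0}\le p,q<2k_{j_0}$ (possible precisely because $2k_{j_0}\le n_{j_0}\le 3k_{j_0}-2$) and placing all other coordinates into $\mathbf{a}$.

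It then remains to check that the hypotheses hold in the two gap-absorption situations. For (GA1), if $\mathbf{n}=\mathbf{h}_1+\mathbf{h}_2$ with $\mathbf{h}_1,\mathbf{h}_2\in\mathcal{H}(S)$, then every $J$-coordinate satisfies $n_j\le 2(k_j-1)=2k_j-2$; so if $\mathbf{n}$ has no threshold coordinate it is a (nonzero) gap, if it has exactly one it is an atom, and otherwise the observation (with no exceptional index needed) places it in $2\mathcal{A}(S)$. For (GA2), if $\mathbf{n}=\mathbf{h}+\mathbf{a}$ with $\mathbf{h}\in\mathcal{H}(S)$ and $\mathbf{a}\in\mathcal{A}(S)$ of large coordinate $j_0$, then $n_{j_0}\le(k_{j_0}-1)+(2k_{j_0}-1)=3k_{j_0}-2$ while $n_j\le 2k_j-2$ for every other $j\in J$; since $j_0$ is a threshold coordinate, $\mathbf{n}\in S^{*}$, and the observation yields $\mathbf{n}\in\mathcal{A}(S)\cup 2\mathcal{A}(S)$.

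I expect the only delicate point to be the bookkeeping in the splitting construction: one must guarantee that after distributing the coordinates the two parts $\mathbf{a}$ and $\mathbf{b}$ each retain exactly one threshold coordinate, on distinct indices. This is exactly where the uniform bound $n_j\le 2k_j-2$ on the ordinary coordinates (which permits halving into two below-threshold pieces) and the sharp bound $n_{j_0}\le 3k_{j_0}-2=(2k_{j_0}-1)+(k_{j_0}-1)$ on the single exceptional coordinate are used; everything else is routine.
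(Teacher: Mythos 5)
Your proof is correct. It follows the same overall strategy as the paper's --- an explicit coordinate description of $\mathcal{H}(S)$ and $\mathcal{A}(S)$ (your threshold-coordinate characterization of the atoms is exactly the paper's), followed by a case analysis on which coordinates of the sum reach their thresholds --- but the device used to produce the two-atom decomposition is genuinely different. The paper never forgets the two original summands: when $\mathbf{h}+\mathbf{h}'$ (or $\mathbf{h}+\mathbf{a}$) reaches the threshold at two indices $i\neq j$, it swaps coordinates between them, writing for instance $\mathbf{h}+\mathbf{h}'=(\mathbf{h}-h_i\mathbf{e}_i+h_j'\mathbf{e}_j)+(\mathbf{h}'-h_j'\mathbf{e}_j+h_i\mathbf{e}_i)$, so that each part automatically inherits below-threshold entries at all the remaining coordinates. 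You instead discard the summands and re-split the total $\mathbf{n}$ from scratch: cap the distinguished coordinate at $2k_{j_0}-1$, hand a second threshold coordinate wholesale to the other part, and halve every remaining coordinate using the bound $n_j\le 2k_j-2$. Your packaging of this as a single splitting lemma, whose hypotheses ($n_j\le 2k_j-2$ away from one coordinate bounded by $3k_{j_0}-2$) are then verified once for (GA1) and once for (GA2), is arguably tidier than the paper's two separate case analyses and makes clear why sums with three or more threshold coordinates cause no trouble; the price is a bit more bookkeeping in the halving step. Both arguments are complete; the only statement I would tighten is that in your splitting observation $j_0$ should explicitly be taken to be a threshold coordinate of $\mathbf{n}$ (as it is in both applications), so that the part receiving $\min(n_{j_0},2k_{j_0}-1)$ is guaranteed to be an atom.
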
 
\begin{proof}
Set $H=\mathcal{H}(S)$, $A=\mathcal{A}(S)$, and $M=\mathcal{M}(S)=\{k_j\mathbf{e}_j : j\in J\}$. Let us describe how are the sets $H$ and $A$. 

Clearly, $H=\{(x_i)_{i\in I}\in \mathbb{N}^{(I)} : x_j<k_j \text{ for all } j\in J\}$. 

As $A=(M+\mathbb{N}^{(I)})\setminus(2(M+\mathbb{N}^{(I)}))=(M+\mathbb{N}^{(I)})\setminus(2M+\mathbb{N}^{(I)})$, if $\mathbf{x}=(x_i)_{i\in I}\in A$, then $x_i\ge k_i$ for some $i\in J$, because $\mathbf{x}\in M+\mathbb{N}^{(I)}$, and also $x_i<2k_i$, because otherwise $\mathbf{x}\in 2k_i\mathbf{e}_i+\mathbb{N}^{(I)}\subseteq 2M+\mathbb{N}^{(I)}$. There is no $j\in  J\setminus\{i\}$ such that $k_j\le x_j$, since in this setting we would have $\mathbf{x}\in (k_i\mathbf{e}_i+k_j\mathbf{e_j})+\mathbb{N}^{(I)}\subseteq 2M+\mathbb{N}^{(I)}$. We deduce that $A$ is the set of sequences $(x_i)_{i\in I}\in \mathbb{N}^{(I)}$ such that $k_i\le x_i\le 2k_i-1$ for some $i\in  J$ and $x_j<k_j$ for every $j\in  J\setminus\{i\}$.

Let us prove that $2H \subseteq H\cup A\cup 2A$. Take $\mathbf{h}=(h_i)_{i\in I}, \mathbf{h}'=(h_i')_{i\in I}\in H$. If $h_i+h_i'<k_i$ for all $j\in  I$, then $\mathbf{h}+\mathbf{h}'\in H$. If there is a unique $i\in  J$ such that $h_i+h_i'\ge k_i$, then $k_i\le h_i+h_i'<2k_i-1$ and $h_j+h_j'<k_j$ for all $j\in J\setminus\{i\}$, which means that $\mathbf{h}+\mathbf{h}'\in A$. Finally, if there is $i,j\in  J$ with $i\neq j$ and $h_i+h_i'\ge k_i$ and $h_j+h_j'\ge k_j$, we write $\mathbf{h}+\mathbf{h}'=(\mathbf{h}-h_i\mathbf{e}_i+h_j'\mathbf{e}_j)+(\mathbf{h}'-h_j'\mathbf{e}_j+h_i\mathbf{e}_i)$. It easily follows that $\mathbf{h}-h_i\mathbf{e}_i+h_j'\mathbf{e}_j$ and $\mathbf{h}'-h_j'\mathbf{e}_j+h_i\mathbf{e}_i$ are in $A$.

Now, let us show that $H+A\subseteq A\cup 2A$. Consider $\mathbf{h}=(h_i)_{i\in I}\in H$ and $\mathbf{a}=(a_i)_{i\in I}\in A$. Suppose that $k_i\le a_i<2k_i$ for some $i\in J$ and $0\le a_j<k_j$ for all $j\in  J\setminus\{i\}$. If $2k_i\le h_i+a_i<3k_i-1$, then write $\mathbf{h}+\mathbf{a}=(\mathbf{h}-h_i\mathbf{e_i}+k_i\mathbf{e_i})+(\mathbf{a}-a_i\mathbf{e_i}+(h_i+a_i-k_i)\mathbf{e}_i)$, which is in $A+A$, since both $\mathbf{h}-h_i\mathbf{e_i}+k_i\mathbf{e_i}$ and $\mathbf{a}-a_i\mathbf{e_i}+(h_i+a_i-k_i)\mathbf{e}_i$ are in $A$. Thus, in the following, we assume that $k_i\le h_i+a_i<2k_i$. We distinguish two cases.
\begin{itemize}
    \item Suppose that $h_j+a_j<k_j$ for all $j\in  J\setminus\{i\}$. Then, we obtain that $\mathbf{h}+\mathbf{a}\in A$. 
    \item If for some $j\in  J$, $j\neq i$, we have that $k_j\le h_j+a_j$ (notice that $h_j+a_j<2k_j-1$), then $\mathbf{h}+\mathbf{a}=(\mathbf{h}-h_i\mathbf{e}_i+a_j\mathbf{e}_j)+(\mathbf{a}-a_j\mathbf{e}_j+h_i\mathbf{e}_i)\in A+A$, since both $\mathbf{h}-h_i\mathbf{e}_i+a_j\mathbf{e}_j$ and $\mathbf{a}-a_j\mathbf{e}_j+h_i\mathbf{e}_i$ are in $A$. 
\end{itemize}
This proves that $S$ is gap absorbing.
\end{proof}

\begin{remark}
    Observe that gap absorbing monoids are not necessarily finitely generated even when $I$ has finitely many elements. Take for instance $S=\{\mathbf{0}\}\cup (\mathbf{e}_1+\mathbb{N}^2)$. In this case $\mathcal{A}(S)=\{(1,n) : n \in \mathbb{N}\}$ and by Proposition~\ref{simple-case}, $S$ is gap absorbing.
\end{remark}

The following result tells us that on an ideal extension of $\mathbb{N}^{(I)}$ (and therefore on every gap absorbing monoid by Proposition~\ref{prop:gap-absorbing-implies-ideal}) the set of atoms ``seats'' over the set of gaps.

\begin{lemma}\label{atoms-sitting-over-gaps}
Let $S$ be an ideal extension of $\mathbb{N}^{(I)}$. 
\begin{enumerate}[a)]
\item For all $\mathbf{h}\in \mathcal{H}(S)$, if $\mathbf{h}-\mathbf{e}_i \in \mathbb{N}^{(I)}$ for some $i\in  I$, then $\mathbf{h}-\mathbf{e}_i \in \{\mathbf{0}\}\cup \mathcal{H}(S)$.
\item For all $\mathbf{a}\in \mathcal{A}(S)$, if $\mathbf{a}-\mathbf{e}_i \in \mathbb{N}^{(I)}$ for some $i\in  I$, then $\mathbf{a}-\mathbf{e}_i\in \{\mathbf{0}\}\cup  \mathcal{H}(S) \cup \mathcal{A}(S)$. 
\item For every $\mathbf{h}\in \mathcal{H}(S)$, there exists $\mathbf{a}\in \mathcal{A}(S)$ with $\mathbf{h}\le \mathbf{a}$.
\end{enumerate}
\end{lemma}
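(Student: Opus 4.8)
The plan is to treat (a) and (b) as direct consequences of the defining property of an ideal extension, $S^*+\mathbb{N}^{(I)}\subseteq S^*$, and to reserve the real work for (c). For (a), I would argue by contradiction: if $\mathbf{h}-\mathbf{e}_i\notin\{\mathbf{0}\}\cup\mathcal{H}(S)$, then $\mathbf{h}-\mathbf{e}_i\in S^*$, and since $S^*$ is an ideal, $\mathbf{h}=(\mathbf{h}-\mathbf{e}_i)+\mathbf{e}_i\in S^*$, contradicting $\mathbf{h}\in\mathcal{H}(S)$. For (b), I would use the disjoint partition $\mathbb{N}^{(I)}=\{\mathbf{0}\}\sqcup\mathcal{H}(S)\sqcup\mathcal{A}(S)\sqcup(S^*+S^*)$ (recall $\mathcal{A}(S)=S^*\setminus(S^*+S^*)$), so that the claim reduces to ruling out $\mathbf{a}-\mathbf{e}_i\in S^*+S^*$. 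If $\mathbf{a}-\mathbf{e}_i=\mathbf{s}+\mathbf{t}$ with $\mathbf{s},\mathbf{t}\in S^*$, then $\mathbf{t}+\mathbf{e}_i\in S^*$ (again because $S^*$ is an ideal), whence $\mathbf{a}=\mathbf{s}+(\mathbf{t}+\mathbf{e}_i)\in S^*+S^*$, contradicting that $\mathbf{a}$ is an atom.

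The substantial part is (c). Assuming $\mathcal{M}(S)\neq\emptyset$ (the excluded case $S=\{\mathbf{0}\}$ having no atoms), the plan is to pick $\mathbf{a}$ to be a $\le$-minimal element of $T=\{\mathbf{x}\in S^*:\mathbf{x}\ge\mathbf{h}\}$ and to show such a minimal element must be an atom. The set $T$ is nonempty, since for any $\mathbf{m}\in\mathcal{M}(S)$ the join $\mathbf{h}\vee\mathbf{m}$ dominates $\mathbf{m}$, hence lies in $S^*$, and also dominates $\mathbf{h}$. A $\le$-minimal element of $T$ exists because $(\mathbb{N}^{(I)},\le)$ is well-founded: a strictly descending chain would force a strictly descending chain of the values $\lVert\cdot\rVert_1$ in $\mathbb{N}$, which is impossible.

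To see that $\mathbf{a}$ is an atom, I would suppose $\mathbf{a}=\mathbf{s}+\mathbf{t}$ with $\mathbf{s},\mathbf{t}\in S^*$ and derive a contradiction. The element $\mathbf{s}\vee\mathbf{h}$ dominates $\mathbf{s}\in S^*$, so it lies in $S^*$; it satisfies $\mathbf{s}\vee\mathbf{h}\ge\mathbf{h}$, and, since $\mathbf{s}\le\mathbf{a}$ and $\mathbf{h}\le\mathbf{a}$, also $\mathbf{s}\vee\mathbf{h}\le\mathbf{a}$. Thus $\mathbf{s}\vee\mathbf{h}\in T$, and minimality of $\mathbf{a}$ forces $\mathbf{s}\vee\mathbf{h}=\mathbf{a}$; by symmetry $\mathbf{t}\vee\mathbf{h}=\mathbf{a}$. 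Finally, from $\mathbf{a}=\mathbf{s}\vee\mathbf{h}\le\mathbf{s}+\mathbf{h}$ together with $\mathbf{a}=\mathbf{s}+\mathbf{t}$, cancelling $\mathbf{s}$ componentwise gives $\mathbf{t}\le\mathbf{h}$, and symmetrically $\mathbf{s}\le\mathbf{h}$; but then $\mathbf{a}=\mathbf{s}\vee\mathbf{h}=\mathbf{h}$, contradicting $\mathbf{a}\in S^*$ and $\mathbf{h}\in\mathcal{H}(S)$. Hence $\mathbf{a}\in\mathcal{A}(S)$ with $\mathbf{h}\le\mathbf{a}$.

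I expect the main obstacle to be exactly this last argument: (a) and (b) are routine manipulations with the ideal property, but (c) requires both guaranteeing a suitable minimal element and ruling out any decomposition of it. The crux is the observation that joining a hypothetical factor with $\mathbf{h}$ produces an element of $T$ lying weakly below $\mathbf{a}$, after which the contradiction comes out cleanly by cancellation; the only delicate bookkeeping points are the coordinatewise behaviour of $\vee$ and the exclusion of the degenerate monoid $S=\{\mathbf{0}\}$.
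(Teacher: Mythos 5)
Your proof is correct. Parts (a) and (b) coincide with the paper's argument (the paper in fact dismisses (a) as immediate and proves (b) exactly as you do, by pushing $\mathbf{e}_i$ onto one summand via the ideal property). For (c) you and the paper share the same skeleton --- take a $\le$-minimal element $\mathbf{a}$ of $(\mathbf{h}+\mathbb{N}^{(I)})\cap S^*$ and show it must be an atom --- but the contradiction is reached differently. The paper writes $\mathbf{a}=\mathbf{h}+\mathbf{x}$ with $\mathbf{x}\neq\mathbf{0}$, picks $i\in\operatorname{Supp}(\mathbf{x})$, and observes that $\mathbf{a}-\mathbf{e}_i$ still dominates $\mathbf{h}$, so minimality forces it to be a gap, while a hypothetical splitting $\mathbf{a}=\mathbf{b}+\mathbf{c}$ puts $\mathbf{a}-\mathbf{e}_i$ back in $S^*$ via the ideal property. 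You instead exploit the lattice structure: from $\mathbf{a}=\mathbf{s}+\mathbf{t}$ you show $\mathbf{s}\vee\mathbf{h}$ and $\mathbf{t}\vee\mathbf{h}$ both lie in the set and sit below $\mathbf{a}$, so minimality collapses them to $\mathbf{a}$, and cancellation then forces $\mathbf{s},\mathbf{t}\le\mathbf{h}$ and $\mathbf{a}=\mathbf{h}$, a contradiction. Both arguments are clean; yours avoids any appeal to supports and single-coordinate decrements, at the price of the $\vee$ bookkeeping, and it has the merit of explicitly justifying the two points the paper leaves tacit, namely that the set is non-empty (via $\mathbf{h}\vee\mathbf{m}$ for $\mathbf{m}\in\mathcal{M}(S)$, assuming $S\neq\{\mathbf{0}\}$) and that a $\le$-minimal element exists (well-foundedness via $\lVert\cdot\rVert_1$).
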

\begin{proof}
The claim a) easily follows from the structure of $S$. For the claim b), suppose $\mathbf{a}-\mathbf{e}_i=\mathbf{b}+\mathbf{c}$ with $\mathbf{b},\mathbf{c}\in  S^*$. Then $\mathbf{a}=\mathbf{b}+\mathbf{c}+\mathbf{e}_i$ and as $S^*$ is an ideal of $\mathbb{N}^{(I)}$, we obtain $\mathbf{b}+\mathbf{e}_i\in  S^*$, so $\mathbf{a}\in  S^*+ S^*$, which contradicts $\mathbf{a}\in \mathcal{A}(S)$. 

For the last claim, consider $\mathbf{m}\in \operatorname{Minimals}_\le ((\mathbf{h}+\mathbb{N}^{(I)})\cap S)$. Then $\mathbf{m}=\mathbf{h}+\mathbf{x}$ for some $\mathbf{x}\in \mathbb{N}^{(I)}$. As $\mathbf{h}\not\in S$, we deduce that $\mathbf{x}\neq \mathbf{0}$. Suppose that $\mathbf{m}$ is not an atom. Then, $\mathbf{m}=\mathbf{a}+\mathbf{b}$ for some $\mathbf{a},\mathbf{b}\in S^*$. Let $i\in \operatorname{Supp}(\mathbf{x})\subseteq \operatorname{Supp}(\mathbf{m})$. Then $\mathbf{m}-\mathbf{e}_i\in \mathbf{h}+\mathbb{N}^{(I)}$, and by the minimality of $\mathbf{m}$, we deduce that $\mathbf{m}-\mathbf{e}_i\in \mathcal{H}(S)$. But we also have $\mathbf{m}-\mathbf{e}_i=\mathbf{a}+\mathbf{b}-\mathbf{e}_i$, and we can assume without loss of generality that $\mathbf{b}-\mathbf{e}_i\in \mathbb{N}^{(I)}$. So $\mathbf{m}-\mathbf{e}_i\in S$, that is a contradiction.     
\end{proof}

If we increase by one the coordinate of one of the atoms of an ideal extension of a free monoid, then we obtain either an atom or a sum of two atoms. In some cases, we can ensure that we obtain just another atom.

\begin{lemma}\label{lem:a+ei-l-le-3}
    Let $S$ be an ideal extension of $\mathbb{N}^{(I)}$. 
    Then, for every $\mathbf{a}\in \mathcal{A}(S)$ and every $i\in I$, $\mathbf{a}+\mathbf{e}_i\in \mathcal{A}(S)\cup 2\mathcal{A}(S)$. If in addition $\mathbf{e}_i\not \in \mathcal{M}(S)$, then $\mathbf{m}+\mathbf{e}_i\in \mathcal{A}(S)$ for all $\mathbf{m}\in \mathcal{M}(S)$.
\end{lemma}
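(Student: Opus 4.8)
The plan is to treat the two assertions separately, in both cases examining a shortest atomic factorization and exploiting that $\mathbf{a}$ (resp.\ $\mathbf{m}$) is itself an atom; the only external input needed is Lemma~\ref{atoms-sitting-over-gaps}~b) and the ideal property $S^*+\mathbb{N}^{(I)}\subseteq S^*$.

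For the first assertion, note that $\mathbf{a}+\mathbf{e}_i\in S^*$, so it has a factorization into atoms; I would take one of minimal length, say $\mathbf{a}+\mathbf{e}_i=\mathbf{a}_1+\cdots+\mathbf{a}_k$ with each $\mathbf{a}_\ell\in\mathcal{A}(S)$, and aim to show $k\le 2$. Assume for contradiction $k\ge 3$. Comparing $i$th coordinates gives $\sum_\ell (\mathbf{a}_\ell)_i=a_i+1\ge 1$, so some summand, say $\mathbf{a}_1$, has $(\mathbf{a}_1)_i\ge 1$; then $\mathbf{a}_1-\mathbf{e}_i\in\mathbb{N}^{(I)}$, and Lemma~\ref{atoms-sitting-over-gaps}~b) places it in $\{\mathbf{0}\}\cup\mathcal{H}(S)\cup\mathcal{A}(S)$. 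Subtracting $\mathbf{e}_i$ yields $\mathbf{a}=(\mathbf{a}_1-\mathbf{e}_i)+\mathbf{a}_2+\cdots+\mathbf{a}_k$, and I would derive a contradiction with $\mathbf{a}\in\mathcal{A}(S)$ in each case: if $\mathbf{a}_1-\mathbf{e}_i\in\mathcal{A}(S)$ this exhibits $\mathbf{a}$ as a sum of $k\ge 2$ atoms, and if $\mathbf{a}_1-\mathbf{e}_i=\mathbf{0}$ then $\mathbf{a}=\mathbf{a}_2+\cdots+\mathbf{a}_k$ is a sum of $k-1\ge 2$ atoms.

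The remaining, and to my mind the only slightly delicate, case is $\mathbf{a}_1-\mathbf{e}_i\in\mathcal{H}(S)$. Here the trick is to absorb the gap into a neighbouring atom via the ideal property: writing $\mathbf{g}=\mathbf{a}_1-\mathbf{e}_i$, the element $\mathbf{a}_2+\mathbf{g}$ lies in $S^*$ since $\mathbf{a}_2\in S^*$ and $S^*+\mathbb{N}^{(I)}\subseteq S^*$. As $k\ge 3$, we may regroup $\mathbf{a}=(\mathbf{a}_2+\mathbf{g})+(\mathbf{a}_3+\cdots+\mathbf{a}_k)$ as a sum of two elements of $S^*$, contradicting that $\mathbf{a}$ is an atom. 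Hence $k\le 2$, i.e.\ $\mathbf{a}+\mathbf{e}_i\in\mathcal{A}(S)\cup 2\mathcal{A}(S)$.

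For the second assertion I would argue by contradiction: suppose $\mathbf{e}_i\notin\mathcal{M}(S)$ yet $\mathbf{m}+\mathbf{e}_i$ is not an atom for some $\mathbf{m}\in\mathcal{M}(S)$, so $\mathbf{m}+\mathbf{e}_i=\mathbf{b}+\mathbf{c}$ with $\mathbf{b},\mathbf{c}\in S^*$. Choosing $\mathbf{m}_b,\mathbf{m}_c\in\mathcal{M}(S)$ with $\mathbf{m}_b\le\mathbf{b}$ and $\mathbf{m}_c\le\mathbf{c}$ gives $\mathbf{m}_b+\mathbf{m}_c\le\mathbf{m}+\mathbf{e}_i$. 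The key dichotomy is that, since $\mathcal{M}(S)$ is an antichain, any minimal generator $\le\mathbf{m}+\mathbf{e}_i$ either equals $\mathbf{m}$ or has $i$th coordinate exactly $m_i+1$ with all other coordinates bounded by those of $\mathbf{m}$. A short count of the $i$th coordinate, using $(\mathbf{m}_b)_i+(\mathbf{m}_c)_i\le m_i+1$, rules out both generators exceeding $\mathbf{m}$; the case where both equal $\mathbf{m}$ forces $\mathbf{m}\le\mathbf{e}_i$, hence $\mathbf{m}=\mathbf{e}_i$, while the mixed case forces the exceeding generator to be exactly $\mathbf{e}_i$. In every case $\mathbf{e}_i\in\mathcal{M}(S)$, contradicting the hypothesis, so $\mathbf{m}+\mathbf{e}_i$ must be an atom. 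I expect this coordinate bookkeeping to be the most error-prone step, but it is elementary once the dichotomy for minimal generators below $\mathbf{m}+\mathbf{e}_i$ is in place.
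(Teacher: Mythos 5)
Your proof is correct and follows essentially the same route as the paper: the first assertion is settled by the same regrouping trick (subtract $\mathbf{e}_i$ from a summand whose support contains $i$ and absorb the result into a neighbouring term via $S^*+\mathbb{N}^{(I)}\subseteq S^*$), your extra case analysis through Lemma~\ref{atoms-sitting-over-gaps} being harmless but unnecessary. For the second assertion the paper subtracts $\mathbf{e}_i$ and invokes minimality of $\mathbf{m}$ directly to force $\mathbf{t}_2=\mathbf{e}_i$, whereas you compare two minimal generators below $\mathbf{m}+\mathbf{e}_i$ against the antichain property; the bookkeeping differs but the substance is the same and your dichotomy and coordinate count are sound.
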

\begin{proof}
    Clearly, $\mathbf{a}+\mathbf{e}_i\in S$. If $\mathbf{a}+\mathbf{e}_i\not\in \mathcal{A}(S)\cup 2\mathcal{A}(S)$, then $\mathbf{a}+\mathbf{e}_i=\mathbf{s}_1+\mathbf{s}_2+\mathbf{s}_3$ with $\mathbf{s}_1,\mathbf{s}_2,\mathbf{s}_3\in S^*$. As $i\in \operatorname{Supp}(\mathbf{a}+\mathbf{e}_i)$, there exists $j\in \{1,2,3\}$ such that $\mathbf{s}_j-\mathbf{e}_i\in \mathbb{N}^{(I)}$. Suppose without loss of generality that $j=3$. Hence, $\mathbf{a}=\mathbf{s}_1+(\mathbf{s}_2+(\mathbf{s}_3-\mathbf{e}_i))\in S^*+S^*$, contradicting that $\mathbf{a}\in \mathcal{A}(S)=S^*\setminus(S^*+S^*)$.

    For the second part, suppose that $\mathbf{m}+\mathbf{e}_i=\mathbf{t}_1+\mathbf{t}_2$ for some $\mathbf{t}_1,\mathbf{t}_2\in S^*$. Arguing as above, we may assume that $i\in \operatorname{Supp}(\mathbf{t}_2)$. Then, $\mathbf{m}=\mathbf{t}_1+(\mathbf{t}_2-\mathbf{e}_i)\in S$. Let $\mathbf{m}_1\in \mathcal{M}(S)$ be such that $\mathbf{m}_1\le \mathbf{t}_1$, and set $\mathbf{x}=\mathbf{t}_1-\mathbf{m}_1$. Then, $\mathbf{m}=\mathbf{m}_1+\mathbf{x}+(\mathbf{t}_2-\mathbf{e}_i)$, and since both $\mathbf{m}$ and $\mathbf{m}_1$ are minimal, this forces $\mathbf{x}+(\mathbf{t}_2-\mathbf{e}_i)=\mathbf{0}$; in particular $\mathbf{t}_2=\mathbf{e}_i$, which is impossible by the hypothesis $\mathbf{e}_i\not\in \mathcal{M}(S)$.
\end{proof}

For gap absorbing monoids, we have the following consequence.

\begin{corollary}\label{cor:An-sits-previous}
Let $S\subseteq \mathbb{N}^{(I)}$ be a gap absorbing monoid and $n$ be an integer greater than one. Then for all $\mathbf{s}\in n\mathcal{A}(S)$ and for all $i\in  I$ such that $\mathbf{s}-\mathbf{e}_i \in \mathbb{N}^{(I)}$ we have that $\mathbf{s}-\mathbf{e}_i\in (n-1)\mathcal{A}(S) \cup n\mathcal{A}(S)$.
\end{corollary}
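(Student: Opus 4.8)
The plan is to write $\mathbf{s}=\mathbf{a}_1+\dots+\mathbf{a}_n$ with each $\mathbf{a}_k\in\mathcal{A}(S)$, and then to push the subtraction of $\mathbf{e}_i$ into a single summand. Since $\mathbf{s}-\mathbf{e}_i\in\mathbb{N}^{(I)}$ means $i\in\operatorname{Supp}(\mathbf{s})$, the $i$-th coordinate of $\mathbf{s}$ is positive, and because coordinates add it must be positive in at least one summand; after relabeling I may assume $\mathbf{a}_n-\mathbf{e}_i\in\mathbb{N}^{(I)}$. Thus $\mathbf{s}-\mathbf{e}_i=\mathbf{a}_1+\dots+\mathbf{a}_{n-1}+(\mathbf{a}_n-\mathbf{e}_i)$. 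The key structural input is Lemma~\ref{atoms-sitting-over-gaps}(b), which guarantees $\mathbf{a}_n-\mathbf{e}_i\in\{\mathbf{0}\}\cup\mathcal{H}(S)\cup\mathcal{A}(S)$. This trichotomy drives the whole argument.

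The two outer cases are immediate and require no use of the gap absorbing hypothesis. If $\mathbf{a}_n-\mathbf{e}_i=\mathbf{0}$, then $\mathbf{s}-\mathbf{e}_i=\mathbf{a}_1+\dots+\mathbf{a}_{n-1}$ is a sum of $n-1$ atoms, so $\mathbf{s}-\mathbf{e}_i\in(n-1)\mathcal{A}(S)$. If $\mathbf{a}_n-\mathbf{e}_i\in\mathcal{A}(S)$, then $\mathbf{s}-\mathbf{e}_i$ is already displayed as a sum of $n$ atoms, so $\mathbf{s}-\mathbf{e}_i\in n\mathcal{A}(S)$. In either case the conclusion holds.

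The main point, and the only real obstacle, is the remaining case $\mathbf{h}:=\mathbf{a}_n-\mathbf{e}_i\in\mathcal{H}(S)$, which is precisely where both the hypothesis $n>1$ and gap absorption enter. Because $n\ge 2$ there is at least one further atom, say $\mathbf{a}_1$, available to pair with the gap $\mathbf{h}$. Condition (GA2) then gives $\mathbf{h}+\mathbf{a}_1\in\mathcal{H}(S)+\mathcal{A}(S)\subseteq\mathcal{A}(S)\cup 2\mathcal{A}(S)$. Rewriting $\mathbf{s}-\mathbf{e}_i=(\mathbf{h}+\mathbf{a}_1)+\mathbf{a}_2+\dots+\mathbf{a}_{n-1}$ (where the residual sum $\mathbf{a}_2+\dots+\mathbf{a}_{n-1}$ consists of $n-2$ atoms, and is empty when $n=2$) yields a factorization into $n-1$ atoms if $\mathbf{h}+\mathbf{a}_1\in\mathcal{A}(S)$, and into $n$ atoms if $\mathbf{h}+\mathbf{a}_1\in 2\mathcal{A}(S)$. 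In both subcases $\mathbf{s}-\mathbf{e}_i\in(n-1)\mathcal{A}(S)\cup n\mathcal{A}(S)$, which finishes the proof.

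I expect no serious difficulties beyond the gap case: the trichotomy from Lemma~\ref{atoms-sitting-over-gaps}(b) reduces everything to bookkeeping, and (GA2) is exactly tailored to absorb the one gap that can appear. The role of $n>1$ is worth flagging explicitly, since without a second atom to pair with $\mathbf{h}$ the gap could not be reabsorbed.
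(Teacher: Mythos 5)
Your proof is correct and follows the same route as the paper's: decompose $\mathbf{s}$ into $n$ atoms, push $-\mathbf{e}_i$ onto one summand, invoke Lemma~\ref{atoms-sitting-over-gaps}(b) for the trichotomy, and absorb the gap case via (GA2). The paper states the final step more tersely; your explicit bookkeeping in the gap case (and the remark on where $n>1$ is used) simply fills in what the paper leaves implicit.
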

\begin{proof}
    If $\mathbf{s}\in n\mathcal{A}(S)$, then $\mathbf{s}-\mathbf{e}_i=\mathbf{a}_1+\cdots +\mathbf{a}_n-\mathbf{e}_i$ with $\mathbf{a}_j\in \mathcal{A}(S)$ for each $j\in \{1,\ldots,n\}$, and we can assume without loss of generality that $\mathbf{a}_n-\mathbf{e}_i \in \mathbb{N}^{(I)}$. In particular, by Lemma~\ref{atoms-sitting-over-gaps}, $\mathbf{a}_n-\mathbf{e}_i \in \{\mathbf{0}\}\cup \mathcal{H}(S) \cup \mathcal{A}(S)$. Therefore, since $S$ is a gap absorbing monoid, we deduce that $\mathbf{s}-\mathbf{e}_i\in (n-1)\mathcal{A}(S) \cup n\mathcal{A}(S)$.
\end{proof}

The next result explains the relation between gap absorbing monoids and ideal extensions of free monoids.

\begin{proposition}\label{prop:2A-interval-eq}
Let $S$ be a submonoid of $\mathbb{N}^{(I)}$. The following conditions are equivalent.
\begin{enumerate}[(1)]
\item  $S$ is a gap absorbing monoid. 

\item 
$S$ is an ideal extension of $\mathbb{N}^{(I)}$ and $n\mathcal{A}(S)\setminus (\bigcup_{i=0}^{n-1} i\mathcal{A}(S))$ is closed under intervals for all $n\in \mathbb{N}\setminus \{0\}$.

\item 
$S$ is an ideal extension of $\mathbb{N}^{(I)}$ and $2\mathcal{A}(S)$ is closed under intervals. 
\end{enumerate}
\end{proposition}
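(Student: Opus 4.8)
The plan is to prove the cycle of implications $(1)\Rightarrow(2)\Rightarrow(3)\Rightarrow(1)$. Throughout I write $A=\mathcal A(S)$, $H=\mathcal H(S)$, and for $\mathbf x\in S$ let $\ell(\mathbf x)$ denote the minimal number of atoms needed to express $\mathbf x$ (so $\ell(\mathbf 0)=0$ and $\ell(\mathbf a)=1$ for $\mathbf a\in A$). The first thing to record, valid whenever $S$ is an ideal extension, is the bookkeeping identity $nA\setminus\bigcup_{i=0}^{n-1}iA=\{\mathbf x\in S:\ell(\mathbf x)=n\}$: indeed $\mathbf x\in nA$ forces $\ell(\mathbf x)\le n$, while $\mathbf x\notin\bigcup_{i<n}iA$ forces $\ell(\mathbf x)\ge n$. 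In particular, since an element of $A+A$ lies in $S^*+S^*$ and hence is not an atom, one gets $2A=\{\mathbf x:\ell(\mathbf x)=2\}=2A\setminus(\{\mathbf 0\}\cup A)$, so condition $(3)$ is exactly the $n=2$ instance of condition $(2)$; this makes $(2)\Rightarrow(3)$ immediate. I also use repeatedly that an ideal extension $S$ is closed under intervals, so that every point between two elements of $S$ again lies in $S$ and $\ell$ is defined along any monotone lattice path joining them.

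For $(1)\Rightarrow(2)$, since a gap absorbing monoid is an ideal extension by Proposition~\ref{prop:gap-absorbing-implies-ideal}, it suffices to prove that $\ell$ is monotone non-decreasing for $\le$ on $S$; the closedness of $\{\ell=n\}$ under intervals then follows because $\mathbf u\le\mathbf v\le\mathbf w$ with $\ell(\mathbf u)=\ell(\mathbf w)=n$ squeezes $\ell(\mathbf v)$ to $n$. Monotonicity I would establish one coordinate at a time: if $\mathbf y=\mathbf x+\mathbf e_i\in S$ and $m=\ell(\mathbf y)\ge 2$, then Corollary~\ref{cor:An-sits-previous} gives $\mathbf x=\mathbf y-\mathbf e_i\in(m-1)A\cup mA$, whence $\ell(\mathbf x)\le m=\ell(\mathbf y)$; if $m=1$ then $\mathbf y\in A$ and Lemma~\ref{atoms-sitting-over-gaps}(b) places $\mathbf x\in\{\mathbf 0\}\cup H\cup A$, so $\mathbf x\in S$ forces $\mathbf x\in\{\mathbf 0\}\cup A$ and again $\ell(\mathbf x)\le 1$. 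Composing such single steps along a path from $\mathbf x$ to any $\mathbf y\ge\mathbf x$ in $S$ yields $\ell(\mathbf x)\le\ell(\mathbf y)$.

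The substantive implication is $(3)\Rightarrow(1)$, and I would isolate the following claim as its engine: \emph{assuming $(3)$, if $\mathbf g\in S^*$ satisfies $\mathbf g\le\mathbf c$ for some $\mathbf c\in 2A$, then $\mathbf g\in A\cup 2A$.} To prove it, pick $\mathbf m\in\mathcal M(S)$ with $\mathbf m\le\mathbf g$ and walk from $\mathbf m$ up to $\mathbf g$ by adding one $\mathbf e_i$ at a time, staying inside $\llbracket\mathbf m,\mathbf g\rrbracket\subseteq S$. The walk starts at the atom $\mathbf m$; if it never leaves $A$ we are done with $\mathbf g\in A$, and otherwise Lemma~\ref{lem:a+ei-l-le-3} guarantees that at the first step leaving $A$ we land in $2A$ (an atom plus $\mathbf e_i$ lies in $A\cup 2A$). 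Calling that point $\mathbf z$, we have $\mathbf z\le\mathbf g\le\mathbf c$ with $\mathbf z,\mathbf c\in 2A$, so the convexity of $2A$ from $(3)$ forces $\mathbf g\in 2A$. Granting the claim, both axioms follow using Lemma~\ref{atoms-sitting-over-gaps}(c) to find atoms above gaps: for (GA2), given $\mathbf h\in H$ and $\mathbf a\in A$ choose an atom $\mathbf a'\ge\mathbf h$, so $\mathbf h+\mathbf a\in S^*$ and $\mathbf h+\mathbf a\le\mathbf a'+\mathbf a\in 2A$, and the claim gives $\mathbf h+\mathbf a\in A\cup 2A$; for (GA1), given $\mathbf h,\mathbf h'\in H$ with $\mathbf h+\mathbf h'\in S$ choose atoms $\mathbf a\ge\mathbf h$, $\mathbf a'\ge\mathbf h'$, so $\mathbf h+\mathbf h'\le\mathbf a+\mathbf a'\in 2A$ and again the claim yields $\mathbf h+\mathbf h'\in A\cup 2A$ (and if $\mathbf h+\mathbf h'\notin S$ it lies in $H$).

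I expect the delicate point to be the path argument in the engine claim: one must ensure the walk stays in $S$ (guaranteed by closedness of $S$ under intervals), that the very first departure from $A$ lands in $2A$ rather than higher (this is exactly Lemma~\ref{lem:a+ei-l-le-3}, which crucially holds for arbitrary ideal extensions and not only for gap absorbing ones), and that the bracketing element $\mathbf c\in 2A$ dominates $\mathbf g$ so that convexity of $2A$ can be invoked. Everything else is bookkeeping; the only genuine inputs are Corollary~\ref{cor:An-sits-previous} for $(1)\Rightarrow(2)$, and Lemmas~\ref{atoms-sitting-over-gaps} and~\ref{lem:a+ei-l-le-3} together with the hypothesis that $2A$ is closed under intervals for $(3)\Rightarrow(1)$.
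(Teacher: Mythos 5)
Your proof is correct; both nontrivial implications follow a genuinely different route from the paper's, while drawing on the same preparatory lemmas. For $(1)\Rightarrow(2)$, the paper argues by induction on $n$: it decomposes an intermediate point $\mathbf{x}$ of $\llbracket\mathbf{a},\mathbf{b}\rrbracket$ as $\mathbf{x}=\sum_{i=1}^n(\mathbf{b}_i-\mathbf{m}_i)$ with each summand in $\{\mathbf{0}\}\cup\mathcal{H}(S)\cup\mathcal{A}(S)$ by Lemma~\ref{atoms-sitting-over-gaps}, applies the gap absorbing axioms to conclude $\mathbf{x}\in q\mathcal{A}(S)$ for some $q\le n$, and then excludes $q<n$ by feeding a truncated sum $\mathbf{a}_1+\cdots+\mathbf{a}_l\le\mathbf{a}\le\mathbf{x}$ back into the induction hypothesis. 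You instead reduce the whole statement to the single-step monotonicity $\ell(\mathbf{s})\le\ell(\mathbf{s}+\mathbf{e}_i)$ obtained from Corollary~\ref{cor:An-sits-previous} and Lemma~\ref{atoms-sitting-over-gaps}, and then squeeze $\ell$ on the interval; this is in substance (half of) Proposition~\ref{prop:non-decreasing-length}, which the paper only states later but whose proof does not depend on the present proposition, so there is no circularity. Your organisation is shorter and isolates the real mechanism (monotonicity of $\ell$), at the price of proving a later result early. For $(3)\Rightarrow(1)$, the two proofs differ in how they manufacture the lower bracketing element of $2\mathcal{A}(S)$: the paper writes the non-atom $\mathbf{h}_1+\mathbf{h}_2$ as $\mathbf{s}_1+\mathbf{s}_2$ with $\mathbf{s}_j\in S^*$ and picks atoms $\mathbf{n}_j\le\mathbf{s}_j$, getting $\mathbf{n}_1+\mathbf{n}_2\le\mathbf{h}_1+\mathbf{h}_2\le\mathbf{m}_1+\mathbf{m}_2$ in one line, whereas your lattice walk from a minimal element up to $\mathbf{g}$ uses Lemma~\ref{lem:a+ei-l-le-3} to catch the first exit from $\mathcal{A}(S)$ inside $2\mathcal{A}(S)$. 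The walk costs a little more work but buys a cleaner uniform statement (every $\mathbf{g}\in S^*$ dominated by an element of $2\mathcal{A}(S)$ lies in $\mathcal{A}(S)\cup2\mathcal{A}(S)$) that disposes of (GA1) and (GA2) simultaneously, including the case where the sum in question happens to be an atom.
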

\begin{proof} 
\emph{(1) implies (2)}. The claim $S=\{\mathbf{0}\}\cup (\mathcal{M}(S)+\mathbb{N}^{(I)})$ follows from Proposition~\ref{prop:gap-absorbing-implies-ideal}. We proceed by induction on $n$. For $n=1$, take $\mathbf{a}\le \mathbf{x}\le \mathbf{b}$, with $\mathbf{a},\mathbf{b}\in \mathcal{A}(S)$. As $S=\{\mathbf{0}\}\cup (\mathcal{M}(S)+\mathbb{N}^{(I)})$, we deduce that $\mathbf{x}\in S$. If $\mathbf{x}=\mathbf{s}+\mathbf{t}$ with $\mathbf{s},\mathbf{t}\in S^*$, then $\mathbf{b}=\mathbf{s}+(\mathbf{t}+\mathbf{b}-\mathbf{x})$, and both $\mathbf{s}$ and $\mathbf{t}+\mathbf{b}-\mathbf{x}$ are in $S^*$, contradicting that $\mathbf{b}$ is an atom. Now, consider $n>1$ and let $\mathbf{a}=\mathbf{a}_1+\cdots+\mathbf{a}_n$, $\mathbf{b}=\mathbf{b}_1+\cdots+\mathbf{b}_n$ with $\mathbf{a}_i,\mathbf{b}_i\in \mathcal{A}(S)$ for  $i\in \{1,\dots,n\}$, and $\mathbf{a},\mathbf{b}\in n\mathcal{A}(S)\setminus (\bigcup_{i=0}^{n-1} i\mathcal{A}(S))$. Let $\mathbf{x}\in \mathbb{N}^{(I)}$ such that $\mathbf{a}\leq \mathbf{x}\leq \mathbf{b}$. As $\mathbf{a}\in S^*$ and $S^*$ is an ideal of $\mathbb{N}^{(I)}$, we deduce that $\mathbf{x}\in S^*$. We can see that there exists $\mathbf{m}\in \mathbb{N}^{(I)}$ such that $\mathbf{x}=\mathbf{b}-\mathbf{m}$. It is not difficult to see that it is possible to express $\mathbf{m}=\mathbf{m}_1+\cdots+\mathbf{m}_n$ such that $\mathbf{b}_i-\mathbf{m}_i\in \mathbb{N}^{(I)}$ for $i\in \{1,\dots,n\}$, that is, $\mathbf{x}=(\mathbf{b}_1-\mathbf{m}_1)+\cdots+(\mathbf{b}_n-\mathbf{m}_n)$. In particular, since $\mathbf{b}_i\in \mathcal{A}(S)$ for all $i$, we have $\mathbf{b}_i-\mathbf{m}_i \in \{\mathbf{0}\}\cup \mathcal{H}(S)\cup \mathcal{A}(S)$ (Lemma~\ref{atoms-sitting-over-gaps}). Therefore, since $S$ is a gap absorbing monoid and $\mathbf{x}\in S^*$, we obtain $\mathbf{x}\in q\mathcal{A}(S)$ for some $q\leq n$. Let $l=\min\{n \in \mathbb{N} : \mathbf{x}\in n\mathcal{A}(S)\}$. 
Then, $l\le q\le n$. Suppose that $l<n$. Take $\overline{\mathbf{a}}=\mathbf{a}_1+\cdots+\mathbf{a}_{l}$. Observe that $\overline{\mathbf{a}}\in l\mathcal{A}(S)\setminus (\bigcup_{i=0}^{l-1} i\mathcal{A}(S))$. In fact, if $\overline{\mathbf{a}}\in i\mathcal{A}(S)$ with $i<l$, then $\mathbf{a}=\overline{\mathbf{a}}+\mathbf{a}_{l+1}+\cdots+\mathbf{a}_{n}\in (i+n-l)\mathcal{A}(S)$ with $i+n-l<n$. So we have $\overline{\mathbf{a}}\leq \mathbf{a} \leq \mathbf{x}$ with $\overline{\mathbf{a}},\mathbf{x}\in l\mathcal{A}(S)\setminus (\bigcup_{i=0}^{l-1} i\mathcal{A}(S))$ and by induction hypothesis we obtain $\mathbf{a}\in l\mathcal{A}(S)$ with $l<n$, which is a contradiction. Hence, $q=n$, that is, $\mathbf{x}\in n\mathcal{A}(S)\setminus (\bigcup_{i=0}^{n-1} i\mathcal{A}(S))$.   

\emph{(2) implies (3)}. We obtain this implication by (2) with $n=2$, since every element in $2\mathcal{A}(S)$ is not an atom, that is,  $2\mathcal{A}(S)\setminus \mathcal{A}(S)=2\mathcal{A}(S)$.

\emph{(3) implies (1)}. We prove condition (a). Let $\mathbf{h}_1, \mathbf{h}_2\in \mathcal{H}(S)$, and $\mathbf{m}_1, \mathbf{m}_2\in \mathcal{A}(S)$ such that $\mathbf{h}_1\leq \mathbf{m}_1$ and $\mathbf{h}_2\leq \mathbf{m}_2$ (Lemma~\ref{atoms-sitting-over-gaps}). If $\mathbf{h}_1+\mathbf{h}_2$ belongs to $\mathcal{H}(S) \cup \mathcal{A}(S)$, then we are done. So, suppose $\mathbf{h}_1+\mathbf{h}_2=\mathbf{s}_1+\mathbf{s}_2$ with $\mathbf{s}_1,\mathbf{s}_2\in  S^*$. Then there  exist $\mathbf{n}_1, \mathbf{n}_2\in \mathcal{A}(S)$ such that $\mathbf{n}_1\leq \mathbf{s}_1$ and $\mathbf{n}_2\leq \mathbf{s}_2$. In this setting, we have $\mathbf{n}_1+\mathbf{n}_2\leq \mathbf{h}_1+\mathbf{h}_2\leq \mathbf{m}_1+\mathbf{m}_2$ with, $\mathbf{n}_1+\mathbf{n}_2, \mathbf{m}_1+\mathbf{m}_2\in 2\mathcal{A}(S)$. Hence $\mathbf{h}_1+\mathbf{h}_2\in 2\mathcal{A}(S)$. Condition (b) can be proved in a similar way.  
\end{proof}

According to this last result, Conjecture~\ref{conj:ideal-implies-gap-absorbing} can be rephrased as follows.

\begin{conjecture}\label{conj:ideal-2A-closed-intervals}
    Let $S$ be an ideal extension of $\mathbb{N}^{(I)}$. Then $2\mathcal{A}(S)$ is closed under intervals.
\end{conjecture}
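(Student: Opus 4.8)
The plan is to fix a distinguished coordinate $i\in I$ and work entirely with the functions $\pi^1$ and $\pi^2$ over the ``base'' $\mathbb{N}^{(I\setminus\{i\})}$, exactly as in Proposition~\ref{prop:gaps-atoms-pi}. Writing a point as $(\mathbf{v},x)$ with $\mathbf{v}\in\mathbb{N}^{(I\setminus\{i\})}$, an element of $2\mathcal{A}(S)$ is a sum $(\mathbf{a},x_1)+(\mathbf{b},x_2)$ of two atoms, so by Proposition~\ref{prop:gaps-atoms-pi}(5) its base lies in $A+A$ and, for that fixed decomposition, its last coordinate ranges over the integer interval $[\pi^1_{\mathbf{a}}+\pi^1_{\mathbf{b}},\,\pi^2_{\mathbf{a}}+\pi^2_{\mathbf{b}}-2]$ (nonempty because $\pi^1<\pi^2$ on $A$ by Lemma~\ref{lem:pi1-less-pi2}). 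Setting $u(\mathbf{v})=\max\{\pi^2_{\mathbf{a}}+\pi^2_{\mathbf{b}} : \mathbf{v}=\mathbf{a}+\mathbf{b},\ \mathbf{a},\mathbf{b}\in A\}-2$, and recalling from Lemma~\ref{lem:pi1+pi1} that the minimal achievable last coordinate is $\pi^2_{\mathbf{v}}$, the goal becomes to show that
\[
2\mathcal{A}(S)=\{(\mathbf{v},x) : \mathbf{v}\in A+A,\ \pi^2_{\mathbf{v}}\le x\le u(\mathbf{v})\}
\]
and that this set is closed under intervals.

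Granting the displayed description, convexity reduces to three facts. First, $A+A$ is closed under intervals: given $\mathbf{a}_1+\mathbf{a}_2\le\mathbf{p}\le\mathbf{b}_1+\mathbf{b}_2$ with all $\mathbf{a}_k,\mathbf{b}_k\in A$, one splits $\mathbf{p}$ coordinatewise as $\mathbf{p}_1+\mathbf{p}_2$ with $\mathbf{a}_k\le\mathbf{p}_k\le\mathbf{b}_k$, and then $\mathbf{p}_k\in A$ because $A$ is closed under intervals (Proposition~\ref{prop:gaps-atoms-pi}(1)); here $\mathbf{p}_k$ is sandwiched between two elements of $A$, so no difficulty arises. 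Second, the lower envelope $\pi^2_{\mathbf{v}}$ is order-reversing by Lemma~\ref{lem:pi-decreasing}. Third---and this is the substance---each fibre $\{x : (\mathbf{v},x)\in2\mathcal{A}(S)\}$ is a single integer interval, and the upper envelope $u$ is order-reversing. Granting these, a point lying between two elements of $2\mathcal{A}(S)$ again lies in $2\mathcal{A}(S)$, in the spirit of the equivalence in Proposition~\ref{prop:2A-interval-eq}.

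The heart of the argument is the fibre claim, and here I would compare the intervals attached to \emph{adjacent} decompositions. If $\mathbf{v}=\mathbf{a}+\mathbf{b}=\mathbf{a}'+\mathbf{b}'$ with $\mathbf{a}'=\mathbf{a}+\mathbf{e}_j$ and $\mathbf{b}'=\mathbf{b}-\mathbf{e}_j$ and all four in $A$, then the two attached intervals overlap or abut: using $\pi^1_{\mathbf{b}'}\le\pi^2_{\mathbf{b}}$ and $\pi^1_{\mathbf{a}}\le\pi^2_{\mathbf{a}'}$ from Proposition~\ref{prop:gaps-atoms-pi}(3), together with $\pi^1<\pi^2$ on $A$ (Lemma~\ref{lem:pi1-less-pi2}), one checks that each interval's left endpoint is at most one more than the other's right endpoint, so their union has no gap. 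Since the minimal value $\pi^2_{\mathbf{v}}$ is attained somewhere (Lemma~\ref{lem:pi1+pi1}), chaining these overlaps along a path of unit moves through the decomposition set $\mathbf{v}_A=\{\mathbf{a}\in A : \mathbf{v}-\mathbf{a}\in A\}$ would collapse the union to the single interval $[\pi^2_{\mathbf{v}},u(\mathbf{v})]$, and a parallel shrinking argument would give order-reversal of $u$.

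The hard part, which is precisely why the statement is only conjectural, is that this chaining requires $\mathbf{v}_A$ to be connected under unit moves. By Proposition~\ref{prop:gaps-atoms-pi}(2) the set $\mathbf{v}_A$ is closed under intervals, but an order-convex subset of $\mathbb{N}^{(I\setminus\{i\})}$ need not be connected by single steps once the base has rank at least two: the underlying obstruction is that $A$, being an intersection of an up-set ($\pi^1<\infty$) and a down-set ($\pi^2\ne 0$), is \emph{not} closed under passing to smaller elements, so the natural shrinking moves can leave $A$ (the same phenomenon is what blocks a direct proof that $u$ is order-reversing). When $\lvert I\rvert\le 2$ the base is at most one-dimensional, every order-convex subset of $\mathbb{N}$ is a genuine interval and hence connected, and the entire scheme goes through; this is exactly the mechanism behind the $\mathbb{N}^2$ case. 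For general $I$ I would try to replace unit-move connectivity by a weaker combinatorial input---for instance, showing that any two decompositions of $\mathbf{v}$ can be joined by a sequence of decompositions whose attached intervals pairwise overlap, even when no connecting lattice path stays inside $\mathbf{v}_A$---but establishing such a statement is the genuine obstacle.
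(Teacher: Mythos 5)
This statement is one of the paper's open conjectures (it is the reformulation, via Proposition~\ref{prop:2A-interval-eq}, of Conjecture~\ref{conj:ideal-implies-gap-absorbing}); the paper proves it only when $\lvert I\rvert=2$ (Proposition~\ref{prop:2A-interval} in Section~\ref{sec:N2}). Your proposal is, correctly, not a proof either, and your diagnosis of where the difficulty sits is essentially the right one and matches the mechanism of the paper's two-dimensional argument: fix a coordinate $i$, describe atoms by the fibres $[\pi^1_{\mathbf{v}},\pi^2_{\mathbf{v}}-1]$ over $A$, describe $2\mathcal{A}(S)$ fibrewise over $A+A$, show each fibre is a single interval by chaining overlapping intervals attached to adjacent decompositions (the paper's unnamed lemma giving $\bigcup_{v\in z_A}[\,\cdot\,]=[m_z,M_z]$, using Lemmas~\ref{lem:pi-decreasing}, \ref{lem:pi1+pi1}, \ref{lem:pi1-less-pi2} and Proposition~\ref{prop:gaps-atoms-pi}(3)), and then show the endpoints are order-reversing (Lemma~\ref{m1<m2}). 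In dimension two the base $\mathbb{N}^{(I\setminus\{i\})}$ is $\mathbb{N}$, so $z_A$ is an honest interval and the chain of unit moves exists; for larger $I$ the connectivity of $\mathbf{v}_A$ under unit moves is exactly what is missing, as you say.

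Two concrete inaccuracies in the parts you present as unproblematic. First, your argument that $A+A$ is closed under intervals does not work as written: from $\mathbf{a}_1+\mathbf{a}_2\le\mathbf{p}\le\mathbf{b}_1+\mathbf{b}_2$ you cannot in general split $\mathbf{p}=\mathbf{p}_1+\mathbf{p}_2$ with $\mathbf{a}_k\le\mathbf{p}_k\le\mathbf{b}_k$, because $\mathbf{a}_k$ and $\mathbf{b}_k$ need not be comparable (take $\mathbf{a}_1=5\mathbf{e}_1$, $\mathbf{b}_1=5\mathbf{e}_2$ in some coordinate). Since $A$ is the intersection of an up-set and a down-set, neither pushing the $\mathbf{a}_k$ up nor pulling the $\mathbf{b}_k$ down is guaranteed to stay in $A$, so the convexity of $A+A$ is itself part of the open problem rather than a preliminary triviality; this is the same obstruction you correctly flag later for the fibre claim. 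Second, the lower endpoint of the fibre over $\mathbf{v}$ is $m_{\mathbf{v}}=\min\{\pi^1_{\mathbf{a}}+\pi^1_{\mathbf{v}-\mathbf{a}}:\mathbf{a}\in\mathbf{v}_A\}$, whereas Lemma~\ref{lem:pi1+pi1} computes $\pi^2_{\mathbf{v}}$ as the minimum over \emph{all} decompositions of $\mathbf{v}$ in $\mathbb{N}^{(I\setminus\{i\})}$, not only those with both summands in $A$; identifying the two requires an argument (the minimizing decomposition could a priori use a summand with $\pi^2=0$, i.e.\ outside $A$). Neither issue changes the overall verdict: your plan reconstructs the paper's strategy for $\mathbb{N}^2$ and honestly isolates the genuine obstacle to the general case, which the paper also leaves open.
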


\section{Betti elements and lengths of factorizations}\label{sec:betti-elements}

Let $S$ be a submonoid of $\mathbb{N}^{(I)}$. Let $\mathcal{F}$ be the free monoid on $\mathcal{A}(S)$, and let $\varphi: \mathcal{F} \to S$ be the monoid morphism induced by $\mathbf{a}\mapsto \mathbf{a}$ for all $\mathbf{a}\in \mathcal{A}(S)$ (this morphism is sometimes called the factorization morphism of $S$). Given $\mathbf{s}\in S$, the \emph{set of factorizations of} $\mathbf{s}$ is $\mathsf{Z}(\mathbf{s})=\varphi^{-1}(\mathbf{s})$. Observe that the set of factorizations of $\mathbf{s}$ is finite since all the atoms involved in any factorization of $\mathbf{s}$ are in $\operatorname{B}(\mathbf{s})$, which is a finite set of $\mathbb{N}^{(I)}$. In other words, $S$ is a \emph{finite factorization monoid}, or FF-monoid for short (see for instance \cite{g-hk}).

For a given factorization $\mathbf{z}=\lambda_1 \mathbf{a}_1+\dots+\lambda_n\mathbf{a}_n$ of $\mathbf{s}\in S$  ($\lambda_1,\dots,\lambda_n\in \mathbb{N}$ and $\mathbf{s}= \lambda_1 \mathbf{a}_1+\dots+\lambda_n\mathbf{a}_n$ in $S$), the \emph{length} of $\mathbf{z}$ is $|\mathbf{z}|=\sum_{i=1}^n \lambda_i$. The set of lengths of factorizations of $\mathbf{s}$ is denoted by $\mathsf{L}(\mathbf{s})$. Recall that $S$ is \emph{half-factorial} if $|\mathsf{L}(\mathbf{s})|=1$ for all $\mathbf{s}\in S$. 

Since the number of factorizations of $\mathbf{s}$ is finite, we have that $\mathsf{L}(\mathbf{s})$ has finitely many elements, that is, $S$ is a \emph{bounded factorization monoid}, BF-monoid for short. Set $\ell(\mathbf{s})=\min\mathsf{L}(\mathbf{s})$. Observe that $\ell(\mathbf{s})=\min\{ n\in \mathbb{N} : \mathbf{s}\in n\mathcal{A}(S)\}$.

Next we show that $\ell$ has a non-decreasing behaviour on gap absorbing monoids, and that this property actually characterizes that fact of being gap absorbing.

\begin{proposition}\label{prop:non-decreasing-length}
    Let $S\subseteq \mathbb{N}^{(I)}$ be a gap absorbing monoid. Then, for every $\mathbf{s}\in S$ and every $i\in I$, $\ell(\mathbf{s})\le \ell(\mathbf{s}+\mathbf{e}_i)\le \ell(\mathbf{s})+1$. In particular, for every $\mathbf{t}\in \mathbf{s}+\mathbb{N}^{(I)}$, the inequality $\ell(\mathbf{s})\le \ell(\mathbf{t})$ holds.
\end{proposition}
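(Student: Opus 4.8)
The plan is to prove the two-sided estimate $\ell(\mathbf{s})\le \ell(\mathbf{s}+\mathbf{e}_i)\le \ell(\mathbf{s})+1$ for a single coordinate bump, and then derive the monotonicity claim $\ell(\mathbf{s})\le \ell(\mathbf{t})$ for $\mathbf{t}\in \mathbf{s}+\mathbb{N}^{(I)}$ by iterating the left inequality. Throughout I use the description $\ell(\mathbf{x})=\min\{n\in\mathbb{N}:\mathbf{x}\in n\mathcal{A}(S)\}$. Since a gap absorbing monoid is an ideal extension (Proposition~\ref{prop:gap-absorbing-implies-ideal}), whenever $\mathbf{s}\in S^*$ we automatically have $\mathbf{s}+\mathbf{e}_i\in S^*$, so $\ell(\mathbf{s}+\mathbf{e}_i)$ is defined; the degenerate case $\mathbf{s}=\mathbf{0}$ is immediate, since then $\ell(\mathbf{s})=0$ and, provided $\mathbf{e}_i\in S$, the element $\mathbf{e}_i$ is minimal nonzero and hence an atom, giving $\ell(\mathbf{e}_i)=1$. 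So I may assume $\mathbf{s}\in S^*$.

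For the upper bound I would start from a shortest factorization $\mathbf{s}=\mathbf{a}_1+\cdots+\mathbf{a}_n$ with $n=\ell(\mathbf{s})$ and $\mathbf{a}_j\in\mathcal{A}(S)$, and absorb the extra generator into a single atom. By Lemma~\ref{lem:a+ei-l-le-3} we have $\mathbf{a}_1+\mathbf{e}_i\in\mathcal{A}(S)\cup 2\mathcal{A}(S)$, so it factors into at most two atoms; then $\mathbf{s}+\mathbf{e}_i=(\mathbf{a}_1+\mathbf{e}_i)+\mathbf{a}_2+\cdots+\mathbf{a}_n$ is a factorization of length at most $n+1$, which yields $\ell(\mathbf{s}+\mathbf{e}_i)\le \ell(\mathbf{s})+1$.

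The lower bound is the heart of the matter, and it is exactly here that the gap absorbing hypothesis is needed: subtracting $\mathbf{e}_i$ from an atom may land in a gap, and the whole point of the hypothesis is that such a gap is reabsorbed without inflating the factorization length. I would set $m=\ell(\mathbf{s}+\mathbf{e}_i)$ and apply Corollary~\ref{cor:An-sits-previous} to the element $\mathbf{s}+\mathbf{e}_i\in m\mathcal{A}(S)$ and the coordinate $i$, noting that $(\mathbf{s}+\mathbf{e}_i)-\mathbf{e}_i=\mathbf{s}\in\mathbb{N}^{(I)}$. For $m>1$ the corollary gives $\mathbf{s}\in (m-1)\mathcal{A}(S)\cup m\mathcal{A}(S)$, whence $\ell(\mathbf{s})\le m=\ell(\mathbf{s}+\mathbf{e}_i)$. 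The boundary case $m=1$ is handled directly: then $\mathbf{s}+\mathbf{e}_i$ is an atom and, by Lemma~\ref{atoms-sitting-over-gaps}(b), $\mathbf{s}=(\mathbf{s}+\mathbf{e}_i)-\mathbf{e}_i\in\{\mathbf{0}\}\cup\mathcal{H}(S)\cup\mathcal{A}(S)$; since $\mathbf{s}\in S$ excludes the gap alternative, we get $\ell(\mathbf{s})\le 1$. One could alternatively argue the lower bound from scratch by choosing an atom $\mathbf{a}_j$ in a shortest factorization of $\mathbf{s}+\mathbf{e}_i$ with $i\in\operatorname{Supp}(\mathbf{a}_j)$, replacing it by $\mathbf{a}_j-\mathbf{e}_i\in\{\mathbf{0}\}\cup\mathcal{H}(S)\cup\mathcal{A}(S)$, and using (GA2) to reabsorb a resulting gap into a neighbouring atom; Corollary~\ref{cor:An-sits-previous} packages precisely this computation.

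Finally, for the ``in particular'' clause I would induct on $\lVert\mathbf{t}-\mathbf{s}\rVert_1$, writing $\mathbf{t}-\mathbf{s}$ as a sum of standard basis vectors and applying the left inequality $\ell(\cdot)\le\ell(\cdot+\mathbf{e}_i)$ one coordinate at a time; all intermediate elements remain in $S^*$ because $S^*$ is an ideal, so each step is legitimate (the case $\mathbf{s}=\mathbf{0}$ being trivial, as $\ell(\mathbf{t})\ge 1$ for $\mathbf{t}\in S^*$). The only genuine care needed is the bookkeeping at the extremes $m=1$ and $\mathbf{s}=\mathbf{0}$; the substance of the argument is the single application of Corollary~\ref{cor:An-sits-previous}.
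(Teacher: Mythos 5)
Your proposal is correct and follows essentially the same route as the paper: the upper bound via Lemma~\ref{lem:a+ei-l-le-3} applied to one atom of a shortest factorization of $\mathbf{s}$, the lower bound via Corollary~\ref{cor:An-sits-previous} (with Lemma~\ref{atoms-sitting-over-gaps} for the atom case), and the final claim by iterating the left inequality. You are somewhat more careful than the paper with the boundary cases $\mathbf{s}=\mathbf{0}$ and $\ell(\mathbf{s}+\mathbf{e}_i)=1$, but the substance is identical.
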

\begin{proof}
    If $\mathbf{s}+\mathbf{e}_i$ is an atom, by Lemma~\ref{atoms-sitting-over-gaps}, $\mathbf{s}$ is also an atom and we are done.
    
    Let $\mathbf{s}+\mathbf{e}_i=\mathbf{a}_1+\dots+\mathbf{a}_n$, $n=\ell(\mathbf{s}+\mathbf{e}_i)\ge 2$. 
    By Corollary~\ref{cor:An-sits-previous}, 
    $\mathbf{s}=(\mathbf{s}+\mathbf{e}_i)-\mathbf{e}_i\in (n-1)\mathcal{A}(S)\cup n\mathcal{A}(S)$, which means that $\ell(\mathbf{s})\le n=\ell(\mathbf{s}+\mathbf{e}_i)$. 

    Now, let $\mathbf{s}=\mathbf{b}_1+\dots+\mathbf{b}_m$, with $\mathbf{b}_1,\dots,\mathbf{b}_m\in \mathcal{A}(S)$ and $m\ge 1$, be a factorization of $\mathbf{s}$ of minimal length. Then $\mathbf{s}+\mathbf{e}_i=\mathbf{b}_1+\dots+\mathbf{b}_m+\mathbf{e}_i$. By Proposition~\ref{prop:gap-absorbing-implies-ideal} and Lemma~\ref{lem:a+ei-l-le-3}, we have that $\mathbf{b}_m+\mathbf{e}_i\in \mathcal{A}(S)\cup 2\mathcal{A}(S)$, which means that $\mathbf{s}+\mathbf{e}_i\in m\mathcal{A}(S)\cup(m+1)\mathcal{A}(S)$, and so $\ell(\mathbf{s}+\mathbf{e}_i)\le m+1=\ell(\mathbf{s})+1$.
\end{proof}

\begin{proposition}\label{prop:gap-absorbing-eq-non-decreasing-length}
    Let $S$ be an ideal extension of $\mathbb{N}^{(I)}$. Suppose that for every $\mathbf{s},\mathbf{t}\in S$ with $\mathbf{s}\le \mathbf{t}$, we have $\ell(\mathbf{s})\le \ell(\mathbf{t})$. Then, $S$ is a gap absorbing monoid.
\end{proposition}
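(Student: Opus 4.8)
The plan is to avoid checking conditions (GA1) and (GA2) head-on and instead invoke the characterization already established in Proposition~\ref{prop:2A-interval-eq}. Since $S$ is assumed to be an ideal extension of $\mathbb{N}^{(I)}$, condition (3) of that proposition says that $S$ is gap absorbing if and only if $2\mathcal{A}(S)$ is closed under intervals. So the entire statement reduces to deriving that single closure property from the monotonicity of $\ell$. The reformulation to keep in mind is the level-set description $2\mathcal{A}(S)=\{\mathbf{s}\in S : \ell(\mathbf{s})=2\}$: indeed, $\mathbf{s}\in 2\mathcal{A}(S)$ gives $\ell(\mathbf{s})\le 2$, while being a sum of two atoms $\mathbf{s}$ is neither $\mathbf{0}$ nor an atom, so $\ell(\mathbf{s})\ge 2$; the converse is immediate from $\ell(\mathbf{s})=\min\{n\in\mathbb{N} : \mathbf{s}\in n\mathcal{A}(S)\}$.

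With this in hand I would take $\mathbf{u},\mathbf{v}\in 2\mathcal{A}(S)$ with $\mathbf{u}\le\mathbf{v}$ and an arbitrary $\mathbf{x}\in\llbracket\mathbf{u},\mathbf{v}\rrbracket$. First note $\mathbf{x}\in S^*$, because $\mathbf{u}\le\mathbf{x}$ forces $\mathbf{x}=\mathbf{u}+(\mathbf{x}-\mathbf{u})\in S^*+\mathbb{N}^{(I)}\subseteq S^*$, where $S^*$ being an ideal is exactly the hypothesis that $S$ is an ideal extension. Hence all three of $\mathbf{u},\mathbf{x},\mathbf{v}$ lie in $S$, and the monotonicity hypothesis applies to $\mathbf{u}\le\mathbf{x}\le\mathbf{v}$, yielding $2=\ell(\mathbf{u})\le\ell(\mathbf{x})\le\ell(\mathbf{v})=2$. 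Therefore $\ell(\mathbf{x})=2$, i.e. $\mathbf{x}\in 2\mathcal{A}(S)$, so $2\mathcal{A}(S)$ is closed under intervals and $S$ is gap absorbing.

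There is essentially no obstacle here; the only points requiring a moment's care are the two just flagged: that $\ell$ takes the \emph{exact} value $2$ on $2\mathcal{A}(S)$ (so that the squeeze pins $\ell(\mathbf{x})$ down rather than merely bounding it on one side), and that the intermediate element $\mathbf{x}$ genuinely belongs to $S$ so that the hypothesis is applicable.

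As an alternative route, bypassing Proposition~\ref{prop:2A-interval-eq}, I could verify (GA1) and (GA2) directly by the same squeezing idea combined with Lemma~\ref{atoms-sitting-over-gaps}. For (GA2), given $\mathbf{h}\in\mathcal{H}(S)$ and $\mathbf{a}\in\mathcal{A}(S)$, the sum $\mathbf{h}+\mathbf{a}$ lies in $S^*$ automatically; picking an atom $\mathbf{m}\ge\mathbf{h}$ (Lemma~\ref{atoms-sitting-over-gaps}) gives $\mathbf{h}+\mathbf{a}\le\mathbf{m}+\mathbf{a}\in 2\mathcal{A}(S)$, whence monotonicity yields $\ell(\mathbf{h}+\mathbf{a})\le\ell(\mathbf{m}+\mathbf{a})=2$ and thus $\mathbf{h}+\mathbf{a}\in\mathcal{A}(S)\cup 2\mathcal{A}(S)$. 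For (GA1), if $\mathbf{h}_1+\mathbf{h}_2\notin S$ it is a gap and there is nothing to prove; otherwise choose atoms $\mathbf{m}_1\ge\mathbf{h}_1$ and $\mathbf{m}_2\ge\mathbf{h}_2$ and argue identically that $\ell(\mathbf{h}_1+\mathbf{h}_2)\le\ell(\mathbf{m}_1+\mathbf{m}_2)=2$. This version is slightly more self-contained but is the same computation as the reduction above.
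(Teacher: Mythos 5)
Your proposal is correct. Your primary route differs from the paper's: you reduce the claim to the closure of $2\mathcal{A}(S)$ under intervals via Proposition~\ref{prop:2A-interval-eq}(3), using the level-set identity $2\mathcal{A}(S)=\{\mathbf{s}\in S:\ell(\mathbf{s})=2\}$ and a squeeze $2=\ell(\mathbf{u})\le\ell(\mathbf{x})\le\ell(\mathbf{v})=2$; all the steps check out, including the two points you flag (that $\ell$ equals exactly $2$ on $2\mathcal{A}(S)$ because such an element is neither $\mathbf{0}$ nor an atom, and that the intermediate element lies in $S^*$ because $S^*$ is an ideal). The paper instead verifies (GA1) and (GA2) head-on: it picks atoms sitting over the gaps via Lemma~\ref{atoms-sitting-over-gaps}(c), bounds $\ell(\mathbf{h}+\mathbf{a})\le\ell(\mathbf{a}+\mathbf{b})=2$ and $\ell(\mathbf{h}+\mathbf{h}')\le\ell(\mathbf{a}+\mathbf{a}')=2$ by monotonicity, and concludes directly --- which is exactly the ``alternative route'' you sketch in your last paragraph. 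The trade-off is minor: your reduction leans on the machinery of Proposition~\ref{prop:2A-interval-eq} (whose proof of $(3)\Rightarrow(1)$ is itself essentially the same squeezing argument), while the direct verification is self-contained and marginally shorter. Either version is acceptable.
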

\begin{proof}
    Let $\mathbf{h}\in\mathcal{H}(S)$ and let $\mathbf{a}\in \mathcal{A}(S)$. By Lemma~\ref{atoms-sitting-over-gaps}, there exists $\mathbf{b}\in \mathcal{A}(S)$ such that $\mathbf{h}\le \mathbf{b}$. Thus $\mathbf{h}+\mathbf{a}\in S$ and $\mathbf{h}+\mathbf{a}\le \mathbf{a}+\mathbf{b}$. By hypothesis, $\ell(\mathbf{h}+\mathbf{a})\le \ell(\mathbf{a}+\mathbf{b})=2$, and consequently $\mathbf{h}+\mathbf{a}\in \mathcal{A}(S)\cup 2\mathcal{A}(S)$. This proves that $\mathcal{H}(S)+\mathcal{A}(S)\subseteq  \mathcal{A}(S)\cup 2\mathcal{A}(S)$.

    Now, let $\mathbf{h},\mathbf{h}'\in \mathcal{H}(S)$. By using Lemma~\ref{atoms-sitting-over-gaps} again, we know that there exist $\mathbf{a},\mathbf{a}'\in\mathcal{A}(S)$ such that $\mathbf{h}\le \mathbf{a}$ and $\mathbf{h}'\le \mathbf{a}'$. Then $\mathbf{h}+\mathbf{h}'\le \mathbf{a}+\mathbf{a}'$. If $\mathbf{h}+\mathbf{h}'\in S$, then by hypothesis $\ell(\mathbf{h}+\mathbf{h}')\le 2$, and thus $\mathbf{h}+\mathbf{h}'\in \mathcal{H}(S)\cup \mathcal{A}(S)\cup 2\mathcal{A}(S)$. Hence, $2\mathcal{H}(S)\subseteq \mathcal{H}(S)\cup \mathcal{A}(S)\cup 2\mathcal{A}(S)$.
\end{proof}

Next, we provide another characterization of the gap absorbing property in terms of minimal lengths of factorizations of certain elements.

\begin{proposition}
    Let $S$ be a submonoid of $\mathbb{N}^{(I)}$. Then $S$ is gap absorbing if and only if $S$ is an ideal extension of $\mathbb{N}^{(I)}$ and for every $\mathbf{a},\mathbf{b}\in \mathcal{A}(S)$ and every $i\in \operatorname{Supp}(\mathbf{a}\vee \mathbf{b})$, the inequality $\ell(\mathbf{a}+\mathbf{b}-\mathbf{e}_i)\le 2$ holds.
\end{proposition}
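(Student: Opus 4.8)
The plan is to prove both implications through the monotonicity criterion of Proposition~\ref{prop:gap-absorbing-eq-non-decreasing-length}, which reduces the gap absorbing property (for an ideal extension) to the statement that $\ell$ is non-decreasing along $\le$.

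For the forward implication, assume $S$ is gap absorbing. Then $S$ is an ideal extension by Proposition~\ref{prop:gap-absorbing-implies-ideal}, so it remains to establish the length bound. Fix $\mathbf{a},\mathbf{b}\in\mathcal{A}(S)$ and $i\in\operatorname{Supp}(\mathbf{a}\vee\mathbf{b})$; without loss of generality $a_i\ge 1$, so $\mathbf{a}-\mathbf{e}_i\in\mathbb{N}^{(I)}$ and $\mathbf{a}+\mathbf{b}-\mathbf{e}_i=(\mathbf{a}-\mathbf{e}_i)+\mathbf{b}$. By Lemma~\ref{atoms-sitting-over-gaps}(b), $\mathbf{a}-\mathbf{e}_i\in\{\mathbf{0}\}\cup\mathcal{H}(S)\cup\mathcal{A}(S)$, and I would split into the three corresponding cases: if $\mathbf{a}-\mathbf{e}_i=\mathbf{0}$ the element equals the atom $\mathbf{b}$; if $\mathbf{a}-\mathbf{e}_i\in\mathcal{A}(S)$ it lies in $2\mathcal{A}(S)$; and if $\mathbf{a}-\mathbf{e}_i\in\mathcal{H}(S)$ then condition (GA2) gives $(\mathbf{a}-\mathbf{e}_i)+\mathbf{b}\in\mathcal{A}(S)\cup 2\mathcal{A}(S)$. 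In every case $\mathbf{a}+\mathbf{b}-\mathbf{e}_i\in\mathcal{A}(S)\cup 2\mathcal{A}(S)$, hence $\ell(\mathbf{a}+\mathbf{b}-\mathbf{e}_i)\le 2$. This case analysis also confirms that the element always lies in $S$, so $\ell$ is well defined here.

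For the converse, assume $S$ is an ideal extension satisfying the length hypothesis. By Proposition~\ref{prop:gap-absorbing-eq-non-decreasing-length} it suffices to show $\ell(\mathbf{s})\le\ell(\mathbf{t})$ whenever $\mathbf{s}\le\mathbf{t}$ in $S$. The crux is a local statement: if $\mathbf{s}\in S$, $n=\ell(\mathbf{s})$, $i\in\operatorname{Supp}(\mathbf{s})$ and $\mathbf{s}-\mathbf{e}_i\in S$, then $\ell(\mathbf{s}-\mathbf{e}_i)\le n$. For $n\le 1$ this is immediate from Lemma~\ref{atoms-sitting-over-gaps}(b). For $n\ge 2$ I would write $\mathbf{s}=\mathbf{a}_1+\dots+\mathbf{a}_n$ with $\mathbf{a}_j\in\mathcal{A}(S)$ and index so that $i\in\operatorname{Supp}(\mathbf{a}_n)\subseteq\operatorname{Supp}(\mathbf{a}_{n-1}\vee\mathbf{a}_n)$; the hypothesis then yields $\ell(\mathbf{a}_{n-1}+\mathbf{a}_n-\mathbf{e}_i)\le 2$, and since a sum of two atoms minus a unit vector has $\lVert\cdot\rVert_1\ge 1$ it is nonzero, so $\mathbf{a}_{n-1}+\mathbf{a}_n-\mathbf{e}_i\in\mathcal{A}(S)\cup 2\mathcal{A}(S)$. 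Substituting this back gives $\mathbf{s}-\mathbf{e}_i\in\bigcup_{q\le n}q\mathcal{A}(S)$, whence $\ell(\mathbf{s}-\mathbf{e}_i)\le n$.

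From the local statement the global monotonicity follows by telescoping. For $\mathbf{0}\neq\mathbf{s}\le\mathbf{t}$ I would factor $\mathbf{t}-\mathbf{s}$ into unit vectors and build a chain $\mathbf{s}=\mathbf{u}_0\le\mathbf{u}_1\le\dots\le\mathbf{u}_k=\mathbf{t}$ with $\mathbf{u}_{j+1}=\mathbf{u}_j+\mathbf{e}_{i_{j+1}}$. Because $\mathbf{s}\in S^*$ and $S^*$ is an ideal, every $\mathbf{u}_j\in S^*$, and $i_{j+1}\in\operatorname{Supp}(\mathbf{u}_{j+1})$, so applying the local statement to $\mathbf{u}_{j+1}$ gives $\ell(\mathbf{u}_j)\le\ell(\mathbf{u}_{j+1})$; chaining yields $\ell(\mathbf{s})\le\ell(\mathbf{t})$, and the case $\mathbf{s}=\mathbf{0}$ is trivial. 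I expect the main obstacle to be precisely the $n\ge 2$ part of the local statement: one must apply the hypothesis to a pair of atoms whose join has $i$ in its support and check that the resulting element does not collapse to $\mathbf{0}$, which is exactly where the bound $\ell\le 2$ does its work. The rest is the bookkeeping needed to pass from this single-coordinate decrement to full $\le$-monotonicity.
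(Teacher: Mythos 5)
Your proof is correct, and while the necessity half coincides with the paper's (both reduce via Lemma~\ref{atoms-sitting-over-gaps} to the three cases $\mathbf{b}-\mathbf{e}_i\in\{\mathbf{0}\}\cup\mathcal{H}(S)\cup\mathcal{A}(S)$ and apply (GA2) in the gap case), your sufficiency half takes a genuinely different route. The paper proves directly that $2\mathcal{A}(S)$ is closed under intervals: given $\mathbf{a}+\mathbf{b}\le\mathbf{x}\le\mathbf{c}+\mathbf{d}$ with nonzero slack $\mathbf{y}=\mathbf{c}+\mathbf{d}-\mathbf{x}$, it picks $i\in\operatorname{Supp}(\mathbf{y})$, notes that $\mathbf{c}+\mathbf{d}-\mathbf{e}_i$ dominates $\mathbf{a}+\mathbf{b}$ and hence cannot be an atom, so the hypothesis rewrites it as $\mathbf{c}'+\mathbf{d}'$ with $\mathbf{c}',\mathbf{d}'\in\mathcal{A}(S)$ and strictly smaller slack; iterating gives $\mathbf{x}\in 2\mathcal{A}(S)$, and Proposition~\ref{prop:2A-interval-eq} concludes. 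You instead prove the one-step estimate $\ell(\mathbf{s}-\mathbf{e}_i)\le\ell(\mathbf{s})$ by regrouping two atoms of a minimal factorization of $\mathbf{s}$, telescope to the full monotonicity $\ell(\mathbf{s})\le\ell(\mathbf{t})$ for $\mathbf{s}\le\mathbf{t}$ in $S$, and invoke Proposition~\ref{prop:gap-absorbing-eq-non-decreasing-length}. Both routes rest on a previously established equivalence from the paper, so neither is circular; yours has the minor bonus of deriving the $\le$-monotonicity of $\ell$ (the content of Proposition~\ref{prop:non-decreasing-length}) directly from the two-atom hypothesis, whereas the paper's descent yields the interval-closedness of $2\mathcal{A}(S)$ as a byproduct. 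The two points you flag as delicate are handled correctly: $\lVert\mathbf{a}_{n-1}+\mathbf{a}_n-\mathbf{e}_i\rVert_1\ge 1$ rules out collapse to $\mathbf{0}$ (indeed $\mathbf{a}_{n-1}+\mathbf{a}_n-\mathbf{e}_i$ already lies in $S^*$ because $S^*$ is an ideal), and every element of your chain lies in $S^*$ for the same reason.
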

\begin{proof}
 \emph{Necessity.}  If $i\in \operatorname{Supp}(\mathbf{a}\vee \mathbf{b})$, then we can assume without loss of generality that $\mathbf{b}-\mathbf{e}_i\in \mathbb{N}^{(I)}$, which by Lemma~\ref{atoms-sitting-over-gaps} implies that $\mathbf{b}-\mathbf{e}_i\in \{\mathbf{0}\}\cup \mathcal{H}(S)\cup \mathcal{A}(S)$. If $\mathbf{b}-\mathbf{e}_i\in\mathcal{A}(S)$, then $\ell(\mathbf{a}+\mathbf{b}-\mathbf{e}_i) = 2$. If $\mathbf{b}-\mathbf{e}_i\in \{\mathbf{0}\}\cup \mathcal{H}(S)$, then as $S$ is gap absorbing, $\mathbf{a}+\mathbf{b}-\mathbf{e}_i\in \mathcal{A}(S)\cup 2\mathcal{A}(S)$, that is, $\ell(\mathbf{a}+\mathbf{b}-\mathbf{e}_i)\le 2$.

 \emph{Sufficiency.} We prove that $2\mathcal{A}(S)$ is closed under intervals and then make use of Proposition~\ref{prop:2A-interval-eq}. Take $\mathbf{a},\mathbf{b},\mathbf{c},\mathbf{d}\in \mathcal{A}(S)$ and $\mathbf{x}\in \mathbb{N}^{(I)}$ such that $\mathbf{a}+\mathbf{b}\le \mathbf{x}\le \mathbf{c}+\mathbf{d}$. We have to show that $\mathbf{x}\in 2\mathcal{A}(S)$. Let $\mathbf{y}=\mathbf{c}+\mathbf{d}-\mathbf{x}$. If $\mathbf{y}=\mathbf{0}$, then we are done. Otherwise, choose $i\in \operatorname{Supp}(\mathbf{y})$. We have $\mathbf{a}+\mathbf{b}\le \mathbf{x}\le \mathbf{c}+\mathbf{d}-\mathbf{e}_i$. As $i\in \operatorname{Supp}(\mathbf{y})\subseteq \operatorname{Supp}(\mathbf{c}+\mathbf{d})$, $\mathbf{c}+\mathbf{d}-\mathbf{e}_i\in S$ and by hypothesis we have that $1\le \ell(\mathbf{c}+\mathbf{d}-\mathbf{e}_i)\le 2$. Notice that $\mathbf{c}+\mathbf{d}-\mathbf{e}_i=\mathbf{a}+\mathbf{b}+\mathbf{z}$ for some $\mathbf{z}\in \mathbb{N}^{(I)}$, which means that $\mathbf{c}+\mathbf{d}-\mathbf{e}_i\not\in \mathcal{A}(S)$. Thus, $\mathbf{c}+\mathbf{d}-\mathbf{e}_i=\mathbf{c}'+\mathbf{d}'$ for some $\mathbf{c}',\mathbf{d}'\in \mathcal{A}(S)$, and $\mathbf{a}+\mathbf{b}\le \mathbf{x}\le \mathbf{c}'+\mathbf{d}'$ with $\mathbf{y}'=\mathbf{c}'+\mathbf{d}'-\mathbf{x}<\mathbf{y}$. We continue this process, which after a finite number of steps will yield $\mathbf{x}\in 2\mathcal{A}(S)$.
\end{proof}

Given a submonoid $S$ of $\mathbb{N}^{(I)}$, we define on $\mathbb{N}^{(I)}$ the following relation: \[\mathbf{a}\le_S \mathbf{b} \text{ if } \mathbf{b}-\mathbf{a}\in S.\] It is easy to prove that $\le_S$ is an order relation.

To every $\mathbf{s}\in S$, we associate a graph $\mathbf{G}_\mathbf{s}$ whose vertices are the atoms $\mathbf{a}$ such that $\mathbf{a}\le_S \mathbf{s}$, and two vertices $\mathbf{a}$ and $\mathbf{b}$ are connected by an edge if $\mathbf{a}+\mathbf{b}\le_S \mathbf{s}$. If $\mathbf{G}_\mathbf{s}$ is not connected, then we say that $\mathbf{s}$ is a \emph{Betti element} (or Betti degree) of $S$. The set of Betti elements of $S$ is denoted by $\operatorname{Betti}(S)$.

For an element $\mathbf{s}$ in a submonoid of $\mathbb{N}^{(I)}$ that is not an atom, we have $\ell(\mathbf{s})\ge 2$. When this minimal bound is attained and $\mathbf{s}$ has at least two different factorizations, we can ensure that $\mathbf{s}$ is a Betti element.

\begin{proposition}\label{prop:l2-denumerant2}
    Let $S$ be a submonoid of $\mathbb{N}^{(I)}$ and let $\mathbf{s}\in S$. If $\ell(\mathbf{s})=2$ and $|\mathsf{Z}(\mathbf{s})|\ge 2$, then $\mathbf{s}$ is a Betti element of $S$.
\end{proposition}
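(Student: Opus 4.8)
The plan is to show directly that the factorization graph $\mathbf{G}_\mathbf{s}$ is disconnected, by exhibiting the minimal-length factorization as an isolated piece of the graph. Since $\ell(\mathbf{s})=2$, I would fix a factorization $\mathbf{s}=\mathbf{a}+\mathbf{b}$ with $\mathbf{a},\mathbf{b}\in\mathcal{A}(S)$. Both are vertices of $\mathbf{G}_\mathbf{s}$, and they are joined by an edge because $\mathbf{s}-\mathbf{a}-\mathbf{b}=\mathbf{0}\in S$.

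The first key step is to pin down the connected component of $\mathbf{a}$ and prove it is contained in $\{\mathbf{a},\mathbf{b}\}$. Suppose an atom $\mathbf{c}\neq\mathbf{a}$ is adjacent to $\mathbf{a}$, that is, $\mathbf{s}-\mathbf{a}-\mathbf{c}\in S$. Rewriting gives $\mathbf{b}-\mathbf{c}=(\mathbf{s}-\mathbf{a})-\mathbf{c}\in S$, so $\mathbf{b}=\mathbf{c}+(\mathbf{b}-\mathbf{c})$. If $\mathbf{b}-\mathbf{c}\neq\mathbf{0}$, then $\mathbf{b}$ would be a sum of two elements of $S^*$, contradicting that $\mathbf{b}$ is an atom; hence $\mathbf{c}=\mathbf{b}$. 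By symmetry the only atom adjacent to $\mathbf{b}$ is $\mathbf{a}$, so no vertex outside $\{\mathbf{a},\mathbf{b}\}$ is reachable from $\mathbf{a}$. The same computation, applied when $\mathbf{a}=\mathbf{b}$ (so $\mathbf{s}=2\mathbf{a}$), shows $\mathbf{a}$ is then an isolated vertex.

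The second step uses $|\mathsf{Z}(\mathbf{s})|\ge 2$ to produce a vertex outside $\{\mathbf{a},\mathbf{b}\}$, which together with the first step forces $\mathbf{G}_\mathbf{s}$ to be disconnected and hence $\mathbf{s}$ to be a Betti element. I would argue by contradiction: if every factorization of $\mathbf{s}$ used only the atoms $\mathbf{a}$ and $\mathbf{b}$, then each would read $p\mathbf{a}+q\mathbf{b}=\mathbf{s}=\mathbf{a}+\mathbf{b}$ with $p,q\ge 0$, giving $(p-1)\mathbf{a}+(q-1)\mathbf{b}=\mathbf{0}$. Comparing coordinates in $\mathbb{N}^{(I)}$, where all entries are non-negative, and using $\mathbf{a},\mathbf{b}\neq\mathbf{0}$, the only solution with $p,q\ge 1$ is $(p,q)=(1,1)$; the degenerate cases $p=0$ or $q=0$ force one of $\mathbf{a},\mathbf{b}$ to be a proper multiple of the other and thus reducible, which is impossible for an atom (the borderline subcase collapses to $\mathbf{a}=\mathbf{b}$, where only the single factorization $2\mathbf{a}$ survives). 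In every case $\mathbf{s}$ would then have a unique factorization, contradicting $|\mathsf{Z}(\mathbf{s})|\ge 2$. Therefore some factorization contains an atom $\mathbf{c}\notin\{\mathbf{a},\mathbf{b}\}$; since $\mathbf{s}-\mathbf{c}\in S$, this $\mathbf{c}$ is a vertex of $\mathbf{G}_\mathbf{s}$ lying outside the component of $\mathbf{a}$.

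I expect the first step to be immediate from the atom property of $\mathbf{b}$; the care is concentrated in the second step, namely ruling out the degenerate factorizations with $p=0$ or $q=0$ and handling the coincidence $\mathbf{a}=\mathbf{b}$ (where the graph reduces to an isolated vertex together with whatever the second factorization contributes). Once a vertex outside $\{\mathbf{a},\mathbf{b}\}$ is located, disconnectedness of $\mathbf{G}_\mathbf{s}$, and hence the Betti property, follows at once.
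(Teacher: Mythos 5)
Your proof is correct and follows essentially the same route as the paper's: show that $\{\mathbf{a},\mathbf{b}\}$ forms its own connected component of $\mathbf{G}_\mathbf{s}$ using the atom property of $\mathbf{b}$ (and of $\mathbf{a}$), then use the second factorization to exhibit a vertex outside it. Your second step merely spells out in more detail the point the paper leaves implicit, namely that a factorization distinct from $\mathbf{a}+\mathbf{b}$ cannot be supported only on $\{\mathbf{a},\mathbf{b}\}$.
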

\begin{proof}
Since $\ell(\mathbf{s})=2$, there exist $\mathbf{a},\mathbf{b}\in \mathcal{A}(S)$ such that $\mathbf{s}=\mathbf{a}+\mathbf{b}$. Let $\mathbf{c}\in \mathcal{A}(S)\setminus \{\mathbf{a},\mathbf{b}\}$ be a vertex of $\mathbf{G}_\mathbf{s}$. Suppose that $\mathbf{a}\mathbf{c}$ is an edge of $\mathbf{G}_\mathbf{s}$. Then $\mathbf{s}=\mathbf{a}+\mathbf{b}=\mathbf{a}+\mathbf{c}+\mathbf{t}$ for some $\mathbf{t}\in S$. But then $\mathbf{b}=\mathbf{c}+\mathbf{t}$, which forces $\mathbf{t}=\mathbf{0}$ and $\mathbf{b}=\mathbf{c}$, a contradiction. This implies that there is no edge joining $\mathbf{a}$ to any atom other than $\mathbf{b}$. In the same way we show that $\mathbf{b}$ has the same property, and consequently $\{\mathbf{a},\mathbf{b}\}$ are the vertices of a connected component of $\mathbf{G}_\mathbf{s}$. We know that $\mathbf{G}_\mathbf{s}$ has more connected components, since $|\mathsf{Z}(\mathbf{s})|\ge 2$, and so $\mathbf{G}_\mathbf{s}$ is not connected.
\end{proof}

\begin{remark}
    The relation $\le_S$ can be defined on any commutative monoid $S$: for $x,y\in S$, $x\le_S y$ if there exists $z\in S$ such that $x+z=y$. If $S$ is reduced and cancellative, then $\le_S$ is an order relation (if multiplicative notation is used, this is just the divisibility relation).
    In the above result, we can replace $S$ by any atomic monoid that is cancellative, reduced and commutative monoid and the result still holds true.
\end{remark}


Next we prove that any vertex in a graph of an element of an ideal extension is always connected to a minimal non-zero element of the monoid (with respect to the usual partial order).

\begin{lemma} \label{lem:connect-to-M}
Let $S$ be an ideal extension of $\mathbb{N}^{(I)}$. Let $\mathbf{s}\in S^*\setminus (\mathcal{A}(S)\cup 2\mathcal{A}(S))$. Then for every vertex $\mathbf{a}$ of $\mathbf{G}_\mathbf{s}$ there is another vertex $\mathbf{m}$ of $\mathbf{G}_\mathbf{s}$ such that $\mathbf{m}\in\mathcal{M}(S)$ and $\mathbf{a}\mathbf{m}$ is an edge of $\mathbf{G}_\mathbf{s}$.
\label{connected}
\end{lemma}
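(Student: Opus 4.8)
The plan is to exploit that $\mathbf{s}$ lies strictly beyond the second ``layer'' of atoms, so that after deleting the vertex $\mathbf{a}$ there is still enough room to reach a minimal generator, and then to use that $S^*$ is an ideal to produce the edge.

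First I would record the length information. Since $\mathbf{s}\in S^*\setminus(\mathcal{A}(S)\cup 2\mathcal{A}(S))$, neither $\ell(\mathbf{s})=1$ nor $\ell(\mathbf{s})=2$ is possible, so $\ell(\mathbf{s})\ge 3$. As $\mathbf{a}$ is a vertex of $\mathbf{G}_\mathbf{s}$, we have $\mathbf{a}\in\mathcal{A}(S)$ and $\mathbf{r}:=\mathbf{s}-\mathbf{a}\in S$. Removing one atom drops the minimal length by at most one, so $\ell(\mathbf{r})\ge\ell(\mathbf{s})-1\ge 2$; in particular $\mathbf{r}\in S^*$ is not an atom and admits a factorization $\mathbf{r}=\mathbf{a}_1+\cdots+\mathbf{a}_k$ with $k\ge 2$ and all $\mathbf{a}_j\in\mathcal{A}(S)$.

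Next I would build the edge. Choose $\mathbf{m}\in\mathcal{M}(S)$ with $\mathbf{m}\le\mathbf{a}_1$ (this exists because $\mathbf{a}_1\in S^*$ and $\mathcal{M}(S)=\operatorname{Minimals}_\le(S^*)$). Then $\mathbf{a}_1-\mathbf{m}\in\mathbb{N}^{(I)}$ and
\[
\mathbf{s}-\mathbf{a}-\mathbf{m}=(\mathbf{a}_1-\mathbf{m})+(\mathbf{a}_2+\cdots+\mathbf{a}_k).
\]
Here $\mathbf{a}_2+\cdots+\mathbf{a}_k$ is a nonzero element of $S^*$ (a sum of $k-1\ge 1$ atoms), and since $S^*$ is an ideal of $\mathbb{N}^{(I)}$, adding $\mathbf{a}_1-\mathbf{m}\in\mathbb{N}^{(I)}$ keeps it in $S^*$. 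Hence $\mathbf{s}-\mathbf{a}-\mathbf{m}\in S$, i.e. $\mathbf{a}+\mathbf{m}\le_S\mathbf{s}$, which is exactly the edge condition; moreover $\mathbf{s}-\mathbf{m}=\mathbf{a}+(\mathbf{s}-\mathbf{a}-\mathbf{m})\in S$ shows that $\mathbf{m}$ is itself a vertex, and $\mathbf{m}\in\mathcal{M}(S)\subseteq\mathcal{A}(S)$ makes it the minimal vertex we want.

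The main obstacle is the word ``another'', i.e. arranging $\mathbf{m}\neq\mathbf{a}$. I would split on whether $\mathbf{a}\in\mathcal{M}(S)$. If $\mathbf{a}\notin\mathcal{M}(S)$, then any $\mathbf{m}\in\mathcal{M}(S)$ is automatically distinct from $\mathbf{a}$, and the construction above finishes the proof. If instead $\mathbf{a}\in\mathcal{M}(S)$, I would first try to select, among $\mathbf{a}_1,\dots,\mathbf{a}_k$, an atom having a minimal generator other than $\mathbf{a}$ below it and use that one; the only case left is when $\mathbf{a}$ is the unique minimal generator lying under every $\mathbf{a}_j$, so that $\mathbf{a}\le\mathbf{a}_j$ for all $j$. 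In that degenerate situation $\mathbf{a}$ must serve as its own minimal neighbour: from $\mathbf{s}-2\mathbf{a}=(\mathbf{a}_1-\mathbf{a})+(\mathbf{a}_2+\cdots+\mathbf{a}_k)\in S^*$ we get $\mathbf{a}+\mathbf{a}\le_S\mathbf{s}$, a loop at the minimal vertex $\mathbf{a}$. This is genuinely unavoidable, as the example $S=\{0\}\cup(3+\mathbb{N})$ with $\mathbf{s}=11$ shows, where the minimal vertex $3$ has no \emph{distinct} minimal neighbour; so the statement should be read as joining each vertex $\mathbf{a}$ by an edge to a minimal vertex $\mathbf{m}$, with $\mathbf{m}$ distinct from $\mathbf{a}$ exactly when $\mathbf{a}$ is not itself a minimal generator.
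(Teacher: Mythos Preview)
Your core argument is correct and is essentially the same as the paper's: write $\mathbf{s}-\mathbf{a}$ as a sum of at least two nonzero elements of $S$, pick $\mathbf{m}\in\mathcal{M}(S)$ below one summand, and use the ideal property of $S^*$ to conclude $\mathbf{s}-\mathbf{a}-\mathbf{m}\in S$. The paper argues this slightly more directly (it observes that $\mathbf{t}=\mathbf{s}-\mathbf{a}$ is nonzero and not an atom, hence $\mathbf{t}=\mathbf{t}_1+\mathbf{t}_2$ with $\mathbf{t}_i\in S^*$) rather than through the inequality $\ell(\mathbf{r})\ge\ell(\mathbf{s})-1$, but the content is identical.

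Your discussion of the word ``another'' is a valid observation: the paper's proof makes no attempt to secure $\mathbf{m}\neq\mathbf{a}$, and your example $S=\{0\}\cup(3+\mathbb{N})$, $\mathbf{s}=11$, $\mathbf{a}=3$ shows this is genuinely impossible in general. The intended reading is simply that $\mathbf{a}$ is joined by an edge to some vertex in $\mathcal{M}(S)$, which is exactly what the downstream applications (Lemma~\ref{lemma:connected-factrorizatios}, Theorem~\ref{thm:lge4-not-betti}) use. So your caveat is correct, and the lengthy case analysis in your final paragraph can be dropped once the statement is read this way.
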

\begin{proof}
Observe that $\mathbf{s}-\mathbf{a}=\mathbf{t}\in S$. By hypothesis $\mathbf{s}\not\in \mathcal{A}\cup 2\mathcal{A}(S)$, and so $\mathbf{t}\neq\mathbf{0}$ and $\mathbf{t}\notin \mathcal{A}(S)$. Hence, $\mathbf{t}=\mathbf{t}_1+\mathbf{t}_2$ with $\mathbf{t}_1,\mathbf{t}_2\in S^*$. In particular, there exists $\mathbf{m}\in \mathcal{M}(S)$ such that $\mathbf{t}_1=\mathbf{m}+\mathbf{x}$ with $\mathbf{x}\in \mathbb{N}^{(I)}$. Therefore $\mathbf{s}-\mathbf{a}-\mathbf{m}=\mathbf{t}_2+\mathbf{x}\in S$, that is, $\mathbf{a}\mathbf{m}$ is an edge of $\mathbf{G}_\mathbf{s}$.
\end{proof}

The next goal is to determine when two minimal non-zero elements, in a gap absorbing monoid, are in the same connected component.

\begin{lemma}\label{lemma:connected-factrorizatios}

Let $S$ be a gap absorbing monoid. Let $\mathbf{s}\in S\setminus (\mathcal{A}(S)\cup 2\mathcal{A}(S))$, $\mathbf{s}\neq \mathbf{0}$, and $\mathbf{m},\mathbf{n}\in \mathcal{M}(S)$ two vertices of $\mathbf{G}_\mathbf{s}$.
Suppose that one of the following condition holds:
\begin{enumerate}
    \item $\mathbf{m}\vee \mathbf{n}\in \mathcal{A}(S)$,
    \item $\ell(\mathbf{s})\geq 4$.
\end{enumerate}
Then, $\mathbf{m}$ and $\mathbf{n}$ are in the same connected component of  $\mathbf{G}_\mathbf{s}$.
\end{lemma}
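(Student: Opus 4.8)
The plan is to connect $\mathbf{m}$ and $\mathbf{n}$ through a single common neighbour in $\mathbf{G}_\mathbf{s}$, using the hypotheses only to control the few degenerate configurations where such a neighbour is not immediately available. First I would set $\mathbf{c}=\mathbf{m}\vee\mathbf{n}$ and $\mathbf{d}=\mathbf{m}\wedge\mathbf{n}$, so that $\mathbf{m}+\mathbf{n}=\mathbf{c}+\mathbf{d}$. Since $\mathbf{m}\le\mathbf{c}$ and $S^*$ is an ideal, $\mathbf{c}\in S^*$; and since $\mathbf{m},\mathbf{n}\le\mathbf{s}$ we get $\mathbf{c}\le\mathbf{s}$, so $\mathbf{w}:=\mathbf{s}-\mathbf{c}\in\mathbb{N}^{(I)}$. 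Writing $\mathbf{p}=\mathbf{c}-\mathbf{m}=\mathbf{n}-\mathbf{d}$ and $\mathbf{q}=\mathbf{c}-\mathbf{n}=\mathbf{m}-\mathbf{d}$, one has $\mathbf{s}-\mathbf{m}=\mathbf{w}+\mathbf{p}$ and $\mathbf{s}-\mathbf{n}=\mathbf{w}+\mathbf{q}$, both in $S^*$ because $\mathbf{m},\mathbf{n}$ are vertices of $\mathbf{G}_\mathbf{s}$ and $\mathbf{s}\notin\mathcal{A}(S)$. As $\mathbf{m},\mathbf{n}$ are incomparable, $\mathbf{p},\mathbf{q}\neq\mathbf{0}$; and iterating Lemma~\ref{atoms-sitting-over-gaps}(b) (and (a)) from the atom $\mathbf{n}$ down to $\mathbf{n}-\mathbf{d}$, and from $\mathbf{m}$ down to $\mathbf{m}-\mathbf{d}$, shows $\mathbf{p},\mathbf{q}\in\mathcal{H}(S)\cup\mathcal{A}(S)$.

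Next I would dispose of the generic case, which is common to both hypotheses: if $\mathbf{w}\in S^*\setminus\mathcal{A}(S)$, write $\mathbf{w}=\mathbf{a}+\mathbf{r}$ with $\mathbf{a}\in\mathcal{A}(S)$ and $\mathbf{r}\in S^*$. Then $\mathbf{s}-\mathbf{a}=\mathbf{r}+\mathbf{c}\in S^*$, $\mathbf{s}-\mathbf{m}-\mathbf{a}=\mathbf{r}+\mathbf{p}\in S^*$ and $\mathbf{s}-\mathbf{n}-\mathbf{a}=\mathbf{r}+\mathbf{q}\in S^*$ (each dominates $\mathbf{r}\in S^*$, which is an up-set), so $\mathbf{a}$ is a vertex adjacent to both $\mathbf{m}$ and $\mathbf{n}$, giving the path $\mathbf{m}-\mathbf{a}-\mathbf{n}$. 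Thus everything reduces to ruling out or handling $\mathbf{w}\in\{\mathbf{0}\}\cup\mathcal{H}(S)\cup\mathcal{A}(S)$, and this is exactly where the two hypotheses enter.

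Under hypothesis (1), $\mathbf{c}\in\mathcal{A}(S)$: then $\mathbf{w}=\mathbf{0}$ gives $\mathbf{s}=\mathbf{c}\in\mathcal{A}(S)$, $\mathbf{w}\in\mathcal{A}(S)$ gives $\mathbf{s}=\mathbf{c}+\mathbf{w}\in 2\mathcal{A}(S)$, and $\mathbf{w}\in\mathcal{H}(S)$ gives $\mathbf{s}=\mathbf{c}+\mathbf{w}\in\mathcal{A}(S)\cup 2\mathcal{A}(S)$ by (GA2); all three contradict $\mathbf{s}\notin\mathcal{A}(S)\cup 2\mathcal{A}(S)$, so only the generic case survives. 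Under hypothesis (2), $\ell(\mathbf{s})\ge 4$, I would first record that appending the atom $\mathbf{m}$ to a factorization of $\mathbf{s}-\mathbf{m}$ gives $\ell(\mathbf{s})\le\ell(\mathbf{s}-\mathbf{m})+1$, hence $\ell(\mathbf{s}-\mathbf{m}),\ell(\mathbf{s}-\mathbf{n})\ge 3$, so $\mathbf{w}+\mathbf{p}=\mathbf{s}-\mathbf{m}$ and $\mathbf{w}+\mathbf{q}=\mathbf{s}-\mathbf{n}$ lie outside $\mathcal{A}(S)\cup 2\mathcal{A}(S)$. If $\mathbf{w}\in\mathcal{H}(S)$ then, whether $\mathbf{p}\in\mathcal{H}(S)$ (use (GA1)) or $\mathbf{p}\in\mathcal{A}(S)$ (use (GA2)), we would get $\mathbf{w}+\mathbf{p}\in\mathcal{A}(S)\cup 2\mathcal{A}(S)$, a contradiction; the case $\mathbf{w}=\mathbf{0}$ forces $\mathbf{p}\in\mathcal{A}(S)$ and $\ell(\mathbf{s}-\mathbf{m})=1$, again impossible. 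So under (2) either $\mathbf{w}\in S^*\setminus\mathcal{A}(S)$ (generic) or $\mathbf{w}\in\mathcal{A}(S)$; in the latter, the same (GA2) argument applied to $\mathbf{w}+\mathbf{p}$ and $\mathbf{w}+\mathbf{q}$ forces $\mathbf{p},\mathbf{q}\in\mathcal{A}(S)\subseteq S^*$, so $\mathbf{w}$ is a vertex with $\mathbf{s}-\mathbf{m}-\mathbf{w}=\mathbf{p}\in S^*$ and $\mathbf{s}-\mathbf{n}-\mathbf{w}=\mathbf{q}\in S^*$, yielding the path $\mathbf{m}-\mathbf{w}-\mathbf{n}$.

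The main obstacle is precisely the bookkeeping for the degenerate $\mathbf{w}$. The common-neighbour construction only works when one can split an atom off $\mathbf{w}$ and keep the remainder in $S^*$, so the real content is showing that the gap-absorbing axioms (GA1) and (GA2), together with the length information supplied by each hypothesis (the bound $\ell(\mathbf{s}-\mathbf{m})\ge 3$ under (2), or $\mathbf{c}$ being an atom under (1)), eliminate every configuration in which $\mathbf{w}$ is $\mathbf{0}$, a gap, or a bare atom that cannot be peeled; I expect that verifying the identities $\mathbf{s}-\mathbf{m}=\mathbf{w}+\mathbf{p}$, $\mathbf{s}-\mathbf{n}=\mathbf{w}+\mathbf{q}$ and tracking the status of $\mathbf{p},\mathbf{q}$ in $\mathcal{H}(S)\cup\mathcal{A}(S)$ is the delicate step.
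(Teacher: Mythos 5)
Your proposal is correct and follows essentially the same route as the paper: both decompose $\mathbf{s}=(\mathbf{m}\vee\mathbf{n})+\mathbf{w}$, use (GA1)/(GA2) together with the hypothesis to show that the remainder $\mathbf{w}$ lies in $S^*$ and is not an atom (except that in your $\mathbf{w}\in\mathcal{A}(S)$ subcase under (2) you exhibit a path through $\mathbf{w}$ itself — a subcase that is in fact vacuous, since $\mathbf{p}\in\mathcal{A}(S)$ would put $\mathbf{s}-\mathbf{m}$ in $2\mathcal{A}(S)$, but your argument there is still valid), and then peel an atom off $\mathbf{w}$ to serve as a common neighbour of $\mathbf{m}$ and $\mathbf{n}$ in $\mathbf{G}_\mathbf{s}$. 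The only cosmetic difference is that the paper rules out $\mathbf{w}\in\mathcal{H}(S)$ in case (2) by splitting $\mathbf{m}\vee\mathbf{n}$ into two atoms and bounding $\ell(\mathbf{s})$, whereas you apply the axioms to $\mathbf{w}+\mathbf{p}=\mathbf{s}-\mathbf{m}$ and bound $\ell(\mathbf{s}-\mathbf{m})$; both are sound.
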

\begin{proof}
Let $H=\mathcal{H}(S)$ and $A=\mathcal{A}(S)$. 

By Proposition~\ref{prop:gap-absorbing-implies-ideal}, we know that $S=\{\mathbf{0}\}\cup (\mathcal{M}(S)+\mathbb{N}^{(I)})$. We can write $\mathbf{s}= \mathbf{m}+\mathbf{a}\ge \mathbf{m}$ and $\mathbf{s}=\mathbf{n}+\mathbf{b}\ge \mathbf{n}$ with $\mathbf{a},\mathbf{b}\in S$. Thus, $\mathbf{s}=\mathbf{m}\vee\mathbf{n}+\mathbf{c}$ for some $\mathbf{c}\in \mathbb{N}^{(I)}$. 

Suppose that condition (1) holds. This in particular means that $\mathbf{c}\neq \mathbf{0}$. If $\mathbf{c}\in H$, then as $S$ is gap absorbing, we deduce that $\mathbf{s}\in A\cup 2A$, which contradicts the choice of $\mathbf{s}$. Thus $\mathbf{c}\in S$, and $\mathbf{s}=\mathbf{m}+\mathbf{a}=\mathbf{n}+\mathbf{b}=\mathbf{m}\vee\mathbf{n}+\mathbf{c}$. Notice that $\mathbf{c}$ cannot be an atom since this would imply that $\mathbf{s}=\mathbf{m}\vee\mathbf{n}+\mathbf{c}\in 2A$, a contradiction. Thus, there exists $\mathbf{d}$ an atom of $S$ and $\mathbf{e}\in S^*$ such that $\mathbf{c}=\mathbf{d}+\mathbf{e}$. Take $\mathbf{x}_1=(\mathbf{m}\vee\mathbf{n})-\mathbf{m}\in \mathbb{N}^{(I)}$ and $\mathbf{x}_2=(\mathbf{m}\vee\mathbf{n})-\mathbf{n}\in \mathbb{N}^{(I)}$. Then, $\mathbf{s}=\mathbf{m}+\mathbf{a}=\mathbf{n}+\mathbf{b}=\mathbf{m}+\mathbf{d}+\mathbf{e}+\mathbf{x}_1=\mathbf{n}+\mathbf{d}+\mathbf{e}+\mathbf{x}_2$. As $\mathbf{e}+\mathbf{x}_1, \mathbf{e}+\mathbf{x}_2\in S$, we conclude that $\mathbf{m}\mathbf{d}$ and $\mathbf{n}\mathbf{d}$ are edges of $\mathbf{G}_\mathbf{s}$, and thus $\mathbf{m}$ and $\mathbf{n}$ are in the same connected component.

Now suppose that (2) holds, and that $\mathbf{m}\vee\mathbf{n}$ is not an atom. Notice that $\mathbf{m}\vee\mathbf{n}\le \mathbf{m}+\mathbf{n}\in A+A$. As $S$ is gap absorbing and $\mathbf{m}\vee\mathbf{n}$ is not an atom, we deduce that $\mathbf{m}\vee\mathbf{n}$ is in $2A$; write $\mathbf{m}\vee\mathbf{n}=\mathbf{d}+\mathbf{e}$, with $\mathbf{d},\mathbf{e}\in A$. If $\mathbf{c}$ is a gap of $S$, then $\mathbf{c}+\mathbf{d}\in A\cup 2A$, and thus $\mathbf{s}=\mathbf{d}+\mathbf{e}+\mathbf{c}\in 2A\cup 3A$, contradicting that $\ell(\mathbf{s})\ge 4$. Thus $\mathbf{c}\in S$, and as $\ell(\mathbf{s})\ge 4$, we have that $\mathbf{c}$ is not an atom and it is not zero. We can write $\mathbf{c}=\mathbf{f}+\mathbf{t}$ with $\mathbf{f}$ an atom of $S$ and $\mathbf{t}\in S^*$. In particular, $\mathbf{s}=\mathbf{m}+\mathbf{f}+\mathbf{t}+\mathbf{x}_1=\mathbf{n}+\mathbf{f}+\mathbf{t}+\mathbf{x}_2$ with $\mathbf{x}_1$ and $\mathbf{x}_2$ as in the previous paragraph, concluding again that $\mathbf{m}$ and $\mathbf{n}$ are in the same connected component of $\mathbf{G}_\mathbf{s}$.
\end{proof}

Thus every vertex of a graph associated to an element in $S^*$ is connected to a vertex that is minimal in $S^*$, and in some particular cases two of these minimal vertices are in the same connected component. Putting this together we obtain some relevant consequences.

\begin{theorem}\label{thm:lge4-not-betti}
Let $S\subseteq \mathbb{N}^{(I)}$ be a gap absorbing monoid and let $\mathbf{s}\in S$. If $\mathbf{s}$ is a Betti element, then $\mathbf{s}\in 2\mathcal{A}(S)\cup 3\mathcal{A}(S)$.
\end{theorem}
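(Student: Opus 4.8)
The plan is to prove the contrapositive statement: I will show that if $\mathbf{s}\notin 2\mathcal{A}(S)\cup 3\mathcal{A}(S)$, then the graph $\mathbf{G}_\mathbf{s}$ is connected, so that $\mathbf{s}$ cannot be a Betti element. First I would clear away the trivial cases. If $\mathbf{s}=\mathbf{0}$ or $\mathbf{s}\in\mathcal{A}(S)$, then $\mathbf{G}_\mathbf{s}$ has at most one vertex (for an atom $\mathbf{s}$, any vertex $\mathbf{a}\le_S\mathbf{s}$ forces $\mathbf{s}=\mathbf{a}+(\mathbf{s}-\mathbf{a})$, so $\mathbf{a}=\mathbf{s}$), hence it is connected and $\mathbf{s}$ is not Betti. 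Since an atom never lies in $2\mathcal{A}(S)\cup 3\mathcal{A}(S)$, and since $\ell(\mathbf{s})=2$ (resp.\ $\ell(\mathbf{s})=3$) would directly place $\mathbf{s}$ in $2\mathcal{A}(S)$ (resp.\ $3\mathcal{A}(S)$), the only remaining possibility under the assumption $\mathbf{s}\notin 2\mathcal{A}(S)\cup 3\mathcal{A}(S)$ is that $\mathbf{s}\in S^*\setminus(\mathcal{A}(S)\cup 2\mathcal{A}(S))$ with $\ell(\mathbf{s})\ge 4$. This is the case I would focus on.

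For this regime the two preceding lemmas do essentially all the work, and the task reduces to combining them. Since $\ell(\mathbf{s})\ge 4$, the element $\mathbf{s}$ lies in $S^*\setminus(\mathcal{A}(S)\cup 2\mathcal{A}(S))$, so the hypotheses of Lemma~\ref{lem:connect-to-M} are met: every vertex $\mathbf{a}$ of $\mathbf{G}_\mathbf{s}$ is joined by an edge to a vertex $\mathbf{m}\in\mathcal{M}(S)$. Consequently each connected component of $\mathbf{G}_\mathbf{s}$ contains at least one element of $\mathcal{M}(S)$; moreover $\mathbf{G}_\mathbf{s}$ has at least one vertex, because $\mathbf{s}\in\mathcal{M}(S)+\mathbb{N}^{(I)}$ guarantees a minimal element below $\mathbf{s}$.

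Next I would invoke Lemma~\ref{lemma:connected-factrorizatios} under its condition~(2): because $\ell(\mathbf{s})\ge 4$, any two vertices $\mathbf{m},\mathbf{n}\in\mathcal{M}(S)$ of $\mathbf{G}_\mathbf{s}$ lie in the same connected component. As the relation of belonging to a common component is transitive, all vertices of $\mathbf{G}_\mathbf{s}$ that belong to $\mathcal{M}(S)$ are contained in a single connected component $C$. Combining this with the previous step, where each vertex is adjacent to some vertex of $\mathcal{M}(S)\subseteq C$, I conclude that every vertex of $\mathbf{G}_\mathbf{s}$ lies in $C$, so $\mathbf{G}_\mathbf{s}$ is connected. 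This contradicts $\mathbf{s}$ being a Betti element, which completes the contrapositive and hence the theorem.

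The argument is thus a bookkeeping synthesis of Lemmas~\ref{lem:connect-to-M} and~\ref{lemma:connected-factrorizatios}, so I do not anticipate a serious obstacle. The only point demanding care is verifying that the hypotheses of both lemmas genuinely hold throughout the regime $\ell(\mathbf{s})\ge 4$, namely that $\mathbf{s}\in S^*\setminus(\mathcal{A}(S)\cup 2\mathcal{A}(S))$ and $\mathbf{s}\neq\mathbf{0}$, together with the preliminary step of excluding the degenerate cases $\mathbf{s}=\mathbf{0}$ and $\mathbf{s}\in\mathcal{A}(S)$, where $\mathbf{G}_\mathbf{s}$ is connected for trivial reasons.
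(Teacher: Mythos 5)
Your proposal is correct and follows essentially the same route as the paper's proof: reduce to the case $\ell(\mathbf{s})\ge 4$, then combine Lemma~\ref{lem:connect-to-M} (every vertex is adjacent to a vertex in $\mathcal{M}(S)$) with Lemma~\ref{lemma:connected-factrorizatios}(2) (all such minimal vertices lie in one component) to conclude $\mathbf{G}_\mathbf{s}$ is connected. Your extra care with the degenerate cases $\mathbf{s}=\mathbf{0}$ and $\mathbf{s}\in\mathcal{A}(S)$ is fine but not a substantive difference.
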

\begin{proof}
    Let $\mathbf{s}$ be a Betti element of $S$. Suppose that $\mathbf{s}\not \in 2\mathcal{A}(S)\cup 3\mathcal{A}(S)$. Then as $\mathbf{s}$ is not an atom, $\ell(\mathbf{s})\ge 4$. Then by Lemma~\ref{lem:connect-to-M} every vertex of $\mathbf{G}_\mathbf{s}$ is connected to an atom in $\operatorname{Minimals}_\le (S^*)$, and by Lemma~\ref{lemma:connected-factrorizatios}, any two atoms in $\operatorname{Minimals}_\le (S^*)$ are connected, concluding that $\mathbf{G}_\mathbf{s}$ is connected, a contradiction.
\end{proof}

\begin{corollary}\label{cor:antichain-of-supatoms}
Let $S$ be a gap absorbing monoid. 
Let $\mathbf{a}\in S$ with $\ell(\mathbf{a})=3$ and suppose that for every $\mathbf{m},\mathbf{n}\in \mathcal{M}(S)\cap \operatorname{B}(\mathbf{a})$, there exists $\mathbf{m}_1,\ldots,\mathbf{m}_{h} \in \mathcal{M}(S)\cap \operatorname{B}(\mathbf{a})$ such that $\mathbf{m}_1=\mathbf{m}$, $\mathbf{m}_h=\mathbf{n}$ and $\mathbf{m}_i \vee \mathbf{m}_{i+1}\in \mathcal{A}(S)$ for all $i\in \{1,\ldots,h-1\}$. Then $\mathbf{a}$ is not a Betti element.

\end{corollary}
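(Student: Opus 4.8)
The goal is to prove that the graph $\mathbf{G}_\mathbf{a}$ is connected, since by definition this is exactly what it means for $\mathbf{a}$ not to be a Betti element. First I would record that the hypothesis $\ell(\mathbf{a})=3$ guarantees $\mathbf{a}\in S\setminus(\mathcal{A}(S)\cup 2\mathcal{A}(S))$ and $\mathbf{a}\neq\mathbf{0}$, so that both Lemma~\ref{lem:connect-to-M} and Lemma~\ref{lemma:connected-factrorizatios} are available with $\mathbf{s}=\mathbf{a}$.

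The plan is to reduce connectivity of $\mathbf{G}_\mathbf{a}$ to connectivity among its \emph{minimal} vertices. By Lemma~\ref{lem:connect-to-M}, every vertex of $\mathbf{G}_\mathbf{a}$ is joined by an edge to some vertex $\mathbf{m}\in\mathcal{M}(S)$; moreover such an $\mathbf{m}$ satisfies $\mathbf{a}-\mathbf{m}\in S$, hence $\mathbf{m}\le\mathbf{a}$ and thus $\mathbf{m}\in\mathcal{M}(S)\cap\operatorname{B}(\mathbf{a})$. Consequently it suffices to show that any two vertices of $\mathbf{G}_\mathbf{a}$ lying in $\mathcal{M}(S)\cap\operatorname{B}(\mathbf{a})$ belong to the same connected component: then every vertex is linked to a minimal vertex, and all minimal vertices are mutually linked, forcing $\mathbf{G}_\mathbf{a}$ to be connected.

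To connect two such minimal vertices $\mathbf{m},\mathbf{n}$, I would invoke the hypothesis to obtain a chain $\mathbf{m}=\mathbf{m}_1,\ldots,\mathbf{m}_h=\mathbf{n}$ in $\mathcal{M}(S)\cap\operatorname{B}(\mathbf{a})$ with $\mathbf{m}_i\vee\mathbf{m}_{i+1}\in\mathcal{A}(S)$, and then apply Lemma~\ref{lemma:connected-factrorizatios}(1) to each consecutive pair. The step I expect to be the main obstacle is checking that each $\mathbf{m}_i$ is genuinely a vertex of $\mathbf{G}_\mathbf{a}$, which is required before Lemma~\ref{lemma:connected-factrorizatios} can be applied: membership in $\mathcal{M}(S)\cap\operatorname{B}(\mathbf{a})$ only gives $\mathbf{m}_i\le\mathbf{a}$, not $\mathbf{a}-\mathbf{m}_i\in S$. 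To settle this, set $\mathbf{w}=\mathbf{m}_i\vee\mathbf{m}_{i+1}\in\mathcal{A}(S)$; since $\mathbf{m}_i,\mathbf{m}_{i+1}\le\mathbf{a}$ we have $\mathbf{w}\le\mathbf{a}$, so write $\mathbf{a}=\mathbf{w}+\mathbf{c}$ with $\mathbf{c}\in\mathbb{N}^{(I)}$. If $\mathbf{c}=\mathbf{0}$ then $\mathbf{a}=\mathbf{w}\in\mathcal{A}(S)$, while if $\mathbf{c}\in\mathcal{H}(S)$ then condition (GA2) gives $\mathbf{a}=\mathbf{w}+\mathbf{c}\in\mathcal{A}(S)\cup 2\mathcal{A}(S)$; both possibilities contradict $\ell(\mathbf{a})=3$. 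Hence $\mathbf{c}\in S^*$, and since $S^*$ is an ideal of $\mathbb{N}^{(I)}$, I obtain $\mathbf{a}-\mathbf{m}_i=\mathbf{c}+(\mathbf{w}-\mathbf{m}_i)\in S^*$, so $\mathbf{m}_i$ is a vertex of $\mathbf{G}_\mathbf{a}$; the same argument applies symmetrically to $\mathbf{m}_{i+1}$.

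With every $\mathbf{m}_i$ shown to be a vertex and $\mathbf{m}_i\vee\mathbf{m}_{i+1}\in\mathcal{A}(S)$, Lemma~\ref{lemma:connected-factrorizatios}(1) places $\mathbf{m}_i$ and $\mathbf{m}_{i+1}$ in the same connected component for each $i$, and transitivity along the chain places $\mathbf{m}$ and $\mathbf{n}$ in the same component. Combined with the reduction of the second paragraph, this shows that $\mathbf{G}_\mathbf{a}$ is connected, so $\mathbf{a}$ is not a Betti element of $S$.
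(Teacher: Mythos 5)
Your proof is correct and follows exactly the route the paper intends (the corollary is stated without proof as a direct consequence of Lemma~\ref{lem:connect-to-M} and Lemma~\ref{lemma:connected-factrorizatios}): reduce connectivity of $\mathbf{G}_\mathbf{a}$ to connectivity among minimal vertices, then chain through the hypothesis using Lemma~\ref{lemma:connected-factrorizatios}(1). Your additional verification that each $\mathbf{m}_i\in\mathcal{M}(S)\cap\operatorname{B}(\mathbf{a})$ is genuinely a vertex of $\mathbf{G}_\mathbf{a}$ (via (GA2) and $\ell(\mathbf{a})=3$) correctly fills a step the paper leaves implicit.
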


\begin{remark} \label{rem:betti-base-case} 
    Let $S$ be the gap absorbing monoid defined in Proposition~\ref{simple-case}, with $\mathcal{M}(S)=\{k_j\mathbf{e}_j : j\in J\}$. Let $\mathbf{s}\in S\setminus (\mathcal{A}(S)\cup 2\mathcal{A}(S))$ and assume $\mathbf{s}=\mathbf{m}+\mathbf{a}$ and $\mathbf{s}=\mathbf{n}+\mathbf{b}$ with $\mathbf{m},\mathbf{n}\in \mathcal{M}(S)$. Using the same argument as in the proof of Lemma~\ref{lemma:connected-factrorizatios}, we can write $\mathbf{s}=\mathbf{m}\vee \mathbf{n}+\mathbf{c}$ with $\mathbf{c}\in \mathbb{N}^{(I)}$. In this particular case, $\mathbf{m}\vee \mathbf{n}=\mathbf{m}+ \mathbf{n}\in 2\mathcal{A}(S)$  and consequently $\mathbf{c}\neq \mathbf{0}$. By assuming $\mathbf{c}\in \mathcal{H}(S)$, from the description of $\mathcal{H}(S)$ and $\mathcal{A}(S)$, we obtain $\mathbf{n}+\mathbf{c}\in \mathcal{A}(S)$. So $\mathbf{s}=\mathbf{m}+(\mathbf{n}+\mathbf{c})\in 2\mathcal{A}(S)$, which contradicts the choice of $\mathbf{s}$. Thus, $\mathbf{c}\in S$ and $\mathbf{s}=\mathbf{m}+\mathbf{a}=\mathbf{n}+\mathbf{b}=\mathbf{m}+\mathbf{n}+\mathbf{c}$, which implies, using Lemma~\ref{lem:connect-to-M}, that $\mathbf{G}_\mathbf{s}$ is connected. In particular, $\mathrm{Betti}(S)\subseteq 2\mathcal{A}(S)$. 
\end{remark}



Let $S$ be an ideal extension of $\mathbb{N}^{(I)}$. Next, we examine the case $\mathbf{e}_i\in \mathcal{M}(S)$ for some $i\in I$, showing how this case can be reduced to the study of an ideal extension in $\mathbb{N}^{(J)}$ with $J\subsetneq I$.

Suppose that $\mathcal{M}(S)\cap \{\mathbf{e}_i: i \in I\}=\{\mathbf{e}_{l} : l\in L\}$ with $L\subsetneq I$  (in the case $L=I$ we have $S=\mathbb{N}^{(I)}$). Define $J=I\setminus L$ and 
\[\mathsf{C}(S)=\{\mathbf{s}\in S : \operatorname{Supp}(\mathbf{s})\subseteq J\},\] which is called the \emph{core} of $S$ (using the notation in \cite{Baeth}).

We now show how the problem of obtaining the Betti elements of $S$ can be reduced to finding the Betti elements in $\mathsf{C}(S)$. First, we need a technical lemma which, among other things, relates the atoms, minimal non-zero elements and gaps of $S$ and $\mathsf{C}(S)$.

\begin{lemma}\label{lem:basic-facts-core}
Let $S$ be an ideal extension of $\mathbb{N}^{(I)}$ and let $L=\{i\in I : \mathbf{e}_i\in S\}$.
\begin{enumerate}[(a)]
\item $S=\mathsf{C}(S)\cup (\bigcup_{l \in L} (\mathbf{e}_{l}+\mathbb{N}^{(I)}))$ (disjoint union).
\item $\mathcal{H}(S)=\mathcal{H}(\mathsf{C}(S))$.
\item $\mathcal{M}(S)=\mathcal{M}(\mathsf{C}(S))\cup \{\mathbf{e}_l:l\in L\}$ (disjoint union).
\item $\mathsf{C}(S)$ is a divisor-closed submonoid of $S$.
\item $\mathcal{A}(S)=\mathcal{A}(\mathsf{C}(S))\cup \{\mathbf{e}_{l} :l \in L\}\cup (\bigcup_{l \in L} (\mathbf{e}_{l}+\mathcal{H}(S)))$.
\end{enumerate}
\end{lemma}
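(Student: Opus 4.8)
The plan is to isolate a single structural observation and deduce all five items from it. Call it the \emph{$L$-support property}: any $\mathbf{x}\in\mathbb{N}^{(I)}$ with $\operatorname{Supp}(\mathbf{x})\cap L\neq\emptyset$ already lies in $S$. Indeed, if $l\in\operatorname{Supp}(\mathbf{x})\cap L$ then $\mathbf{e}_l\le\mathbf{x}$ and $\mathbf{e}_l\in S$, so $\mathbf{x}\in\mathbf{e}_l+\mathbb{N}^{(I)}\subseteq S$ because $S^*$ is an ideal. Equivalently, every gap of $S$ has support inside $J$, that is $\mathcal{H}(S)\subseteq\mathbb{N}^{(J)}$. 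At the same time I would record that $\mathsf{C}(S)$ is itself an ideal extension of $\mathbb{N}^{(J)}$: its nonzero part is closed under adding elements of $\mathbb{N}^{(J)}$, since $S^*$ is an ideal and the support stays in $J$. This legitimizes the symbols $\mathcal{H}(\mathsf{C}(S))$, $\mathcal{M}(\mathsf{C}(S))$, $\mathcal{A}(\mathsf{C}(S))$ as taken relative to $\mathbb{N}^{(J)}$.

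With this in hand, items (a), (d), (b) are immediate. For (a) I would split an arbitrary $\mathbf{s}\in S$ according to whether $\operatorname{Supp}(\mathbf{s})\subseteq J$ (then $\mathbf{s}\in\mathsf{C}(S)$) or $\operatorname{Supp}(\mathbf{s})\cap L\neq\emptyset$ (then $\mathbf{s}\ge\mathbf{e}_l$ for some $l\in L$); these two cases are mutually exclusive, which is exactly the disjointness of $\mathsf{C}(S)$ from the union, and the reverse inclusion uses $\mathbf{e}_l+\mathbb{N}^{(I)}\subseteq S$. For (d), if $\mathbf{s}\in\mathsf{C}(S)$ and $\mathbf{s}=\mathbf{t}+\mathbf{u}$ with $\mathbf{t},\mathbf{u}\in S$, then $\mathbf{t}\le\mathbf{s}$ forces $\operatorname{Supp}(\mathbf{t})\subseteq J$, so $\mathbf{t}\in\mathsf{C}(S)$. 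For (b), the $L$-support property gives $\mathcal{H}(S)\subseteq\mathbb{N}^{(J)}$, and inside $\mathbb{N}^{(J)}$ one has $S\cap\mathbb{N}^{(J)}=\mathsf{C}(S)$, so the complements $\mathbb{N}^{(J)}\setminus S$ and $\mathbb{N}^{(J)}\setminus\mathsf{C}(S)$ coincide.

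Item (c) I would handle by comparing $\operatorname{Minimals}_\le$ on the two sides. Each $\mathbf{e}_l$ with $l\in L$ is minimal in $S^*$ because nothing nonzero lies strictly below it; conversely, if $\mathbf{m}\in\mathcal{M}(S)$ has some $L$-coordinate positive, minimality forces $\mathbf{m}=\mathbf{e}_l$. The remaining minimal elements have support in $J$, and for these I would check that minimality in $S^*$ is equivalent to minimality in $\mathsf{C}(S)^*$ (any competitor below a $J$-supported element is itself $J$-supported), yielding the two disjoint pieces $\{\mathbf{e}_l:l\in L\}$ and $\mathcal{M}(\mathsf{C}(S))$.

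The only genuinely delicate item is (e), which I expect to be the main obstacle. I would first classify atoms by support. An atom with support in $J$ is exactly an atom of $\mathsf{C}(S)$: a factorization in $S^*$ of a $J$-supported element automatically has both factors $J$-supported, hence in $\mathsf{C}(S)^*$, and conversely. For an atom $\mathbf{a}$ with $l\in\operatorname{Supp}(\mathbf{a})\cap L$, peel off $\mathbf{e}_l\le\mathbf{a}$ and write $\mathbf{a}=\mathbf{e}_l+\mathbf{x}$; since $\mathbf{e}_l\in S^*$ and $\mathbf{a}$ is an atom, $\mathbf{x}\notin S^*$, so either $\mathbf{x}=\mathbf{0}$ (giving $\mathbf{a}=\mathbf{e}_l$) or $\mathbf{x}\in\mathcal{H}(S)$ (giving $\mathbf{a}\in\mathbf{e}_l+\mathcal{H}(S)$). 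The hard direction is the converse, namely that every $\mathbf{e}_l+\mathbf{h}$ with $\mathbf{h}\in\mathcal{H}(S)$ is really an atom. The key is that a gap $\mathbf{h}$ has $h_l=0$ by the $L$-support property, so $(\mathbf{e}_l+\mathbf{h})_l=1$. Hence in any decomposition $\mathbf{e}_l+\mathbf{h}=\mathbf{s}+\mathbf{t}$ with $\mathbf{s},\mathbf{t}\in S^*$ the single unit in coordinate $l$ lands in one summand, say $\mathbf{s}$, forcing $t_l=0$ and therefore $\mathbf{t}\le\mathbf{h}$; but then $\mathbf{h}=\mathbf{t}+(\mathbf{h}-\mathbf{t})\in S^*$ because $S^*$ is an ideal, contradicting $\mathbf{h}\in\mathcal{H}(S)$. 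This contradiction shows $\mathbf{e}_l+\mathbf{h}$ is an atom and completes the three-way description in (e).
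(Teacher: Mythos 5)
Your proposal is correct and follows essentially the same route as the paper: (a)--(d) by direct support considerations and divisor-closedness, and for (e) the same two-way classification of atoms by whether their support meets $L$, with the key step (that $\mathbf{e}_l+\mathbf{h}$ is an atom for $\mathbf{h}\in\mathcal{H}(S)$) argued exactly as in the paper by locating the single unit in coordinate $l$ inside one summand and using that $S^*$ is an ideal to contradict $\mathbf{h}\notin S$. The only cosmetic difference is that where the paper cites Lemma~\ref{atoms-sitting-over-gaps} to see that $\mathbf{a}-\mathbf{e}_l\in\{\mathbf{0}\}\cup\mathcal{H}(S)\cup\mathcal{A}(S)$, you re-derive the needed special case directly, and you make explicit the (correct and worthwhile) convention that $\mathsf{C}(S)$ is viewed as an ideal extension of $\mathbb{N}^{(J)}$.
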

\begin{proof}
The first three assertions follow directly from the definition. 

If $\mathbf{s}\in \mathsf{C}(S)$ and $\mathbf{t}\in S$ is such that $\mathbf{t}\le_S \mathbf{s}$, then $\mathbf{t}\in \mathsf{C}(S)$. This proves that $\mathsf{C}(S)$ is a divisor-closed submonoid of $S$, and in particular, $\mathcal{A}(S)\cap \mathsf{C}(S)=\mathcal{A}(\mathsf{C}(S))$. If $\mathbf{a}\in \mathcal{A}(S)$, then either $\mathbf{a}\in \mathsf{C}(S)$ or $\mathbf{a}\in \bigcup_{l\in L}\mathbf{e}_l+\mathbb{N}^{(I)}$. 
\begin{itemize}
    \item If $\mathbf{a}\in \mathsf{C}(S)$, then  $\mathbf{a}\in \mathcal{A}(S)\cap \mathsf{C}(S)=\mathcal{A}(\mathsf{C}(S))$.
    \item If $\mathbf{a}=\mathbf{e}_l+\mathbf{x}$ for some $l\in L$ and $\mathbf{x}\in \mathbb{N}^{(I)}$, then, according to Lemma~\ref{atoms-sitting-over-gaps}, $\mathbf{x}=\mathbf{a}-\mathbf{e}_l\in \{\mathbf{0}\}\cup  \mathcal{H}(S)\cup \mathcal{A}(S)$. Since $\mathbf{a}$ is an atom, $\mathbf{x}$ cannot be in $\mathcal{A}(S)$, which forces $\mathbf{x}=\mathbf{0}$ or $\mathbf{x}\in \mathcal{H}(S)$.
\end{itemize}
For the other inclusion, we already know that $\mathcal{A}(\mathsf{C}(S))\subseteq \mathcal{A}(S)$. Also $\mathbf{e}_l\in \mathcal{M}(S)\subseteq \mathcal{A}(S)$ for all $l\in L$. Let $l\in L$ and $\mathbf{h}\in \mathcal{H}(S)$. If $\mathbf{e}_l+\mathbf{h}=\mathbf{s}_1+\mathbf{s}_2$, with $\mathbf{s}_1,\mathbf{s}_2\in S^*$, then $l$ is either in the support of $\mathbf{s}_1$ or in the support of $\mathbf{s}_2$. Thus, $\mathbf{h}=\mathbf{s}_1+\mathbf{s}_2-\mathbf{e}_l\in S$, which is a contradiction.
\end{proof}

Since $\mathsf{C}(S)$ is divisor-closed, the factorizations of an element $\mathbf{s}\in \mathsf{C}(S)$ involve only atoms in $\mathcal{A}(\mathsf{C}(S))$. Thus, $\mathbf{s}$ has a unique factorization in $S$ if and only if $\mathbf{s}$ has a unique factorization in $\mathsf{C}(S)$. 

\begin{lemma} \label{lem:unique_fctr-2A-CS}
Let $S$ be an ideal extension of $\mathbb{N}^{(I)}$ and let $L=\{i\in I : \mathbf{e}_i\in S\}$.
Let $\mathbf{s}\in 2\mathcal{A}(\mathsf{C}(S))$ and suppose it has a unique factorization. Then one of the following occurs:
\begin{enumerate}
\item $\mathbf{s}=\mathbf{m}_1+\mathbf{m}_2$ with $\mathbf{m}_1,\mathbf{m}_2\in \mathcal{M}(\mathsf{C}(S))$;
\item $\mathbf{s}=(\mathbf{m}+\mathbf{e}_i)+\mathbf{m}=2\mathbf{m}+\mathbf{e}_i$ for some $\mathbf{m}\in \mathcal{M}(\mathsf{C}(S))$ and  some $i\in I\setminus L$.
\end{enumerate}
\end{lemma}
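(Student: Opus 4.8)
The plan is to start from a two-atom factorization $\mathbf{s}=\mathbf{a}+\mathbf{b}$ with $\mathbf{a},\mathbf{b}\in\mathcal{A}(\mathsf{C}(S))$ and to pin down its shape using the hypothesis $\lvert\mathsf{Z}(\mathbf{s})\rvert=1$. Set $J=I\setminus L$. Since each of $\mathbf{a},\mathbf{b}$ is a nonzero element of $\mathsf{C}(S)$, I would fix $\mathbf{m}_1,\mathbf{m}_2\in\mathcal{M}(\mathsf{C}(S))$ with $\mathbf{m}_1\le\mathbf{a}$, $\mathbf{m}_2\le\mathbf{b}$, and write $\mathbf{a}=\mathbf{m}_1+\mathbf{y}_1$, $\mathbf{b}=\mathbf{m}_2+\mathbf{y}_2$ with $\mathbf{y}_1,\mathbf{y}_2\in\mathbb{N}^{(J)}$; note that $\mathbf{y}_k=\mathbf{0}$ precisely when the corresponding atom is itself minimal. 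Throughout I would use that $\mathsf{C}(S)$ is divisor-closed, so every factorization of $\mathbf{s}$ involves only atoms of $\mathsf{C}(S)$, together with the identities $\mathcal{A}(\mathsf{C}(S))=\mathcal{A}(S)\cap\mathsf{C}(S)$ and $\mathcal{M}(\mathsf{C}(S))\subseteq\mathcal{M}(S)$ from Lemma~\ref{lem:basic-facts-core}. The goal is to show that $\mathbf{y}_1$ and $\mathbf{y}_2$ are forced to be $\mathbf{0}$ (case (1)) or a single basis vector together with $\mathbf{0}$ (case (2)).

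The central device is a unit transfer between the two atoms. Assume $\mathbf{y}_1\neq\mathbf{0}$ and pick $j\in\operatorname{Supp}(\mathbf{y}_1)$; then $\mathbf{a}-\mathbf{e}_j=\mathbf{m}_1+(\mathbf{y}_1-\mathbf{e}_j)\ge\mathbf{m}_1\neq\mathbf{0}$, so $\mathbf{a}'=\mathbf{a}-\mathbf{e}_j\in S^*$. By Lemma~\ref{atoms-sitting-over-gaps}(b), $\mathbf{a}'\in\{\mathbf{0}\}\cup\mathcal{H}(S)\cup\mathcal{A}(S)$, and since it is a nonzero element of $S$ with support in $J$, it lies in $\mathcal{A}(\mathsf{C}(S))$. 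Setting $\mathbf{b}'=\mathbf{b}+\mathbf{e}_j$, Lemma~\ref{lem:a+ei-l-le-3} gives $\mathbf{b}'\in\mathcal{A}(\mathsf{C}(S))\cup 2\mathcal{A}(\mathsf{C}(S))$. Now $\mathbf{s}=\mathbf{a}'+\mathbf{b}'$. If $\mathbf{b}'$ were a sum of two atoms, this would produce a length-$3$ factorization of $\mathbf{s}$, contradicting uniqueness; hence $\mathbf{b}'$ is an atom and $\{\mathbf{a}',\mathbf{b}'\}$ is a two-atom factorization. Uniqueness forces $\{\mathbf{a}',\mathbf{b}'\}=\{\mathbf{a},\mathbf{b}\}$, and as $\mathbf{a}'=\mathbf{a}-\mathbf{e}_j\neq\mathbf{a}$, the only option is $\mathbf{a}'=\mathbf{b}$, i.e. $\mathbf{a}=\mathbf{b}+\mathbf{e}_j$.

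With this in hand the case analysis closes quickly. Because $\mathbf{a}-\mathbf{b}$ is a fixed vector, $\mathbf{a}=\mathbf{b}+\mathbf{e}_j$ can hold for at most one index, so $\operatorname{Supp}(\mathbf{y}_1)$ is a singleton $\{i\}$ with $i\in J\subseteq I\setminus L$, and $\mathbf{a}-\mathbf{b}=\mathbf{e}_i$. Running the symmetric transfer shows that if $\mathbf{y}_2\neq\mathbf{0}$ one gets $\mathbf{b}=\mathbf{a}+\mathbf{e}_{i'}$ for some $i'$, which is incompatible with $\mathbf{a}=\mathbf{b}+\mathbf{e}_i$; therefore $\mathbf{y}_2=\mathbf{0}$ and $\mathbf{b}=\mathbf{m}_2$ is minimal. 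Then $\mathbf{b}=\mathbf{a}-\mathbf{e}_i=\mathbf{m}_1+(\mathbf{y}_1-\mathbf{e}_i)\ge\mathbf{m}_1$, and since $\mathbf{b}$ and $\mathbf{m}_1$ are both minimal and $\mathcal{M}(\mathsf{C}(S))$ is an antichain, $\mathbf{b}=\mathbf{m}_1$; writing $\mathbf{m}=\mathbf{m}_1=\mathbf{m}_2$ yields $\mathbf{y}_1=\mathbf{e}_i$ and $\mathbf{s}=(\mathbf{m}+\mathbf{e}_i)+\mathbf{m}$, which is alternative (2). If instead $\mathbf{y}_1=\mathbf{y}_2=\mathbf{0}$, then $\mathbf{a}=\mathbf{m}_1$ and $\mathbf{b}=\mathbf{m}_2$ are both minimal, giving alternative (1). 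These exhaust all possibilities.

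The step I expect to require the most care is the transfer itself: one must confirm that deleting $\mathbf{e}_j$ from $\mathbf{a}$ lands in $\mathcal{A}(\mathsf{C}(S))$ and not among the gaps, and this is exactly where choosing $j\in\operatorname{Supp}(\mathbf{y}_1)$ (so that $\mathbf{a}-\mathbf{e}_j\ge\mathbf{m}_1$) is indispensable. The other delicate point is that adding $\mathbf{e}_j$ to $\mathbf{b}$ might increase the factorization length; the clean dichotomy ``$\mathbf{b}'$ is an atom or a sum of two atoms'' provided by Lemma~\ref{lem:a+ei-l-le-3} is precisely what lets unique factorization produce a contradiction in \emph{both} subcases, rather than only when $\mathbf{b}'$ stays an atom. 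Once these are secured, the remainder is routine manipulation of the partial order and the antichain property of $\mathcal{M}(\mathsf{C}(S))$.
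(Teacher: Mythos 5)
Your proof is correct and follows essentially the same route as the paper's: both write each of the two atoms as a minimal element plus a remainder, and both use the exchange of a unit vector between the two summands together with Lemma~\ref{lem:a+ei-l-le-3} and uniqueness of the factorization to force the remainders to be $\mathbf{0}$ or a single $\mathbf{e}_i$. The only cosmetic difference is that you transfer $\mathbf{e}_j$ directly from $\mathbf{a}$ to $\mathbf{b}$ (invoking Lemma~\ref{atoms-sitting-over-gaps} to keep $\mathbf{a}-\mathbf{e}_j$ an atom) and rule out the case of two nonzero remainders via the incompatible relations $\mathbf{a}=\mathbf{b}+\mathbf{e}_i$ and $\mathbf{b}=\mathbf{a}+\mathbf{e}_{i'}$, whereas the paper rebalances around $\mathbf{m}_1+\mathbf{e}_i$; both land on the same dichotomy.
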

\begin{proof}
Suppose $\mathbf{s}=\mathbf{s}_1+\mathbf{s}_2$ with $\mathbf{s}_1,\mathbf{s}_2\in \mathcal{A}(\mathsf{C}(S))$. Then $\mathbf{s}_1=\mathbf{m}_1+\mathbf{n}_1$ and $\mathbf{s}_2=\mathbf{m}_2+\mathbf{n}_2$ for some $\mathbf{m}_1,\mathbf{m}_2\in \mathcal{M}(S)$ and $\mathbf{n}_1,\mathbf{n}_2\in \mathbb{N}^{(I)}$. From $\mathbf{s}=(\mathbf{m}_1+\mathbf{n}_1)+(\mathbf{m}_2+\mathbf{n}_2)=\mathbf{m}_1+(\mathbf{m}_2+\mathbf{n}_1+\mathbf{n}_2)=\mathbf{m}_2+(\mathbf{m}_1+\mathbf{n}_1+\mathbf{n}_2)$ we obtain two different factorizations of $\mathbf{s}$ in the case $\mathbf{n}_1$ and $\mathbf{n_2}$ are both non-zero. Then, suppose without loss of generality that $\mathbf{n}_2=\mathbf{0}$.
Hence, $\mathbf{s}=(\mathbf{m_1}+\mathbf{n_1})+\mathbf{m}_2$. 

If $\mathbf{n}_1=\mathbf{0}$, then we fall into case (1).

If $\mathbf{n}_1$ is not zero, then $\mathbf{n}_1\geq \mathbf{e}_i$ for some $i\in I$ (as $\mathbf{s}\in \mathsf{C}(S)$, this forces $i\not\in L$). By Lemma~\ref{lem:a+ei-l-le-3}, we have $\mathbf{m}_1+\mathbf{e}_i, \mathbf{m}_2+\mathbf{e}_i\in \mathcal{A}(S)$, so $\mathbf{s}=(\mathbf{m_1}+\mathbf{n_1})+\mathbf{m}_2=(\mathbf{m}_1+\mathbf{e}_i)+(\mathbf{m}_2+\mathbf{n}_1-\mathbf{e}_i)$. Since $\mathbf{s}$ admits a unique factorization, we deduce that $\mathbf{m}_1+\mathbf{e}_i\in \{\mathbf{m}_1+\mathbf{n_1},\mathbf{m}_2\}$. As $\mathbf{m}_1$ and $\mathbf{m}_2$ are incomparable, we deduce $\mathbf{m}_1+\mathbf{n}_1=\mathbf{m}_1+\mathbf{e}_i$. Therefore, $\mathbf{n}_1=\mathbf{e}_i$. As the two factorizations $\mathbf{s}=(\mathbf{m}_1+\mathbf{e}_i)+\mathbf{m_2}=\mathbf{m}_1+(\mathbf{m}_2+\mathbf{e}_i)$ must be the same, we conclude that $\mathbf{m}_1=\mathbf{m}_2$, that is, $\mathbf{s}=2\mathbf{m}_1+\mathbf{e}_i$.
\end{proof}

Not every element that is the sum of two minimal non-zero elements will have a unique factorization as the next example shows.

\begin{example}
    Let $S=\{(0,0)\}\cup(\{(2,0),(1,2),(0,3)\}+\mathbb{N}^2$. Then $|\mathsf{Z}(2(1,2))|=3$.
\begin{verbatim}
gap> M:=[[2,0],[1,2],[0,3]];;
gap> s:=FiniteComplementIdealExtension(M);
<Affine semigroup>
gap> a:=MinimalGenerators(s);
[ [ 0, 3 ], [ 1, 2 ], [ 0, 5 ], [ 0, 4 ], [ 3, 0 ], [ 2, 0 ], [ 1, 3 ], 
  [ 2, 1 ], [ 1, 4 ], [ 2, 2 ], [ 3, 1 ] ]
\end{verbatim}
Now, we run over all the elements in $2\mathcal{A}(S)$ and look for the first having more than one factorization.
\begin{verbatim}
gap> Filtered(Set(Cartesian(M,M),Sum), 
    mm->Length(FactorizationsVectorWRTList(mm,a))>1);
[ [ 2, 4 ] ]
gap> FactorizationsVectorWRTList([2,4],a);
[ [ 0, 0, 0, 1, 0, 1, 0, 0, 0, 0, 0 ], [ 0, 2, 0, 0, 0, 0, 0, 0, 0, 0, 0 ], 
  [ 1, 0, 0, 0, 0, 0, 0, 1, 0, 0, 0 ] ]
\end{verbatim}
\end{example}

Next, we study what elements with minimum length equal to two that are not in the core of $S$ have unique factorizations.

\begin{lemma} \label{lem:unique_fctr-2A-NotCS}
Let $S$ an ideal extension of $\mathbb{N}^{(I)}$, $L=\{i\in I : \mathbf{e}_i\in S\}$ and $J=I\setminus L$. If $\mathbf{s}\in 2\mathcal{A}(S)\setminus\mathsf{C}(S)$, then $\mathbf{s}$ has a unique factorization if and only if one of the following occurs:
\begin{enumerate}[(1)]
\item $\mathbf{s}=\mathbf{e}_{l}+\mathbf{m}$ with $\mathbf{m}\in \mathcal{M}(S)$, and $l\in L$;
\item $\mathbf{s}=(\mathbf{e}_{l}+\mathbf{e}_{j})+\mathbf{e}_{l}=2\mathbf{e}_{l}+\mathbf{e}_j$ with $l\in L$ and $j\in J$.
\end{enumerate}
\label{unique_fact}
\end{lemma}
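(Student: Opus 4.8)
The plan is to determine $|\mathsf Z(\mathbf s)|$ directly from the description of $\mathcal A(S)$ in Lemma~\ref{lem:basic-facts-core}(e), so that both implications fall out at once. Decompose $\mathbf s=\mathbf s_L+\mathbf s_J$ with $\operatorname{Supp}(\mathbf s_L)\subseteq L$ and $\operatorname{Supp}(\mathbf s_J)\subseteq J$; since $\mathbf s\notin\mathsf C(S)$ we have $\mathbf s_L\neq\mathbf 0$. Every atom of $S$ is either a \emph{core atom} (in $\mathcal A(\mathsf C(S))$, with support in $J$), an $\mathbf e_l$ with $l\in L$, or $\mathbf e_l+\mathbf h$ with $l\in L$ and $\mathbf h\in\mathcal H(S)$; the last two types each contribute exactly $\mathbf e_l$ to the $L$-part, while core atoms contribute $\mathbf 0$. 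As $\mathbf s$ is a sum of two atoms, this forces $\lVert\mathbf s_L\rVert_1\in\{1,2\}$, and a factorization of $\mathbf s$ amounts to a choice, for each unit $\mathbf e_p$ appearing in $\mathbf s_L$, of a gap-or-zero $\mathbf g_p\in\{\mathbf 0\}\cup\mathcal H(S)$, together with a factorization into core atoms of $\mathbf s_J-\sum_p\mathbf g_p$ (which must then lie in $\mathsf C(S)$).

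I would run the analysis through two principles. The \emph{length principle}: $\mathbf s$ already has a factorization of length $2$, so any factorization of length $\ge 3$ is a second one; taking all $\mathbf g_p=\mathbf 0$ and factoring $\mathbf s_J$ into core atoms gives length $\lVert\mathbf s_L\rVert_1$ plus the number of core atoms needed for $\mathbf s_J$, which exceeds $2$ exactly when $\mathbf s_J\in S$ and either $\lVert\mathbf s_L\rVert_1=2$ with $\mathbf s_J\neq\mathbf 0$, or $\lVert\mathbf s_L\rVert_1=1$ and $\mathbf s_J$ is not a core atom. The \emph{shift principle}: for $j\in J$ one has $\mathbf e_j\in\mathcal H(S)$; if $\mathbf s_J$ is a gap then $\mathbf s_J-\mathbf e_j\in\{\mathbf 0\}\cup\mathcal H(S)$ by Lemma~\ref{atoms-sitting-over-gaps}(a), and if $\mathbf a$ is a core atom with $\mathbf a=\mathbf m+\mathbf n$, $\mathbf m\in\mathcal M(\mathsf C(S))$ and $\mathbf e_j\le\mathbf n$, then $\mathbf a-\mathbf e_j\ge\mathbf m$ lies in $S^*$, so by Lemma~\ref{atoms-sitting-over-gaps}(b) it is again an atom. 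These let me replace one admissible choice by a genuinely different one.

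When $\lVert\mathbf s_L\rVert_1=1$, say $\mathbf s_L=\mathbf e_l$, the length principle forces $\mathbf s_J$ to be a single core atom $\mathbf a$. I would then show uniqueness holds iff $\mathbf a\in\mathcal M(\mathsf C(S))$: if $\mathbf a$ is minimal, no nonzero gap $\mathbf g\le\mathbf a$ leaves $\mathbf a-\mathbf g$ in $S^*$, and $\mathbf a-\mathbf g=\mathbf 0$ is impossible as $\mathbf a\notin\mathcal H(S)$, so $\mathbf g=\mathbf 0$ is the only choice; if $\mathbf a$ is not minimal, the shift principle with $\mathbf g=\mathbf e_j$ produces the second factorization $\{\mathbf e_l+\mathbf e_j,\ \mathbf a-\mathbf e_j\}$. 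Since $\mathcal M(\mathsf C(S))\subseteq\mathcal M(S)$ (Lemma~\ref{lem:basic-facts-core}(c)), this is exactly case~(1). When $\lVert\mathbf s_L\rVert_1=2$, write $\mathbf s_L=\mathbf e_l+\mathbf e_{l'}$. If $\mathbf s_J\in S^*$ the length principle breaks uniqueness; if $\mathbf s_J\in\mathcal H(S)$ I peel off a unit $\mathbf e_j\le\mathbf s_J$, so that for $l\neq l'$ the assignments $(\mathbf s_J,\mathbf 0)$ and $(\mathbf 0,\mathbf s_J)$ already differ, and for $l=l'$ the assignments $\{\mathbf s_J,\mathbf 0\}$ and $\{\mathbf e_j,\mathbf s_J-\mathbf e_j\}$ differ once $\lVert\mathbf s_J\rVert_1\ge 2$. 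The surviving unique cases are $\mathbf s_J=\mathbf 0$, giving $\mathbf s=\mathbf e_l+\mathbf e_{l'}$ (case~(1) with $\mathbf m=\mathbf e_{l'}$), and, only when $l=l'$, $\mathbf s_J=\mathbf e_j$, giving $\mathbf s=2\mathbf e_l+\mathbf e_j$ (case~(2)), where the lone unit $\mathbf e_j$ cannot carry a core atom so $\{\mathbf e_j,\mathbf 0\}$ is forced. Running the same parametrization backwards confirms that the elements in~(1) and~(2) are uniquely factorable, which is the ``if'' direction.

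The main obstacle is the shift principle together with the bookkeeping it demands: one must verify that peeling a unit $\mathbf e_j$ off a non-minimal core atom (via Lemma~\ref{atoms-sitting-over-gaps}(b)) or off a gap (via Lemma~\ref{atoms-sitting-over-gaps}(a)) always yields an admissible assignment whose associated multiset of atoms genuinely differs from the original one---this is where comparing $L$-coordinates and using $\mathbf e_j\neq\mathbf 0$ is essential. Everything else is the finite case split on $\lVert\mathbf s_L\rVert_1\in\{1,2\}$ and on whether $l=l'$, organized around the length and shift principles.
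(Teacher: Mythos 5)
Your proposal is correct. The case analysis is exhaustive, each alternative factorization you produce is certified to consist of atoms by the right lemmas (Lemma~\ref{lem:basic-facts-core}(e) for $\mathbf{e}_l+\mathbf{h}$, Lemma~\ref{atoms-sitting-over-gaps} for $\mathbf{a}-\mathbf{e}_j$ and $\mathbf{h}-\mathbf{e}_j$), and you correctly verify in each instance that the new multiset of atoms differs from the old one. The one implicit fact you rely on, that every gap has support contained in $J$ (so that a non-core atom contributes exactly one unit vector to the $L$-part), does hold: if $\mathbf{h}\ge\mathbf{e}_l$ with $l\in L$ then $\mathbf{h}\in\mathbf{e}_l+\mathbb{N}^{(I)}\subseteq S^*$, a contradiction.

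Your route differs from the paper's in organization. The paper fixes one length-two factorization $\mathbf{s}=\mathbf{s}_1+\mathbf{s}_2$ with $\mathbf{s}_2=\mathbf{e}_l+\mathbf{x}$, $\mathbf{x}\in\{\mathbf{0}\}\cup\mathcal{H}(S)$, and argues by perturbation: in the case $\mathbf{x}\neq\mathbf{0}$ it rewrites $\mathbf{s}=(\mathbf{s}_1+\mathbf{x})+\mathbf{e}_l$ and lets uniqueness force $\mathbf{s}_1=\mathbf{e}_l$ in a single stroke (covering at once your sub-cases $l\neq l'$ and $l=l'$ with $\lVert\mathbf{s}_L\rVert_1=2$), then peels off $\mathbf{e}_i$ to force $\mathbf{x}=\mathbf{e}_i$; in the case $\mathbf{x}=\mathbf{0}$ it writes $\mathbf{s}_1=\mathbf{m}+\mathbf{y}$ and moves the whole gap $\mathbf{y}$ onto $\mathbf{e}_l$ rather than a single unit. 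You instead parametrize the entire set $\mathsf{Z}(\mathbf{s})$ by the decomposition $\mathbf{s}=\mathbf{s}_L+\mathbf{s}_J$, observing that any factorization consists of exactly $\lVert\mathbf{s}_L\rVert_1\in\{1,2\}$ non-core atoms $\mathbf{e}_p+\mathbf{g}_p$ plus a core factorization of $\mathbf{s}_J-\sum_p\mathbf{g}_p$, and then enumerate when this data is forced. Your version is longer and the bookkeeping heavier, but it handles necessity and sufficiency simultaneously and makes the structure of $\mathsf{Z}(\mathbf{s})$ explicit; the paper's is more economical but the sufficiency direction must be argued separately. The sentence ``running the same parametrization backwards confirms the if direction'' should be expanded in a final write-up, since for case~(1) with $\mathbf{m}\in\mathcal{M}(\mathsf{C}(S))$ it is exactly the minimality argument you gave (no nonzero $\mathbf{g}\le\mathbf{m}$ leaves $\mathbf{m}-\mathbf{g}$ in $S$), but it deserves to be stated rather than gestured at.
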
 
\begin{proof}
\emph{Necessity}. Let $\mathbf{s}=\mathbf{s}_1+\mathbf{s}_2$, with $\mathbf{s}_1,\mathbf{s}_2\in \mathcal{A}(S)$. Since $\mathbf{s}\not\in \mathsf{C}(S)$ and $\mathsf{C}(S)$ is a submonoid of $S$, we can suppose that $\mathbf{s}_2\not\in \mathsf{C}(S)$. Thus, $\mathbf{s}_2=\mathbf{e}_l+\mathbf{x}$ for some $l\in L$ and $\mathbf{x}\in \{\mathbf{0}\}\cup\mathcal{H}(S)$ (Lemma~\ref{lem:basic-facts-core}). 

If $\mathbf{x}$ is not zero, then $\mathbf{s}=\mathbf{s}_1+(\mathbf{e}_l+\mathbf{x})=(\mathbf{s}_1+\mathbf{x})+\mathbf{e}_l$ and the fact that $\mathbf{s}$ admits a unique factorization forces $\mathbf{e}_l=\mathbf{s}_1$; whence $\mathbf{s}=2\mathbf{e}_l+\mathbf{x}$. Let $i\in I$ be such that $i$ is in the support of $\mathbf{x}$. The fact that $\mathbf{x}=\mathbf{e}_i$ is a gap yields $i\in J$. Then $\mathbf{s}=(\mathbf{e}_l+\mathbf{e}_i)+(\mathbf{e}_l+\mathbf{x}-\mathbf{e}_i)$. If $\mathbf{x}\neq \mathbf{e}_i$, Lemma~\ref{atoms-sitting-over-gaps} is telling us that $\mathbf{x}-\mathbf{e}_i\in \mathcal{H}(S)$, and by Lemma~\ref{lem:basic-facts-core}, $\mathbf{e}_l+\mathbf{x}-\mathbf{e}_i\in \mathcal{A}(S)$. The uniqueness of the factorizations of $\mathbf{s}$ implies that $\mathbf{e}_l+\mathbf{x}-\mathbf{e}_i\in \{\mathbf{e}_l,\mathbf{e}_l+\mathbf{x}\}$, which is impossible. Thus $\mathbf{x}=\mathbf{e}_i$, and $\mathbf{s}=2\mathbf{e}_l+\mathbf{e}_i$. 

If $\mathbf{x}=0$, then $\mathbf{s}=\mathbf{s}_1+\mathbf{e}_l$. If $\mathbf{s}_1$ is not a minimal element of $S$, we can pick $\mathbf{m}\in \mathcal{M}(S)$ and $\mathbf{y}\in \mathbb{N}^{(I)}$ such that $\mathbf{s}_1=\mathbf{m}+\mathbf{y}$ (and thus $\mathbf{y}$ is a gap of $S$, since otherwise $\mathbf{s}_1$ would not be an atom). But then Lemma~\ref{lem:basic-facts-core} asserts that $\mathbf{e}_l+\mathbf{y}$ is an atom of $S$, which means that $\mathbf{s}=(\mathbf{m}+\mathbf{y})+\mathbf{e}_l=\mathbf{m}+(\mathbf{e}_l+\mathbf{y})$ has two factorizations; a contradiction. Thus $\mathbf{y}$ is zero and $\mathbf{s}=\mathbf{e}_l+\mathbf{m}$ with $\mathbf{m}\in \mathcal{M}(S)$.

{\em Sufficiency}. If $\mathbf{s}=\mathbf{e}_l+\mathbf{m}=\mathbf{a}_1+\dots+\mathbf{a}_n$ with $l\in L$, $\mathbf{m}\in \mathcal{M}(S)$, and $\mathbf{a}_1,\dots,\mathbf{a}_n\in\mathcal{A}(S)$, then $n\ge 2$ since $\mathbf{s}$ is not an atom. We can assume without loss of generality that $l$ is in the support of $\mathbf{a}_n$, and consequently $\mathbf{m}=\mathbf{a}_1+\dots+\mathbf{a}_{n-1}+(\mathbf{a}_n-\mathbf{e}_l)$. 
The minimality of $\mathbf{m}$ forces $n=2$ and $\mathbf{a}_n-\mathbf{e}_l=\mathbf{0}$.

If $\mathbf{s}=2\mathbf{e}_l+\mathbf{e}_j=\mathbf{a}_1+\dots+\mathbf{a}_n$ with $l\in L$ and $j\in J$ and $\mathbf{a}_1,\dots,\mathbf{a}_n\in \mathcal{A}(S)$, then $2\le n\le 3$, since $\lVert \mathbf{s}\rVert_1 =3$. If $n=3$, then $\mathbf{a}_i=\mathbf{e}_{j_i}$ for each $i\in\{1,2,3\}$, but this leads to $\{\mathbf{a}_1,\mathbf{a}_2,\mathbf{a}_3\}=\{\mathbf{e}_l,\mathbf{e}_j\}$, and $\mathbf{e}_j$ is a gap of $S$. Thus, $n=2$, and we can assume that $\mathbf{a}_1=\mathbf{e}_{i_1}+\mathbf{e}_{i_2}$ and $\mathbf{a}_2=\mathbf{e}_{i_3}$ (one of them must have 1-norm equal to one and the other equal to two). From the equality $2\mathbf{e}_l+\mathbf{e}_j=\mathbf{e}_{i_1}+\mathbf{e}_{i_2}+\mathbf{e}_{i_3}$ we deduce that $\{i_1,i_2,i_3\}=\{l,j\}$. As $\mathbf{e}_{i_3}$ is an atom, by Lemma~\ref{lem:basic-facts-core}, $i_3\in L$, and so $i_3= l$. Therefore,  $2\mathbf{e}_l+\mathbf{e}_j=\mathbf{a}_1+\mathbf{e}_{l}$, which forces $\mathbf{a}_1=\mathbf{e}_l+\mathbf{e}_j$.
\end{proof}

\begin{proposition}\label{prop:betti-core}
Let $S$ an ideal extension of  $\mathbb{N}^{(I)}$, $L=\{i\in I : \mathbf{e}_i\in S\}$ and $J=I\setminus L$. Denote
\begin{itemize}
\item $B_1= \{\mathbf{e}_{l}+\mathbf{a}: l \in L, \mathbf{a}\in \mathcal{A}(\mathsf{C}(S))\setminus \mathcal{M}(S)\}$;
\item $B_2= \{2\mathbf{e}_{l}+\mathbf{h} : l\in L, \mathbf{h}\in \mathcal{H}(S)\setminus \{\mathbf{e}_j : j\in J \}\}$;
\item $B_3= \{\mathbf{e}_{l}+\mathbf{h}+\mathbf{a}: l\in L, \mathbf{h}\in \mathcal{H}(S), \mathbf{a}\in \mathcal{A}(\mathsf{C}(S))\}$; 
\item $B_4=\bigcup_{l,k \in L} (\mathbf{e}_{l}+\mathcal{H}(S))+(\mathbf{e}_{k}+\mathcal{H}(S))$.
\end{itemize} 
Then $\mathrm{Betti}(S)=B_1 \cup B_2 \cup B_3 \cup B_4 \cup \mathrm{Betti}(\mathsf{C}(S))$.
\end{proposition}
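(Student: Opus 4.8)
The plan is to split $\mathrm{Betti}(S)$ according to whether a Betti element lies in the core $\mathsf{C}(S)$ or not, writing $\mathrm{Betti}(S)=(\mathrm{Betti}(S)\cap\mathsf{C}(S))\cup(\mathrm{Betti}(S)\setminus\mathsf{C}(S))$, and identifying the first piece with $\mathrm{Betti}(\mathsf{C}(S))$ and the second with $B_1\cup B_2\cup B_3\cup B_4$. For the core piece I would use that $\mathsf{C}(S)$ is divisor-closed (Lemma~\ref{lem:basic-facts-core}): if $\mathbf{s}\in\mathsf{C}(S)$, then every atom $\mathbf{a}\le_S\mathbf{s}$ already lies in $\mathcal{A}(\mathsf{C}(S))$, and the edge condition $\mathbf{a}+\mathbf{b}\le_S\mathbf{s}$ is the same whether computed in $S$ or in $\mathsf{C}(S)$; hence $\mathbf{G}_\mathbf{s}$ is literally the same graph in both monoids, so $\mathbf{s}\in\mathrm{Betti}(S)$ if and only if $\mathbf{s}\in\mathrm{Betti}(\mathsf{C}(S))$. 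Since every element of $B_1\cup B_2\cup B_3\cup B_4$ has some $\mathbf{e}_l$ with $l\in L$ in its support and so lies outside $\mathsf{C}(S)$, it remains to establish the second identity.

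The key reduction is that a Betti element outside the core has minimal length two. Such an element is not an atom, so $\ell(\mathbf{s})\ge 2$; I then claim $\ell(\mathbf{s})\ge 3$ forces $\mathbf{G}_\mathbf{s}$ to be connected. Fix $l\in L\cap\operatorname{Supp}(\mathbf{s})$. Since $\mathbf{s}\notin\mathcal{A}(S)\cup 2\mathcal{A}(S)$, the element $\mathbf{s}-\mathbf{e}_l$ cannot be a gap (otherwise $\mathbf{s}=\mathbf{e}_l+(\mathbf{s}-\mathbf{e}_l)$ would be an atom by Lemma~\ref{lem:basic-facts-core}), so $\mathbf{e}_l$ is a vertex of $\mathbf{G}_\mathbf{s}$. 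By Lemma~\ref{lem:connect-to-M} every vertex is joined to some $\mathbf{m}\in\mathcal{M}(S)$, so it suffices to join each such $\mathbf{m}$ to $\mathbf{e}_l$. If $\mathbf{m}\ne\mathbf{e}_l$, then, as $\mathcal{M}(S)$ is an antichain, $l\notin\operatorname{Supp}(\mathbf{m})$ and $\mathbf{m}+\mathbf{e}_l\le\mathbf{s}$; writing $\mathbf{t}=\mathbf{s}-\mathbf{m}\in S$, if $\mathbf{t}-\mathbf{e}_l$ were a gap then $\mathbf{t}$ would be an atom (again Lemma~\ref{lem:basic-facts-core}) and $\ell(\mathbf{s})=2$, a contradiction; hence $\mathbf{s}-\mathbf{m}-\mathbf{e}_l\in S$ and $\mathbf{m}\mathbf{e}_l$ is an edge. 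Thus $\mathbf{G}_\mathbf{s}$ is connected, proving the claim.

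It follows that $\mathrm{Betti}(S)\setminus\mathsf{C}(S)$ consists of length-two elements outside the core, and for these Proposition~\ref{prop:l2-denumerant2} together with its easy converse (if $\ell(\mathbf{s})=2$ and $\mathbf{s}$ has a unique factorization, then the vertex set of $\mathbf{G}_\mathbf{s}$ is exactly its two atoms, joined by an edge, so $\mathbf{s}$ is not Betti) shows that being Betti is equivalent to not having a unique factorization. Therefore $\mathrm{Betti}(S)\setminus\mathsf{C}(S)=\{\mathbf{s}\in 2\mathcal{A}(S)\setminus\mathsf{C}(S):|\mathsf{Z}(\mathbf{s})|\ge 2\}$, and Lemma~\ref{lem:unique_fctr-2A-NotCS} pins down exactly which elements of $2\mathcal{A}(S)\setminus\mathsf{C}(S)$ must be removed (those of the form $\mathbf{e}_l+\mathbf{m}$ and $2\mathbf{e}_l+\mathbf{e}_j$). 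To finish, I would run through the possible types of an atom pair $\{\mathbf{a},\mathbf{b}\}$ with $\mathbf{a}+\mathbf{b}\notin\mathsf{C}(S)$, using the trichotomy of $\mathcal{A}(S)$ from Lemma~\ref{lem:basic-facts-core} into core atoms, the $\mathbf{e}_l$, and the ``$\mathbf{e}_l+{}$gap'' atoms, and match each surviving combination to one of $B_1,\dots,B_4$. Conversely, for each $B_i$ I would exhibit two genuinely distinct factorizations (for $B_1$ split the non-minimal core atom as $\mathbf{m}+\mathbf{y}$ with $\mathbf{y}$ a gap and regroup; for $B_2$ split the gap as $\mathbf{h}=\mathbf{e}_j+\mathbf{h}'$; for $B_3$ and $B_4$ peel off one $\mathbf{e}_l$ and refactor the remainder), confirming $B_i\subseteq\mathrm{Betti}(S)$.

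The main obstacle is the final combinatorial bookkeeping. One must verify that the split into pairs of atom types is exhaustive and that each surviving case lands in precisely one $B_i$, while being careful with borderline gaps: a gap decomposes into two nonzero gaps exactly when its $1$-norm exceeds one, and the norm-one gaps $\mathbf{e}_j$ are precisely those excised in $B_2$ and responsible for the unique-factorization family $2\mathbf{e}_l+\mathbf{e}_j$. Aligning these boundary cases with the definitions of $B_2$ and $B_4$, and in particular accounting correctly for the sums $\mathbf{e}_l+\mathbf{e}_k+\mathbf{h}$ with $l\ne k$, is where the matching is most delicate and deserves the closest checking.
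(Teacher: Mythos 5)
Your strategy coincides with the paper's: peel off the core using divisor-closedness, show that a Betti element outside $\mathsf{C}(S)$ has minimal length two by proving $\mathbf{G}_{\mathbf{s}}$ is connected whenever $\ell(\mathbf{s})\ge 3$ and $\mathbf{s}\notin\mathsf{C}(S)$, and then combine Proposition~\ref{prop:l2-denumerant2} with Lemma~\ref{lem:unique_fctr-2A-NotCS}. Your connectivity argument is a clean variant of the paper's: where the paper runs a case analysis over the three types of atoms from Lemma~\ref{lem:basic-facts-core} to connect an arbitrary vertex to $\mathbf{e}_k$, you first reduce to minimal vertices via Lemma~\ref{lem:connect-to-M} and then join each $\mathbf{m}\in\mathcal{M}(S)$ directly to $\mathbf{e}_l$ using the antichain property; both routes work, and yours is slightly shorter.

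The step you leave as ``bookkeeping'' --- matching the pairs of atom types to $B_1,\dots,B_4$ --- is also the step the paper compresses into ``the result follows directly from'' the two cited results, so in level of detail you are not behind the paper. But the case you flag as delicate, $\mathbf{e}_l+\mathbf{e}_k+\mathbf{h}$ with $l\neq k$, is genuinely problematic and you should push on it rather than defer it. Such an element has the two distinct length-two factorizations $\mathbf{e}_l+(\mathbf{e}_k+\mathbf{h})$ and $\mathbf{e}_k+(\mathbf{e}_l+\mathbf{h})$, so it is a Betti element by Proposition~\ref{prop:l2-denumerant2}; when $\lVert\mathbf{h}\rVert_1\ge 2$ it lands in $B_4$ after splitting $\mathbf{h}$ into two nonzero gaps, but when $\mathbf{h}=\mathbf{e}_j$ it lies in none of the $B_i$: its $1$-norm is $3$ while every element of $B_4$ has $1$-norm at least $4$ (since $\mathbf{0}\notin\mathcal{H}(S)$), and $B_1$, $B_2$, $B_3$ are excluded by comparing the parts supported on $L$. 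A concrete instance is $S=\{\mathbf{0}\}\cup(\{\mathbf{e}_1,\mathbf{e}_2,2\mathbf{e}_3\}+\mathbb{N}^3)$ and $\mathbf{s}=(1,1,1)$, whose graph $\mathbf{G}_{\mathbf{s}}$ has the two components $\{\mathbf{e}_1,(0,1,1)\}$ and $\{\mathbf{e}_2,(1,0,1)\}$. So the final matching cannot be completed for the statement as written; one needs an extra family $\{\mathbf{e}_l+\mathbf{e}_k+\mathbf{e}_j : l,k\in L,\ l\neq k,\ j\in J\}$ (equivalently, $B_4$ read with $\{\mathbf{0}\}\cup\mathcal{H}(S)$ and the unique-factorization sums $\mathbf{e}_l+\mathbf{e}_k$ removed). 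Your instinct about where the argument is fragile was exactly right; the honest conclusion is that this is a gap in the statement rather than merely in your write-up.
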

\begin{proof}
Observe that if $\mathbf{s}\in \mathsf{C}(S)$, as $\mathsf{C}(S)$ is divisor-closed, all the vertices of $\mathbf{G}_{\mathbf{s}}$ are in $\mathcal{A}(\mathsf{C}(S))$. Hence, $\operatorname{Betti}(\mathsf{C}(S))=\mathsf{C}(S)\cap \operatorname{Betti}(S)$. Thus, we can concentrate on the Betti elements of $S$ that are not in $\mathsf{C}(S)$. 

Let $\mathbf{s}\in \operatorname{Betti}(S)\setminus \mathsf{C}(S)$. If we prove that $\mathbf{s}\in 2\mathcal{A}(S)$, then our result follows directly by Lemma~\ref{lem:unique_fctr-2A-NotCS} and Proposition~\ref{prop:l2-denumerant2}. We show that if $\ell(\mathbf{s})\ge 3$ and $\mathbf{s}\not\in\mathsf{C}(S)$, then $\mathbf{G}_{\mathbf{s}}$ is connected.

We are assuming that $\mathbf{s}-\mathbf{e}_k\in \mathbb{N}^{(I)}$ for some $k\in L$ ($\mathbf{s}$ not in $\mathsf{C}(S)$), and that $\mathbf{s}=\mathbf{a}_1+\dots+\mathbf{a}_r$ for some $\mathbf{a}_1,\dots,\mathbf{a}_r\in\mathcal{A}(S)$, with $r=\ell(\mathbf{s})\ge 3$. Since $k$ is in the support of $\mathbf{s}$, we may suppose without loss of generality that $k$ is also in the support of $\mathbf{a}_r$. Then $\mathbf{s}-\mathbf{e}_k=\mathbf{a}_1+\dots+\mathbf{a}_{r-1}+(\mathbf{a}_r-\mathbf{e}_k)\in S$, which in particular means that $\mathbf{e}_k$ is a vertex of $\mathbf{G}_{\mathbf{s}}$. We prove that any other vertex of this graph is connected to $\mathbf{e}_k$.

Let $\mathbf{a}\in \mathcal{A}(S)$ such that $\mathbf{s}-\mathbf{a}\in S$ and $\mathbf{a}\neq \mathbf{e}_k$. Recall that by Lemma~\ref{lem:basic-facts-core}, either $\mathbf{a}\in \mathcal{A}(\mathsf{C}(S))$, or $\mathbf{a}=\mathbf{e}_l$ for some $l\in L\setminus\{k\}$, or $\mathbf{a}=\mathbf{e}_l+\mathbf{h}$ for some $l\in L$ and $\mathbf{h}\in \mathcal{H}(S)$. 
\begin{itemize}
    \item If either $\mathbf{a}\in \mathsf{C}(S)$ or $\mathbf{a}=\mathbf{e}_l$ or $\mathbf{a}=\mathbf{e}_l+\mathbf{h}$, with $l \in L\setminus\{k\}$ and $\mathbf{h}\in \mathcal{H}(S)$, then $\mathbf{s}-\mathbf{a}=\mathbf{a}_1'+\dots+\mathbf{a}_n'$ for some $\mathbf{a}_1',\dots,\mathbf{a}_n'\in \mathcal{A}(S)$, and as $\ell(\mathbf{s})\ge 3$, we have $n\ge 2$. Notice that $k$ is in the support of $\mathbf{s}$ but it is not in the support of $\mathbf{a}$, and so $k$ is in the support of $\mathbf{s}-\mathbf{a}$. This means that we can suppose without loss of generality that $k$ is in the support of $\mathbf{a}_n'$. Hence, $\mathbf{s}-\mathbf{a}-\mathbf{e}_k=\mathbf{a}_1'+\dots+\mathbf{a}_{n-1}'+(\mathbf{a}_n'-\mathbf{e}_k)\in S$. In particular, $\mathbf{a}$ is connected to $\mathbf{e}_k$ in $\mathbf{G}_{\mathbf{s}}$.
    \item If $\mathbf{a}=\mathbf{e}_k+\mathbf{h}$ with $\mathbf{h}\in \mathcal{H}(S)$, then $\mathbf{s}-\mathbf{e}_k-\mathbf{h}=\mathbf{a}_1'+\dots+\mathbf{a}_n'$ with $n\ge 2$ (recall that $\ell(\mathbf{s})\ge 3$). Hence, $\mathbf{s}=\mathbf{e}_k+\mathbf{h}+\mathbf{a}_1'+\dots+\mathbf{a}_n'$. We easily deduce that $\mathbf{a}\mathbf{a}_1'$ and $\mathbf{e}_k\mathbf{a}_1'$ are edges of $\mathbf{G}_{\mathbf{s}}$, which means that $\mathbf{a}$ and $\mathbf{e}_k$ are connected. 
\end{itemize}
This proves that $\mathbf{G}_{\mathbf{s}}$ is connected, which concludes the proof.
\end{proof}

\begin{remark}\rm
Observe that with the notation of the previous result we have $B_1 \cup B_2 \cup B_3 \cup B_4 \subseteq 2\mathcal{A}(S)$. In particular, the maximum minimal length of a Betti element in $S$ is attained by a Betti element in $\mathsf{C}(S)$.

\end{remark}

\section{Catenary degree}\label{sec:catenary-degree}

Let $S$ be a submonoid of $\mathbb{N}^{(I)}$. We can associate to any factorization $\sum_{\mathbf{a}\in \mathcal{A}(S)} \lambda_{\mathbf{a}} \mathbf{a}$ of an element $\mathbf{s}$ in $S$ the tuple $(\lambda_{\mathbf{a}})_{\mathbf{a} \in \mathcal{A}(S)}$. We are thus identifying $\mathcal{F}$, the free monoid on $\mathcal{A}(S)$, with $\mathbb{N}^{(\mathcal{A}(S))}$. In particular, we can write $\mathsf{Z}(\mathbf{s})=\{ (\lambda_{\mathbf{a}})_{\mathbf{a} \in \mathcal{A}(S)} : \sum_{\mathbf{a}\in \mathcal{A}(S)} \lambda_{\mathbf{a}} \mathbf{a}=\mathbf{s}\}$ and $|(\lambda_{\mathbf{a}})_{\mathbf{a} \in \mathcal{A}(S)}|=\lVert(\lambda_{\mathbf{a}})_{\mathbf{a} \in \mathcal{A}(S)}\rVert_1=\sum_{\mathbf{a}\in \mathcal{A}(S)} \lambda_{\mathbf{a}}$. Given $\mathbf{u}=(\lambda_{\mathbf{a}})_{\mathbf{a} \in \mathcal{A}(S)}$ and $\mathbf{v}=(\mu_{\mathbf{a}})_{\mathbf{a} \in \mathcal{A}(S)}$, define the \emph{distance} between $\mathbf{u}$ and $\mathbf{v}$ as 
\[
\operatorname{d}(\mathbf{u},\mathbf{v})=\max\{|\mathbf{u}-(\mathbf{u}\wedge \mathbf{v})|,|\mathbf{v}-(\mathbf{u}\wedge \mathbf{v})|\}. 
\]

If $\mathbf{u}$ and $\mathbf{v}$ are factorizations of $\mathbf{s}$, then an $N$-\emph{chain} joining $\mathbf{u}$ and $\mathbf{v}$ is a sequence $\mathbf{u}_1,\dots,\mathbf{u_n}\in \mathsf{Z}(\mathbf{s})$ such that $\mathbf{u}_1=\mathbf{u}$, $\mathbf{u}_n=\mathbf{v}$, and $\operatorname{d}(\mathbf{u}_i,\mathbf{u}_{i+1})\le N$ for all $i\in \{1,\dots,n-1\}$.

The \emph{catenary degree} of $\mathbf{s}$, $\mathsf{c}(\mathbf{s})$, is the minimum positive integer $N$ such that for any two factorizations of $\mathbf{s}$ there exists an $N$-chain joining them. The catenary degree of $S$ is defined as 
\[
\mathsf{c}(S)=\sup\{ \mathsf{c}(\mathbf{s}) : \mathbf{s}\in S\}.
\]

Let $\mathbf{z},\mathbf{z}'$ be two factorizations of $\mathbf{s}\in S$. We say that $\mathbf{z}$ and $\mathbf{z}'$ are $\mathcal{R}$-related if there exists $\mathbf{z}_1,\dots,\mathbf{z}_n\in \mathsf{Z}(\mathbf{s})$ such that $\mathbf{z}_1=\mathbf{z}$, $\mathbf{z}_n=\mathbf{z}'$, and $\mathbf{z}_i\wedge \mathbf{z}_{i+1}$ is not zero. The $\mathcal{R}$ relation is an equivalence relation; its equivalence classes are known as the $\mathcal{R}$-\emph{classes of factorizations} of $\mathbf{s}$. A factorization $\mathbf{z}$ of $\mathbf{s}$ is called \emph{isolated} if $\{\mathbf{z}\}$ is an $\mathcal{R}$-class of $\mathsf{Z}(\mathbf{s})$, that is, $\mathbf{z}$ is not $\mathcal{R}$-related to any other factorization of $\mathbf{s}$.

The number of $\mathcal{R}$-classes of factorizations of $\mathbf{s}$ is precisely the number of connected components of $\mathbf{G}_{\mathbf{s}}$ (see for instance \cite[Proposition~9.7]{fg}; here we are using that the number of atoms in $\operatorname{B}(\mathbf{s})$ are those that can be vertices of $\mathbf{G}_\mathbf{s}$ and are the only ones that can appear in the factorizations of $\mathbf{s}$; and $\mathcal{A}(S)\cap\operatorname{B}(\mathbf{s})$ has finite cardinality).

It is well known (see for instance \cite[Section~3]{ph}; the number of atoms below an element is finite and we could also apply \cite[Theorem~3.1]{cgspr}) that if $\mathcal{R}_1,\dots,\mathcal{R}_n$ are the different $\mathcal{R}$-classes of factorizations of $\mathbf{s}$, 
\begin{equation}\label{eq:catenary-r-classes}
\mathsf{c}(\mathbf{s})=\max\{ \min\{ |\mathbf{z}| : \mathbf{z}\in \mathcal{R}_i\}: i\in \{1,\dots,n\}\},
\end{equation}
and that (see for instance \cite{bgs})
\[
\mathsf{c}(S)=\sup\{\mathsf{c}(\mathbf{b}) : \mathbf{b}\in \operatorname{Betti}(S)\}.
\]





We are going to use \eqref{eq:catenary-r-classes} to compute an upper bound for $\mathsf{c}(S)$. To this end, we first need to bound the maximal length of the isolated factorizations of the Betti elements of $S$.

\begin{lemma}\label{lem:unique-fact-l-ge-3}
Let $S$ be an ideal extension of  $\mathbb{N}^{(I)}$, $\emptyset\neq I\subseteq\mathbb{N}$, and let $L=\{ i \in I : \mathbf{e}_i \in S\}$. 
Let $\mathbf{s}\in \mathsf{C}(S)$ such that $\ell(\mathbf{s})\geq 3$. Then $\mathbf{s}$ has a unique factorization if and only if  $\mathbf{s}=7\mathbf{e}_i$ for some $i\in I\setminus L$ such that $2\mathbf{e}_i\in \mathsf{C}(S)$.
\end{lemma}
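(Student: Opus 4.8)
The plan is to prove both implications, treating the ``if'' direction as a short numerical check and concentrating on the ``only if'' direction. For the easy direction, suppose $\mathbf{s}=7\mathbf{e}_i$ with $i\in I\setminus L$ and $2\mathbf{e}_i\in\mathsf{C}(S)$. Since $\mathbf{e}_i\notin S$ while $2\mathbf{e}_i\in S$, the set $T_i=\{t\in\mathbb{N}:t\mathbf{e}_i\in S\}$ is the numerical semigroup $\{0\}\cup\{t\geq 2\}=\langle 2,3\rangle$; as any factorization of a multiple of $\mathbf{e}_i$ uses only multiples of $\mathbf{e}_i$, the factorizations of $7\mathbf{e}_i$ correspond to those of $7$ in $\langle 2,3\rangle$, and $7=2+2+3$ is the only one. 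Hence $\mathbf{s}$ has a unique factorization and $\ell(\mathbf{s})=3$.

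For the converse, fix the unique factorization $\mathbf{s}=\mathbf{a}_1+\dots+\mathbf{a}_r$ with $r=\ell(\mathbf{s})\geq 3$ and all $\mathbf{a}_k\in\mathcal{A}(\mathsf{C}(S))$ (recall $\mathsf{C}(S)$ is divisor-closed, so factorizations in $S$ and in $\mathsf{C}(S)$ agree). The first observation is a \emph{sub-sum principle}: cancelling common parts of multisets shows that the sum of any sub-multiset of $\{\mathbf{a}_1,\dots,\mathbf{a}_r\}$ again has a unique factorization, namely that very sub-multiset. Applying this to pairs and invoking Lemma~\ref{lem:unique_fctr-2A-CS}, each $\mathbf{a}_k+\mathbf{a}_l$ ($k\neq l$) is either a sum of two minimal elements, or of the form $2\mathbf{m}+\mathbf{e}_j=\mathbf{m}+(\mathbf{m}+\mathbf{e}_j)$ with $\mathbf{m}\in\mathcal{M}(\mathsf{C}(S))$ and $j\in I\setminus L$. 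In particular at most one of the $\mathbf{a}_k$ can be non-minimal, since a pair of non-minimal atoms fits neither type.

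The workhorse is an \emph{exchange trick}: if $\mathbf{m}$ is a minimal atom in the factorization and $j\in\operatorname{Supp}(\mathbf{a}_q)$ for some other atom $\mathbf{a}_q$, then $\mathbf{m}+\mathbf{e}_j$ is again an atom (Lemma~\ref{lem:a+ei-l-le-3}, as $\mathbf{e}_j\notin\mathcal{M}(S)$), and, borrowing the $\mathbf{e}_j$ from $\mathbf{a}_q$ and absorbing the remainder into a surviving atom (which exists because $r\geq 3$), one gets $\mathbf{s}-(\mathbf{m}+\mathbf{e}_j)\in S^*$. As $\mathbf{m}+\mathbf{e}_j$ is non-minimal, it yields a factorization of $\mathbf{s}$ distinct from the unique one, a contradiction, whenever $\mathbf{m}+\mathbf{e}_j$ is not already one of the $\mathbf{a}_k$. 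I will use this to eliminate the ``spread out'' configurations: first, if all atoms are minimal, the trick with any $j\in\operatorname{Supp}(\mathbf{a}_1)$ applied to a second atom gives a contradiction; second, when exactly one atom $\mathbf{m}+\mathbf{e}_i$ is non-minimal, the same trick forces every remaining minimal atom to equal a single $\mathbf{m}$ and forces $\operatorname{Supp}(\mathbf{m})=\{i\}$, so that $\mathbf{m}=c\mathbf{e}_i$ and $\mathbf{s}=(rc+1)\mathbf{e}_i$ lies on a single axis.

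Finally I reduce to the numerical semigroup $T_i=\{0\}\cup\{t\geq c\}$, whose atoms are $c,c+1,\dots,2c-1$, and study $n=rc+1$ with a unique factorization of minimal length $r\geq 3$. If $c\geq 3$, then $3c+1=(c+2)+(2c-1)$ is a length-two factorization, which together with $r-3$ copies of $c$ produces a factorization of $n$ of length $r-1<r$, contradicting minimality; hence $c=2$. For $c=2$ we have $T_i=\langle 2,3\rangle$ and $n=2r+1$, and replacing three summands $2$ by two summands $3$ exhibits a shorter factorization once $r\geq 4$; therefore $r=3$ and $\mathbf{s}=7\mathbf{e}_i$, with $2\mathbf{e}_i=\mathbf{m}\in\mathsf{C}(S)$. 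The main obstacle is the bookkeeping in the exchange trick: at each application one must verify that the newly created atom genuinely differs from every atom of the given factorization (non-minimality settles this) and that the complementary vector truly lies in $S^*$ and not merely in $\mathbb{N}^{(I)}$, which is where the hypothesis $r\geq 3$ and the ideal property of $S^*$ are essential.
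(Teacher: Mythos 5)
Your proposal is correct and follows essentially the same route as the paper's proof: reduce to sub-sums with unique factorization, apply Lemma~\ref{lem:unique_fctr-2A-CS} to pairs to show at most one atom is non-minimal, use the exchange $\mathbf{m}\mapsto\mathbf{m}+\mathbf{e}_j$ (valid by Lemma~\ref{lem:a+ei-l-le-3} and the ideal property of $S^*$) to force all atoms onto a single axis, and finish with arithmetic in $\{0\}\cup(c+\mathbb{N})$. The only difference is organizational — you treat all $r$ atoms at once and conclude $\mathbf{s}=(rc+1)\mathbf{e}_i$ before the numerics, whereas the paper first reduces to the triple $\mathbf{a}_1+\mathbf{a}_2+\mathbf{a}_3=7\mathbf{e}_i$ and then rules out $r\ge 4$ via $9\mathbf{e}_i$ — which is not a substantive departure.
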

\begin{proof}
We start by proving the sufficiency. It is clear that if $2\mathbf{e}_i\in \mathsf{C}(S)$, then $2\mathbf{e}_i\in \mathcal{M}(S)$ and by Lemma~\ref{lem:a+ei-l-le-3}, $3\mathbf{e}_i$ is also an atom of $S$. If follows that $7\mathbf{e}_i=2\mathbf{e}_i+2\mathbf{e}_i+3\mathbf{e}_i$ is the unique factorization of $7\mathbf{e}_i$. Observe that in this setting $\ell(\mathbf{s})=3$.

For the converse, 
express $\mathbf{s}$ as $\mathbf{s}=\mathbf{a}_1+\dots+\mathbf{a}_n$, with $\mathbf{a}_1,\dots,\mathbf{a}_n\in \mathcal{A}(S)$ and $n=\ell(\mathbf{s})\ge 3$. 
If $\mathbf{s}$ has a unique factorization, then the same happens with $\mathbf{t}=\mathbf{a}_1+\mathbf{a}_2+\mathbf{a}_3$, and as $\ell(\mathbf{s})=n$, we have that $\ell(\mathbf{t})=3$. Moreover, $\mathbf{a}_i+\mathbf{a}_j$ has unique factorization for every $i,j\in \{1,2,3\}$ and $i\neq j$.

If $\mathbf{a}_1$ is not minimal, then for $\mathbf{a}_1+\mathbf{a}_2$ to have a unique minimal factorization, according to Lemma~\ref{lem:unique_fctr-2A-CS}, $\mathbf{a}_1=\mathbf{m}+\mathbf{e}_i$ for some $\mathbf{m}\in \mathcal{M}(S)$, $i\in I\setminus L$, and $\mathbf{a}_2=\mathbf{m}$, and then $\mathbf{t}=(\mathbf{m}+\mathbf{e}_i)+\mathbf{m}+\mathbf{a}_3$. If $\mathbf{a}_3$ is not minimal, then by applying Lemma~\ref{lem:unique_fctr-2A-CS} to $\mathbf{m}+\mathbf{a}_3$, we deduce that there exists $j\in I\setminus L$ such that $\mathbf{a}_3=\mathbf{m}+\mathbf{e}_j$. Thus, $\mathbf{t}=(\mathbf{m}+\mathbf{e}_i)+(\mathbf{m}+\mathbf{e}_j)+\mathbf{m}$. But then, according to Lemma~\ref{lem:unique_fctr-2A-CS}, $\mathbf{a}_1+\mathbf{a}_3$ will not have a unique factorization, which is impossible. This forces $\mathbf{a}_3$ to be minimal, and as $\mathbf{a}_1+\mathbf{a}_3$ has a unique factorization we in addition have that $\mathbf{a}_3=\mathbf{m}$. Therefore $\mathbf{t}=(\mathbf{m}+\mathbf{e}_i)+\mathbf{m}+\mathbf{m}$. If there exists $k\in I\setminus \{i\}$ such that $\mathbf{m}-\mathbf{e}_k\in \mathbb{N}^{(I)}$, then $k\not\in L$ and from $\mathbf{t}=(\mathbf{m}+\mathbf{e}_k)+(2\mathbf{m}+\mathbf{e}_i-\mathbf{e}_k)$ we obtain a different factorization of $\mathbf{t}$, since $\mathbf{m}+\mathbf{e}_k\in \mathcal{A}(S)$ (Lemma~\ref{lem:a+ei-l-le-3}) and $2\mathbf{m}+\mathbf{e}_i-\mathbf{e}_k\in S$. If for all $k\in I\setminus \{i\}$ we have $\mathbf{m}-\mathbf{e}_k\notin \mathbb{N}^{(I)}$, then $\mathbf{m}=m\mathbf{e}_i$ for some positive integer $m$, and as $i\not\in L$, $m\geq 2$. If $m\ge 3$, $(m+2)\mathbf{e}_i\in \mathcal{A}(S)$ and $\mathbf{s}=(2m-1)\mathbf{e}_i+(m+2)\mathbf{e}_i$ is a another factorization of $\mathbf{t}$, a contradiction. Thus $m=2$, and $\mathbf{t}=7\mathbf{e}_i$. 

We can argue analogously if $\mathbf{a}_2$ or $\mathbf{a}_3$ are not minimal elements of $S^*$. So, the only remaining case is $\mathbf{t}=\mathbf{m}_1+\mathbf{m}_2+\mathbf{m}_3$ with $\mathbf{m}_1,\mathbf{m}_2,\mathbf{m}_3\in \mathcal{M}(S)$. We can assume that $\mathbf{m_3}-\mathbf{e}_i\in \mathbb{N}^{(I)}$ for some $i\in I$, and $i\not\in L$ ($\mathbf{a}_i+\mathbf{a}_j\in \mathsf{C}(S)$). Then, from 
$\mathbf{t}=(\mathbf{m}_1+\mathbf{e}_i)+(\mathbf{m}_2+\mathbf{m}_3-\mathbf{e}_i)$ we obtain a different factorization of 
$\mathbf{t}$, since $\mathbf{m}_1+\mathbf{e}_i\in \mathcal{A}(S)\setminus \mathcal{M}(S)$ (Lemma~\ref{lem:a+ei-l-le-3}) and $\mathbf{m}_2+\mathbf{m}_3-\mathbf{e}_i\in S$ (and in the first factorization only elements in $\mathcal{M}(S)$ are involved).

This proves that if $\mathbf{s}$ has unique factorization, then $\mathbf{a}_1+\mathbf{a}_2+\mathbf{a}_3=7\mathbf{e}_i$, and we can choose $\mathbf{a}_1=3\mathbf{e}_i$ and $\mathbf{a}_2=\mathbf{a}_3=2\mathbf{e}_i$. If $n\ge 4$, the same argument shows that $\mathbf{a}_1+\mathbf{a}_2+\mathbf{a}_4=7\mathbf{e}_i$, proving in this way that $\mathbf{a}_3=\mathbf{a}_4=2\mathbf{e}_i$. However, $\mathbf{a}_1+\mathbf{a}_2+\mathbf{a}_3+\mathbf{a}_4=9\mathbf{e}_i$ has two different factorizations $3\mathbf{a}_1=3(3\mathbf{e}_i)=3\mathbf{e}_i+3(2\mathbf{e}_i)=\mathbf{a}_1+3\mathbf{a}_2$, a contradiction. We conclude that $n=3$ and $\mathbf{s}=7\mathbf{e}_i$.
\end{proof}

\begin{lemma}\label{lem:isolated-more-4-GA}
Let $S\subseteq \mathbb{N}^{(I)}$ be a gap absorbing monoid. Let $\mathbf{s}\in \operatorname{Betti}(S)$ and $\mathbf{z}\in \mathsf{Z}(\mathbf{s})$. If $\lvert \mathbf{z} \rvert\geq 4$, then $\mathbf{z}$ is not isolated.
\end{lemma}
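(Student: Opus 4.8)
The plan is to argue by contradiction: suppose $\mathbf z=\mathbf a_1+\cdots+\mathbf a_n$ with $n=\lvert\mathbf z\rvert\ge 4$ is an isolated factorization of the Betti element $\mathbf s$. The first and most useful reduction is that isolation forces every \emph{proper} sub-sum of $\mathbf z$ to be uniquely factorable: if $Y$ is a sub-multiset of $\{\mathbf a_1,\dots,\mathbf a_n\}$ with $2\le\lvert Y\rvert<n$ and the sum of the atoms in $Y$ admitted a factorization other than $Y$ itself, then adjoining the (nonempty) complementary atoms would yield a factorization of $\mathbf s$ distinct from $\mathbf z$ yet sharing an atom with it, contradicting that $\{\mathbf z\}$ is an $\mathcal R$-class. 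In particular, since $n\ge 4$, every sub-pair $\mathbf a_i+\mathbf a_j$ and every sub-triple $\mathbf a_i+\mathbf a_j+\mathbf a_k$ has a unique factorization.

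Next I would extract rigidity from this. Applying Lemma~\ref{lem:unique_fctr-2A-CS} and Lemma~\ref{lem:unique_fctr-2A-NotCS} to the uniquely factorable pairs, in each admissible form at least one of the two atoms lies in $\mathcal M(S)$; hence at most one $\mathbf a_i$ is non-minimal. Moreover, three atoms in $\mathcal M(\mathsf C(S))$ would sum to a uniquely factorable element of $\mathsf C(S)$ of minimal length $3$, which by Lemma~\ref{lem:unique-fact-l-ge-3} must equal $7\mathbf e_i$; but its factorization $3\mathbf e_i+2\mathbf e_i+2\mathbf e_i$ involves the non-minimal atom $3\mathbf e_i$, a contradiction. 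So at most two atoms are core-minimal, and therefore at least $n-3\ge 1$ of the $\mathbf a_i$ equal a unit $\mathbf e_l$ with $l\in L$. Finally, not all $\mathbf a_i$ can be units: otherwise $\mathbf s\in\langle \mathbf e_l:l\in L\rangle$ would have a single factorization and $\mathbf G_\mathbf s$ would be connected, contradicting that $\mathbf s$ is a Betti element. Thus $\mathbf z$ contains both a unit $\mathbf e_l$ and an atom $\mathbf c$ with $\lVert\mathbf c\rVert_1\ge 2$.

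The heart of the argument is then to manufacture a forbidden second factorization from these two atoms by \emph{splitting off a gap and re-absorbing it into a unit}, using that $\mathbf e_l+\mathbf h$ is an atom for every gap $\mathbf h$ (Lemma~\ref{lem:basic-facts-core}) and that $\mathbf m+\mathbf e_j$ is an atom for $\mathbf m\in\mathcal M(S)$ and $j\in J$ (Lemma~\ref{lem:a+ei-l-le-3}). If some atom is a core-minimal $\mathbf m$, choose a third minimal atom $\mathbf c_3$ (at least $n-1\ge 3$ atoms are minimal) and write $\mathbf m=\mathbf e_j+\mathbf g$ with $\mathbf g$ a gap; then $\mathbf e_l+\mathbf m+\mathbf c_3=(\mathbf e_l+\mathbf g)+(\mathbf c_3+\mathbf e_j)$ exhibits a second factorization of this sub-triple. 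If $\mathbf c=\mathbf m+\mathbf h$ is a core non-minimal atom, then already $\mathbf e_l+\mathbf c=(\mathbf e_l+\mathbf h)+\mathbf m$ is a second factorization of the sub-pair; and if $\mathbf c=\mathbf e_{l'}+\mathbf h$ with $l'\ne l$, likewise $\mathbf e_l+\mathbf c=(\mathbf e_l+\mathbf h)+\mathbf e_{l'}$. The one configuration escaping these pair and triple moves is $\mathbf z=p\,\mathbf e_l+(\mathbf e_l+\mathbf h)$ with $p=n-1\ge 3$; here $\operatorname{Supp}(\mathbf h)\cap J\ne\emptyset$ (a gap supported only on $L$ would lie in $S$), so either $\lVert\mathbf h\rVert_1\ge 2$ and splitting $\mathbf h=\mathbf h_1+\mathbf h_2$ into two gaps gives $\mathbf s=(p-1)\mathbf e_l+(\mathbf e_l+\mathbf h_1)+(\mathbf e_l+\mathbf h_2)$, a factorization sharing $\mathbf e_l$ with $\mathbf z$, or $\mathbf h=\mathbf e_j$ and $\mathbf s=(p+1)\mathbf e_l+\mathbf e_j$ is uniquely factorable, contradicting once more that $\mathbf s$ is Betti.

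I expect the main obstacle to be exactly this last non-core configuration and, more generally, the case bookkeeping of the previous paragraph: one must track which auxiliary atoms are guaranteed to exist (this is where $n\ge 4$ is used), verify in each subcase that the rewritten expression is a genuinely different factorization that still meets $\mathbf z$ in an atom, and dispatch the degenerate case $\mathbf h=\mathbf e_j$ through unique factorization rather than a rewriting move. The gap-absorption identities of Lemma~\ref{lem:basic-facts-core} and Lemma~\ref{lem:a+ei-l-le-3}, together with the unique-factorization classifications of Lemmas~\ref{lem:unique_fctr-2A-CS}, \ref{lem:unique_fctr-2A-NotCS} and \ref{lem:unique-fact-l-ge-3}, are what make every branch collapse to a contradiction.
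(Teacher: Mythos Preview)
Your argument is correct and takes a genuinely different route from the paper's. The paper splits into the cases $\mathbf{s}\in\mathsf{C}(S)$ and $\mathbf{s}\notin\mathsf{C}(S)$; in the first it applies Lemma~\ref{lem:unique-fact-l-ge-3} to sub-triples much as you do, but in the second it proceeds by first forcing one of the $\mathbf a_i$ to equal $\mathbf e_l$, then pulls in a factorization of $\mathbf s$ from a \emph{different} $\mathcal R$-class and uses the gap-absorbing condition (GA2) to bound its length, forcing $n-1\le 2$. You instead classify the atoms of $\mathbf z$ completely (at most one non-minimal, at most two core-minimal, hence at least $n-3$ units) and then manufacture a second factorization of a proper sub-sum by an explicit gap-transfer rewrite in every configuration; only the degenerate endpoints (all units, or $\mathbf h=\mathbf e_j$) appeal to the Betti hypothesis $|\mathsf Z(\mathbf s)|\ge 2$. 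A pleasant by-product is that your proof never invokes (GA1) or (GA2)---only the ideal-extension structure through Lemmas~\ref{lem:a+ei-l-le-3} and~\ref{lem:basic-facts-core} and the unique-factorization classifications---so it actually establishes the lemma for every ideal extension of $\mathbb N^{(I)}$, extending the paper's own remark (which notes this only for $\mathbf s\in\mathsf C(S)$). The cost is more case bookkeeping: one must check in each branch that the rewrite lands on genuine atoms, really differs from the original sub-factorization, and that the needed auxiliary atom (e.g.\ the third minimal $\mathbf c_3$) is available, and the residual configuration $\mathbf z=p\,\mathbf e_l+(\mathbf e_l+\mathbf h)$ with $p\ge 3$ requires its own disposal via the split $\mathbf h=\mathbf h_1+\mathbf h_2$.
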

\begin{proof}
By Proposition~\ref{prop:gap-absorbing-implies-ideal}, we know that $S$ is an ideal extension of $\mathbb{N}^{(I)}$. We distinguish two cases depending on if $\mathbf{s}$ is in $\mathsf{C}(S)$ or not. Let $L=\{i\in I : \mathbf{e}_i \in S\}$.

Suppose we have a factorization $\mathbf{z}$ that corresponds to the expression $\mathbf{s}=\mathbf{a}_1+\cdots+\mathbf{a}_n$, with $\mathbf{a}_i\in \mathcal{A}(S)$ for all $i\in \{1,\ldots,n\}$ and $n\geq 4$, and assume that $\mathbf{z}$ is isolated.

If $\mathbf{s}\in \mathsf{C}(S)$, and $\mathbf{z}$ is isolated, then $\mathbf{a}_1+\mathbf{a}_2+\mathbf{a}_3$ must have a unique expression, and so by Lemma~\ref{lem:unique-fact-l-ge-3} $2\mathbf{e}_i\in S$, $i\not\in L$, and $\mathbf{a}_1+\mathbf{a}_2+\mathbf{a}_3=7\mathbf{e}_i$. Moreover, from the proof of Lemma~\ref{lem:unique-fact-l-ge-3} we can assume that $\mathbf{a}_1=3\mathbf{e}_i$ and $\mathbf{a}_2=2\mathbf{e}_i=\mathbf{a}_3$; $\mathbf{a}_1$ is the only atom in the expression $\mathbf{a}_1+\mathbf{a}_2+\mathbf{a}_3$ that is not minimal. Also, from the proof of Lemma~\ref{lem:unique-fact-l-ge-3}, we deduce that $\mathbf{a}_1+\mathbf{a}_2+\mathbf{a}_4$ is also of the form $7\mathbf{e}_j$ for some $j$, but as $\operatorname{Supp}(\mathbf{a}_1)=\{i\}$, we derive that $i=j$. In particular $\mathbf{a}_4=2\mathbf{e}_i$. From $3\mathbf{a}_1=\mathbf{a}_1+3\mathbf{a}_2=\mathbf{a}_1+\mathbf{a}_2+\mathbf{a}_3+\mathbf{a}_4$ we deduce that $\mathbf{z}$ cannot be an isolated factorization.

Now, suppose that $\mathbf{s}\not \in \mathsf{C}(S)$, that is, there exists $l\in L$ such that $\mathbf{s}\in \mathbf{e}_l+\mathbb{N}^{(I)}$. As $l\in \operatorname{Supp}(\mathbf{s})$, there exists $i\in \{1,\dots,n\}$ such that $l\in \operatorname{Supp}(\mathbf{a}_i)$. We can assume without loss of generality that $i=n$, and so $\mathbf{s}-\mathbf{e}_l=\mathbf{a}_1+\dots+\mathbf{a}_{n-1}+(\mathbf{a}_n-\mathbf{e}_l)\in S$. Write $\mathbf{a}_{n-1}+(\mathbf{a}_n-\mathbf{e}_l)=\mathbf{b}_1+\dots+\mathbf{b}_k$ for some $\mathbf{b}_1,\dots,\mathbf{b}_k\in \mathcal{A}(S)$. Then $\mathbf{s}=\mathbf{e}_l+\mathbf{a}_1+\dots+\mathbf{a}_{n-2}+\mathbf{b}_1+\dots+\mathbf{b}_k$. Since $\mathbf{z}$ is an isolated factorization, $k=1$, $\mathbf{b}_1=\mathbf{a}_{n-1}$, and $\mathbf{a}_n=\mathbf{e}_l$, so $\mathbf{s}=\mathbf{a}_1+\dots+\mathbf{a}_{n-1}+\mathbf{e}_l$. As $\mathbf{z}$ is isolated and $\mathsf{Z}(\mathbf{s})$ has more than one $\mathcal{R}$-class, there exists an expression of $\mathbf{s}$ of the form $\mathbf{s}=\mathbf{a}_1'+\dots+\mathbf{a}_m'$ with $\{\mathbf{a}_1',\dots,\mathbf{a}_m'\}\subseteq \mathcal{A}(S)\setminus\{\mathbf{e}_l,\mathbf{a}_1,\dots,\mathbf{a}_{n-1}\}$ and $m\ge 2$. As $l\in \operatorname{Supp}(\mathbf{s})$ we can suppose without loss of generality that $l$ is in the support of $\mathbf{a}_m'$, and then by Lemma~\ref{lem:basic-facts-core}, $\mathbf{a}_m'=\mathbf{e}_l+\mathbf{h}$ with $\mathbf{h}\in \{\mathbf{0}\}\cup \mathcal{H}(S)$. Hence,  $\mathbf{s}=\mathbf{e}_l+\mathbf{a}_1'+\dots+(\mathbf{a}_{m-1}'+\mathbf{h})$. 
Notice that as $\mathbf{z}$ is isolated, we must have $m=2$ and $\mathbf{h}\neq \mathbf{0}$; in fact, it is trivial to see that $\mathbf{h}$ must be nonzero, and if $m>2$, then, as $\mathbf{a}_{m-1}'+\mathbf{h}\in S$, we can write $\mathbf{s}=\mathbf{e}_l+\mathbf{a}_1'+\cdots+\mathbf{a}_{m-2}'+\mathbf{b}_1'+\cdots+\mathbf{b}_r'$, with $\mathbf{b}_1',\ldots,\mathbf{b}_r'\in \mathcal{A}(S)$, and as $\mathbf{a}_i'\notin\{\mathbf{e}_l,\mathbf{a}_1,\ldots,\mathbf{a}_{n-1}\}$ we obtain a different factorization of $\mathbf{s}$ in the same $\mathcal{R}$-class of $\mathbf{z}$. Therefore,  $\mathbf{s}=\mathbf{e}_l+\mathbf{a}_1'+\mathbf{h}$. Since $S$ is gap absorbing, $\mathbf{a}_1'+\mathbf{h}\in \mathcal{A}(S)\cup 2\mathcal{A}(S)$. So either $\mathbf{s}=\mathbf{e}_l+\mathbf{a}_1+\dots+\mathbf{a}_{n-1}=\mathbf{e}_l+\mathbf{c}_1$ or  $\mathbf{s}=\mathbf{e}_l+\mathbf{a}_1+\dots+\mathbf{a}_{n-1}=\mathbf{e}_l+\mathbf{c}_1+\mathbf{c}_2$, with $\mathbf{c}_1,\mathbf{c}_2\in\mathcal{A}(S)$. By using once more the fact that the factorization $\mathbf{z}$ is isolated, $n-1\in\{1,2\}$, which contradicts the hypothesis $n\ge 4$.
\end{proof}

\begin{remark}
    Let $S$ be an ideal extension of $\mathbb{N}^{(I)}$. Observe that Lemma~\ref{lem:isolated-more-4-GA} holds even if $S$ is not gap absorbing in the case $S=\mathsf{C}(S)$, since in the first part of the proof we did not use the assumption of gap absorbing.
\end{remark}

\begin{theorem}\label{th:bound-catenary}
    Let $S$ be a gap absorbing monoid. Then $\mathsf{c}(S)\le 4$.
\end{theorem}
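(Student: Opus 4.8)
The plan is to use the reduction $\mathsf{c}(S)=\sup\{\mathsf{c}(\mathbf{b}):\mathbf{b}\in\operatorname{Betti}(S)\}$ recorded after equation~\eqref{eq:catenary-r-classes}, so that it suffices to bound $\mathsf{c}(\mathbf{b})\le 4$ for an arbitrary Betti element $\mathbf{b}$. Two earlier results feed into this: Theorem~\ref{thm:lge4-not-betti} tells us that $\mathbf{b}\in 2\mathcal{A}(S)\cup 3\mathcal{A}(S)$, hence $\ell(\mathbf{b})\le 3$; and Proposition~\ref{prop:non-decreasing-length} supplies the monotonicity $\ell(\mathbf{s})\le\ell(\mathbf{t})$ whenever $\mathbf{s}\le\mathbf{t}$ in $S$.

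First I would invoke the description \eqref{eq:catenary-r-classes}, namely $\mathsf{c}(\mathbf{b})=\max_i\min\{|\mathbf{z}|:\mathbf{z}\in\mathcal{R}_i\}$, where $\mathcal{R}_1,\dots,\mathcal{R}_n$ are the $\mathcal{R}$-classes of $\mathsf{Z}(\mathbf{b})$. Thus it is enough to show that every $\mathcal{R}$-class of $\mathbf{b}$ contains a factorization of length at most $4$. Fix a class $\mathcal{R}_i$ and any factorization $\mathbf{z}\in\mathcal{R}_i$. Since $\mathbf{b}$ is a Betti element it is not an atom, so $\mathbf{z}$ uses at least one atom $\mathbf{a}$; then $\mathbf{b}=\mathbf{a}+(\mathbf{b}-\mathbf{a})$ with $\mathbf{b}-\mathbf{a}\in S$ and $\mathbf{b}-\mathbf{a}\le\mathbf{b}$.

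Now the monotonicity of $\ell$ (Proposition~\ref{prop:non-decreasing-length}) gives $\ell(\mathbf{b}-\mathbf{a})\le\ell(\mathbf{b})\le 3$, so $\mathbf{b}-\mathbf{a}$ admits a factorization $\mathbf{w}$ with $|\mathbf{w}|\le 3$. Adjoining one copy of $\mathbf{a}$ yields a factorization $\mathbf{w}+\mathbf{a}\in\mathsf{Z}(\mathbf{b})$ of length at most $4$. Since both $\mathbf{z}$ and $\mathbf{w}+\mathbf{a}$ contain the atom $\mathbf{a}$, we have $(\mathbf{w}+\mathbf{a})\wedge\mathbf{z}\neq\mathbf{0}$, so $\mathbf{w}+\mathbf{a}$ is $\mathcal{R}$-related to $\mathbf{z}$ and therefore lies in $\mathcal{R}_i$. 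Hence $\min\{|\mathbf{z}|:\mathbf{z}\in\mathcal{R}_i\}\le 4$ for every $i$, giving $\mathsf{c}(\mathbf{b})\le 4$ and, taking the supremum over $\operatorname{Betti}(S)$, the bound $\mathsf{c}(S)\le 4$.

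I expect the only delicate points to be bookkeeping rather than genuine obstacles: one must ensure the chosen atom $\mathbf{a}$ actually appears in $\mathbf{z}$, so that the new factorization stays in the same $\mathcal{R}$-class, and one must apply the bound $\ell(\mathbf{b})\le 3$ precisely where Theorem~\ref{thm:lge4-not-betti} licenses it. An alternative route, closer in spirit to the preceding Lemma~\ref{lem:isolated-more-4-GA}, is to take a minimal-length factorization $\mathbf{z}$ in a class and argue by contradiction: if $|\mathbf{z}|\ge 5$ then in particular $|\mathbf{z}|\ge 4$, so Lemma~\ref{lem:isolated-more-4-GA} forces $\mathbf{z}$ to be non-isolated, and the same atom-removal step produces a strictly shorter representative of the same class, contradicting minimality. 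Either way, the whole statement collapses to the single inequality $\ell(\mathbf{b})\le 3$ combined with length monotonicity.
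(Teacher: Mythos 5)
Your proof is correct and follows essentially the same strategy as the paper: reduce to Betti elements, use Theorem~\ref{thm:lge4-not-betti} to get $\ell(\mathbf{b})\le 3$, strip one atom $\mathbf{a}$ occurring in a given factorization, bound $\ell(\mathbf{b}-\mathbf{a})\le 3$ by length monotonicity, and reattach $\mathbf{a}$ to produce a length-$\le 4$ factorization sharing the atom $\mathbf{a}$ with the original one, hence lying in the same $\mathcal{R}$-class; then conclude via \eqref{eq:catenary-r-classes}. The one genuine (and welcome) difference is that you apply this atom-removal step uniformly to every class, so an isolated class simply forces its unique factorization to coincide with the constructed one and hence to have length $\le 4$; this makes the paper's case split between isolated and non-isolated factorizations, and in particular its appeal to Lemma~\ref{lem:isolated-more-4-GA}, unnecessary for obtaining the bound $4$ (the lemma buys the sharper bound $3$ on isolated factorizations, which the theorem does not need). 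Both of your "delicate points" are indeed only bookkeeping, and your argument still yields Corollary~\ref{cor:catenary-three} when $\operatorname{Betti}(S)\subseteq 2\mathcal{A}(S)$.
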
 
\begin{proof}
    Let $\mathbf{b}\in \operatorname{Betti}(S)$. According to Theorem~\ref{thm:lge4-not-betti}, $\ell(\mathbf{b})\le 3$. Let $\mathbf{z}=(z_\mathbf{a})_{\mathbf{a}\in \mathcal{A}(S)}$ be a factorization of $\mathbf{b}$. If $\mathbf{z}$ is isolated, by Lemma~\ref{lem:isolated-more-4-GA} and Proposition~\ref{prop:gap-absorbing-implies-ideal}, $|\mathbf{z}|\le 3$. If $\mathbf{z}$ is not isolated, then there exists $\mathbf{z}'=(z_\mathbf{a}')_{\mathbf{a}\in \mathcal{A}(S)}\in \mathsf{Z}(\mathbf{b})$ and $\mathbf{a}\in \mathcal{A}(S)$ such that $z_\mathbf{a}\neq 0 \neq z_\mathbf{a}'$. In particular $\mathbf{b}-\mathbf{a}\in S$, and by Proposition~\ref{prop:gap-absorbing-eq-non-decreasing-length}, $\ell(\mathbf{b}-\mathbf{a})\le 3$. Thus, there exists $\mathbf{z}''\in \mathsf{Z}(\mathbf{b}-\mathbf{a})$ such that $|\mathbf{z}''|\le 3$. Notice that $\mathbf{z}''+\mathbf{e}_\mathbf{a}$ is a factorization of $\mathbf{b}$ that is in the same $\mathcal{R}$-class of $\mathbf{z}$ with length at most four. From \eqref{eq:catenary-r-classes} we derive that $\mathsf{c}(\mathbf{b})\le 4$, and consequently $\mathsf{c}(S)\le 4$.
\end{proof}

As a consequence of the proof of the previous theorem we obtain the following.

\begin{corollary}\label{cor:catenary-three}
    Let $S$ be a gap absorbing monoid. If $\operatorname{Betti}(S)\subseteq 2\mathcal{A}(S)$, then $\mathsf{c}(S)\le 3$. 
\end{corollary}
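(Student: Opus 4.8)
The plan is to reuse the argument in the proof of Theorem~\ref{th:bound-catenary} almost verbatim, exploiting the extra hypothesis to shave one unit off the bound. The only place where the value $4$ is produced in that proof is the treatment of non-isolated factorizations, and the saving will come from the fact that, under the present hypothesis, every Betti element has minimal length exactly $2$ rather than possibly $3$.

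First I would record the consequence of the hypothesis: if $\mathbf{b}\in\operatorname{Betti}(S)$, then $\mathbf{b}\in 2\mathcal{A}(S)$, so $\mathbf{b}$ is a sum of two atoms and is not itself an atom; hence $\ell(\mathbf{b})=2$. Recall also that $\mathsf{c}(S)=\sup\{\mathsf{c}(\mathbf{b}):\mathbf{b}\in\operatorname{Betti}(S)\}$ and that, by \eqref{eq:catenary-r-classes}, $\mathsf{c}(\mathbf{b})$ equals the maximum over the $\mathcal{R}$-classes of $\mathsf{Z}(\mathbf{b})$ of the minimal length attained inside each class. Thus it suffices to show that every $\mathcal{R}$-class of every Betti element contains a factorization of length at most $3$.

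For an isolated factorization $\mathbf{z}$ (a singleton $\mathcal{R}$-class), Lemma~\ref{lem:isolated-more-4-GA} already gives $\lvert\mathbf{z}\rvert\le 3$, so such a class contributes at most $3$. For a non-isolated class I would argue as in Theorem~\ref{th:bound-catenary}: pick $\mathbf{z}$ in the class together with a distinct factorization sharing an atom $\mathbf{a}$, so that $\mathbf{b}-\mathbf{a}\in S$. Since $\mathbf{b}\in(\mathbf{b}-\mathbf{a})+\mathbb{N}^{(I)}$, the monotonicity of $\ell$ (Proposition~\ref{prop:non-decreasing-length}) gives $\ell(\mathbf{b}-\mathbf{a})\le\ell(\mathbf{b})=2$. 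Choosing a factorization $\mathbf{z}''$ of $\mathbf{b}-\mathbf{a}$ of minimal length, we have $\lvert\mathbf{z}''\rvert\le 2$, and then $\mathbf{z}''+\mathbf{e}_{\mathbf{a}}$ is a factorization of $\mathbf{b}$ of length at most $3$ that shares the atom $\mathbf{a}$ with $\mathbf{z}$, and hence lies in the same $\mathcal{R}$-class. Therefore this class too contains a factorization of length at most $3$.

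Combining the two cases, the minimal length in each $\mathcal{R}$-class of $\mathsf{Z}(\mathbf{b})$ is at most $3$, so $\mathsf{c}(\mathbf{b})\le 3$ by \eqref{eq:catenary-r-classes}, and taking the supremum over $\operatorname{Betti}(S)$ yields $\mathsf{c}(S)\le 3$. There is essentially no obstacle beyond the original theorem: the only point requiring care is that the shortcut factorization $\mathbf{z}''+\mathbf{e}_{\mathbf{a}}$ genuinely stays in the $\mathcal{R}$-class of $\mathbf{z}$ (it does, since both contain $\mathbf{a}$), and the observation that $\operatorname{Betti}(S)\subseteq 2\mathcal{A}(S)$ is exactly what upgrades $\ell(\mathbf{b})\le 3$ to $\ell(\mathbf{b})=2$, thereby producing the bound $2+1=3$ in place of the previous $3+1=4$.
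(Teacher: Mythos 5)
Your proof is correct and is exactly the paper's intended argument: the corollary is stated there as "a consequence of the proof of the previous theorem," namely rerunning the proof of Theorem~\ref{th:bound-catenary} with $\ell(\mathbf{b})=2$ (forced by $\operatorname{Betti}(S)\subseteq 2\mathcal{A}(S)$) so that the non-isolated classes yield length at most $2+1=3$, while isolated factorizations are handled by Lemma~\ref{lem:isolated-more-4-GA} as before.
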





\section{Delta sets}\label{sec:delta}

Let $I$ be a set of non-negative integers and let $S$ be a submonoid of $\mathbb{N}^{(I)}$. Recall that for $\mathbf{s}\in S$, with $\mathsf{L}(\mathbf{s})=\{l_1<l_2< \dots <l_n\}$ (we know that $S$ is a BF-monoid), the \emph{Delta set} of $\mathbf{s}$ is defined as $\Delta(\mathbf{s})=\{l_{i+1}-l_i : i\in \{1,\dots,n-1\}\}$. In particular, the set of lengths of $\mathbf{s}$ is an interval of integers if and only if $\Delta(\mathbf{s})\subseteq \{1\}$. The Delta set of $S$ is defined as the union of all the Delta sets of its elements. 

It is well known (see for instance \cite[Theorem~1.6.3]{g-hk}) that $2+\sup\Delta(S)\le \mathsf{c}(S)$. By Theorem~\ref{th:bound-catenary}, we have that if $S$ is gap absorbing, then $\max\Delta(S)\le 2$. Next we prove that as a matter of fact $\max\Delta(S)\le 1$, which means that every set $\mathsf{L}(\mathbf{s})$ with $\mathbf{s}\in S$ is an interval.

\begin{theorem}\label{th:Ls-interval}
Let $S$ be a gap absorbing monoid. Then, $\mathsf{L}(\mathbf{s})$ is an interval for every $\mathbf{s}\in S$.
\end{theorem}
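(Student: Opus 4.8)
The plan is to prove, by strong induction on $\lVert\mathbf{s}\rVert_1$, the equivalent statement that $\mathsf{L}(\mathbf{s})=\{\ell(\mathbf{s}),\ell(\mathbf{s})+1,\dots,\max\mathsf{L}(\mathbf{s})\}$, i.e. that $\mathsf{L}(\mathbf{s})$ is the full set of integers from $\ell(\mathbf{s})$ up to $\max\mathsf{L}(\mathbf{s})$. Notably, the only structural input I intend to draw from the hypothesis that $S$ is gap absorbing is the monotonicity of the minimal-length function established in Proposition~\ref{prop:non-decreasing-length}: if $\mathbf{s}'\le\mathbf{s}$ in $S$, then $\ell(\mathbf{s}')\le\ell(\mathbf{s})$. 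The rest of the argument is a clean ``peel one atom and interpolate'' induction, and in particular it will not need the catenary bound of Theorem~\ref{th:bound-catenary} nor any reduction of $\max\Delta(S)$ to Betti elements.

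For the base case $\lVert\mathbf{s}\rVert_1\le 1$ there is nothing to do, since then $\mathbf{s}$ is either $\mathbf{0}$ or an atom and $\lvert\mathsf{L}(\mathbf{s})\rvert\le 1$. For the inductive step I would set $M=\max\mathsf{L}(\mathbf{s})$ (which exists because $S$ is a BF-monoid) and fix a factorization $\mathbf{s}=\mathbf{a}_1+\dots+\mathbf{a}_M$ of maximal length. If $M\le 1$ we are done, so assume $M\ge 2$ and put $\mathbf{a}=\mathbf{a}_M\in\mathcal{A}(S)$. Then $\mathbf{s}-\mathbf{a}\in S$ with $\lVert\mathbf{s}-\mathbf{a}\rVert_1<\lVert\mathbf{s}\rVert_1$, and $\mathbf{a}_1+\dots+\mathbf{a}_{M-1}$ is a factorization of $\mathbf{s}-\mathbf{a}$ of length $M-1$, so in particular $M-1\in\mathsf{L}(\mathbf{s}-\mathbf{a})$.

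Now the induction hypothesis applies to $\mathbf{s}-\mathbf{a}$: the set $\mathsf{L}(\mathbf{s}-\mathbf{a})$ is an interval, and since it contains both $\ell(\mathbf{s}-\mathbf{a})$ and $M-1$ it contains every integer of $\{\ell(\mathbf{s}-\mathbf{a}),\dots,M-1\}$. Adding the atom $\mathbf{a}$ to each of the corresponding factorizations shows that $\{\ell(\mathbf{s}-\mathbf{a})+1,\dots,M\}\subseteq\mathsf{L}(\mathbf{s})$. Applying Proposition~\ref{prop:non-decreasing-length} to the inequality $\mathbf{s}-\mathbf{a}\le\mathbf{s}$ gives $\ell(\mathbf{s}-\mathbf{a})\le\ell(\mathbf{s})$, hence $\ell(\mathbf{s}-\mathbf{a})+1\le\ell(\mathbf{s})+1$; together with the fact that $\ell(\mathbf{s})\in\mathsf{L}(\mathbf{s})$, this forces $\{\ell(\mathbf{s}),\dots,M\}\subseteq\mathsf{L}(\mathbf{s})$. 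As the reverse inclusion is automatic, $\mathsf{L}(\mathbf{s})=\{\ell(\mathbf{s}),\dots,M\}$ is an interval, which closes the induction.

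I expect the whole difficulty to be concentrated in the single monotonicity inequality $\ell(\mathbf{s}-\mathbf{a})\le\ell(\mathbf{s})$, which is precisely where gap absorbing is used. It is exactly this that prevents the minimal length from \emph{increasing} when an atom is removed, and therefore guarantees that the newly created length $\ell(\mathbf{s}-\mathbf{a})+1$ lands no higher than $\ell(\mathbf{s})+1$, so that no length below $M$ can be skipped. Without it (for a general ideal extension not yet known to be gap absorbing) the element $\mathbf{s}-\mathbf{a}$ could have strictly larger minimal length and the interpolation at the bottom would fail; this is also consistent with Proposition~\ref{prop:gap-absorbing-eq-non-decreasing-length}, which shows that monotonicity of $\ell$ is in fact equivalent to being gap absorbing. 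The only bookkeeping point to watch is to choose $\mathbf{a}$ from a \emph{maximal}-length factorization, so that the interval handed up by the induction hypothesis already reaches $M-1$.
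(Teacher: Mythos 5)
Your proof is correct, and it takes a genuinely different route from the paper's. The paper first establishes, by separate minimal-counterexample arguments resting on the interval-closedness of $n\mathcal{A}(S)\setminus\bigcup_{i<n}i\mathcal{A}(S)$ (Proposition~\ref{prop:2A-interval-eq}), that $\mathsf{L}(\mathbf{s})$ is an interval whenever $\ell(\mathbf{s})\in\{2,3\}$; it then invokes Theorem~\ref{thm:lge4-not-betti} to reduce to Betti elements and the external result \cite[Theorem~2.5]{maxdelta}, which says that $\max\Delta(S)$ is attained at a Betti element, to propagate the conclusion to all of $S$. You instead run a direct induction on $\lVert\mathbf{s}\rVert_1$: peel the last atom off a factorization of maximal length $M$, use the inductive hypothesis to fill in $\{\ell(\mathbf{s}-\mathbf{a})+1,\dots,M\}\subseteq\mathsf{L}(\mathbf{s})$, and close the gap at the bottom with the single inequality $\ell(\mathbf{s}-\mathbf{a})\le\ell(\mathbf{s})$ from Proposition~\ref{prop:non-decreasing-length} (whose ``in particular'' clause applies since $\mathbf{s}\in(\mathbf{s}-\mathbf{a})+\mathbb{N}^{(I)}$, and which is proved well before this theorem, so there is no circularity). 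The case analysis on whether $\ell(\mathbf{s}-\mathbf{a})+1\le\ell(\mathbf{s})$ or $=\ell(\mathbf{s})+1$ works out exactly as you say, since $\ell(\mathbf{s})\in\mathsf{L}(\mathbf{s})$ covers the only possibly missing value. What your approach buys is economy and generality: it bypasses the entire Betti-element machinery and the citation to \cite{maxdelta}, and it isolates the general principle that in a reduced, cancellative, atomic BF-submonoid of $\mathbb{N}^{(I)}$ in which $\ell$ is non-decreasing along divisibility, every set of lengths is an interval --- a statement in the same spirit as Proposition~\ref{prop:nA-closed-intervals-delta-1} and which, combined with Proposition~\ref{prop:gap-absorbing-eq-non-decreasing-length}, pins the result precisely on the gap-absorbing hypothesis. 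What the paper's route buys in exchange is that the intermediate facts about elements of minimal length two and three, and about Betti elements, are needed elsewhere (e.g.\ for the catenary degree bounds), so the authors get Theorem~\ref{th:Ls-interval} almost for free from structure they must develop anyway.
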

\begin{proof}
    We start by proving that if $\ell(\mathbf{s})=2$, then $\mathsf{L}(\mathbf{s})$ is an interval. Suppose, on the contrary, that there is some $\mathbf{s}\in S$ with $\ell(\mathbf{s})=2$ and such that $\mathsf{L}(\mathbf{s})$ is not an interval; we may also assume that $\mathbf{s}$ is minimal with respect to $\le_S$ having this condition. Let $L>2$ be an element of $\mathsf{L}(\mathbf{s})$ such that $L-1\not\in \mathsf{L}(\mathbf{s})$ (whence $L-1>2$). Then, there exist $\mathbf{a}_1,\mathbf{a}_2\in \mathcal{A}(S)$ and $\mathbf{b}_1,\dots,\mathbf{b}_L\in \mathcal{A}(S)$ such that $\mathbf{s}=\mathbf{a}_1+\mathbf{a}_2=\mathbf{b}_1+\dots+\mathbf{b}_L$. 
    Hence,  $\mathbf{b}_1+\mathbf{b}_2\le \mathbf{s}-\mathbf{b}_L\le \mathbf{s}$. Both $\mathbf{b}_1+\mathbf{b}_2$ and $\mathbf{s}$ are in $2\mathcal{A}(S)\setminus \mathcal{A}(S)$. From Proposition~\ref{prop:2A-interval-eq}, we obtain that $\mathbf{s}-\mathbf{b}_L\in 2\mathcal{A}(S)\setminus \mathcal{A}(S)$. In particular, $\ell(\mathbf{s}-\mathbf{b}_L)=2$ and $\mathbf{s}-\mathbf{b}_L\le_S \mathbf{s}$. By the minimality of $\mathbf{s}$, we deduce that $\mathsf{L}(\mathbf{s}-\mathbf{b}_L)$ is an interval, and as $2,L-1\in \mathsf{L}(\mathbf{s}-\mathbf{b}_L)$, we get $L-2\in \mathsf{L}(\mathbf{s}-\mathbf{b}_L)$, which translates to $L-1\in \mathsf{L}(\mathbf{s})$, a contradiction. Thus, for every $\mathbf{s} \in S$ with $\ell(\mathbf{s})=2$ we have that $\mathsf{L}(\mathbf{s})$ is an interval.

    Now we show that if $\ell(\mathbf{s})=3$, then $\mathsf{L}(\mathbf{s})$ is an interval. We use a similar argument to the one employed above. Suppose that $\mathbf{s}$ is minimal with respect to $\le_S$ such that $\ell(\mathbf{s})=3$ and $\mathsf{L}(\mathbf{s})$ is not an interval. Take $L>3$ to be an element of $\mathsf{L}(\mathbf{s})$ such that $L-1\not\in \mathsf{L}(\mathbf{s})$. There exists $\mathbf{a}_1,\mathbf{a}_2,\mathbf{a}_3\in \mathcal{A}(S)$ and $\mathbf{b}_1,\dots,\mathbf{b}_L\in \mathcal{A}(S)$ such that $\mathbf{s}=\mathbf{a}_1+\mathbf{a}_2+\mathbf{a}_3=\mathbf{b}_1+\dots+\mathbf{b}_L$. Then $\mathbf{b}_1+\mathbf{b}_2+\mathbf{b}_3\le \mathbf{s}-\mathbf{b}_L\le \mathbf{s}$. Notice that $\mathbf{s} \in 3\mathcal{A}(S)\setminus(\mathcal{A}(S)\cup 2\mathcal{A}(S))$. Clearly, $\mathbf{b}_1+\mathbf{b}_2+\mathbf{b}_3\not\in\mathcal{A}(S)$. If $\mathbf{b}_1+\mathbf{b}_2+\mathbf{b}_3\in 2\mathcal{A}(S)$, say $\mathbf{b}_1+\mathbf{b}_2+\mathbf{b}_3=\mathbf{c}_1+\mathbf{c}_2$, with $\mathbf{c}_1,\mathbf{c}_2\in\mathcal{A}(S)$, we have that $\mathbf{s}=\mathbf{c}_1+\mathbf{c}_2+\mathbf{b}_4+\dots+\mathbf{b}_L$, which is a factorization of length $L-1$, and thus a contradiction. Hence, $\mathbf{b}_1+\mathbf{b}_2+\mathbf{b}_3\in 3\mathcal{A}(S)\setminus(\mathcal{A}(S)\cup 2\mathcal{A}(S))$. By Proposition~\ref{prop:2A-interval-eq}, we infer that $\mathbf{s}-\mathbf{b}_L\in 3\mathcal{A}(S)\setminus(\mathcal{A}(S)\cup 2\mathcal{A}(S))$, and in particular $\ell(\mathbf{s}-\mathbf{b}_L)=3$ and $\mathbf{s}-\mathbf{b}_L\le_S \mathbf{s}$. The minimality of $\mathbf{s}$ forces $\mathsf{L}(\mathbf{s}-\mathbf{b}_L)$ to be an interval. From $3,L-1\in \mathsf{L}(\mathbf{s}-\mathbf{b}_L)$ we derive that $L-2\in \mathsf{L}(\mathbf{s}-\mathbf{b}_L)$, and consequently $L-1\in\mathsf{L}(\mathbf{s})$, yielding once more a contradiction.

    As a consequence of Theorem~\ref{thm:lge4-not-betti}, if $\mathbf{s} \in S$ is a Betti element, then $\ell(\mathbf{s})\le 3$, and thus $\mathsf{L}(\mathbf{s})$ is an interval. In particular, this means that $\Delta(\mathbf{s})\subseteq \{1\}$, and as $S$ is a BF-monoid, by \cite[Theorem~2.5]{maxdelta}, $\max\Delta(S)\le 1$. This forces $\mathsf{L}(\mathbf{s})$ to be an interval for all $\mathbf{s} \in S$. 
\end{proof}

Observe that from Theorem~\ref{th:Ls-interval}, we easily derive that Conjecture~\ref{conj:ideal-implies-gap-absorbing} implies \cite[Conjecture~4.16]{Baeth}.



There are other submonoids of $\mathbb{N}^{(I)}$ with the property that the sets of lengths of its elements are intervals. Next, we provide another family that arises when weakening the conditions in Proposition~\ref{prop:2A-interval-eq}.

\begin{proposition}\label{prop:nA-closed-intervals-delta-1}
    Let $S$ be a submonoid of $\mathbb{N}^{(I)}$ such that for every positive $n$, the set $n\mathcal{A}(S)\setminus(n-1)\mathcal{A}(S)$ is closed under intervals. Then, $\max\Delta(S)\le 1$.
\end{proposition}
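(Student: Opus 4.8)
The plan is to prove the equivalent statement that $\mathsf{L}(\mathbf{s})$ is an interval for every $\mathbf{s}\in S$, arguing by well-founded induction on the order $\le_S$. First I would record that $\le_S$ is well founded on $\mathbb{N}^{(I)}$: if $\mathbf{t}<_S\mathbf{s}$ then $\mathbf{s}-\mathbf{t}\in S^*$, so $\mathbf{t}<\mathbf{s}$ in the usual order and $\lVert\mathbf{t}\rVert_1<\lVert\mathbf{s}\rVert_1$; hence every nonempty family of elements admits a $\le_S$-minimal member. I would then assume, for contradiction, that some element has a non-interval length set and pick $\mathbf{s}$ that is $\le_S$-minimal with this property, so that $\mathsf{L}(\mathbf{t})$ is an interval for every $\mathbf{t}<_S\mathbf{s}$.

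Next I would set $m=\ell(\mathbf{s})=\min\mathsf{L}(\mathbf{s})$ and choose $L\in\mathsf{L}(\mathbf{s})$ with $L-1\notin\mathsf{L}(\mathbf{s})$ and $L>m$; such an $L$ exists because $\mathsf{L}(\mathbf{s})$ is not an interval, and necessarily $L\ge m+2$, since $L=m+1$ would give $L-1=m\in\mathsf{L}(\mathbf{s})$. Fixing a factorization $\mathbf{s}=\mathbf{b}_1+\dots+\mathbf{b}_L$ into atoms, I would introduce the two auxiliary elements $\mathbf{u}=\mathbf{b}_1+\dots+\mathbf{b}_m$ and $\mathbf{s}'=\mathbf{b}_1+\dots+\mathbf{b}_{L-1}=\mathbf{s}-\mathbf{b}_L$, both of which satisfy $\mathbf{u},\mathbf{s}'<_S\mathbf{s}$ and hence have interval length sets by minimality. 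The goal in every branch is to manufacture a factorization of $\mathbf{s}$ of length exactly $L-1$, contradicting $L-1\notin\mathsf{L}(\mathbf{s})$.

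The argument then splits on $\ell(\mathbf{u})$, which is at most $m$ since $\mathbf{u}\in m\mathcal{A}(S)$. If $\ell(\mathbf{u})<m$, then as $m\in\mathsf{L}(\mathbf{u})$ and $\mathsf{L}(\mathbf{u})$ is an interval we get $m-1\in\mathsf{L}(\mathbf{u})$; rewriting $\mathbf{u}$ with $m-1$ atoms and restoring $\mathbf{b}_{m+1}+\dots+\mathbf{b}_L$ gives a factorization of $\mathbf{s}$ of length $(m-1)+(L-m)=L-1$. If $\ell(\mathbf{u})=m$, then $\mathbf{u}\in m\mathcal{A}(S)\setminus(m-1)\mathcal{A}(S)$, and likewise $\mathbf{s}\in m\mathcal{A}(S)\setminus(m-1)\mathcal{A}(S)$ because $m=\min\mathsf{L}(\mathbf{s})$. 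Here I would invoke the hypothesis: from $\mathbf{u}\le\mathbf{s}'\le\mathbf{s}$ and the closedness of $m\mathcal{A}(S)\setminus(m-1)\mathcal{A}(S)$ under intervals I obtain $\mathbf{s}'\in m\mathcal{A}(S)\setminus(m-1)\mathcal{A}(S)$, so in particular $m\in\mathsf{L}(\mathbf{s}')$. Since $\mathsf{L}(\mathbf{s}')$ is an interval containing both $m$ and $L-1$ with $m\le L-2$, it contains $L-2$; rewriting $\mathbf{s}'$ with $L-2$ atoms and adding $\mathbf{b}_L$ again yields a factorization of $\mathbf{s}$ of length $L-1$. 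Both branches contradict the choice of $L$, so no counterexample exists and $\max\Delta(S)\le 1$.

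The step I expect to be the main obstacle is the branch $\ell(\mathbf{u})=m$. Because $S$ need not be gap absorbing, no monotonicity of $\ell$ is available (Proposition~\ref{prop:non-decreasing-length} does not apply), so I cannot simply assert $\ell(\mathbf{s}-\mathbf{b}_L)\le\ell(\mathbf{s})$; this is exactly the place where the hypothesis must do the work, by trapping $\mathbf{s}'$ inside $m\mathcal{A}(S)\setminus(m-1)\mathcal{A}(S)$ and thereby guaranteeing $m\in\mathsf{L}(\mathbf{s}')$. The subtlety that forces the case distinction is that the natural sandwich bottom $\mathbf{u}=\mathbf{b}_1+\dots+\mathbf{b}_m$ only automatically lies in $m\mathcal{A}(S)$, not necessarily in the smaller set $m\mathcal{A}(S)\setminus(m-1)\mathcal{A}(S)$ needed to apply closedness; verifying that $\mathbf{u}$ belongs to that smaller set when $\ell(\mathbf{u})=m$, or else shortcutting via the interval property of $\mathsf{L}(\mathbf{u})$ when $\ell(\mathbf{u})<m$, is the delicate heart of the proof.
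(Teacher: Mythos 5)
Your proposal is correct and follows essentially the same route as the paper: a $\le_S$-minimal counterexample, the sandwich $\mathbf{b}_1+\dots+\mathbf{b}_m\le \mathbf{s}-\mathbf{b}_L\le \mathbf{s}$, the interval hypothesis to force $\mathbf{s}-\mathbf{b}_L\in m\mathcal{A}(S)\setminus(m-1)\mathcal{A}(S)$, and then $L-2\in\mathsf{L}(\mathbf{s}-\mathbf{b}_L)$ to produce the contradictory length $L-1$. The only (harmless) deviation is in the degenerate branch, where the paper rewrites $\mathbf{b}_1+\dots+\mathbf{b}_m$ directly from membership in $(m-1)\mathcal{A}(S)$ while you route through the interval property of $\mathsf{L}(\mathbf{u})$; both are valid.
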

\begin{proof}
    Let us argue by contradiction. Let $\mathbf{s}\in S$ be minimal with respect to $\le_S$ such that $\mathsf{L}(\mathbf{s})$ is not an interval. So there exists $L\in \mathsf{L}(\mathbf{s})$ such that $L-1\not\in \mathsf{L}(\mathbf{s})$ and $L>\ell(\mathbf{s})=l$. We can find expressions of $\mathbf{s}$ of the form $\mathbf{s}=\mathbf{a}_1+\dots+\mathbf{a}_l=\mathbf{b}_1+\dots+\mathbf{b}_L$, with $\mathbf{a}_1,\dots,\mathbf{a}_l,\mathbf{b}_1,\dots,\mathbf{b}_L\in \mathcal{A}(S)$. As $l<L$, we have that $\mathbf{b}_1+\dots+\mathbf{b}_{l}\le_S \mathbf{s}-\mathbf{b}_L\le_S \mathbf{s}=\mathbf{a}_1+\dots+\mathbf{a}_{l}\in l\mathcal{A}(S)$. 
    If $\mathbf{b}_1+\dots+\mathbf{b}_l\in (l-1)\mathcal{A}(S)$, then $\mathbf{b}_1+\dots+\mathbf{b}_{l}=\mathbf{a}_1'+\dots+\mathbf{a}_{l-1}'$ with $\mathbf{a}_1',\dots,\mathbf{a}_{l-1}'\in \mathcal{A}(S)$ and therefore $\mathbf{s}=\mathbf{a}_1'+\dots+\mathbf{a}_{l-1}'+\mathbf{b}_{l+1}+\dots+\mathbf{b}_L\in (L-1)\mathcal{A}(S)$, a contradiction. Thus, $\mathbf{b}_1+\dots+\mathbf{b}_l\in l\mathcal{A}(S)\setminus (l-1)\mathcal{A}(S)$ and clearly $\mathbf{s}\in l\mathcal{A}(S)\setminus(l-1)\mathcal{A}(S)$. By hypothesis $\mathbf{s}-\mathbf{b}_L\in l\mathcal{A}(S)\setminus(l-1)\mathcal{A}(S)$, and as $\mathbf{s}$ was minimal not having an interval as set of lengths, we deduce that $\mathsf{L}(\mathbf{s}-\mathbf{b}_L)$ is an interval. Also $l$ and $L-1$ are in $\mathsf{L}(\mathbf{s}-\mathbf{b}_L)$, and so $L-2\in \mathsf{L}(\mathbf{s}-\mathbf{b}_L)$, which means that $L-1\in\mathsf{L}(\mathbf{s})$, a contradiction.
\end{proof}

\begin{example}
    Let $S=\langle \llbracket (20,3),(23,4)\rrbracket\rangle$. Let us double check with \texttt{numericalsgps} that $\max(\Delta(S))=1$.
 \begin{verbatim}
gap> g:=Union(List([20..23],i->[i,3]),List([20..23],i->[i,4]));
[ [ 20, 3 ], [ 20, 4 ], [ 21, 3 ], [ 21, 4 ], 
  [ 22, 3 ], [ 22, 4 ], [ 23, 3 ], [ 23, 4 ] ]
gap> a:=AffineSemigroup(g);;
gap> DeltaSet(a);
[ 1 ]
\end{verbatim}   

Notice that this example also shows that  the condition on $2\mathcal{A}(S)$ being closed under intervals does not imply that $\operatorname{Betti(S)}\subseteq 2\mathcal{A}(S)$. Observe that $(161,28)\in \operatorname{Betti}(S)$, and $\ell((161,28))=7$.
 \begin{verbatim}
gap> g:=Union(List([20..23],i->[i,3]),List([20..23],i->[i,4]));
[ [ 20, 3 ], [ 20, 4 ], [ 21, 3 ], [ 21, 4 ], 
  [ 22, 3 ], [ 22, 4 ], [ 23, 3 ], [ 23, 4 ] ]
gap> a:=AffineSemigroup(g);;
gap> BettiElements(a);
[ [ 41, 7 ], [ 42, 6 ], [ 42, 7 ], [ 42, 8 ], [ 43, 6 ], [ 43, 7 ],
  [ 43, 8 ], [ 44, 6 ], [ 44, 7 ], [ 44, 8 ], [ 45, 7 ], [ 160, 24 ],
  [ 160, 25 ], [ 160, 26 ], [ 160, 27 ], [ 160, 28 ], [ 161, 24 ],
  [ 161, 25 ], [ 161, 26 ], [ 161, 27 ], [ 161, 28 ] ]
gap> Factorizations([161,28],a);
[ [ 0, 0, 0, 0, 0, 0, 0, 7 ], [ 3, 4, 1, 0, 0, 0, 0, 0 ], 
  [ 4, 3, 0, 1, 0, 0, 0, 0 ] ]     
\end{verbatim}   
\end{example}

There are not too many results in the literature dealing with elements in atomic monoids whose sets of lengths are intervals. For numerical semigroups, one can find \cite[Corollary~2.3]{a-c-al} with $k=1$, which is actually a particular case of Proposition~\ref{prop:nA-closed-intervals-delta-1}. For Krull monoids see \cite[Theorem~7.6.9]{g-hk}.

\section{Omega primality}\label{sec:omega-primality}

Let $S$ be a submonoid of $\mathbb{N}^{(I)}$, for some $I\subseteq \mathbb{N}$, and let $\mathbf{s}\in S$. The $\omega$\emph{-primality} of $\mathbf{s}$, denoted $\omega(\mathbf{s})$, is the least positive integer $N$ such that whenever $\mathbf{s} \le_S \mathbf{s}_1+\cdots+\mathbf{s}_n$ for some $\mathbf{s}_1,\ldots, \mathbf{s}_n\in S$, then $\mathbf{s} \le_S \sum_{j\in J} \mathbf{s}_j$ for some $J\subseteq \{1,\ldots, n\}$ with $|J|\le N$. 

The $\omega$-primality of $S$ is defined as 
\[
\omega(S)=\sup\{ \omega(\mathbf{a}) : \mathbf{a}\in \mathcal{A}(S)\}.
\]

 Observe that for any submonoid of $S$ of $\mathbb{N}^{(I)}$, and $\mathbf{a},\mathbf{b}\in \mathbb{N}^{(I)}$, if $\mathbf{a}\le_S \mathbf{b}$, then $\mathbf{a}\le \mathbf{b}$. We will take advantage of this fact to find upper bounds for the $\omega$-primality of an atom, and thus of the semigroup itself.

\begin{lemma}\label{lem:number-le-sum}
    Let $\mathbf{a},\mathbf{a}_1,\dots,\mathbf{a}_n\in \mathbb{N}^{(I)}$. Suppose that $\mathbf{a} \le \mathbf{a}_1+\dots+\mathbf{a}_n$. Then there exists $J\subseteq \{1,\dots,n\}$, with $|J|\le \lVert \mathbf{a}\rVert_1$, such that $\mathbf{a} \le \sum_{j\in J}\mathbf{a}_j$.
\end{lemma}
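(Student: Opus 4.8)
The plan is to argue by induction on the quantity $\lVert \mathbf{a}\rVert_1$, peeling off one of the summands $\mathbf{a}_j$ at a time while making sure this quantity strictly decreases. The base case is $\lVert \mathbf{a}\rVert_1=0$, in which $\mathbf{a}=\mathbf{0}$ and the empty index set $J=\emptyset$ trivially satisfies $\mathbf{a}\le \sum_{j\in J}\mathbf{a}_j=\mathbf{0}$.

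For the inductive step I would assume $\lVert \mathbf{a}\rVert_1\ge 1$ and first fix a coordinate $i_0$ with $a_{i_0}>0$. Since $\mathbf{a}\le \mathbf{a}_1+\dots+\mathbf{a}_n$ forces $\sum_{j=1}^n (\mathbf{a}_j)_{i_0}\ge a_{i_0}>0$, there must exist an index $j_0$ with $(\mathbf{a}_{j_0})_{i_0}>0$. I would then set $\mathbf{b}=\mathbf{a}\wedge \mathbf{a}_{j_0}$ and $\mathbf{a}'=\mathbf{a}-\mathbf{b}\in\mathbb{N}^{(I)}$. Because $b_{i_0}=\min(a_{i_0},(\mathbf{a}_{j_0})_{i_0})\ge 1$, we obtain $\lVert\mathbf{a}'\rVert_1\le \lVert\mathbf{a}\rVert_1-1$, which is exactly what lets the induction proceed.

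The key verification is that $\mathbf{a}'\le \sum_{j\ne j_0}\mathbf{a}_j$, which I would check coordinate by coordinate: for a coordinate $i$ with $(\mathbf{a}_{j_0})_i\ge a_i$ one has $a'_i=0$, while for $(\mathbf{a}_{j_0})_i< a_i$ one has $a'_i=a_i-(\mathbf{a}_{j_0})_i\le \sum_{j\ne j_0}(\mathbf{a}_j)_i$, using the hypothesis $a_i\le\sum_{j=1}^n(\mathbf{a}_j)_i$. Applying the induction hypothesis to $\mathbf{a}'$ and to the collection $\{\mathbf{a}_j:j\ne j_0\}$ then yields a set $J'\subseteq\{1,\dots,n\}\setminus\{j_0\}$ with $|J'|\le\lVert\mathbf{a}'\rVert_1$ and $\mathbf{a}'\le\sum_{j\in J'}\mathbf{a}_j$. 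Taking $J=J'\cup\{j_0\}$ gives $|J|\le\lVert\mathbf{a}'\rVert_1+1\le\lVert\mathbf{a}\rVert_1$, and since $\mathbf{a}_{j_0}\ge\mathbf{b}$ we get $\sum_{j\in J}\mathbf{a}_j\ge \mathbf{b}+\mathbf{a}'=\mathbf{a}$, as required.

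This is essentially a greedy set-cover estimate, so I do not anticipate a genuine obstacle; the only delicate point is choosing how much to subtract. Subtracting the full meet $\mathbf{a}\wedge\mathbf{a}_{j_0}$ (rather than a single $\mathbf{e}_{i_0}$) is what simultaneously guarantees that $\lVert\mathbf{a}'\rVert_1$ drops and that the residual inequality $\mathbf{a}'\le\sum_{j\ne j_0}\mathbf{a}_j$ still holds \emph{after} discarding $\mathbf{a}_{j_0}$ from the collection. A naive single-coordinate reduction that kept $\mathbf{a}_{j_0}$ in play would risk reusing the same summand and thereby spoil the bound on $|J|$.
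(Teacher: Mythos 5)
Your proof is correct. It is the same greedy idea as the paper's, but packaged differently: the paper runs an iterative procedure that peels off a single unit $\mathbf{e}_k$ at each step from some summand $\mathbf{a}_j$ whose support contains $k$, adds $j$ to $J$ (possibly revisiting the same $j$ several times, which is why they track the multiplicities $\lambda_k$), and bounds $|J|$ by the number of iterations, which is $\lVert\mathbf{a}\rVert_1$. You instead induct on $\lVert\mathbf{a}\rVert_1$, subtract the full meet $\mathbf{a}\wedge\mathbf{a}_{j_0}$ in one step, and permanently discard $\mathbf{a}_{j_0}$ from the pool. Your coordinate-by-coordinate verification that $\mathbf{a}'\le\sum_{j\ne j_0}\mathbf{a}_j$ is exactly the point that makes this work, and your closing remark is apt: discarding the used summand is what guarantees $|J|\le\lVert\mathbf{a}\rVert_1$ without having to account for repeated selections. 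The trade-off is minor: your version is a cleaner, fully rigorous induction (the paper's ``after repeating this procedure'' is left somewhat informal), while the paper's per-unit version makes the bound $|J|\le\lVert\mathbf{a}\rVert_1$ visible without any bookkeeping about which summands remain available. Both establish the stated bound; no gap in either.
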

\begin{proof}
    If $n\le \lVert \mathbf{a}\rVert_1$, then take $J=\{1,\dots,n\}$. So, let us suppose that $n>\lVert \mathbf{a}\rVert_1$. Start with $J=\emptyset$. The support of $\mathbf{a}$ is included in $\cup_{i=1}^n \operatorname{Supp}(\mathbf{a}_i)$. Let $k\in \operatorname{Supp}(\mathbf{a})$, and let $j\in \{1,\dots,n\}$ be such that $k\in \operatorname{Supp}(\mathbf{a}_j)$. Then $\mathbf{a}-\mathbf{e}_k \le \mathbf{a}_1+\dots+\mathbf{a}_{j-1}+(\mathbf{a}_j-\mathbf{e}_k)+\mathbf{a}_{j+1}+\dots+\mathbf{a}_n$. Add $j$ to the set $J$. Now $\operatorname{Supp}(\mathbf{a}-\mathbf{e}_k) \subseteq \operatorname{Supp}(\mathbf{a}_1) \cup \dots \cup \operatorname{Supp}(\mathbf{a}_{j-1}) \cup \operatorname{Supp}(\mathbf{a}_j-\mathbf{e}_k)\cup \operatorname{Supp}(\mathbf{a}_{j+1})\cup \dots \cup \operatorname{Supp}(\mathbf{a}_n)$. For $j\in J$ and $k\in  \operatorname{Supp}(\mathbf{a}_j)$, denote by $\lambda_k\in \mathbb{N}$ the number of times $\mathbf{e}_k$ has been removed by $\mathbf{a}_j$ during the procedure described above. Set $J_j=\{k \in \operatorname{Supp}(\mathbf{a}_j): \lambda_k>0\}$. After repeating this procedure we will arrive to $\mathbf{0}\le \sum_{j\in J} (\mathbf{a}_j-\sum_{k\in J_j} \lambda_k \mathbf{e}_k) + \sum_{i\in \{1,\dots,n\}\setminus J} \mathbf{a}_i$, $\mathbf{a}=\sum_{j\in J}\sum_{k\in J_j}\lambda_k\mathbf{e}_k$, and $\mathbf{a} \le \sum_{j\in J}\mathbf{a}_j$.
\end{proof}

For the proof of the following lemma we are going to use \cite[Lemma~3.2]{bgsg}, which asserts that for the computation of $\omega(\mathbf{s})$ we can restrict our search to  sums of the form $\mathbf{a}_1+\cdots+\mathbf{a}_n$, with $\mathbf{a}_1,\ldots,\mathbf{a}_n\in \mathcal{A}(S)$ and $\mathsf{s}\le_S \mathbf{a}_1+\cdots+\mathbf{a}_n$.

\begin{theorem}\label{thm:upper-bound-omega-norm1}
    Let $S$ be an ideal extension of $\mathbb{N}^{(I)}$.
    For every $\mathbf{a}\in \mathcal{A}(S)$, $\omega(\mathbf{a})\le \lVert \mathbf{a}\rVert_1 +1$. In particular,
    \[
    \omega(S)\le 1+\sup_{\mathbf{a}\in \mathcal{A}(S)}\lVert \mathbf{a}\rVert_1.
    \]
\end{theorem}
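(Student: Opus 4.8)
The plan is to reduce, via \cite[Lemma~3.2]{bgsg}, to sums of atoms, then to bound the number of atoms needed by Lemma~\ref{lem:number-le-sum}, and finally to repair the mismatch between the ambient order $\le$ and the monoid order $\le_S$ by adjoining a single extra atom. Concretely, I would start from an arbitrary witness $\mathbf{a}\le_S \mathbf{a}_1+\cdots+\mathbf{a}_n$ with $\mathbf{a}_1,\dots,\mathbf{a}_n\in\mathcal{A}(S)$ (the cited lemma guarantees it suffices to consider sums of this shape). Since $\le_S$ refines $\le$, that is, $\mathbf{x}\le_S\mathbf{y}$ forces $\mathbf{y}-\mathbf{x}\in S\subseteq\mathbb{N}^{(I)}$ and hence $\mathbf{x}\le\mathbf{y}$, we immediately obtain $\mathbf{a}\le \mathbf{a}_1+\cdots+\mathbf{a}_n$ in the usual partial order.

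Next I would apply Lemma~\ref{lem:number-le-sum} to produce a subset $J\subseteq\{1,\dots,n\}$ with $|J|\le\lVert\mathbf{a}\rVert_1$ and $\mathbf{a}\le\sum_{j\in J}\mathbf{a}_j$. The catch, and the main obstacle, is that this yields only $\mathbf{a}\le\sum_{j\in J}\mathbf{a}_j$ and not $\mathbf{a}\le_S\sum_{j\in J}\mathbf{a}_j$: the difference $\sum_{j\in J}\mathbf{a}_j-\mathbf{a}$ lies in $\mathbb{N}^{(I)}$ but could be a gap of $S$. The extra ``$+1$'' in the statement is precisely what is needed to cure this.

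To repair it I would distinguish two cases. If $J=\{1,\dots,n\}$, then $\sum_{j\in J}\mathbf{a}_j=\mathbf{a}_1+\cdots+\mathbf{a}_n$, and the original hypothesis already gives $\mathbf{a}\le_S\sum_{j\in J}\mathbf{a}_j$ with $|J|=n\le\lVert\mathbf{a}\rVert_1\le\lVert\mathbf{a}\rVert_1+1$. Otherwise choose any $k\in\{1,\dots,n\}\setminus J$ and set $J'=J\cup\{k\}$, so $|J'|\le\lVert\mathbf{a}\rVert_1+1$. Writing $\sum_{j\in J'}\mathbf{a}_j-\mathbf{a}=\mathbf{a}_k+\bigl(\sum_{j\in J}\mathbf{a}_j-\mathbf{a}\bigr)$ and using that $S$ is an ideal extension (so that $S^*$ is an ideal of $\mathbb{N}^{(I)}$), the atom $\mathbf{a}_k\in S^*$ absorbs the non-negative vector $\sum_{j\in J}\mathbf{a}_j-\mathbf{a}\in\mathbb{N}^{(I)}$; hence $\sum_{j\in J'}\mathbf{a}_j-\mathbf{a}\in S^*\subseteq S$, i.e.\ $\mathbf{a}\le_S\sum_{j\in J'}\mathbf{a}_j$. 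In either case an index set of size at most $\lVert\mathbf{a}\rVert_1+1$ witnesses $\mathbf{a}\le_S$ a subsum, so $\omega(\mathbf{a})\le\lVert\mathbf{a}\rVert_1+1$.

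Finally, taking the supremum over $\mathbf{a}\in\mathcal{A}(S)$ gives $\omega(S)=\sup_{\mathbf{a}\in\mathcal{A}(S)}\omega(\mathbf{a})\le 1+\sup_{\mathbf{a}\in\mathcal{A}(S)}\lVert\mathbf{a}\rVert_1$, as claimed. I expect no genuine difficulty beyond the order-refinement bookkeeping: the only point requiring care is ensuring an extra atom is available to adjoin, and the degenerate case where $J$ already exhausts all atoms is handled directly by the hypothesis itself.
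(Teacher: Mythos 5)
Your argument is correct and follows essentially the same route as the paper's proof: reduce to sums of atoms via \cite[Lemma~3.2]{bgsg}, extract a subsum of size at most $\lVert\mathbf{a}\rVert_1$ using Lemma~\ref{lem:number-le-sum}, and adjoin one spare atom so that the ideal property of $S^*$ absorbs the possibly-gap difference. The only cosmetic difference is that you treat the case $J=\{1,\dots,n\}$ explicitly, while the paper disposes of it up front by assuming $n>\lVert\mathbf{a}\rVert_1$.
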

\begin{proof}
    Let $\mathbf{a}_1,\dots,\mathbf{a}_n\in \mathcal{A}(S)$ be such that $\mathbf{a}\le_S \mathbf{a}_1+\dots+\mathbf{a}_n$ and $n>\lVert \mathbf{a} \rVert_1$. Then, $\mathbf{a}\le \mathbf{a}_1+\dots+\mathbf{a}_n$. By Lemma~\ref{lem:number-le-sum}, there exists $J\subseteq \{1,\dots,n\}$ such that $\mathbf{a}+\mathbf{x} = \sum_{j\in J}\mathbf{a}_j$ and $|J|\le \lVert \mathbf{a} \rVert_1$. As $n>\lVert \mathbf{a} \rVert_1\ge |J|$, there exists $i\in \{1,\dots,n\}\setminus J$. Hence $\mathbf{a}+\mathbf{a}_i+\mathbf{x} = \sum_{j\in J\cup\{i\}} \mathbf{a}_j$, and from $\mathbf{a}_i+\mathbf{x}\in S$ (Proposition~\ref{prop:gap-absorbing-implies-ideal}), we deduce that $\mathbf{a}\le_S \sum_{j\in J\cup\{i\}} \mathbf{a}_j$. By \cite[Lemma~3.2]{bgsg}, $\omega(\mathbf{a})\le \lVert \mathbf{a} \rVert_1+1$.
\end{proof}


\begin{example}
    Let $m$ be an integer greater than one, and let $S=\langle m,\dots,2m-1\rangle=\{0\}+(m+\mathbb{N})$. By \cite[Proposition 3.1]{algorithm_omega}, $\omega(S)=3<2m=1+\sup\{ x : x\in \{m,\dots,2m-1\}\}$. Thus, the upper bound given in Theorem~\ref{thm:upper-bound-omega-norm1} is far from being sharp. 
\end{example}

This example also highlights that \cite[Proposition~4.9]{Baeth} is wrong, since according to that result, the $\omega$-primality of $\langle m,\dots,2m-1\rangle$ should belong to $\{m,m+1\}$. 

\begin{proposition}\label{prop:omega-non-decreasing}
    Let $S$ be an ideal extension of $\mathbb{N}^{(I)}$.
    Let $\mathbf{a},\mathbf{b}\in \mathcal{A}(S)$ with $\mathbf{a}\le \mathbf{b}$. Then, $\omega(\mathbf{a})\le \omega(\mathbf{b})$.
\end{proposition}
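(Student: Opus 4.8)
The plan is to show directly from the definition that $\omega(\mathbf{b})$ is an admissible value of $N$ for $\mathbf{a}$; since $\omega(\mathbf{a})$ is the least such value, this immediately yields $\omega(\mathbf{a})\le\omega(\mathbf{b})$. If $\mathbf{a}=\mathbf{b}$ there is nothing to prove, so I would assume $\mathbf{a}<\mathbf{b}$ and set $\mathbf{c}=\mathbf{b}-\mathbf{a}\in\mathbb{N}^{(I)}\setminus\{\mathbf{0}\}$. The first observation to record is that $\mathbf{c}\in\mathcal{H}(S)$: indeed $\mathbf{c}\notin S$, since otherwise $\mathbf{b}=\mathbf{a}+\mathbf{c}$ would be a sum of two elements of $S^*$, contradicting $\mathbf{b}\in\mathcal{A}(S)=S^*\setminus(S^*+S^*)$.

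Next I would take an arbitrary expression $\mathbf{a}\le_S\mathbf{s}_1+\dots+\mathbf{s}_n$ with $\mathbf{s}_1,\dots,\mathbf{s}_n\in S$ (discarding zero summands, so that each $\mathbf{s}_i\in S^*$; the list is nonempty since $\mathbf{a}\neq\mathbf{0}$), and produce a subset $J$ with $|J|\le\omega(\mathbf{b})$ and $\mathbf{a}\le_S\sum_{j\in J}\mathbf{s}_j$. The key move is to absorb $\mathbf{c}$ into one summand: because $S^*$ is an ideal of $\mathbb{N}^{(I)}$, the element $\mathbf{t}_n=\mathbf{s}_n+\mathbf{c}$ lies in $S^*$, and setting $\mathbf{t}_i=\mathbf{s}_i$ for $i<n$ gives $\mathbf{b}=\mathbf{a}+\mathbf{c}\le_S\mathbf{t}_1+\dots+\mathbf{t}_n$. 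Applying the definition of $\omega(\mathbf{b})$ yields a set $J\subseteq\{1,\dots,n\}$ with $|J|\le\omega(\mathbf{b})$ and $\mathbf{b}\le_S\sum_{j\in J}\mathbf{t}_j$, and I would then translate this back. If $n\in J$, then $\sum_{j\in J}\mathbf{t}_j=\mathbf{c}+\sum_{j\in J}\mathbf{s}_j$, and $\mathbf{b}\le_S$ this is equivalent, after subtracting $\mathbf{b}=\mathbf{a}+\mathbf{c}$, to $\mathbf{a}\le_S\sum_{j\in J}\mathbf{s}_j$. If $n\notin J$ and $\mathbf{d}=\sum_{j\in J}\mathbf{s}_j-\mathbf{b}\neq\mathbf{0}$, then $\sum_{j\in J}\mathbf{s}_j-\mathbf{a}=\mathbf{d}+\mathbf{c}\in S^*$ by the ideal property, so again $\mathbf{a}\le_S\sum_{j\in J}\mathbf{s}_j$. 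In both situations the set $J$ already works.

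The only obstacle is the boundary case $n\notin J$ with $\mathbf{d}=\mathbf{0}$, that is, $\sum_{j\in J}\mathbf{s}_j=\mathbf{b}$; since $\mathbf{b}$ is an atom this forces $J=\{j_0\}$ with $\mathbf{s}_{j_0}=\mathbf{b}$, and here $\mathbf{c}\in\mathcal{H}(S)$ blocks $\mathbf{a}\le_S\mathbf{b}$. To handle this I would enlarge the set to $J'=\{j_0,n\}$: then $\sum_{j\in J'}\mathbf{s}_j-\mathbf{a}=\mathbf{c}+\mathbf{s}_n\in S^*$, so $\mathbf{a}\le_S\mathbf{s}_{j_0}+\mathbf{s}_n$, and it remains only to check that $|J'|=2\le\omega(\mathbf{b})$. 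This is where I would invoke that $\mathbf{b}$ is not \emph{prime}: the identity $2\mathbf{b}=\mathbf{a}+(\mathbf{a}+2\mathbf{c})$ exhibits $\mathbf{b}\le_S\mathbf{x}+\mathbf{y}$ with $\mathbf{x}=\mathbf{a}$ and $\mathbf{y}=\mathbf{a}+2\mathbf{c}\in S^*$, while $\mathbf{b}\not\le_S\mathbf{x}$ (as $\mathbf{a}-\mathbf{b}=-\mathbf{c}\notin\mathbb{N}^{(I)}$) and $\mathbf{b}\not\le_S\mathbf{y}$ (as $\mathbf{y}-\mathbf{b}=\mathbf{c}\in\mathcal{H}(S)$); hence $\omega(\mathbf{b})\ge2$, exactly as needed.

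The main delicacy of the argument is isolating this boundary case and realizing that a strict inequality $\mathbf{a}<\mathbf{b}$ between two atoms forces $\mathbf{b}$ to be non-prime, which is precisely what guarantees $\omega(\mathbf{b})\ge2$ and rescues the size count. I would emphasize that the whole proof uses only the defining property of $\omega$ together with the ideal structure of $S^*$, so it requires no appeal to the reduction to sums of atoms.
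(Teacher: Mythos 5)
Your proposal is correct and follows essentially the same strategy as the paper's proof: absorb $\mathbf{c}=\mathbf{b}-\mathbf{a}$ into one summand, apply the defining property of $\omega(\mathbf{b})$ to the modified list, and translate the resulting subset $J$ back to $\mathbf{a}$. The one genuine difference is your boundary case: the paper's proof, in the situation $n\notin J$, simply asserts that $\mathbf{s}'+\mathbf{x}\in S$ (in your notation, $\mathbf{d}+\mathbf{c}\in S$), which is justified by the ideal property only when $\mathbf{d}\neq\mathbf{0}$; when $\mathbf{d}=\mathbf{0}$ one has $\mathbf{d}+\mathbf{c}=\mathbf{c}\in\mathcal{H}(S)$ and the chosen $J$ really does fail for $\mathbf{a}$. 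Your repair --- enlarging $J$ to $\{j_0,n\}$ and observing that $\mathbf{a}<\mathbf{b}$ with both atoms forces $\omega(\mathbf{b})\ge 2$ via $2\mathbf{b}=\mathbf{a}+(\mathbf{a}+2\mathbf{c})$ --- is correct and closes a small gap that the paper's own argument leaves implicit.
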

\begin{proof}
    Suppose that $\mathbf{a}\le_S \mathbf{s}_1+\dots+\mathbf{s}_n$ with $\mathbf{s}_1,\dots,\mathbf{s}_n\in S$. Write $\mathbf{a}+\mathbf{x}=\mathbf{b}$ for some $\mathbf{x}\in \mathbb{N}^{(I)}$. There exists $\mathbf{s}\in S$ such that $\mathbf{a}+\mathbf{s}= \mathbf{s}_1+\dots+\mathbf{s}_n$, and consequently $\mathbf{a}+\mathbf{s}+\mathbf{x}=\mathbf{b}+\mathbf{s}=\mathbf{s}_1+\dots+\mathbf{s}_n+\mathbf{x}$. As $\mathbf{s}\in S$, we have that $\mathbf{b}\le_S \mathbf{s}_1'+\dots+\mathbf{s}_n'$, with $\mathbf{s}_i'=\mathbf{s}_i$ for $i<n$ and $\mathbf{s}_n'=\mathbf{s}_n+\mathbf{x}\in S$. Hence, there exists $J\subseteq \{1,\dots,n\}$ with $|J|\le \omega(\mathbf{b})$ such that $\mathbf{b}\le_S \sum_{j\in J}\mathbf{s}_j'$. Thus, $\mathbf{b}+\mathbf{s}'= \sum_{j\in J}\mathbf{s}_j'$ for some $\mathbf{s}'\in S$, and so $\mathbf{a}+\mathbf{x}+\mathbf{s}'=\sum_{j\in J}\mathbf{s}_j'$. If $n\in J$, then $\mathbf{x}$ cancels out from both sides of the equality and we get  $\mathbf{a}+\mathbf{s}'=\sum_{j\in J}\mathbf{s}_j$, which implies $\mathbf{a}\le_S \sum_{j\in J}\mathbf{s}_j$. If $n$ is not in $J$, then $\mathbf{a}+\mathbf{s}'+\mathbf{x}=\sum_{j\in J}\mathbf{s}_j$, and as $\mathbf{s}'+\mathbf{x}\in S$, we also derive that $\mathbf{a}\le_S \sum_{j\in J}\mathbf{s}_j$, concluding that $\omega(\mathbf{a})\le \omega(\mathbf{b})$.
\end{proof}

\begin{example}
Let $S=\{(0,0)\}\cup (\{(2,0),(0,3)\}+\mathbb{N}^2)$. The set of atoms of $S$ can be computed as follows:
\begin{verbatim}
gap> s:=FiniteComplementIdealExtension([2,0],[0,3]);
<Affine semigroup>
gap> MinimalGenerators(s);
[ [ 0, 3 ], [ 0, 5 ], [ 0, 4 ], [ 3, 0 ], [ 2, 0 ], [ 1, 3 ], [ 2, 1 ],
  [ 1, 4 ], [ 2, 2 ], [ 3, 1 ], [ 1, 5 ], [ 3, 2 ] ]
\end{verbatim}
By Proposition~\ref{prop:omega-non-decreasing}, in order to compute $\omega(S)$, it suffices to compute $\max\{\omega((3,2)), \omega((1,5))\}$.
\begin{verbatim}
gap> OmegaPrimality([3,2],s);
4
gap> OmegaPrimality([1,5],s);
5
\end{verbatim}
Thus $\omega(S)=5$, and the bound given by Theorem~\ref{thm:upper-bound-omega-norm1} is $7=1+\lVert (5,1)\rVert_1$. Observe that according to \cite[Proposition~4.9]{Baeth}, $\omega(S)$ should be in $\{3,4\}$.
\end{example}



It is well known, see for instance \cite[Section~3]{gk}, that $\mathsf{c}(S)\le \omega(S)$. In the following example we provide a monoid having an ``extreme'' behaviour with respect to $\omega$-primality and catenary degree (compare with \cite[Example~4.7]{bgsg}).




\begin{example} \label{ex:omega-infty}

    Let $S=\{(0,0)\}\cup ((0,1)+\mathbb{N}^2)$. 
    The set of atoms of $S$ is $\mathcal{A}(S)=\{(n,1) : n\in \mathbb{N}\}$. Take $n$ a positive integer. Define $s_n=\frac{n(n+1)}{2}=\sum_{i=1}^n i$. Then $(s_n,1)+(n-1)(0,1)=\sum_{i=1}^n(i,1)$, and so $(s_n,1)\le_S \sum_{i=1}^n(i,1)$. If $J$ is a proper subset of $\{1,\dots,n\}$, then $s_n>\sum_{j\in J} j$, and so $(s_n,1)\not\le \sum_{j\in J}(j,1)$, which in particular implies $(s_n,1)\not\le_S \sum_{j\in J}(j,1)$. Thus $\omega((s_n,1))\ge n$, and consequently $\omega(S)=\infty$.

     According to Remark~\ref{rem:betti-base-case}, $\operatorname{Betti}(S)\subseteq 2\mathcal{A}(S)$, and so $\operatorname{Betti}(S)\subseteq \{(n,2) : n\in \mathbb{N}\}$. In particular, this implies that the maximal length of a factorization of a Betti element of $S$ is two, which by \cite[Section~3]{ph} forces $\mathsf{c}(S)\le 2$. The factorizations of $(4,2)$ are $(2,1)+(2,1)$ and $(1,1)+(3,1)$, and so $\mathsf{c}((4,2))=2=\mathsf{c}(S)$. By \cite[Theorem~1.6.3]{g-hk}, $S$ is half-factorial; this also follows from \cite[Proposition~1]{kl}, since the set of atoms is contained in a hyperplane.
\end{example}

Notice that $\{(0,0)\}+((0,1)+\mathbb{N}^2)$ is half-factorial but not factorial. If $S=\{\mathbf{0}\}\cup (X+\mathbb{N}^k)$ with $k$ a positive integer and $|\mathbb{N}^k\setminus S|<\infty$, then \cite[Theorem~4.4]{Baeth} asserts that $S$ is half-factorial if an only if it is factorial.

\begin{proposition}\label{prop:lower-omega-disjoint-supp}
Let $S$ be an extension ideal of $\mathbb{N}^{(I)}$. Let $\mathbf{a}\in \mathcal{A}(S)$ and suppose there exists $\mathbf{b}\in S^*$ such that $\mathbf{a}\wedge \mathbf{b}=\mathbf{0}$. Then $\omega(\mathbf{a})\geq \lVert \mathbf{a} \rVert_1$.
\end{proposition}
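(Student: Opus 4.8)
The plan is to produce an explicit \emph{witness}: a single expression $\mathbf{a}\le_S \mathbf{s}_1+\dots+\mathbf{s}_n$ with $\mathbf{s}_1,\dots,\mathbf{s}_n\in S$ for which no proper sub-sum can absorb $\mathbf{a}$, thereby forcing $\omega(\mathbf{a})$ to be at least $\lVert\mathbf{a}\rVert_1$. Write $n=\lVert\mathbf{a}\rVert_1$ and recall that, since $S$ is an ideal extension of $\mathbb{N}^{(I)}$, the set $S^*$ is an ideal, so $\mathbf{b}+\mathbf{e}_i\in S^*$ for every $i\in I$ (here $\mathbf{b}\in S^*$ is the given element satisfying $\mathbf{a}\wedge\mathbf{b}=\mathbf{0}$). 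The natural witness is the multiset containing, for each $i\in\operatorname{Supp}(\mathbf{a})$, exactly $a_i$ copies of the summand $\mathbf{b}+\mathbf{e}_i$; this multiset has $\sum_{i}a_i=n$ elements.

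First I would compute the total sum
\[
\sum_{i\in\operatorname{Supp}(\mathbf{a})}a_i(\mathbf{b}+\mathbf{e}_i)=\Big(\sum_i a_i\Big)\mathbf{b}+\sum_i a_i\mathbf{e}_i=n\mathbf{b}+\mathbf{a},
\]
and note that $\mathbf{a}\le_S n\mathbf{b}+\mathbf{a}$ because the difference $n\mathbf{b}$ lies in $S$. This establishes the required inequality with $n$ summands. Next I would argue that no strictly smaller selection works. Suppose $J$ picks out $c_i\le a_i$ copies of $\mathbf{b}+\mathbf{e}_i$ for each $i$, and that $\mathbf{a}\le_S\sum_{j\in J}\mathbf{s}_j$. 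Since $\le_S$ implies the componentwise order $\le$ on $\mathbb{N}^{(I)}$, we have $\mathbf{a}\le \big(\sum_i c_i\big)\mathbf{b}+\sum_i c_i\mathbf{e}_i$. For an index $i\in\operatorname{Supp}(\mathbf{a})$ the hypothesis $\mathbf{a}\wedge\mathbf{b}=\mathbf{0}$ gives $b_i=0$, so the $i$-th coordinate of the right-hand side is exactly $c_i$; thus $a_i\le c_i$, which together with $c_i\le a_i$ forces $c_i=a_i$. Hence $|J|=\sum_i c_i=\sum_i a_i=n$, so every admissible $J$ has cardinality $n$.

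Combining the two steps, the witness expression admits a working index set only of size $n=\lVert\mathbf{a}\rVert_1$, and therefore $\omega(\mathbf{a})\ge\lVert\mathbf{a}\rVert_1$. I do not expect a genuine obstacle here: the entire argument hinges on choosing the summands $\mathbf{b}+\mathbf{e}_i$ so that the disjoint-support condition decouples the $\operatorname{Supp}(\mathbf{a})$-coordinates from $\mathbf{b}$, making the coordinatewise comparison alone enough to pin down the selection. The one point that deserves care is the passage from $\le_S$ to $\le$, which is precisely the elementary observation recorded just before Lemma~\ref{lem:number-le-sum}.
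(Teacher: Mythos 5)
Your proof is correct and follows essentially the same route as the paper: both use the witness $\sum_{j\in\operatorname{Supp}(\mathbf{a})}a_j(\mathbf{e}_j+\mathbf{b})$, observe that the total sum is $\mathbf{a}+\lVert\mathbf{a}\rVert_1\mathbf{b}$ so that $\mathbf{a}\le_S$ it, and then use $\mathbf{a}\wedge\mathbf{b}=\mathbf{0}$ to see that dropping any summand destroys even the componentwise inequality $\le$, hence certainly $\le_S$. Your bookkeeping with the counts $c_i$ is just a slightly more explicit phrasing of the paper's ``remove one summand'' argument.
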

\begin{proof}
 Let $J=\operatorname{Supp}(\mathbf{a})$ and write $\mathbf{a}=(a_i)_{i\in I}$. If $\mathbf{a}=\mathbf{e}_j$ for some $j\in J$, then $\omega(\mathbf{a})\ge 1=\lVert \mathbf{a}\rVert_1$. So, suppose that $\sum_{j\in J} a_j\ge 2$. Observe that $\mathbf{a}+\sum_{j\in J} a_j \mathbf{b}=\sum_{j\in J}\sum_{i=1}^{a_j}(\mathbf{e}_j+\mathbf{b})$, $\sum_{j\in J} a_j \mathbf{b}\in S$, and $\mathbf{e}_j+\mathbf{b} \in S$ for all $j\in J$. Hence, $\mathbf{a}\leq_S\sum_{j\in J}\sum_{i=1}^{a_j}(\mathbf{e}_j+\mathbf{b})$. Moreover, if $k\in J$, then from $\mathbf{a}\wedge \mathbf{b}=\mathbf{0}$, we deduce that $\mathbf{a}\nleq \sum_{j\in J}\sum_{i=1}^{a_j}(\mathbf{e}_j+\mathbf{b})-(\mathbf{e}_k+\mathbf{b})$. Therefore, $\omega(\mathbf{a})\geq \sum_{j\in J}a_j=\lVert \mathbf{a}\rVert_1$.
\end{proof}

\begin{example}
    Notice that even if $\mathbf{e}_1$ is in $S$, this does not imply that $\omega(\mathbf{e}_1)=1$. Take for example $S=\{(0,0)\}\cup ((1,0)+\mathbb{N}^2)$. Then, $(1,0)\le_S (1,1)+(1,1)$ but $(1,0)\not\le_S(1,1)$. In general, $1\le \omega(\mathbf{e}_1)\le 2=1+\lVert \mathbf{e}_1\rVert_1$ (Theorem~\ref{thm:upper-bound-omega-norm1}). 
\end{example}


Let $S$ be an ideal extension of $\mathbb{N}^{(I)}$. Define the set of \emph{extreme rays} of $S$ as
\[
\mathcal{E}(S)=\{\mathbf{m}\in \mathcal{M}(S) : |\operatorname{Supp}(\mathbf{m})|=1\}.
\]

\begin{corollary}\label{cor:lower-bound-omega-extreme-rays}
    Let $S$ be an ideal extension of $\mathbb{N}^{(I)}$ with more than one extreme ray. Then 
    \[
    \omega(S)\ge \sup\{ 2\lVert \mathbf{r}\rVert_1-1 : \mathbf{r}\in \mathcal{E}(S)\}.
    \]
\end{corollary}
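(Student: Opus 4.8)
The plan is to bound $\omega(S)$ from below by exhibiting, for each extreme ray $\mathbf{r}\in\mathcal{E}(S)$, a single atom $\mathbf{a}$ whose $\omega$-primality is at least $2\lVert\mathbf{r}\rVert_1-1$; since $\omega(S)=\sup\{\omega(\mathbf{a}):\mathbf{a}\in\mathcal{A}(S)\}$, taking the supremum over $\mathbf{r}$ will then yield the claim. The engine of the argument is Proposition~\ref{prop:lower-omega-disjoint-supp}, which gives $\omega(\mathbf{a})\ge\lVert\mathbf{a}\rVert_1$ as soon as some nonzero element of $S$ has support disjoint from that of $\mathbf{a}$; the task is therefore to produce an atom of $1$-norm exactly $2\lVert\mathbf{r}\rVert_1-1$ together with a suitable disjoint witness.

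Fix $\mathbf{r}\in\mathcal{E}(S)$. By definition of $\mathcal{E}(S)$ I may write $\mathbf{r}=r\mathbf{e}_j$ for some $j\in I$ and some positive integer $r=\lVert\mathbf{r}\rVert_1$, with $r\mathbf{e}_j\in\mathcal{M}(S)$. The key step --- and the one requiring a short verification --- is to determine which multiples $m\mathbf{e}_j$ are atoms. I would argue directly: since $r\mathbf{e}_j$ is minimal in $S^*$ and $\mathcal{M}(S)$ is an antichain, no $m\mathbf{e}_j$ with $0<m<r$ can lie in $S$, so $m\mathbf{e}_j\in S^*$ exactly when $m\ge r$; and because $m\mathbf{e}_j$ has support $\{j\}$, any decomposition $m\mathbf{e}_j=\mathbf{s}+\mathbf{t}$ with $\mathbf{s},\mathbf{t}\in S^*$ must have both summands supported on $\{j\}$, hence of the form $a\mathbf{e}_j,b\mathbf{e}_j$ with $a,b\ge r$ and $a+b=m$. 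Thus $m\mathbf{e}_j\in S^*+S^*$ precisely when $m\ge 2r$, so $m\mathbf{e}_j\in\mathcal{A}(S)$ precisely when $r\le m\le 2r-1$. In particular $\mathbf{a}:=(2r-1)\mathbf{e}_j$ is an atom with $\lVert\mathbf{a}\rVert_1=2r-1=2\lVert\mathbf{r}\rVert_1-1$.

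To finish, I would supply the disjoint-support witness from the hypothesis that $S$ has more than one extreme ray. Distinct elements of $\mathcal{E}(S)$ have distinct singleton supports, since two rays $r\mathbf{e}_j$ and $r'\mathbf{e}_j$ sharing the same index would be $\le$-comparable, violating the antichain property of $\mathcal{M}(S)$. Hence there exists $\mathbf{r}'=r'\mathbf{e}_k\in\mathcal{E}(S)$ with $k\neq j$. Setting $\mathbf{b}=\mathbf{r}'\in S^*$ gives $\mathbf{a}\wedge\mathbf{b}=\mathbf{0}$, and Proposition~\ref{prop:lower-omega-disjoint-supp} yields $\omega(\mathbf{a})\ge\lVert\mathbf{a}\rVert_1=2\lVert\mathbf{r}\rVert_1-1$. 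Therefore $\omega(S)\ge\omega(\mathbf{a})\ge 2\lVert\mathbf{r}\rVert_1-1$ for every $\mathbf{r}\in\mathcal{E}(S)$, and taking the supremum over $\mathbf{r}$ gives the stated bound. The only delicate point is the atom computation of the previous paragraph; everything else is a direct application of the cited proposition, and the boundary case $r=1$ causes no trouble, since there $\mathbf{a}=\mathbf{e}_j$ and the bound reduces to $\omega(\mathbf{a})\ge 1$.
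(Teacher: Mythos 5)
Your proposal is correct and follows essentially the same route as the paper: the paper's proof also takes the atom $2\mathbf{r}-\mathbf{e}_j=(2r-1)\mathbf{e}_j$ and a second extreme ray $\mathbf{r}'$ as the disjoint-support witness, then applies Proposition~\ref{prop:lower-omega-disjoint-supp}. You merely spell out the two details the paper leaves implicit (that $(2r-1)\mathbf{e}_j$ is indeed an atom, and that distinct extreme rays have distinct supports), and both verifications are sound.
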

\begin{proof}
    The claim follows easily from Proposition~\ref{prop:lower-omega-disjoint-supp}, since if $\mathbf{r}$ and $\mathbf{r}'$ are two different extreme rays, and $\mathbf{r}=r\mathbf{e}_j$ for some $j\in I$, then $2\mathbf{r}-\mathbf{e}_j\in \mathcal{A}(S)$ and $(2\mathbf{r}-\mathbf{e}_j)\wedge \mathbf{r}'=\mathbf{0}$.
\end{proof}


\begin{example} \label{ex:omega-ubound}\rm 
Let $\emptyset\neq J\subseteq I\subseteq \mathbb{N}$ and consider the monoid $S=\{0\}\cup \{k_j\mathbf{e}_j: j\in J\}+\mathbb{N}^{(I)}$ for
given positive integers $k_j$, $j\in J$ (see Proposition~\ref{simple-case}). By Corollary~\ref{cor:lower-bound-omega-extreme-rays}, we have that $\omega(S)\geq \sup\{2k_j-1:j\in J\}$. So, for all $n\in \mathbb{N}$ there exists an extension ideal $S$ of $\mathbb{N}^{(I)}$ such that $\omega(S)\geq n$. We can also obtain an example with infinite omega primality. For instance, suppose $J$ is an infinite set and define $k_j=j+1$ for all $j\in J$. Then $\omega(S)\geq j+1$ for all $j\in J$ and since $J$ is infinite we have $\omega(S)=\infty$.
\end{example}

If $|I|=1$, then $S$ is isomorphic to an ordinary numerical semigroup, and by \cite[Proposition 3.1]{algorithm_omega} then $\omega(S)=3$ (or $1$ if $S$ is isomorphic to $\mathbb{N}$).

\begin{proposition}\label{prop:infinite-omega-primality}
Let $S$ be an ideal extension of $\mathbb{N}^k$ for some positive integer $k$. Then, $\omega(S)<\infty$ if and only if $|\mathcal{H}(S)|<\infty$. 
\end{proposition}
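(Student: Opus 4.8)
The plan is to route both implications through the $\ell^1$-norms of the atoms, exploiting that for $S\subseteq\mathbb N^k$ the set $\mathcal M(S)$ is finite: it is an antichain (as noted after its definition), so Dickson's lemma applies. Throughout I assume $S\neq\{\mathbf 0\}$, so that $\mathcal A(S)\neq\emptyset$ and $\mathcal M(S)\neq\emptyset$; the degenerate case is read by convention.

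For the easy implication $|\mathcal H(S)|<\infty\Rightarrow\omega(S)<\infty$, I would first record that every atom $\mathbf a$ can be written as $\mathbf a=\mathbf m+\mathbf x$ with $\mathbf m\in\mathcal M(S)$ and $\mathbf x\in\{\mathbf 0\}\cup\mathcal H(S)$: choosing $\mathbf m\le\mathbf a$ minimal, the remainder $\mathbf x=\mathbf a-\mathbf m$ cannot lie in $S^*$, for otherwise $\mathbf a\in S^*+S^*$. Hence $\lVert\mathbf a\rVert_1\le\max_{\mathbf m\in\mathcal M(S)}\lVert\mathbf m\rVert_1+\max_{\mathbf h\in\mathcal H(S)}\lVert\mathbf h\rVert_1$, which is finite because $\mathcal M(S)$ is finite and $\mathcal H(S)$ is assumed finite. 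Theorem~\ref{thm:upper-bound-omega-norm1} then bounds $\omega(S)$ by one plus this quantity.

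The substantial direction is $|\mathcal H(S)|=\infty\Rightarrow\omega(S)=\infty$, and I would argue by exhibiting atoms of arbitrarily large $\omega$-primality. The first step is structural: since $\mathcal H(S)\cup\{\mathbf 0\}$ is downward closed in $\mathbb N^k$ by Lemma~\ref{atoms-sitting-over-gaps}(a) and infinite, it must contain a whole coordinate ray $\{t\mathbf e_i:t\in\mathbb N\}$; indeed, if for each $i$ some multiple $N_i\mathbf e_i$ failed to be a gap, downward-closedness would confine the gaps to the finite box $\prod_i[0,N_i)$. Fix such an $i$, so $t\mathbf e_i\in\mathcal H(S)$ for all $t\ge1$, and pass to the projection language of $\pi^1,\pi^2$ from Proposition~\ref{prop:gaps-atoms-pi}, writing each $\mathbf m\in\mathcal M(S)$ as $(\widehat{\mathbf m},(\mathbf m)_i)$ with $\widehat{\mathbf m}\in\mathbb N^{(I\setminus\{i\})}$ and setting $P=\{\widehat{\mathbf m}:\mathbf m\in\mathcal M(S)\}$. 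Now pick $\mathbf v$ minimal in $P$ with respect to $\le$. Then $\pi^1_{\mathbf v}<\infty$, and I claim $\pi^2_{\mathbf v}=\infty$: otherwise Lemma~\ref{lem:pi1+pi1} yields a splitting $\mathbf v=\mathbf a+\mathbf b$ with $\pi^1_{\mathbf a},\pi^1_{\mathbf b}<\infty$; since $\pi^1_{\mathbf 0}=\infty$ (the ray $\mathbb N\mathbf e_i$ consists entirely of gaps), both summands are nonzero, and $\pi^1_{\mathbf a}<\infty$ forces $\mathbf a\ge\widehat{\mathbf m'}$ for some $\mathbf m'\in\mathcal M(S)$, so $\mathbf v=\mathbf a+\mathbf b$ strictly dominates the element $\widehat{\mathbf m'}\in P$, contradicting minimality of $\mathbf v$. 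Writing $p=\pi^1_{\mathbf v}$ and $\mathbf m=(\mathbf v,p)\in\mathcal M(S)$, Proposition~\ref{prop:gaps-atoms-pi}(5) shows that $\mathbf a_c:=\mathbf m+c\mathbf e_i=(\mathbf v,p+c)$ is an atom for every $c\ge0$.

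Finally I would bound $\omega(\mathbf a_c)$ from below by an explicit inefficient covering. Take $\mathbf b=\mathbf m+\mathbf e_i\in S^*$, whose $i$-th coordinate is $p+1\ge1$. A direct check gives $k\mathbf b-\mathbf a_c=\big((k-1)\mathbf v,\;(k-1)p+k-c\big)\ge\mathbf m$ for all sufficiently large $k$, so $\mathbf a_c\le_S k\mathbf b$; let $k_0(c)$ be the least such $k$, which is well defined since $k\mapsto(\mathbf a_c\le_S k\mathbf b)$ is monotone. Because $\mathbf a_c\le_S k\mathbf b$ implies $\mathbf a_c\le k\mathbf b$ and hence $p+c\le k(p+1)$ in coordinate $i$, we obtain $k_0(c)\ge (p+c)/(p+1)\to\infty$ as $c\to\infty$. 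The configuration of $k_0(c)$ identical copies of $\mathbf b$ cannot be pruned: every proper sub-sum equals $j\mathbf b$ with $j<k_0(c)$, which does not $\le_S$-dominate $\mathbf a_c$ by minimality of $k_0(c)$. Therefore $\omega(\mathbf a_c)\ge k_0(c)$, and letting $c\to\infty$ yields $\omega(S)=\infty$. The hard part is precisely the middle step—guaranteeing, via the minimal projection $\mathbf v$ together with the ray of gaps, that $\pi^2_{\mathbf v}=\infty$ so that atoms persist arbitrarily far along $\mathbf e_i$; once such a one-parameter family of atoms is secured, the covering by copies of $\mathbf b$ is routine.
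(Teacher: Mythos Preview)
Your proof is correct and follows the same overall scheme as the paper---locate a coordinate $i$ whose ray lies entirely in $\mathcal{H}(S)$, produce an $\mathbf{m}\in\mathcal{M}(S)$ such that $\mathbf{m}+n\mathbf{e}_i\in\mathcal{A}(S)$ for all $n\ge 0$, and then witness unbounded $\omega$-primality along this family---but the two executions differ in interesting ways. For the selection of $\mathbf{m}$, the paper picks $\mathbf{m}\in\mathcal{M}(S)$ with \emph{maximal} $i$-th coordinate and argues directly that no decomposition $\mathbf{m}+n\mathbf{e}_i=\mathbf{a}+\mathbf{b}$ with $\mathbf{a},\mathbf{b}\in S^*$ can exist; you instead pick $\mathbf{m}$ whose projection $\widehat{\mathbf{m}}$ is \emph{minimal} in $P$ and route the conclusion through the $\pi^1/\pi^2$ machinery (Lemma~\ref{lem:pi1+pi1} and Proposition~\ref{prop:gaps-atoms-pi}) to obtain $\pi^2_{\mathbf v}=\infty$. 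In dimension $k\ge 3$ these criteria need not single out the same element, so the arguments are genuinely parallel rather than re-phrasings of one another. For the lower bound on $\omega$, the paper uses the graded sum $\sum_{j=1}^n(\mathbf{m}+j\mathbf{e}_i)$, whereas your use of $k_0(c)$ identical copies of $\mathbf{b}=\mathbf{m}+\mathbf{e}_i$ is cleaner: minimality of $k_0(c)$ immediately rules out every proper sub-sum. Your easy direction is also more self-contained, bounding $\lVert\mathbf{a}\rVert_1$ directly via the decomposition $\mathbf{a}=\mathbf{m}+\mathbf{x}$ with $\mathbf{x}\in\{\mathbf{0}\}\cup\mathcal{H}(S)$ rather than citing finite generation of generalized numerical semigroups.
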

\begin{proof}
    Observe that $\mathcal{H}(S)$ has finitely many elements if and only if for all $i\in \{1,\dots,k\}$ there exists a positive integer $k_i$ such that $k_i\mathbf{e}_i\in S$. 

    \emph{Sufficiency}. In this setting, $S$ is a generalized numerical semigroup and thus it is finitely generated (see \cite[Proposition 2.3]{GNS}). Thus, $\omega(S)$ is finite in virtue of Theorem~\ref{thm:upper-bound-omega-norm1}.

    \emph{Necessity}. Let us suppose that $\mathcal{H}(S)$ has infinitely many elements, that is, there exists $i\in\{1,\dots,k\}$ such that $t\mathbf{e}_i\not\in S$ for every positive integer $t$. Let $\mathbf{m}=(m_1,\dots,m_k) \in \mathcal{M}(S)$ be such that its $i$th coordinate is maximal among the elements in $\mathcal{M}(S)$.
    
    We show that $\mathbf{m}+n\mathbf{e}_i\in\mathcal{A}(S)$ for all $n\in \mathbb{N}$. Let $n\in \mathbb{N}$ and assume that $\mathbf{m}+n\mathbf{e}_i=\mathbf{a}+\mathbf{b}$ with $\mathbf{a}=(a_1,\dots,a_k), \mathbf{b}=(b_1,\dots,b_k)\in S^*$. By hypothesis, there exist $j\in \{1,\dots,k\}\setminus \{i\}$ such that $m_j\neq 0$. Since $a_j+b_j=m_j$, we can assume without loss of generality that $a_j<m_j$. Moreover, we have $a_l\leq m_l$ for all $l\in \{1,\dots,k\}\setminus \{i,j\}$. Let $\mathbf{n}=(n_1,\dots,n_k)\in \mathcal{M}(S)$ such that $\mathbf{n}\leq \mathbf{a}$; in particular, $n_j<m_j$ and thus $\mathbf{n}\neq \mathbf{m}$. If $n_i\leq m_i$, we obtain $\mathbf{n}< \mathbf{m}$, which is in contradiction with $\mathbf{m}\in \mathcal{M}(S)$, and  the inequality $n_i>m_i$ cannot hold by the maximality of $m_i$. So, $\mathbf{m}+n\mathbf{e}_i\in \mathcal{A}(S)$ for all $n\in \mathbb{N}$. 
    
    Let $n\in \mathbb{N}$ with $n>2$ and denote $s_n=\sum_{i=1}^n i=\frac{n(n+1)}{2}$. Observe that $\mathbf{m}+((n-2)m_i+s_n)\mathbf{e}_i\leq_S \sum_{k=1}^n(\mathbf{m}+k\mathbf{e}_i)$, since $\sum_{k=1}^n(\mathbf{m}+k\mathbf{e}_i)-(\mathbf{m}+((n-2)m_i+s_n)\mathbf{e}_i)=n\mathbf{m}+s_n\mathbf{e}_i-\mathbf{m}-(n-2)m_i\mathbf{e}_i-s_n\mathbf{e}_i=(n-1)\mathbf{m}-(n-2)m_i\mathbf{e}_i =\mathbf{m}+(\sum_{l\in I, l\neq i}(n-2) m_l\mathbf{e}_l)\in S$. But for all $j\in \{1,\dots, n\}$ we have $\mathbf{m}+((n-2)m_i+s_n)\mathbf{e}_i\nleq_S \sum_{k=1,k\neq j}^n(\mathbf{m}+k\mathbf{e}_i)$, since $\sum_{k=1,k\neq j}^n(\mathbf{m}+k\mathbf{e}_i)-(\mathbf{m}+((n-2)m_i+s_n)\mathbf{e}_i)= (n-2)\mathbf{m}-(n-2)m_i\mathbf{e}_i-j\mathbf{e}_i$, whose $i$th coordinate is negative. So, $\omega(\mathbf{m}+((n-2)m_i+s_n)\mathbf{e}_i)\ge n$.
\end{proof}


\begin{remark}\label{rem:omega-infinite}
    Let $S$ be an ideal extension of $\mathbb{N}^{(I)}$ for some non-empty set $I$ of $\mathbb{N}$. Suppose that there is $i\in I$ such that there is no $\mathbf{s}\in S$ whose support is $\{i\}$. Let $\mathbf{a}$ be an atom of $S$, and define  $J=\operatorname{Supp}(\mathbf{a})\cup \{i\}$. Let $M=\{\mathbf{s}\in S : \operatorname{Supp}(\mathbf{s})\subseteq J\}$. Then, $M$ is a divisor-closed submnonoid of $S$ isomorphic to the submonoid of $\mathbb{N}^J$ defined as $M'=\{ (s_j)_{j\in J} : (s_i)_{i\in I}\in M\}$. Observe that $M'$ is an ideal extension of $\mathbb{N}^J$ and that $\mathcal{A}(M)\subseteq \mathcal{A}(S)$. Also, since $i\in J$, we have that $\mathbb{N}^J\setminus M'$ has infinite cardinality, and so by Proposition~\ref{prop:infinite-omega-primality}, we deduce that $\infty=\omega(M')=\omega(M)$, and this forces $\omega(S)=\infty$.

\noindent  If we assume that $I$ is an infinite set and for all $i \in I$ there exists a positive integer $k_i$ such that $k_i\mathbf{e}_i\in S$, then $\omega(S)$ may be finite or not. For an example with $\omega(S)=\infty$, consider $S=\{0\}\cup \{k_i\mathbf{e}_i: i\in I\}+\mathbb{N}^{(I)}$ with $k_i=i+1$ for all $i\in I$ (see also Example~\ref{ex:omega-ubound}). For an example with $\omega(S)<\infty$, see  Proposition~\ref{prop:omega-backslash} with $I=J$.
\end{remark}


\section{Backslash monoids}\label{sec:back-slash}

In this section, we consider a special class of ideal extensions of  $\mathbb{N}^{(I)}$ and study some of its factorization invariants. 


Let $\lambda=(\lambda_i)_{i\in I}$ be a sequence of positive integers. We define  $|\mathbf{x}|_\lambda=\sum_{i\in I}\lambda_i x_i$. Let $T\subseteq \mathbb{N}$ be a numerical semigroup, define the following set:
$$S_{\lambda}^I(T)=\{\mathbf{x}\in \mathbb{N}^{(I)}  : |\mathbf{x}|_\lambda \in T\setminus \{0\}\}\cup \{\mathbf{0}\}.$$
It is not difficult to see that $S_{\lambda}^I(T)$ is a monoid, because of the linearity of $|\cdot|_\lambda$. 

\begin{remark}
If the numerical semigroup $T$ is ordinary, that is, of the form $T= \{0\}\cup (m+\mathbb{N})$ with $m$ a positive integer, then $S_{\lambda}^I(T)$ is an ideal extension of $\mathbb{N}^{(I)}$. To prove this, take $\mathbf{s}\in S_{\lambda}^I(T)^*$ (that is, $|\mathbf{s}|_\lambda \ge m$) and $\mathbf{x}\in\mathbb{N}^{(I)}$. Then $|\mathbf{s}+\mathbf{x}|_\lambda =|\mathbf{s}|_\lambda +|\mathbf{x}|_\lambda\ge m+|\mathbf{x}|_\lambda \in T\setminus\{0\}$. 


The converse is not true in general, that is, if $T$ has not the form $\{0\}\cup (m+\mathbb{N}$), then it is possible that $S_\lambda^I(T)$ is an ideal extension of $\mathbb{N}^{(I)}$ for some $I\subseteq \mathbb{N}$ and $\lambda \in \mathbb{N}^{(I)}$. In fact, consider $T=\mathbb{N}\setminus \{1,2,3,5\}$, $\lambda=(2,3)$ and $S=S_\lambda^{\{1,2\}}=\{(x,y)\in \mathbb{N}^2 : 2x+3y\in T\}$. Take $(x,y)\in S^*$, and so $2x+3y\in T^*$. Then $(x,y)+(1,0)\in S$, since $2(x+1)+3y=(2x+3y)+2\in T$. Also, $(x,y)+(0,1)\in S$, because $2x+3(y+1)=(2x+3y)+3\in T$. Thus, $S$ is an ideal extension of $\mathbb{N}^2$.
\end{remark}

We study the monoid $S_{\lambda}^I(T)$ in the special case $\lambda_j=1$ for all $j\in J$ and $\lambda_i=0$ for every $i\in I\setminus J$, with $J$ a subset of $I$. In this special case, we denote the monoid $S_{\lambda}^I(T)$ by $S_{J}^I(T)$, and we write $|\cdot|_J$ instead of $|\cdot|_\lambda$. If $J=I$, then $|\cdot |_J$ equals $\lVert\cdot\rVert_1$. Due to their shape, we call these monoids \emph{backslash monoids}.

\begin{proposition}\label{atoms-S_J^I(T)}
Let $T$ be a numerical semigroup, $I$ a non-empty set of non-negative integers, and $\emptyset\neq J\subseteq I$. Then \[\mathcal{A}(S_J^I(T))=\{\mathbf{s}\in S_J^I(T) : |\mathbf{s}|_J\in \mathcal{A}(T)\}.\]
\end{proposition}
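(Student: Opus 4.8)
The plan is to exploit the fact that $\mathbf{x}\mapsto |\mathbf{x}|_J$ restricts to a surjective monoid homomorphism $\varphi\colon S_J^I(T)\to T$ and to transfer factorizations back and forth along $\varphi$. The single structural observation needed throughout is that a nonzero element of $S_J^I(T)$ necessarily has $|\mathbf{x}|_J>0$: indeed, by definition $\mathbf{x}\in S_J^I(T)\setminus\{\mathbf{0}\}$ forces $|\mathbf{x}|_J\in T\setminus\{0\}$, so membership in $S_J^I(T)$ depends only on the value $|\mathbf{x}|_J$ and is insensitive to the coordinates indexed by $I\setminus J$. I would prove the two inclusions separately.

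For the inclusion $\{\mathbf{s} : |\mathbf{s}|_J\in\mathcal{A}(T)\}\subseteq \mathcal{A}(S_J^I(T))$, suppose $|\mathbf{s}|_J\in\mathcal{A}(T)$. Then $|\mathbf{s}|_J\in T\setminus\{0\}$, so $\mathbf{s}\neq\mathbf{0}$, that is, $\mathbf{s}\in S_J^I(T)^*$. If $\mathbf{s}=\mathbf{u}+\mathbf{v}$ with $\mathbf{u},\mathbf{v}\in S_J^I(T)^*$, then by the observation above both $|\mathbf{u}|_J$ and $|\mathbf{v}|_J$ lie in $T\setminus\{0\}$, and $|\mathbf{s}|_J=|\mathbf{u}|_J+|\mathbf{v}|_J$ exhibits $|\mathbf{s}|_J$ as a sum of two nonzero elements of $T$, contradicting $|\mathbf{s}|_J\in\mathcal{A}(T)=T^*\setminus(T^*+T^*)$. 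Hence $\mathbf{s}$ is an atom.

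For the reverse inclusion, take $\mathbf{s}\in\mathcal{A}(S_J^I(T))$; then $\mathbf{s}\neq\mathbf{0}$, so $|\mathbf{s}|_J\in T\setminus\{0\}$. Assuming toward a contradiction that $|\mathbf{s}|_J\notin\mathcal{A}(T)$, write $|\mathbf{s}|_J=a+b$ with $a,b\in T\setminus\{0\}$. The main (and only genuine) step is to lift this factorization to $\mathbf{s}$: since $\sum_{j\in J}s_j=|\mathbf{s}|_J=a+b\ge a$, I would choose integers $0\le u_j\le s_j$ for $j\in J$ with $\sum_{j\in J}u_j=a$ (filling up the coordinates one by one), place all coordinates indexed by $I\setminus J$ into $\mathbf{u}$, and set $\mathbf{v}=\mathbf{s}-\mathbf{u}$. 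Then $|\mathbf{u}|_J=a$ and $|\mathbf{v}|_J=b$ both lie in $T\setminus\{0\}$, so $\mathbf{u},\mathbf{v}\in S_J^I(T)^*$ and $\mathbf{s}=\mathbf{u}+\mathbf{v}$ is a nontrivial factorization, contradicting that $\mathbf{s}$ is an atom. I expect this coordinate-splitting to be the crux, but it is elementary: the coordinates outside $J$ are irrelevant to membership, and the $J$-coordinates may be distributed freely as long as the two partial sums hit the prescribed values $a$ and $b$.
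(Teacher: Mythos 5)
Your proposal is correct and follows essentially the same route as the paper: the forward inclusion by pushing a putative splitting $\mathbf{s}=\mathbf{u}+\mathbf{v}$ through $|\cdot|_J$ to contradict atomicity of $|\mathbf{s}|_J$ in $T$, and the reverse inclusion by greedily distributing the $J$-coordinates of $\mathbf{s}$ to lift a decomposition $|\mathbf{s}|_J=a+b$ with $a,b\in T^*$ to a nontrivial factorization of $\mathbf{s}$. The only cosmetic difference is that the paper assigns the coordinates indexed by $I\setminus J$ to the complementary summand rather than to $\mathbf{u}$, which is immaterial.
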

\begin{proof}
Let $\mathbf{s}\in S_J^I(T)$ such that $|\mathbf{s}|_J\in \mathcal{A}(T)$. If $\mathbf{s}=\mathbf{a}+\mathbf{b}$ with $\mathbf{a},\mathbf{b}\in S_J^I(T)^*$, then $|\mathbf{s}|_J=|\mathbf{a}|_J+|\mathbf{b}|_J$ and $|\mathbf{a}|_J, |\mathbf{b}|_J\in T^*$, which contradicts the fact that $|\mathbf{s}|_J$ is an atom of $T$.

For the other inclusion, let $\mathbf{s}\in \mathcal{A}(S_J^I(T))$ and suppose $|\mathbf{s}|_J\notin \mathcal{A}(T)$, that is, $|\mathbf{s}|_J=a+b$ for some $a,b \in T\setminus \{0\}$. In particular $\sum_{j\in J}s_j=a+b$ and also $\sum_{j\in J}s_j>a$. There exists $k\in J$ such that $s_k$ is not zero, and consequently $(\sum_{j\in J\setminus \{k\}}s_j)+(s_k-1)>a-1$. Repeating this process until the right hand side is zero, it is not difficult to see that it is possible to write $a=\sum_{j\in J}a_j$ for some $a_j\in \mathbb{N}$ such that $s_j\geq a_j$ for all $j\in J$ and there exists $k\in J$ such that $s_k>a_k$. Set $a_i=0$ for all $i\in I\setminus J$, and denote $\mathbf{a}=(a_i)_{i\in I}\in \mathbb{N}^{(I)}$. In particular $|\mathbf{a}|_J=a\in T^*$, and consequently $\mathbf{a}\in S_J^I(T)$. By considering the element $\mathbf{x}=\mathbf{s}-\mathbf{a}\in \mathbb{N}^{(I)}$, we have that $|\mathbf{x}|_J=|\mathbf{s}|_J-|\mathbf{a}|_J=b\in T^*$. But this implies that $\mathbf{s}-\mathbf{a}\in S_J^I(T)^*$, contradicting the fact that $\mathbf{s}\in \mathcal{A}(S_J^I(T))$.
\end{proof}

For the next result we recall that for an atomic monoid $M$ and $m\in M$, the \emph{elasticity} of $m$ is defined as 
\[\rho(m)=\frac{\sup \mathsf{L}(m)}{\min \mathsf{L}(m)}.\] 
The elasticity of $M$ is defined as 
\[\rho(M)=\sup \{\rho(m) : m \in M\}.\] 
Moreover, for an element $m$ in a BF-monoid $M$, the \emph{length density} of $m$ can be defined as in \cite{bckm}: 
\[\operatorname{LD}(m)=\frac{|\mathsf{L}(\mathbf{s})|-1}{\max\mathsf{L}(m)-\ell(m)}.\] The length density of the monoid $M$ is defined as
\[\operatorname{LD}(M)=\inf \{\operatorname{LD}(m): m\in M, |\mathsf{L}(m)|\ge 2\}\]

\begin{proposition}\label{prop:lengths-S_J^I(T)}
Let $T$ be a numerical semigroup, $I$ a non-empty set of non-negative integers, and $\emptyset\neq J\subseteq I$. Then, for all $\mathbf{s}\in S_J^I(T)$ we have $\mathsf{L}(\mathbf{s})=\mathsf{L}(|\mathbf{s}|_J)$. In particular, $\rho(S_J^I(T))=\rho(T)$ and $\operatorname{LD}(S_J^I(T))=\operatorname{LD}(T)$.
\end{proposition}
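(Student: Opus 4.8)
The plan is to prove the set-of-lengths identity $\mathsf{L}(\mathbf{s}) = \mathsf{L}(|\mathbf{s}|_J)$ first, and then read off the statements about elasticity and length density as formal consequences. The identity will follow from two inclusions, both exploiting that $|\cdot|_J$ is a length-preserving monoid homomorphism from $S_J^I(T)$ to $T$ that carries atoms to atoms, which is exactly the content of Proposition~\ref{atoms-S_J^I(T)}.

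For the inclusion $\mathsf{L}(\mathbf{s}) \subseteq \mathsf{L}(|\mathbf{s}|_J)$, I would take any factorization $\mathbf{s} = \mathbf{a}_1 + \cdots + \mathbf{a}_n$ into atoms of $S_J^I(T)$ and apply $|\cdot|_J$. By linearity, $|\mathbf{s}|_J = |\mathbf{a}_1|_J + \cdots + |\mathbf{a}_n|_J$, and by Proposition~\ref{atoms-S_J^I(T)} each $|\mathbf{a}_k|_J$ is an atom of $T$ (in particular nonzero). Hence this is a length-$n$ factorization of $|\mathbf{s}|_J$ in $T$, so $n \in \mathsf{L}(|\mathbf{s}|_J)$.

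The reverse inclusion $\mathsf{L}(|\mathbf{s}|_J) \subseteq \mathsf{L}(\mathbf{s})$ is where the work lies, and I expect this lifting step to be the main obstacle. Given a factorization $|\mathbf{s}|_J = t_1 + \cdots + t_n$ with each $t_k \in \mathcal{A}(T)$, I must produce atoms $\mathbf{a}_1, \ldots, \mathbf{a}_n$ of $S_J^I(T)$ summing to $\mathbf{s}$ with $|\mathbf{a}_k|_J = t_k$. Writing $\mathbf{s} = (s_i)_{i \in I}$, the key point is that $\sum_{j \in J} s_j = |\mathbf{s}|_J = \sum_{k=1}^n t_k$, so I can distribute the $J$-coordinates: there exist non-negative integers $a_{j,k}$ (for $j \in J$, $1 \le k \le n$) with row sums $\sum_k a_{j,k} = s_j$ and column sums $\sum_{j\in J} a_{j,k} = t_k$. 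Concretely, list the $J$-coordinates of $\mathbf{s}$ as a multiset of $|\mathbf{s}|_J$ tokens ($s_j$ tokens labelled $j$) and cut it into consecutive blocks of sizes $t_1, \ldots, t_n$; letting $a_{j,k}$ count the tokens labelled $j$ in block $k$ gives such a matrix. I then define $\mathbf{a}_k$ to have $j$-th coordinate $a_{j,k}$ for $j \in J$, assign all the coordinates of $\mathbf{s}$ indexed by $I \setminus J$ to a single summand (say $\mathbf{a}_1$), and set the remaining $I \setminus J$ coordinates to zero. Since $|\cdot|_J$ ignores the $I \setminus J$ coordinates, $|\mathbf{a}_k|_J = t_k \in \mathcal{A}(T)$, so each $\mathbf{a}_k$ lies in $S_J^I(T)$ and is an atom by Proposition~\ref{atoms-S_J^I(T)}; by construction $\mathbf{a}_1 + \cdots + \mathbf{a}_n = \mathbf{s}$, giving $n \in \mathsf{L}(\mathbf{s})$. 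The degenerate case $\mathbf{s} = \mathbf{0}$ is trivial, both sets being $\{0\}$.

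Finally, to deduce the elasticity and length-density equalities I would note that $\mathbf{s} \mapsto |\mathbf{s}|_J$ maps $S_J^I(T)$ onto $T$, since for every $t \in T\setminus\{0\}$ and any fixed $j \in J$ we have $t\mathbf{e}_j \in S_J^I(T)$ with $|t\mathbf{e}_j|_J = t$. Because $\mathsf{L}(\mathbf{s}) = \mathsf{L}(|\mathbf{s}|_J)$, the quantities $\min \mathsf{L}$, $\sup \mathsf{L}$, $\max \mathsf{L}$, $\ell$ and $|\mathsf{L}|$ agree for $\mathbf{s}$ and for $|\mathbf{s}|_J$, so $\rho(\mathbf{s}) = \rho(|\mathbf{s}|_J)$ and $\operatorname{LD}(\mathbf{s}) = \operatorname{LD}(|\mathbf{s}|_J)$ (the condition $|\mathsf{L}(\mathbf{s})| \ge 2$ being equivalent to $|\mathsf{L}(|\mathbf{s}|_J)| \ge 2$). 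Taking the supremum over $\mathbf{s}$ for the elasticity and the infimum over those $\mathbf{s}$ with $|\mathsf{L}(\mathbf{s})| \ge 2$ for the length density, and using surjectivity so that $|\mathbf{s}|_J$ ranges over all of $T$, yields $\rho(S_J^I(T)) = \rho(T)$ and $\operatorname{LD}(S_J^I(T)) = \operatorname{LD}(T)$.
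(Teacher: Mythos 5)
Your proposal is correct and follows essentially the same route as the paper: both directions hinge on $|\cdot|_J$ being a length-preserving map that sends atoms to atoms (Proposition~\ref{atoms-S_J^I(T)}), and the lifting of a factorization of $|\mathbf{s}|_J$ back to $\mathbf{s}$ is the same coordinate-redistribution idea, which the paper carries out by peeling off one atom at a time while you build the whole decomposition at once via your token/block matrix. The two presentations are interchangeable, and your handling of the $I\setminus J$ coordinates and of the elasticity/length-density deductions matches the paper's (the latter being left implicit there).
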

\begin{proof}
Let $\mathbf{s}\in S_J^I(T)$ and write $|\mathbf{s}|_J=t_1+\cdots+t_n$ with $t_i\in \mathcal{A}(T)$ for every $i\in \{1,\ldots,n\}$. Since $|\mathbf{s}|_J>t_n$, arguing as in the proof of Proposition~\ref{atoms-S_J^I(T)}, we can find $\mathbf{s}_n\in \mathbb{N}^{(I)}$ such that $\mathbf{s}-\mathbf{s}_n\in \mathbb{N}^{(I)}$ and $|\mathbf{s}_n|_J=t_n$. In particular, $\mathbf{s}_n\in S_J^I(T)$. Moreover, $|\mathbf{s}-\mathbf{s}_n|_J=t_1+\dots+t_{n-1}$, that is, $\mathbf{s}-\mathbf{s}_n\in S_J^I(T)$. Repeating the procedure several times, we obtain $\mathbf{s}=\mathbf{s}_1+\mathbf{s}_2+\cdots+\mathbf{s}_n$ with $|\mathbf{s}_i|_J=t_i$ for every $i\in \{1,\dots,n\}$. In particular, by Proposition~\ref{atoms-S_J^I(T)}, $\mathbf{s}_i\in \mathcal{A}(S_J^I(T))$ for every $i\in \{1,\ldots,n\}$. So $\mathsf{L}(|\mathbf{s}|_J)\subseteq \mathsf{L}(\mathbf{s})$. For the converse, if $\mathbf{s}=\mathbf{s}_1+\dots+\mathbf{s}_n$ with $\mathbf{s}_i\in \mathcal{A}(S_J^I(T))$, we have $|\mathbf{s}|_J=|\mathbf{s}_1|_J+\dots+|\mathbf{s}_n|_J$, and by Proposition~\ref{atoms-S_J^I(T)} we have $|\mathbf{s}_i|_J\in \mathcal{A}(T)$ for every $i\in \{1,\ldots,n\}$, so the first claim follows. 

The second claim is a direct consequence of the first claim and the definitions of elasticity and length density.
\end{proof}


\begin{remark}
Let $S$ be an ideal extension of $\mathbb{N}^d$ with $d$ a positive integer. If for all $i\in \{1,\ldots,d\}$ there exists $m_i=\min\{ \alpha \in \mathbb{N} : \alpha \mathbf{e}_i \in S\}$, then $S$ is a \emph{complement-finite ideal} \cite{Baeth}. For this kind of monoids, \cite[Lemma 4.11]{Baeth} provides an upper bound for $\rho(S)$, and it is proved that for the class of monoids introduced in Proposition~\ref{simple-case} (in the case $I=J=\{1,\ldots,d\}$) the elasticity attains the upper bound (see \cite[Proposition 4.15]{Baeth}). Observe that for all $i\in \{1,\ldots,d\}$ the set $S_i=S\cap \{x\mathbf{e}_i : x\in \mathbb{N}\}$ is isomorphic to the numerical semigroup $\mathbb{N}\setminus \{1,\ldots,m_i-1\}$, so $\sup\{\rho(\mathbf{s}) : \mathbf{s}\in S_i\}=\frac{2m_i-1}{m_i}$ (see \cite[Theorem 12]{Num-sem-book}). Therefore, if we define $m=\max \{m_1,\dots,m_d\}$, we have the lower bound $\rho(S)\geq \frac{2m-1}{m}$ for the elasticity of a complement-finite ideal $S$. In the case $T$ is the ordinary semigroup $\{0\}\cup (n+\mathbb{N})$, the monoid $S_J^I(T)\subseteq \mathbb{N}^d$ is a complement-finite ideal when $J=\{1,\ldots,d\}$, and $m_i=n$ for all $i$. By Proposition~\ref{prop:lengths-S_J^I(T)}, the elasticity of $S_J^I(T)$ attains the lower bound $(2n-1)/n$. Thus ideal extensions with finitely many gaps of the form $\{\mathbf{0}\}+(\{m_1\mathbf{e}_1,\dots,m_e\mathbf{e}_d\}+\mathbb{N})^d)$ have maximal elasticity, while those of the form $S_I^I(T)$ with $T$ an ordinary numerical semigroup and $I$ a finite set have minimal elasticity.
\end{remark}

We now want to see under what conditions $S_J^I(T)$ is gap absorbing. To this end, we first give a description of $2\mathcal{A}(S_J^I(T))$, with $T$ an ordinary numerical semigroup.

\begin{lemma}\label{lem:2A-S_J^I(T)}
Let $m$ be a positive integer and $T=\{0\}\cup(m+\mathbb{N})$. Then, $2\mathcal{A}(S_J^I(T))=\{\mathbf{x}\in \mathbb{N}^{(I)} : 2m\leq |\mathbf{x}|_J\leq 4m-2\}$.
\end{lemma}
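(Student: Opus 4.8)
The plan is to reduce everything to the arithmetic of the ordinary numerical semigroup $T=\{0\}\cup(m+\mathbb{N})$ together with the description of atoms in Proposition~\ref{atoms-S_J^I(T)}. First I would record the atoms of $T$ itself: the smallest sum of two nonzero elements of $T$ is $m+m=2m$, while every $n\ge 2m$ factors as $m+(n-m)$ with $n-m\in T\setminus\{0\}$. Hence $\mathcal{A}(T)=\{m,m+1,\dots,2m-1\}$, and by Proposition~\ref{atoms-S_J^I(T)} this gives $\mathcal{A}(S_J^I(T))=\{\mathbf{s}\in S_J^I(T):m\le|\mathbf{s}|_J\le 2m-1\}$.

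The inclusion $2\mathcal{A}(S_J^I(T))\subseteq\{\mathbf{x}:2m\le|\mathbf{x}|_J\le 4m-2\}$ is immediate from the linearity of $|\cdot|_J$: if $\mathbf{x}=\mathbf{a}+\mathbf{b}$ with $\mathbf{a},\mathbf{b}\in\mathcal{A}(S_J^I(T))$, then $|\mathbf{x}|_J=|\mathbf{a}|_J+|\mathbf{b}|_J$, and summing the two bounds $m\le|\mathbf{a}|_J,|\mathbf{b}|_J\le 2m-1$ yields $2m\le|\mathbf{x}|_J\le 4m-2$.

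For the reverse inclusion, I would fix $\mathbf{x}$ with $2m\le n:=|\mathbf{x}|_J\le 4m-2$ and exhibit it as a sum of two atoms. The numerical heart of the matter is that $n$ admits a partition $n=p+q$ with $p,q\in[m,2m-1]$; choosing $p=\max(m,\,n-2m+1)$ works, as a short case split on whether $n\le 3m-1$ or $n\ge 3m$ shows that both $p$ and $q=n-p$ land in $[m,2m-1]$ exactly because $2m\le n\le 4m-2$. Having fixed such $p$, I would realize the split at the level of the vector $\mathbf{x}$ by the same greedy distribution already used in the proofs of Proposition~\ref{atoms-S_J^I(T)} and Proposition~\ref{prop:lengths-S_J^I(T)}: since $0\le p\le\sum_{j\in J}x_j$, one can select integers $0\le a_j\le x_j$ for $j\in J$ with $\sum_{j\in J}a_j=p$, set $b_j=x_j-a_j$ for $j\in J$ (so $\sum_{j\in J}b_j=q$), and assign the coordinates outside $J$ arbitrarily, say all to $\mathbf{a}$. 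Then $\mathbf{a}+\mathbf{b}=\mathbf{x}$ with $|\mathbf{a}|_J=p$ and $|\mathbf{b}|_J=q$, so Proposition~\ref{atoms-S_J^I(T)} makes both $\mathbf{a}$ and $\mathbf{b}$ atoms, whence $\mathbf{x}\in 2\mathcal{A}(S_J^I(T))$.

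I do not expect a genuine obstacle here. The only step requiring a little care is the numerical partition: one must verify that the admissible interval $[\max(m,n-2m+1),\,\min(2m-1,n-m)]$ for $p$ is nonempty precisely under the hypothesis $2m\le n\le 4m-2$. The componentwise realization is routine and mirrors techniques already established earlier in the section, so it introduces no new idea.
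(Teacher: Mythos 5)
Your proposal is correct and follows essentially the same route as the paper: both reduce to the fact that $\mathcal{A}(T)=\{m,\dots,2m-1\}$, split $|\mathbf{x}|_J=t_1+t_2$ with $t_1,t_2\in[m,2m-1]$, and realize the split coordinatewise by the greedy distribution from the proof of Proposition~\ref{atoms-S_J^I(T)}. The only difference is cosmetic: you make the admissible partition explicit via $p=\max(m,\,n-2m+1)$, where the paper simply asserts its existence.
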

\begin{proof}
As $\mathcal{A}(T)=\{m,m+1,\dots,2m-1\}$, by Proposition~\ref{atoms-S_J^I(T)}, $2m\le |\mathbf{a}+\mathbf{b}|_J \le 4m-2$, for $\mathbf{a},\mathbf{b}\in \mathcal{A}(S_J^I(T))$.

For the other inclusion, let $\mathbf{x}\in \mathbb{N}^{(I)}$ such that $|\mathbf{x}|_J=t\in [2m,4m-2]$. It is possible to write $t=t_1+t_2$ with $t_1,t_2\in \{m,\ldots,2m-1\}$. Since $t>t_1$, arguing as in the proof of Proposition~\ref{atoms-S_J^I(T)}, we can obtain $\mathbf{x}_1\in \mathbb{N}^{(I)}$ such that $|\mathbf{x}_1|_J=t_1$ and $\mathbf{x}-\mathbf{x}_1\in \mathbb{N}^{(I)}$. Let $\mathbf{x}_2=\mathbf{x}-\mathbf{x}_1$. Then $|\mathbf{x}_2|_J=t_2$. Hence, by Proposition~\ref{atoms-S_J^I(T)}, $\mathbf{x}_1,\mathbf{x}_2\in \mathcal{A}(S_J^I(T))$, that is, $\mathbf{x}\in 2\mathcal{A}(S_J^I(T))$.
\end{proof}

\begin{proposition}\label{prop:backslash-gap-absorbing-and-minimals}
Let $T$ be a numerical semigroup. Then, $S_J^I(T)$ is a gap absorbing semigroup if and only if $T$ is an ordinary numerical semigroup. In this case, \[\mathcal{M}(S_J^I(T))=\{\mathbf{m}=(m_i)_{i\in I}\in \mathbb{N}^{(I)} : |\mathbf{m}|_J=\min(T^*) \text{ and } m_i=0 \text{ for all } i\in I\setminus J\}.\] 
\end{proposition}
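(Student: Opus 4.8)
```latex
The plan is to prove the two statements separately. The gap-absorbing characterization splits into a necessity and a sufficiency part, and the description of $\mathcal{M}(S_J^I(T))$ is an independent computation valid once we know $T$ is ordinary.

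First I would handle the sufficiency direction: if $T=\{0\}\cup(m+\mathbb{N})$ is ordinary, then $S_J^I(T)$ is gap absorbing. Here I would invoke Proposition~\ref{prop:2A-interval-eq}, which says that an ideal extension of $\mathbb{N}^{(I)}$ is gap absorbing if and only if $2\mathcal{A}(S)$ is closed under intervals. Since the Remark following the definition of $S_\lambda^I(T)$ already shows that $S_J^I(T)$ is an ideal extension when $T$ is ordinary, it suffices to verify that $2\mathcal{A}(S_J^I(T))$ is closed under intervals. But Lemma~\ref{lem:2A-S_J^I(T)} gives the explicit description $2\mathcal{A}(S_J^I(T))=\{\mathbf{x}\in\mathbb{N}^{(I)} : 2m\le|\mathbf{x}|_J\le 4m-2\}$. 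If $\mathbf{u}\le\mathbf{x}\le\mathbf{v}$ with $\mathbf{u},\mathbf{v}$ in this set, then monotonicity of $|\cdot|_J$ gives $2m\le|\mathbf{u}|_J\le|\mathbf{x}|_J\le|\mathbf{v}|_J\le 4m-2$, so $\mathbf{x}$ lies in $2\mathcal{A}(S_J^I(T))$ as well. This settles sufficiency cleanly.

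For necessity I would argue contrapositively: if $T$ is not ordinary, then $S_J^I(T)$ fails to be gap absorbing. The non-ordinariness means there is a gap $g\notin T$ with $g>\min(T^*)$, equivalently the Frobenius number $F(T)$ is strictly larger than $\min(T^*)-1$; concretely there exist atoms or sums producing a violation of one of (GA1) or (GA2). The cleanest route is again through $2\mathcal{A}$: I expect that when $T$ is not ordinary, the set $2\mathcal{A}(S_J^I(T))$ is not closed under intervals, because the set of lengths in $T$ itself is not interval-closed at the level-two data. More directly, I would produce an element $\mathbf{x}$ with $|\mathbf{x}|_J$ a gap of $T$ lying strictly between two values $a,b\in 2\mathcal{A}(T)$-level sums, exhibiting a failure of condition (GA1) (sum of two gaps landing outside $\mathcal{H}(S)\cup\mathcal{A}(S)\cup 2\mathcal{A}(S)$) or (GA2). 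The key observation is that $\mathcal{H}(S_J^I(T))=\{\mathbf{x} : |\mathbf{x}|_J\in(\mathbb{N}\setminus T)\}$ and $\mathcal{A}(S_J^I(T))=\{\mathbf{x} : |\mathbf{x}|_J\in\mathcal{A}(T)\}$ by Proposition~\ref{atoms-S_J^I(T)}, so the gap-absorbing conditions on $S$ translate into purely numerical conditions on $T$, and these numerical conditions are exactly equivalent to $T$ being ordinary. This reduction to $T$ is the \textbf{main obstacle}: I must check carefully that $J\subsetneq I$ does not introduce extra room that could rescue the gap-absorbing property, but since the extra coordinates outside $J$ contribute nothing to $|\cdot|_J$ and can always be absorbed, the problem genuinely collapses onto $T\subseteq\mathbb{N}$, where failure of ordinariness forces a violation.

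Finally, for the description of $\mathcal{M}(S_J^I(T))$ under the assumption that $T=\{0\}\cup(m+\mathbb{N})$ is ordinary with $m=\min(T^*)$, I would argue directly. An element $\mathbf{m}$ is in $\mathcal{M}(S_J^I(T))=\operatorname{Minimals}_\le(S^*)$ precisely when $\mathbf{m}\in S^*$ and no $\mathbf{m}-\mathbf{e}_i$ lies in $S^*$. Membership $\mathbf{m}\in S^*$ means $|\mathbf{m}|_J\ge m$. If any coordinate $m_i$ with $i\in I\setminus J$ were nonzero, subtracting $\mathbf{e}_i$ leaves $|\mathbf{m}|_J$ unchanged and still $\ge m$, contradicting minimality; hence $m_i=0$ for all $i\in I\setminus J$. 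If $|\mathbf{m}|_J>m$, then choosing $j\in J$ with $m_j>0$ and subtracting $\mathbf{e}_j$ keeps $|\mathbf{m}-\mathbf{e}_j|_J\ge m$, so $\mathbf{m}-\mathbf{e}_j\in S^*$, again contradicting minimality. Thus $|\mathbf{m}|_J=m$ and $\mathbf{m}$ is supported on $J$. Conversely, any such $\mathbf{m}$ is minimal: it lies in $S^*$, and decreasing any coordinate drops $|\cdot|_J$ below $m$ (for $j\in J$) or is impossible (the $I\setminus J$ coordinates are already zero), so no proper predecessor lies in $S^*$. This yields exactly the claimed set.
```
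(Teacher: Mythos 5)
Your sufficiency argument and your computation of $\mathcal{M}(S_J^I(T))$ are both correct. For sufficiency you go through Proposition~\ref{prop:2A-interval-eq} (ideal extension plus $2\mathcal{A}$ closed under intervals), using Lemma~\ref{lem:2A-S_J^I(T)} and the monotonicity of $|\cdot|_J$; the paper instead verifies (GA1) and (GA2) directly from the numerical descriptions of $\mathcal{H}$, $\mathcal{A}$ and $2\mathcal{A}$, but both routes rest on the same lemma and are equally valid. The description of the minimal nonzero elements is essentially identical to the paper's.

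The necessity direction, however, is a genuine gap as written. You say you ``expect'' that $2\mathcal{A}(S_J^I(T))$ fails to be closed under intervals and that you ``would produce'' an element violating (GA1) or (GA2), and you assert without proof that the resulting numerical conditions on $T$ are ``exactly equivalent to $T$ being ordinary.'' No witness is ever exhibited, and the equivalence is precisely the content of the claim being proved, so nothing has been established. The cleanest fix is the one the paper uses: every gap absorbing monoid is an ideal extension of $\mathbb{N}^{(I)}$ (Proposition~\ref{prop:gap-absorbing-implies-ideal}), and if $T$ is not ordinary one can choose $t\in T^*$ with $t+1\notin T$; then for $\mathbf{x}\in S^*$ with $|\mathbf{x}|_J=t$ and $k\in J$ one has $\mathbf{x}+\mathbf{e}_k\notin S$, so $S$ is not an ideal extension and hence not gap absorbing. (Your direct route can also be completed: the minimal $t\in T^*$ with $t+1\notin T$ is necessarily an atom of $T$, so an atom $\mathbf{a}$ of $S$ with $|\mathbf{a}|_J=t$ plus the gap $\mathbf{e}_k$ lands in $\mathcal{H}(S)$, violating (GA2) --- but this argument has to be made, not just announced.)
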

\begin{proof}
Denote $S=S_J^I(T)$ and $m=\min(T^*)$ (the multiplicity of $T$). 

\emph{Sufficiency}. Let $\mathbf{x},\mathbf{y}\in \mathcal{H}(S)$ and $\mathbf{a}\in \mathcal{A}(S)$, so $|\mathbf{x}|_J,|\mathbf{y}|_J\in \{1,\ldots,m-1\}$ and $|\mathbf{a}|_J\in \{m,\ldots,2m-1\}$ by Proposition~\ref{atoms-S_J^I(T)}. Then $|\mathbf{x}+\mathbf{y}|_J\in \{2,\ldots,2m-2\}$ and $|\mathbf{x}+\mathbf{a}|_J\in \{m+1,\ldots,3m-2\}$, that is, $\mathbf{x}+\mathbf{y}\in \mathcal{H}(S)\cup \mathcal{A}(S)\cup 2\mathcal{A}(S)$ and $\mathbf{x}+\mathbf{a}\in \mathcal{A}(S)\cup 2\mathcal{A}(S)$ by Lemma~\ref{lem:2A-S_J^I(T)}. Therefore, $S$ is gap absorbing. 

\emph{Necessity}. Observe that if $S$ is gap absorbing, then $S$ is an ideal extension of $\mathbb{N}^{(I)}$ (Proposition~\ref{prop:gap-absorbing-implies-ideal}). Suppose there exists $t\in T$ such that $t+1\notin T$. Let $\mathbf{x}\in \mathbb{N}^{(I)}$ such that $|\mathbf{x}|_J=t$ and consider $\mathbf{x}_k=\mathbf{x}+\mathbf{e}_k$ for some $k\in J$. Then $|\mathbf{x}_k|_J=t+1$, that is, $\mathbf{x}_k\notin S$, which contradicts that $S$ is an ideal extension of $\mathbb{N}^{(I)}$.

Finally, let us prove that $\mathcal{M}(S)=\{\mathbf{m}=(m_i)_{i\in I}\in \mathbb{N}^{(I)} : |\mathbf{m}|_J=m \text{ and } m_i=0 \text{ for all } i\in I\setminus J\}$.
Let $\mathbf{m}=(m_i)_{i\in I}\in \mathcal{M}(S)$. If $m_i\neq 0$ for some $i\in I\setminus J$, then $|\mathbf{m}-m_i\mathbf{e}_i|_J=|\mathbf{m}|_J\in T$, and so $\mathbf{m}-m_i\mathbf{e}_i$ is in $S_J^I(T)$ and $\mathbf{m}-m_i\mathbf{e}_i<\mathbf{m}$, a contradiction. If $|\mathbf{m}|_J>m$, take $j\in J$ such that $m_j\neq 0$. Then, $|\mathbf{m}-\mathbf{e}_j|_J=|\mathbf{m}|_j-1\ge m$, and consequently $\mathbf{m}-\mathbf{e}_j\in S$, contradicting the minimality of $\mathbf{m}$. For the other inclusion, let $\mathbf{m}=(m_i)_{i\in I}\in \mathbb{N}^{(I)}$ be such that $m_i=0$ and $|\mathbf{m}|_J=m$. By definition, $\mathbf{m}\in S$. If there exists $\mathbf{n}=(n_i)_{i\in I}\in S^*$ with $\mathbf{n}<\mathbf{m}$, then $n_i=0$ for all $i\in I\setminus J$ and $m\le |\mathbf{n}|_J<|\mathbf{m}|_J=m$, a contradiction. Thus, $\mathbf{m}\in \mathcal{M}(S)$.
\end{proof}

Recall that on a gap absorbing monoid the Betti elements can be expressed as the sum of at most three atoms (Theorem~\ref{thm:lge4-not-betti}). We prove next that for backslash monoids we can get a sharper description.

\begin{proposition} \label{prop:betti-S_J^I(T)}
Let $T$ be an ordinary semigroup. Then, $\operatorname{Betti}(S_J^I(T))\subseteq 2\mathcal{A}(S_J^I(T))$.
\end{proposition}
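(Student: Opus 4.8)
My goal is to show that for $T$ an ordinary numerical semigroup, every Betti element of $S = S_J^I(T)$ lies in $2\mathcal{A}(S)$. By Theorem~\ref{thm:lge4-not-betti} we already know that every Betti element has minimal length at most three, so it suffices to rule out $\ell(\mathbf{s}) = 3$ for Betti elements; equivalently, I must show that whenever $\mathbf{s} \in 3\mathcal{A}(S) \setminus (\mathcal{A}(S) \cup 2\mathcal{A}(S))$, the graph $\mathbf{G}_\mathbf{s}$ is connected. I would like to invoke the connectivity machinery of Lemma~\ref{lemma:connected-factrorizatios}, whose hypothesis~(1) requires that two minimal vertices $\mathbf{m}, \mathbf{n}$ of $\mathbf{G}_\mathbf{s}$ satisfy $\mathbf{m} \vee \mathbf{n} \in \mathcal{A}(S)$. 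So the core of the proof is a $\vee$-estimate on minimal elements.

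The first step is to record, via Proposition~\ref{prop:backslash-gap-absorbing-and-minimals}, that writing $m = \min(T^*)$, every $\mathbf{m} \in \mathcal{M}(S)$ satisfies $|\mathbf{m}|_J = m$ and is supported in $J$. The key calculation is then to bound $|\mathbf{m} \vee \mathbf{n}|_J$ for two such minimal elements: since each coordinate of $\mathbf{m} \vee \mathbf{n}$ is the larger of the two, and since both $\mathbf{m}$ and $\mathbf{n}$ have total $J$-weight exactly $m$, I get $\max(|\mathbf{m}|_J, |\mathbf{n}|_J) \le |\mathbf{m} \vee \mathbf{n}|_J \le |\mathbf{m}|_J + |\mathbf{n}|_J$, that is, $m \le |\mathbf{m} \vee \mathbf{n}|_J \le 2m$. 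If in fact $|\mathbf{m} \vee \mathbf{n}|_J \le 2m - 1$, then since $\mathcal{A}(T) = \{m, m+1, \dots, 2m-1\}$, Proposition~\ref{atoms-S_J^I(T)} gives $\mathbf{m} \vee \mathbf{n} \in \mathcal{A}(S)$, and Lemma~\ref{lemma:connected-factrorizatios}(1) applies to connect $\mathbf{m}$ and $\mathbf{n}$. Combined with Lemma~\ref{lem:connect-to-M}, which connects every vertex of $\mathbf{G}_\mathbf{s}$ to a minimal one, this forces $\mathbf{G}_\mathbf{s}$ to be connected.

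The remaining—and main—obstacle is the boundary case $|\mathbf{m} \vee \mathbf{n}|_J = 2m$, where $\mathbf{m} \vee \mathbf{n}$ need not be an atom. This happens exactly when $\mathbf{m}$ and $\mathbf{n}$ have disjoint supports (within $J$), so that $\mathbf{m} \vee \mathbf{n} = \mathbf{m} + \mathbf{n}$. In this situation I would argue directly rather than through hypothesis~(1): writing $\mathbf{s} = (\mathbf{m} \vee \mathbf{n}) + \mathbf{c} = \mathbf{m} + \mathbf{n} + \mathbf{c}$ for some $\mathbf{c} \in \mathbb{N}^{(I)}$, I claim $\mathbf{c} \in S$. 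Indeed if $\mathbf{c}$ were a gap, then $\mathbf{n} + \mathbf{c} \in S$ would have $|\mathbf{n} + \mathbf{c}|_J = m + |\mathbf{c}|_J$ with $1 \le |\mathbf{c}|_J \le m-1$, hence $\mathbf{n} + \mathbf{c} \in \mathcal{A}(S)$, giving $\mathbf{s} = \mathbf{m} + (\mathbf{n} + \mathbf{c}) \in 2\mathcal{A}(S)$, contradicting $\ell(\mathbf{s}) = 3$. So $\mathbf{c} \in S^*$, and the identities $\mathbf{s} = \mathbf{m} + (\mathbf{n} + \mathbf{c})$ and $\mathbf{s} = \mathbf{n} + (\mathbf{m} + \mathbf{c})$ exhibit $\mathbf{m}$ and $\mathbf{n}$ as both adjacent to a common further vertex (after decomposing $\mathbf{c}$ through a minimal element and using Lemma~\ref{lem:connect-to-M}), again yielding connectivity. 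This mirrors the reasoning in Remark~\ref{rem:betti-base-case}; the delicate point to check carefully is that the $|\cdot|_J$ weight bounds genuinely force $\mathbf{n} + \mathbf{c}$ to be an atom in the gap case, which is where the ordinariness of $T$ (giving $\mathcal{A}(T)$ a full interval) is essential.
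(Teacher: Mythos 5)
Your proof is correct, but it establishes the key connectivity claim by a different mechanism than the paper. The paper invokes Corollary~\ref{cor:antichain-of-supatoms} and builds, for any two $\mathbf{m},\mathbf{n}\in\mathcal{M}(S)\cap\operatorname{B}(\mathbf{s})$, a chain of minimal elements obtained by unit steps (decrement a coordinate where $\mathbf{m}$ exceeds $\mathbf{n}$, increment one where it falls short), so that each intermediate element keeps $|\cdot|_J=m$ and each consecutive join has $|\cdot|_J=m+1$, hence is an atom; this handles overlapping and disjoint supports uniformly, at the cost of having to check that the intermediate elements stay inside $\operatorname{B}(\mathbf{s})$, and it first reduces to the core via Proposition~\ref{prop:betti-core} to guarantee $m\ge 2$. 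You instead analyse $\mathbf{m}\vee\mathbf{n}$ directly: the identity $|\mathbf{m}\vee\mathbf{n}|_J=2m-|\mathbf{m}\wedge\mathbf{n}|_J$ (valid because minimal elements are supported in $J$) shows the join is an atom unless the supports are disjoint, in which case you fall back on the argument of Remark~\ref{rem:betti-base-case}, connecting $\mathbf{m}$ and $\mathbf{n}$ through a minimal element below $\mathbf{c}$. This avoids the chain construction and the membership checks in $\operatorname{B}(\mathbf{s})$, at the price of a separate boundary case. One small inaccuracy: a gap $\mathbf{c}$ of $S_J^I(T)$ need not satisfy $1\le|\mathbf{c}|_J$ (it may be supported entirely in $I\setminus J$, giving $|\mathbf{c}|_J=0$), but this is harmless since $|\mathbf{n}+\mathbf{c}|_J=m+|\mathbf{c}|_J$ still lies in $\{m,\dots,2m-1\}=\mathcal{A}(T)$, so $\mathbf{n}+\mathbf{c}$ is an atom exactly as you need.
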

\begin{proof}
Denote $S=S_J^I(T)$. We know that $T=\{0\}\cup(m+\mathbb{N})$ for some positive integer $m$. By Proposition~\ref{prop:betti-core}, we can assume $S=\mathsf{C}(S)$, so $m\geq 2$. 

In order to prove our result, we are going to make use of Proposition~\ref{prop:l2-denumerant2} and Corollary~\ref{cor:antichain-of-supatoms}. 
For $\mathbf{s}\in S$, 
we show that for every $\mathbf{m},\mathbf{n}\in \mathcal{M}(S)\cap \operatorname{B}(\mathbf{s})$, there exists a sequence $\mathbf{u}_1,\ldots,\mathbf{u}_n\in \mathcal{M}(S)\cap \operatorname{B}(\mathbf{s})$ such that $\mathbf{u}_1=\mathbf{m}$, $\mathbf{u}_n=\mathbf{n}$ and $\mathbf{u}_{i} \vee \mathbf{u}_{i+1} \in \mathcal{A}(S)$ for all $i\in\{1,\ldots,n-1\}$. 

Since $\mathbf{m}$ and $\mathbf{n}$ are not comparable via $\le$, we can express them as $\mathbf{m}=\sum_{k=1}^r m_{a_k}\mathbf{e}_{a_k}+\sum_{k=1}^s m_{b_k}\mathbf{e}_{b_k}+\sum_{k=1}^t m_{c_k}\mathbf{e}_{c_k}$ and $\mathbf{n}=\sum_{k=1}^r n_{a_k}\mathbf{e}_{a_k}+\sum_{k=1}^s n_{b_k}\mathbf{e}_{b_k}+\sum_{k=1}^t n_{c_k}\mathbf{e}_{c_k}$,
such that $m_{a_k}>n_{a_k}$ for all $k\in \{1,\ldots,r\}$, $m_{b_k}<n_{b_k}$ for all $k\in \{1,\ldots,s\}$ and $m_{c_k}=n_{c_k}$ for all $k\in \{1,\ldots,t\}$; notice that $m_i=n_i=0$ for all $i\in I\setminus J$. Since $|\mathbf{m}|_J=|\mathbf{n}|_J=m$, it is easy to check that $\sum_{k=1}^r m_{a_k}-\sum_{k=1}^r n_{a_k}=\sum_{k=1}^s n_{b_k}-\sum_{k=1}^s m_{b_k}$. At this point, we can define $\mathbf{u}_2= (m_{a_{1}}-1)\mathbf{e}_{a_1}+ \sum_{k=2}^r m_{a_k}\mathbf{e}_{a_k}+(m_{b_{1}}+1)\mathbf{e}_{b_1}+\sum_{k=2}^s m_{b_k}\mathbf{e}_{b_k}+\sum_{k=1}^t m_{c_k}\mathbf{e}_{c_k}$. By looking at the coordinates of $\mathbf{u}_2$, as $\mathbf{m},\mathbf{n}\in \operatorname{B}(\mathbf{s})$, we deduce that $\mathbf{u}_2\in \operatorname{B}(\mathbf{s})$ as well. Moreover $|\mathbf{u}_2|_J=m$ and for every $i\in I\setminus J$, the $i$th coordinate of $\mathbf{u}_2$ is zero, so in light of Proposition~\ref{prop:backslash-gap-absorbing-and-minimals}, $\mathbf{u}_2\in \mathcal{M}(S)$. Furthermore, we have that $|\mathbf{m} \vee \mathbf{u}_{2}|_J=m+1$, which by Proposition~\ref{atoms-S_J^I(T)} implies that $\mathbf{m} \vee \mathbf{u}_{2}\in \mathcal{A}(S)$. Therefore we have obtained the second element of the sequence. By repeating the process now with $\mathbf{u}_2$ and $\mathbf{n}$ and, after a finite number of steps, we obtain $\mathbf{u}_n=\mathbf{n}$.
\end{proof}


With the help of this result, we can explicitly compute the catenary degree of a backslash monoid associated to an ordinary numerical semigroup.

\begin{proposition}\label{catenary_bans}
    Let $T$ be an ordinary numerical semigroup, with $T\neq \mathbb{N}$, $I$ a non-empty set of non-negative integers, and $\emptyset\neq J\subseteq I$. Then $\mathsf{c}(S_J^I(T))=3$.
\end{proposition}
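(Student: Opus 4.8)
The plan is to establish the two inequalities $\mathsf{c}(S_J^I(T))\le 3$ and $\mathsf{c}(S_J^I(T))\ge 3$ separately. Throughout I write $S=S_J^I(T)$, and since $T\neq\mathbb{N}$ is ordinary, $T=\{0\}\cup(m+\mathbb{N})$ with $m=\min(T^*)\ge 2$ and $\mathcal{A}(T)=\{m,m+1,\ldots,2m-1\}$.

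For the upper bound I would simply combine the structural results already proved: by Proposition~\ref{prop:backslash-gap-absorbing-and-minimals} the monoid $S$ is gap absorbing, and by Proposition~\ref{prop:betti-S_J^I(T)} all its Betti elements lie in $2\mathcal{A}(S)$. Corollary~\ref{cor:catenary-three} then immediately gives $\mathsf{c}(S)\le 3$, so this direction requires no new work.

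For the lower bound it suffices to exhibit a single element whose catenary degree is at least $3$. I would fix any $j\in J$ and take $\mathbf{b}=3m\,\mathbf{e}_j$, which lies in $S$ because $|\mathbf{b}|_J=3m\in T$. The key observation is that, since $\operatorname{Supp}(\mathbf{b})=\{j\}$, every atom occurring in a factorization of $\mathbf{b}$ must be supported on $\{j\}$; by Proposition~\ref{atoms-S_J^I(T)} these atoms are exactly the vectors $c\,\mathbf{e}_j$ with $c\in\{m,\ldots,2m-1\}$. Hence factorizations of $\mathbf{b}$ in $S$ correspond bijectively, preserving lengths and shared atoms, to factorizations of $3m$ in $T$. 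A short count (using $3m\le 2(2m-1)$, valid for $m\ge 2$) shows the only possibilities are $3m=m+m+m$ and the length-two expressions $3m=c+(3m-c)$ with $m+1\le c\le 2m-1$. The length-three factorization $m\,\mathbf{e}_j+m\,\mathbf{e}_j+m\,\mathbf{e}_j$ uses only the atom $m\,\mathbf{e}_j$, whereas every length-two factorization uses atoms $c\,\mathbf{e}_j$ with $c\ge m+1$; therefore it shares no atom with any other factorization and forms an isolated $\mathcal{R}$-class of minimal length $3$. By formula~\eqref{eq:catenary-r-classes} this forces $\mathsf{c}(\mathbf{b})\ge 3$, and hence $\mathsf{c}(S)\ge 3$.

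The two bounds together give $\mathsf{c}(S)=3$. I do not anticipate a genuine obstacle: the upper bound is a direct citation, and the only mildly delicate points in the lower bound are verifying that no factorization of $\mathbf{b}$ can have length other than two or three and that the triple $m\,\mathbf{e}_j+m\,\mathbf{e}_j+m\,\mathbf{e}_j$ is genuinely isolated. The whole argument hinges on the reduction that $\mathbf{b}=3m\,\mathbf{e}_j$ ``sees'' only the one-dimensional slice over the coordinate $j$, so that its factorization behaviour is identical to that of $3m$ in the ordinary numerical semigroup $T$, whose catenary degree equals $3$.
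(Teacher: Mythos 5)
Your proof is correct and follows essentially the same route as the paper: the upper bound is the same citation of Proposition~\ref{prop:betti-S_J^I(T)} together with Corollary~\ref{cor:catenary-three}, and the lower bound is obtained by exhibiting an element supported on a single coordinate $j\in J$ whose factorizations reduce to factorizations in $T$ and which has an isolated factorization forcing catenary degree $3$. The only difference is the choice of witness --- you use $3m\,\mathbf{e}_j$ (whose length-three factorization $m+m+m$ is isolated), while the paper uses $(4m-2)\mathbf{e}_j$ (whose length-two factorization $(2m-1)+(2m-1)$ is isolated) --- and both verifications are valid for $m\ge 2$.
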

\begin{proof}
    By Proposition~\ref{prop:betti-S_J^I(T)} and Corollary~\ref{cor:catenary-three}, we know that $\mathsf{c}(S_J^I(T))\leq 3$. Moreover, if $j\in J$, then $\mathsf{L}((4m-2)\mathbf{e}_j)\in\{2,3\}$ and each factorization of length two has no common atoms with any other factorization of length three, obtaining in this way $\mathsf{c}((4m-2)\mathbf{e}_j)=3$ and $\mathsf{c}(S_J^I(T))= 3$.
\end{proof}

If $T$ is an ordinary numerical semigroup, we know by \cite[Proposition 3.1]{algorithm_omega} that $\omega(T)=3$. Next, we explicitly compute $\omega(S_J^I(T))$ in the case $|I|>1$.

\begin{proposition}\label{prop:omega-backslash}
    Let $T$ be an ordinary numerical semigroup, with $T\neq \mathbb{N}$, $I$ a non-empty set of non-negative integers with $|I|>1$, and $J$ a nonempty subset of $I$. Then, 
    \begin{itemize}
        \item $\omega(S_I^I(T))=2\min(T^*)-1$, 
        and 
        \item     $\omega(S_J^I(T))=\infty$ for any proper subset $J$ of $I$.
    \end{itemize}
\end{proposition}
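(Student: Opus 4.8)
The plan is to treat the two equalities separately, writing $m=\min(T^*)$, which satisfies $m\ge 2$ because $T\neq\mathbb{N}$, and abbreviating $S=S_J^I(T)$. Throughout I will use that $S$ is an ideal extension of $\mathbb{N}^{(I)}$ (since $T$ is ordinary), that $\mathcal{A}(S)=\{\mathbf{s}\in S:|\mathbf{s}|_J\in\{m,\dots,2m-1\}\}$ by Proposition~\ref{atoms-S_J^I(T)}, and that $\mathcal{M}(S)$ consists of the vectors $\mathbf{m}$ with $|\mathbf{m}|_J=m$ whose coordinates off $J$ vanish (Proposition~\ref{prop:backslash-gap-absorbing-and-minimals}).

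For the first equality I would prove the two inequalities. When $J=I$ we have $|\cdot|_J=\lVert\cdot\rVert_1$, and the extreme rays of $S$ are exactly $\{m\mathbf{e}_i:i\in I\}$, each of $1$-norm $m$; since $|I|>1$ there are at least two of them, so Corollary~\ref{cor:lower-bound-omega-extreme-rays} yields $\omega(S)\ge 2m-1$. For the reverse inequality I would first reduce, via the monotonicity of $\omega$ along $\le$ (Proposition~\ref{prop:omega-non-decreasing}), to bounding $\omega(\mathbf{a})$ only for atoms $\mathbf{a}$ with $\lVert\mathbf{a}\rVert_1=2m-1$: any atom sits below such an atom obtained by raising a single coordinate. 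Then, fixing such an $\mathbf{a}$ and invoking \cite[Lemma~3.2]{bgsg}, I would consider a relation $\mathbf{a}\le_S\mathbf{a}_1+\dots+\mathbf{a}_n$ with all $\mathbf{a}_k\in\mathcal{A}(S)$ and $n>2m-1$. Lemma~\ref{lem:number-le-sum} produces $J_0\subseteq\{1,\dots,n\}$ with $|J_0|\le\lVert\mathbf{a}\rVert_1=2m-1$ and $\mathbf{a}\le\sum_{k\in J_0}\mathbf{a}_k$ componentwise; set $d=\lVert\sum_{k\in J_0}\mathbf{a}_k\rVert_1-(2m-1)\ge 0$.

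If $d=0$ or $d\ge m$, then $\sum_{k\in J_0}\mathbf{a}_k-\mathbf{a}\in S$, so $\mathbf{a}\le_S\sum_{k\in J_0}\mathbf{a}_k$ and $|J_0|\le 2m-1$ suffices. The delicate case is $1\le d\le m-1$: here $\lVert\sum_{k\in J_0}\mathbf{a}_k\rVert_1=2m-1+d\le 3m-2$, while every atom has $1$-norm at least $m$, forcing $|J_0|\le 2$; adjoining one further atom (which exists because $n>2m-1$) raises the defect to at least $m+1$, hence into $T$, and keeps the subset size at most $3\le 2m-1$. In all cases a subset of size at most $2m-1$ dominates $\mathbf{a}$, so $\omega(\mathbf{a})\le 2m-1$ and thus $\omega(S)=2m-1$. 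I expect this sharpening to be the \emph{main obstacle}: the generic estimate $\lVert\mathbf{a}\rVert_1+1=2m$ of Theorem~\ref{thm:upper-bound-omega-norm1} is off by one, and the decisive point is that a small defect automatically forces $J_0$ to be short, so that the single correcting atom never pushes the count past $2m-1$.

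For the second equality I would exploit that a proper $J$ leaves some index $i\in I\setminus J$ invisible to $|\cdot|_J$. Then no element of $S$ has support equal to $\{i\}$, since $|t\mathbf{e}_i|_J=0\notin T^*$ for $t>0$, and the argument of Remark~\ref{rem:omega-infinite} gives $\omega(S)=\infty$. Alternatively, mirroring Proposition~\ref{prop:infinite-omega-primality} for a self-contained proof, I would fix $\mathbf{m}\in\mathcal{M}(S)$ (so its $i$-th coordinate is $0$), note that $\mathbf{m}+k\mathbf{e}_i\in\mathcal{A}(S)$ for every $k$ because $|\mathbf{m}+k\mathbf{e}_i|_J=m$, and for each $n$ set $s_n=\frac{n(n+1)}{2}$ and $\mathbf{a}=\mathbf{m}+s_n\mathbf{e}_i\in\mathcal{A}(S)$. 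Since $\sum_{k=1}^n(\mathbf{m}+k\mathbf{e}_i)-\mathbf{a}=(n-1)\mathbf{m}\in S$, we have $\mathbf{a}\le_S\sum_{k=1}^n(\mathbf{m}+k\mathbf{e}_i)$, whereas any proper subsum has $i$-th coordinate $\sum_{k\in J'}k<s_n$, so its difference with $\mathbf{a}$ has a negative $i$-th coordinate and cannot dominate $\mathbf{a}$. Hence $\omega(\mathbf{a})\ge n$ for all $n$, and therefore $\omega(S)=\infty$.
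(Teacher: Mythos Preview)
Your argument is correct. The treatment of the case $J\subsetneq I$ and of the lower bound when $J=I$ coincides with the paper's. For the upper bound $\omega(\mathbf{a})\le 2m-1$ (with $\lVert\mathbf{a}\rVert_1=2m-1$) you take a slightly different route: after extracting $J_0$ via Lemma~\ref{lem:number-le-sum} you split into cases on the defect $d$, note that $1\le d\le m-1$ forces $|J_0|\le 2$, and then adjoin a single further atom to push the defect into $T$ while keeping the subset size at most $3\le 2m-1$. The paper avoids this case distinction altogether: it simply pads $L$ with additional atoms from $\{1,\dots,n\}\setminus L$ until $|L|=2m-1$ exactly, and then the defect satisfies $\lVert\mathbf{x}\rVert_1\ge (2m-1)m-(2m-1)=(2m-1)(m-1)\ge m$ automatically. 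Both arguments are valid; the paper's is a touch more direct, while yours makes more visible why the generic bound $2m$ from Theorem~\ref{thm:upper-bound-omega-norm1} can be sharpened by one.
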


\begin{proof}
    Set $m=\min(T^*)$ and denote $S=S_J^I(T)$. If $J\neq I$, take $i\in I\setminus J$ and $j\in J$. We argue as in Remark~\ref{rem:omega-infinite}. Define $M=\{ \mathbf{s} \in S : \operatorname{Supp}(\mathbf{s})\subseteq\{i,j\}\}$. Then, $M$ is a divisor-closed monoid that is isomorphic to $\{(0,0)\}+((0,m)+\mathbb{N}^2)$. By Proposition~\ref{prop:infinite-omega-primality}, we know that $\omega(M)=\infty$. As $\mathcal{A}(M)\subseteq \mathcal{A}(S)$, we derive that $\omega(S)=\infty$. 

    Now, suppose that $I=J$. Arguing as in Proposition~\ref{prop:backslash-gap-absorbing-and-minimals}, it is not difficult to show that $\mathbf{a}$ is a maximal atom with respect to the usual partial order on $\mathbb{N}^{(I)}$ if and only if $|\mathbf{a}|_I = \lVert \mathbf{a}\rVert_1 = 2m-1$. We prove that $\omega(\mathbf{a})\le \lVert \mathbf{a}\rVert_1 = 2m-1$, which in virtue of Proposition~\ref{prop:omega-non-decreasing}, will provide us the upper bound for $\omega(S)$. 
    
    Let $\mathbf{a}\leq_S \mathbf{a}_1+\cdots+\mathbf{a}_n$ with $\mathbf{a}_1,\ldots,\mathbf{a}_n\in \mathcal{A}(S)$. If $n\le \lVert \mathbf{a}\rVert_1$, we are done. So, let us suppose that $n>\lVert \mathbf{a}\rVert_1$. By Lemma~\ref{lem:number-le-sum}, there exists $L\subseteq \{1,\dots,n\}$ such that $\mathbf{a}+\mathbf{x} = \sum_{l \in L}\mathbf{a}_l$ for some $\mathbf{x}\in \mathbb{N}^{(I)}$ and $|L|\le \lVert \mathbf{a} \rVert_1$. By adding atoms in both sides, we can assume that $|L|=\lVert \mathbf{a} \rVert_1$. Since $m\le |\mathbf{a}_l|_I\le 2m-1$ for all $l\in L$ (Proposition~\ref{atoms-S_J^I(T)}), we deduce that  $|\mathbf{x}|_I\geq |L|m-(2m-1)$. Notice that $|L|=\lVert \mathbf{a}\rVert_1= |\mathbf{a}|_J=2m-1$. Hence, $|\mathbf{x}|_J\ge (2m-1)m-(2m-1)\ge m$, and thus $\mathbf{x}\in S_J^I(T)$. Therefore $\mathbf{a}\leq_S \sum_{l \in L}\mathbf{a}_l$, and in particular $\omega(\mathbf{a})\leq |L|\leq \lVert \mathbf{a}\rVert_1=2m-1$. Observe that $m\mathbf{e}_1\in \mathcal{E}(S)$, and so the other inequality follows by Corollary~\ref{cor:lower-bound-omega-extreme-rays}.  
\end{proof}

\begin{remark} 
    Observe that also Proposition~\ref{prop:omega-backslash} highlights that \cite[Proposition~4.9]{Baeth} is wrong, since according to that result, the $\omega$-primality of $S_I^I(T)$ with $T=\{0\}+(m+\mathbb{N})$, should belong to $\{m,m+1\}$. Moreover, from the proof of Proposition~\ref{prop:omega-backslash}, we obtain that $\omega(S_I(T))=\sup_{\mathbf{a}\in \mathcal{A}(S)}\lVert \mathbf{a}\rVert_1$, which is near to upper bound provided by Theorem~\ref{thm:upper-bound-omega-norm1}. We wonder if there exists an ideal extension $S$ of $\mathbb{N}^{(I)}$ such that $\omega(S)= 1+\sup_{\mathbf{a}\in \mathcal{A}(S)}\lVert \mathbf{a}\rVert_1$.     
\end{remark}

\section{\texorpdfstring{Gap absorbing submonoids of $\mathbb{N}^2$}{Gap absorbing submonoids on dimension two}}\label{sec:N2}

We study gap absorbing submonoids of $\mathbb{N}^2$, and prove that every ideal extension of $\mathbb{N}^2$ is gap absorbing. Then we particularize the results of the preceding sections to dimension two.

Our first goal is to prove that for any ideal extension $S$ of $\mathbb{N}^2$, the set $2\mathcal{A}(S)$ is closed by intervals, and by Proposition~\ref{prop:2A-interval-eq}, $S$ is gap absorbing. To this end, we start with a technical lemma that will help us describe $2\mathcal{A}(S)$.

In our particular setting, the set $A$ defined in Proposition~\ref{prop:gaps-atoms-pi} is \[A=\{v\in \mathbb{N}: \pi_{v}^1<\infty, \pi^2_{v}\neq 0\},\]
and for every $z\in A+A$, 
\[
z_A=\{v \in A : z-v\in A\}.
\]
In this case, denote
\begin{itemize}
\item $m_{{z}}=\min \{\pi^1_{{v}}+\pi^1_{{z-v}} : {v}\in z_A\}$,
\item $M_{{z}}=\max \{\pi^2_{{v}}+\pi^2_{{z-v}}-2 : {v}\in {z}_A\}$.
\end{itemize}


\begin{lemma}
Let ${z}\in A+A$. Then
\[
\bigcup_{v\in z_A} [\pi^1_{{v}}+\pi^1_{{z-v}},\pi^2_{{v}}+\pi^2_{{z-v}}-2]=[m_{z}, M_{z}].
\]
\end{lemma}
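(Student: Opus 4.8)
The plan is to exploit that in dimension two $\mathbb{N}^{(I\setminus\{i\})}$ is just $\mathbb{N}$, so that $A$, $z_A$ and $z$ are non-negative integers and each set $[\pi^1_{v}+\pi^1_{z-v},\pi^2_{v}+\pi^2_{z-v}-2]$ is a (possibly infinite, since $\pi^2$ may take the value $\infty$) interval of integers. Writing $f(v)=\pi^1_{v}+\pi^1_{z-v}$ and $g(v)=\pi^2_{v}+\pi^2_{z-v}-2$ for $v\in z_A$, I note that $v\in z_A$ forces $v,z-v\in A$, hence $\pi^1_{v},\pi^1_{z-v}<\infty$ and $f(v)\in\mathbb{N}$. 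Since $0\le v\le z$, the set $z_A$ is finite, so $m_z=\min_v f(v)$ and $M_z=\max_v g(v)$ are attained; the inclusion $\bigcup_{v\in z_A}[f(v),g(v)]\subseteq[m_z,M_z]$ is then immediate from $f(v)\ge m_z$ and $g(v)\le M_z$.

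For the reverse inclusion I would prove that the union is a single interval, since then it must equal $[m_z,M_z]$, its endpoints being exactly $m_z$ and $M_z$. First I would record that $z_A$, being closed under intervals by Proposition~\ref{prop:gaps-atoms-pi}(2) and a finite subset of $\mathbb{N}$, is a block of consecutive integers $\{a,a+1,\dots,b\}$. The heart of the argument is to show that for consecutive indices $v,v+1\in z_A$ the intervals $[f(v),g(v)]$ and $[f(v+1),g(v+1)]$ leave no gap, i.e. that $g(v)\ge f(v+1)-1$ and $g(v+1)\ge f(v)-1$.

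Both inequalities I would derive from three facts: the monotonicity of $\pi^1,\pi^2$ (Lemma~\ref{lem:pi-decreasing}), the strict inequality $\pi^1_w<\pi^2_w$ for $w\in A$ (Lemma~\ref{lem:pi1-less-pi2}), and, as the essential ingredient, the relation $\pi^2_{v+1}\ge\pi^1_v$ for $v,v+1\in A$ coming from Proposition~\ref{prop:gaps-atoms-pi}(3), applied both to the pair $v,v+1$ and to the pair $z-v-1,z-v$. For instance, $\pi^2_v\ge\pi^1_v+1\ge\pi^1_{v+1}+1$ together with $\pi^2_{z-v}\ge\pi^1_{z-v-1}$ yields $g(v)\ge f(v+1)-1$; symmetrically $\pi^2_{v+1}\ge\pi^1_v$ and $\pi^2_{z-v-1}\ge\pi^1_{z-v}+1$ give $g(v+1)\ge f(v)-1$. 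When some $\pi^2$ equals $\infty$ the corresponding $g$ is $\infty$ and the inequality holds trivially, so the infinite case needs no separate treatment.

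Finally I would glue the pieces together by induction along $\{a,\dots,b\}$: if $U_c=\bigcup_{v=a}^{c}[f(v),g(v)]=[\alpha_c,\beta_c]$ is an interval, then $\beta_c\ge g(c)\ge f(c+1)-1$ and $g(c+1)\ge f(c)-1\ge\alpha_c-1$ guarantee that adjoining $[f(c+1),g(c+1)]$ creates no gap on either side, so $U_{c+1}$ is again an interval. I expect the main obstacle to be pinning down precisely this no-gap step: one must observe that \emph{both} inequalities are genuinely needed (one controls a possible gap on the right of $U_c$, the other a possible gap on its left, which can occur when $f(c+1)<\alpha_c$), and that the only non-formal input is $\pi^2_{v+1}\ge\pi^1_v$, which is exactly where the two-dimensional structure, through Proposition~\ref{prop:gaps-atoms-pi}(3), enters.
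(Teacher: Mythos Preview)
Your proposal is correct and follows essentially the same route as the paper's proof: both arguments reduce to showing that for consecutive $v,v+1\in z_A$ the two intervals $[f(v),g(v)]$ and $[f(v+1),g(v+1)]$ leave no gap, and both derive the pair of inequalities $g(v)\ge f(v+1)-1$ and $g(v+1)\ge f(v)-1$ from exactly the same three inputs (Lemma~\ref{lem:pi-decreasing}, Lemma~\ref{lem:pi1-less-pi2}, and Proposition~\ref{prop:gaps-atoms-pi}(3)). Your write-up is in fact a bit more careful than the paper's in spelling out the induction and why \emph{both} inequalities are needed for the gluing step.
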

\begin{proof}

Let ${v}_1,{v}_2 \in {z}_A$, with ${v}_1\leq {v}_2$. By Proposition~\ref{prop:gaps-atoms-pi}~(2), we know that ${z}_A$ is closed under intervals and since in this case ${z}_A\subseteq \mathbb{N}$, we deduce that $z_A$ is itself an interval. So, we can consider the sequence ${u}_1,{u}_2,\ldots,{u}_{n-1},{u}_n \in {z}_A$ such that ${u}_1={v}_1$, ${u}_n={v}_2$ and ${u}_{i+1}={u}_i+1$ for $i\in \{1,\ldots,n-1\}$. We obtain our result by proving that for all $i\in \{1,\ldots,n\}$ the union $[\pi^1_{{u}_i}+\pi^1_{{z}-{u}_i},\pi^2_{{u}_i}+\pi^2_{{z}-{u}_i}-2]\cup [\pi^1_{{u}_{i+1}}+\pi^1_{{z}-{u}_{i+1}},\pi^2_{{u}_{i+1}}+\pi^2_{{z}-{u}_{i+1}}-2]$ is an interval.
For $i\in \{1,\ldots,n\}$, let ${w}_i={z}-{u}_i$ and ${w}_{i+1}={z}-{u}_{i+1}={w}_i-1$. As ${u}_i\leq {u}_{i+1}$, by Lemma~\ref{lem:pi-decreasing}, we have $\pi^1_{{u}_{i+1}}\leq \pi^1_{{u}_i}$ and, since ${w}_i={w}_{i+1}+1\in A$, by Proposition~\ref{prop:gaps-atoms-pi}~(3), we have $\pi^1_{{w}_{i+1}}\leq \pi^2_{{w}_i}$. Also, Lemma~\ref{lem:pi1-less-pi2} ensures that $\pi_{u_i}^1\le \pi_{u_i}^2-1$. Putting all these inequalities together, we get $\pi^1_{{u}_{i+1}}+\pi^1_{{w}_{i+1}}\leq \pi^1_{{u}_i}+\pi^1_{{w}_{i+1}}\leq \pi^2_{{u}_i}-1+\pi^2_{{w}_i}$.
Furthermore, by ${w}_{i+1}\leq {w}_i$ we have $\pi^2_{{w}_{i+1}}\geq \pi^2_{{w}_i}$ (Lemma~\ref{lem:pi-decreasing}), and since ${u}_{i+1}={u}_i+1 \in A$, we have $\pi^2_{{u}_{i+1}}\geq \pi^1_{{u}_i}$ by Proposition~\ref{prop:gaps-atoms-pi}~(3). Therefore, using once more Lemma~\ref{lem:pi1-less-pi2}, we obtain $\pi^2_{{u}_{i+1}}+\pi^2_{{w}_{i+1}}-2\geq \pi^2_{{u}_{i+1}}+\pi^2_{{w}_i}-2\geq \pi^2_{{u}_{i+1}}+\pi^1_{{w}_i}-1\geq \pi^1_{{u}_i}+\pi^1_{{w}_i}-1$. 
This, in particular, proves that $[\pi^1_{{u}_i}+\pi^1_{{z}-{u}_i},\pi^2_{{u}_i}+\pi^2_{{z}-{u}_i}-2]\cup [\pi^1_{{u}_{i+1}}+\pi^1_{{z}-{u}_{i+1}},\pi^2_{{u}_{i+1}}+\pi^2_{{z}-{u}_{i+1}}-2]$ is an interval.
\end{proof}

According to this last result and Proposition~\ref{prop:gaps-atoms-pi}~(5),
\begin{equation}\label{eq:exp-AS+AS}
	2\mathcal{A}(S)=\{({z},x)\in \mathbb{N}^2 : {z}\in A+A, x\in [m_{z}, M_{z}]\}.
\end{equation}

\begin{lemma} Let ${z}_1,{z}_2 \in A+A$ such that ${z}_1\leq {z}_2$. Then, $m_{{z}_1}\geq m_{{z}_2}$ and $M_{{z}_1}\geq M_{{z}_2}$.
\label{m1<m2}
\end{lemma}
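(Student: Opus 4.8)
The plan is to exploit the interval structure of $A$ together with the monotonicity of $\pi^1$ and $\pi^2$ recorded in Lemma~\ref{lem:pi-decreasing}, transporting an optimal decomposition of one of the $z_i$ into a suitable decomposition of the other. Recall that $m_{z}=\min\{\pi^1_{v}+\pi^1_{z-v}:v\in z_A\}$ and $M_{z}=\max\{\pi^2_{v}+\pi^2_{z-v}-2:v\in z_A\}$.

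First I would record the shape of the relevant sets. Since $A\subseteq\mathbb{N}$ is closed under intervals (Proposition~\ref{prop:gaps-atoms-pi}~(1)), it is itself an interval of $\mathbb{N}$, say $A=[\gamma,\delta]$ with $0\le\gamma\le\delta\le\infty$ (the bound $\delta$ may be infinite, in which case every constraint below involving $\delta$ is vacuous). Consequently, for $z\in A+A$ one has $2\gamma\le z\le 2\delta$, and $z_A=\{v\in A:z-v\in A\}=[\max(\gamma,z-\delta),\min(\delta,z-\gamma)]$. Recall also that $\pi^1$ and $\pi^2$ are non-increasing.

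Next I would prove $M_{z_1}\ge M_{z_2}$. Put $d=z_2-z_1\ge 0$ and choose $v^*\in(z_2)_A$ attaining $M_{z_2}=\pi^2_{v^*}+\pi^2_{z_2-v^*}-2$. The key step is to produce $v\in(z_1)_A$ with $v\le v^*$ and $z_1-v\le z_2-v^*$: such a $v$ is precisely an integer lying in $[\max(\gamma,z_1-\delta,v^*-d),\min(\delta,z_1-\gamma,v^*)]$, and I would show this interval is nonempty by checking that each of its three lower endpoints does not exceed any of its three upper endpoints. All nine inequalities reduce to $d\ge 0$, to the membership $v^*\in(z_2)_A$ (which yields $\gamma\le v^*\le\delta$, $v^*\le z_2-\gamma$ and $v^*\ge z_2-\delta$), and to $2\gamma\le z_1\le 2\delta$; the only two nontrivial ones, $v^*-d\le z_1-\gamma$ and $z_1-\delta\le v^*$, follow respectively from $v^*\le z_2-\gamma$ and from $v^*\ge z_2-\delta\ge z_1-\delta$ after substituting $d=z_2-z_1$. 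Given such a $v$, monotonicity of $\pi^2$ (Lemma~\ref{lem:pi-decreasing}) gives $\pi^2_{v}\ge\pi^2_{v^*}$ and $\pi^2_{z_1-v}\ge\pi^2_{z_2-v^*}$, so $M_{z_1}\ge\pi^2_{v}+\pi^2_{z_1-v}-2\ge M_{z_2}$.

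Finally, $m_{z_1}\ge m_{z_2}$ is entirely symmetric, shifting \emph{upward} instead of downward: taking a minimizer $v^*\in(z_1)_A$ for $m_{z_1}$, I would produce $v\in(z_2)_A$ with $v\ge v^*$ and $z_2-v\ge z_1-v^*$ (an integer of $[\max(\gamma,z_2-\delta,v^*),\min(\delta,z_2-\gamma,v^*+d)]$, again nonempty by the analogous nine-inequality check), and then monotonicity of $\pi^1$ yields $\pi^1_{v}\le\pi^1_{v^*}$ and $\pi^1_{z_2-v}\le\pi^1_{z_1-v^*}$, whence $m_{z_2}\le\pi^1_{v}+\pi^1_{z_2-v}\le m_{z_1}$. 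The main obstacle is the nonemptiness of the shifted-decomposition interval; everything else is immediate from monotonicity. I expect no genuine difficulty there, but some care is needed to treat the case $\delta=\infty$ and to guarantee integrality, namely that a nonempty interval with integer endpoints contains an integer.
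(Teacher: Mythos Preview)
Your proof is correct and rests on the same idea as the paper's: transport an optimal decomposition of one $z_i$ to the other and then invoke the monotonicity of $\pi^1$ and $\pi^2$ from Lemma~\ref{lem:pi-decreasing}. The difference lies only in how the transported decomposition is produced. The paper treats the two inequalities asymmetrically: for $m$ it fixes one summand and distinguishes whether the shifted partner still lies in $A$ (if not, it replaces it by $\max A$); for $M$ it runs an iterative one-step-at-a-time reduction from $(b_1,b_2)$ down to a pair summing to $z_1$. You instead write $A=[\gamma,\delta]$ explicitly, describe $(z_i)_A$ as an interval, and reduce both directions to the same nine-inequality nonemptiness check. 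Your argument is more uniform and avoids both the case split and the induction; the paper's is more ad hoc but equally valid. The residual points you flag ($\delta=\infty$ and integrality of the interval) are harmless, since the endpoints are integers (or $\pm\infty$) and a nonempty integer interval contains an integer.
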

\begin{proof}
We first prove that $m_{{z}_1}\geq m_{{z}_2}$. Let ${z}_1={a}_1+{a}_2$ be such that $m_{{z}_1}=\pi^1_{{a}_1}+\pi^1_{{a}_2}$, with $a_1,a_2\in A$. We can assume ${a}_1\leq {a}_2$. Since ${a}_1\leq {z}_1\leq {z}_2$, there exists ${y}\in \mathbb{N}$ such that ${z}_2={a}_1+{a}_2+{y}$. Set ${x}={a}_2+{y}$, in particular $a_2\leq x$. We distinguish two cases depending on $x\in A$ or $x\not\in A$.
\begin{itemize}
    \item If ${x}\in A $, then by the definition of $m_{z_2}$ and by Lemma~\ref{lem:pi-decreasing} we have $m_{{z}_2}\leq \pi^1_{{a}_1}+\pi^1_{{x}}\leq \pi^1_{{a}_1}+\pi^1_{{a}_2}=m_{{z}_1}$, obtaining the desired result.
    \item Now, suppose ${x}\notin A$. Since $a_2\in A$, $A$ is an interval (Proposition~\ref{prop:gaps-atoms-pi}~(1)) and $a_2\le x\not\in A$, we deduce that there exists ${m}=\max A$ and then $a_2\le m<x$. Set $y_1=m-a_2$ and $y_2=y-y_1$. Hence, ${z}_2=a_1+a_2+y={a}_1+{y}_2+{m}$. Set ${w}={a}_1+{y}_2$. Then ${w}\in A$, since, otherwise, ${w}>{m}$ and ${z}_2>2{m}$, which is in contradiction with ${z}_2\in A+A$. So $z_2=w+m$, with $w,m\in A$. Therefore by the definition of $m_{z_2}$ and by Lemma~\ref{lem:pi-decreasing} we deduce that $m_{{z}_2}\leq \pi^1_{{w}}+\pi^1_{{m}}\leq \pi^1_{{a}_1}+\pi^1_{{a}_2}=m_{{z}_1}$, as desired.
 
\end{itemize} 
Next, we prove that $M_{{z}_1}\geq M_{{z}_2}$. Let ${z}_2={b}_1+{b}_2$ be such that $M_{{z}_2}=\pi^2_{{b}_1}+\pi^2_{{b}_2}-2$ and $b_1,b_2\in A$. We first show that we can find ${c}_1,{c}_2\in A$ with ${c}_1\leq {b}_1$ and ${c}_2\leq {b}_2$ such that ${z}_1={c}_1+{c}_2$. Since ${z}_1\leq {b}_1+{b}_2$, there exists ${x}\in\mathbb{N}$ such that ${z}_1={b}_1+{b}_2-{x}$. If $x=0$ there is nothing to show, so suppose $x\neq 0$.
Let ${n}=\min A$, so by $z_1\in A+A$ we have $2n\le z_1=b_1+b_2-x< b_1+b_2$. Moreover, since $b_1,b_2\in A$, we have $b_1\ge n$ and $b_2\ge n$. As consequence we have that ${b}_1>{n}$ or ${b}_2>{n}$. Without loss of generality we can assume ${b}_1>{n}$. In particular ${b}_1-1\in A$ and ${z}_1=({b}_1-1)+{b}_2-({x}-1)$, with ${b}_1-1,{b}_2\in A$. If ${x}-1=0$ we conclude. Otherwise, since $({b}_1-1)+{b}_2\in A+A$, $b_1-1,b_2\in A$ and $2n\leq {z}_1< ({b}_1-1)+{b}_2 $, the only possibility is  ${b}_1-1>{n}$ or ${b}_2>{n}$, and so we can continue with the same argument. Therefore, we can eventually find ${c}_1,{c}_2\in A$ with ${c}_1\leq {b}_1$ and ${c}_2\leq {b}_2$ such that ${z}_1={c}_1+{c}_2$.
Hence, by the definition of $M_{z_1}$ and by Lemma~\ref{lem:pi-decreasing}, $M_{{z}_2}=\pi^2_{{b}_1}+\pi^2_{{b}_2}-2\leq \pi^2_{{c}_1}+\pi^2_{{c}_2}-2\leq M_{{z}_1}$, which concludes the proof.
\end{proof}

We can use the above results to show that $2\mathcal{A}(S)$ is closed under intervals.

\begin{proposition}\label{prop:2A-interval}
Let $S$ be an ideal extension of $\mathbb{N}^2$. 
Then, for every $\mathbf{a},\mathbf{b}\in 2\mathcal{A}(S)$ with $\mathbf{a}\leq \mathbf{b}$, we have that $ \llbracket \mathbf{a},\mathbf{b}\rrbracket\subseteq 2\mathcal{A}(S)$.
\end{proposition}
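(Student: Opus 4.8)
The plan is to read everything off from the explicit description of $2\mathcal{A}(S)$ recorded in \eqref{eq:exp-AS+AS}, combined with the monotonicity established in Lemma~\ref{m1<m2}. Write $\mathbf{a}=(z_1,x_1)$ and $\mathbf{b}=(z_2,x_2)$, where by \eqref{eq:exp-AS+AS} we have $z_1,z_2\in A+A$ and $x_j\in[m_{z_j},M_{z_j}]$ for $j\in\{1,2\}$. The hypothesis $\mathbf{a}\le\mathbf{b}$ amounts to $z_1\le z_2$ and $x_1\le x_2$. Fix an arbitrary $(z,x)\in\llbracket\mathbf{a},\mathbf{b}\rrbracket$, so that $z_1\le z\le z_2$ and $x_1\le x\le x_2$. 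By \eqref{eq:exp-AS+AS} it then suffices to verify the two assertions $z\in A+A$ and $m_z\le x\le M_z$.

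First I would dispatch the membership $z\in A+A$. Since $S$ is an ideal extension of $\mathbb{N}^2$, Proposition~\ref{prop:gaps-atoms-pi}~(1) tells us that $A$ is closed under intervals; as $A\subseteq\mathbb{N}$, this simply means that $A$ is a (possibly unbounded) set of consecutive integers, and hence $A+A$ is again an interval of $\mathbb{N}$, that is, closed under intervals. From $z_1\le z\le z_2$ with $z_1,z_2\in A+A$ we immediately conclude $z\in A+A$. (If $A=\emptyset$, then $2\mathcal{A}(S)=\emptyset$ and there is nothing to prove, so we may assume $A\neq\emptyset$.)

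For the bound on $x$, I would sandwich it using Lemma~\ref{m1<m2}. Since $z_1\le z$, that lemma gives $m_z\le m_{z_1}$, and therefore $m_z\le m_{z_1}\le x_1\le x$. Symmetrically, since $z\le z_2$, we obtain $M_z\ge M_{z_2}$, and hence $x\le x_2\le M_{z_2}\le M_z$. Putting these together yields $x\in[m_z,M_z]$, and therefore $(z,x)\in 2\mathcal{A}(S)$ by \eqref{eq:exp-AS+AS}. Since $(z,x)$ was an arbitrary point of $\llbracket\mathbf{a},\mathbf{b}\rrbracket$, this proves $\llbracket\mathbf{a},\mathbf{b}\rrbracket\subseteq 2\mathcal{A}(S)$.

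There is no substantial obstacle remaining at this point: the two genuinely technical ingredients have already been isolated as the preceding lemmas, namely the description \eqref{eq:exp-AS+AS} of $2\mathcal{A}(S)$ as a union of vertical segments $[m_z,M_z]$ lying over the interval $A+A$, and the antitone behaviour of both $m_z$ and $M_z$ in $z$ furnished by Lemma~\ref{m1<m2}. The only point requiring care is to keep the inequality directions straight: because $m_z$ and $M_z$ are both non-increasing in $z$, the lower endpoint of the segment is controlled at the left endpoint $z_1$ of the base interval, while the upper endpoint is controlled at the right endpoint $z_2$, which is exactly what makes the sandwich close up.
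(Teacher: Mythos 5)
Your proof is correct and follows essentially the same route as the paper's: both arguments read the membership in $2\mathcal{A}(S)$ off the description \eqref{eq:exp-AS+AS}, use the interval property of $A$ (hence of $A+A$) from Proposition~\ref{prop:gaps-atoms-pi}~(1) to handle the first coordinate, and close the sandwich $m_z\le m_{z_1}\le x_1\le x\le x_2\le M_{z_2}\le M_z$ via the antitonicity in Lemma~\ref{m1<m2}. The inequality directions are handled correctly, so there is nothing to add.
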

\begin{proof} 
    First observe that, since $A$ is an interval (Proposition~\ref{prop:gaps-atoms-pi}~(1)), the set $A+A$ is also an interval. In particular, if $v, w\in A+A$ and $z$ is a positive integer such that $v\leq z\leq w$, then $z\in A+A$.

	In light of \eqref{eq:exp-AS+AS}, we can Write $\mathbf{a}=(v_a,x_a)$, with $v_a\in A+A$ and $x_a\in [m_{v_a},M_{v_a}]$. Similarly, $\mathbf{b}=(v_b,x_b)$ with $v_b\in A+A$ and $x_b\in [m_{v_b},M_{v_b}]$. Let $\mathbf{c}\in  \llbracket \mathbf{a},\mathbf{b} \rrbracket$, we can denote $\mathbf{c}=(v_c,x_c)$ with $v_c, x_c\in \mathbb{N}$. We have $v_a\leq v_c\leq v_b$ and $x_a\leq x_c \leq x_b$. Since $v_a,v_b\in A+A$, we deduce that $v_c\in A+A$. Moreover, by Lemma~\ref{m1<m2}, we obtain that $m_{v_c}\leq m_{v_a}\leq x_a \leq x_c \leq x_b \leq M_{v_b}\leq M_{v_c}$, that is, $x_c \in [m_{v_c}, M_{v_c}]$, and so $\mathbf{c} \in 2\mathcal{A}(S)$ by \eqref{eq:exp-AS+AS}.
\end{proof}

%

\begin{corollary}
    Let $S$ be a submonoid of $\mathbb{N}^2$. Then, $S$ is gap absorbing if and only if $S$ is an ideal extension of $\mathbb{N}^2$.
\end{corollary}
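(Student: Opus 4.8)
The plan is to assemble this corollary directly from the three structural results already established, since the real content has been front-loaded into Proposition~\ref{prop:2A-interval}. The statement is an equivalence, so I would prove the two implications separately, and in fact both directions are immediate once the right prior result is invoked.

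First I would dispatch the \emph{only if} direction: if $S$ is gap absorbing, then $S$ is an ideal extension of $\mathbb{N}^2$. This requires no new work at all, as it is precisely the content of Proposition~\ref{prop:gap-absorbing-implies-ideal}, which holds for gap absorbing submonoids of $\mathbb{N}^{(I)}$ for any $I$ and therefore in particular for $I$ of cardinality two. So this half is a one-line citation.

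For the \emph{if} direction, suppose $S$ is an ideal extension of $\mathbb{N}^2$. The strategy is to verify the hypothesis of condition~(3) in Proposition~\ref{prop:2A-interval-eq}, namely that $2\mathcal{A}(S)$ is closed under intervals, and then conclude gap absorption via the equivalence $(3)\Rightarrow(1)$ of that proposition. The fact that $2\mathcal{A}(S)$ is closed under intervals for an ideal extension of $\mathbb{N}^2$ is exactly Proposition~\ref{prop:2A-interval}. Thus $S$ satisfies condition~(3), and Proposition~\ref{prop:2A-interval-eq} yields that $S$ is gap absorbing.

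There is no serious obstacle remaining at the level of the corollary itself: the combinatorial difficulty was absorbed into establishing Proposition~\ref{prop:2A-interval} (relying on the description \eqref{eq:exp-AS+AS} of $2\mathcal{A}(S)$ via the functions $\pi^1,\pi^2$ and the monotonicity of $m_{z}$ and $M_{z}$ from Lemma~\ref{m1<m2}). The only point worth stating carefully is that dimension two is genuinely used here, because Proposition~\ref{prop:2A-interval} — unlike the general machinery of Section~\ref{sec:betti-elements} — depends on reducing everything to a single remaining coordinate, where $A\subseteq\mathbb{N}$ and $A+A$ are literal intervals. With those results in hand, the corollary follows by combining Proposition~\ref{prop:gap-absorbing-implies-ideal}, Proposition~\ref{prop:2A-interval}, and Proposition~\ref{prop:2A-interval-eq}, and no additional estimates are needed.
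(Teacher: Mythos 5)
Your proposal is correct and follows exactly the paper's own argument: the forward direction is Proposition~\ref{prop:gap-absorbing-implies-ideal}, and the converse combines Proposition~\ref{prop:2A-interval} (that $2\mathcal{A}(S)$ is closed under intervals for an ideal extension of $\mathbb{N}^2$) with the equivalence $(3)\Rightarrow(1)$ of Proposition~\ref{prop:2A-interval-eq}. Nothing further is needed.
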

\begin{proof}
    We already know that if $S$ is gap absorbing, then $S=\{\mathbf{0}\}\cup(\mathcal{M}(S)+\mathbb{N}^2)$ (Proposition~\ref{prop:gap-absorbing-implies-ideal}). Sufficiency follows from Propositions~\ref{prop:2A-interval} and \ref{prop:2A-interval-eq}.
\end{proof}

In the following, $\preceq$ denotes the lexicographical order on $\mathbb{N}^2$. For $\mathbf{a},\mathbf{b}\in X\subseteq \mathbb{N}^2$, we write $\mathbf{a}\prec \mathbf{b}$ if $\mathbf{a}\preceq \mathbf{b}$ and $\mathbf{a}\neq \mathbf{b}$. We say $\mathbf{a}$ \emph{covers} $\mathbf{b}$ in $X$ (with respect to $\preceq$) if $\mathbf{a}\prec \mathbf{b}$ and there is no $\mathbf{c}\in X$ such that $\mathbf{a}\prec \mathbf{c}\prec \mathbf{b}$. 

\begin{lemma}\label{lem:sup-is-atom}
 Let $S\subseteq \mathbb{N}^2$ be a gap absorbing monoid. Let $\mathbf{m},\mathbf{n}\in \mathcal{M}(S)$ be such that $\mathbf{n}$ covers $\mathbf{m}$ in $\mathcal{M}(S)$ with respect to the lexicographical ordering. If $\operatorname{Supp}(\mathbf{m})\cap\operatorname{Supp}(\mathbf{n})\neq \emptyset$, then $\mathbf{m}\vee \mathbf{n}$ is an atom.
\end{lemma}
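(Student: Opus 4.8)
The plan is to reduce the statement to a short case analysis over the minimal generators dominated by $\mathbf{m}\vee\mathbf{n}$. First I would record the precise shape of the two generators. Since $\mathcal{M}(S)$ is an antichain in $\mathbb{N}^2$, distinct elements have distinct first coordinates, and along the antichain the second coordinate strictly decreases as the first increases; in particular the lexicographic order $\preceq$ restricts to a total order on $\mathcal{M}(S)$. As $\mathbf{n}$ covers $\mathbf{m}$ we have $\mathbf{m}\prec\mathbf{n}$, so writing $\mathbf{m}=(m_1,m_2)$ and $\mathbf{n}=(n_1,n_2)$ this forces $m_1<n_1$ and $m_2>n_2$; hence $n_1>0$, $m_2>0$, and $\mathbf{m}\vee\mathbf{n}=(n_1,m_2)$. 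Note that $\mathbf{m}\vee\mathbf{n}\ge\mathbf{m}$, so it lies in $S^*$ because $S$ is an ideal extension of $\mathbb{N}^2$ (Proposition~\ref{prop:gap-absorbing-implies-ideal}) and $S^*$ is an ideal.

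Next I would argue by contradiction, assuming $\mathbf{m}\vee\mathbf{n}=\mathbf{p}+\mathbf{q}$ with $\mathbf{p},\mathbf{q}\in S^*$. Choosing $\mathbf{m}',\mathbf{n}'\in\mathcal{M}(S)$ with $\mathbf{m}'\le\mathbf{p}$ and $\mathbf{n}'\le\mathbf{q}$ yields $\mathbf{m}'+\mathbf{n}'\le\mathbf{p}+\mathbf{q}=(n_1,m_2)$, and in particular each of $\mathbf{m}'$ and $\mathbf{n}'$ is itself $\le(n_1,m_2)$.

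The crux, and the step I expect to be the main obstacle, is to show that every $\mathbf{m}'\in\mathcal{M}(S)$ with $\mathbf{m}'\le(n_1,m_2)$ must be either $\mathbf{m}$ or $\mathbf{n}$; this is exactly where the covering hypothesis enters. Since $\preceq$ is total on $\mathcal{M}(S)$ and $\mathbf{n}$ covers $\mathbf{m}$, either $\mathbf{m}'\preceq\mathbf{m}$ or $\mathbf{m}'\succeq\mathbf{n}$. If $\mathbf{m}'\prec\mathbf{m}$ strictly, its first coordinate is $<m_1$, so by the monotonicity of the antichain its second coordinate exceeds $m_2$, contradicting $\mathbf{m}'\le(n_1,m_2)$; thus $\mathbf{m}'=\mathbf{m}$. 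Symmetrically, if $\mathbf{m}'\succ\mathbf{n}$ strictly, its first coordinate exceeds $n_1$, again contradicting $\mathbf{m}'\le(n_1,m_2)$; thus $\mathbf{m}'=\mathbf{n}$. The same dichotomy applies to $\mathbf{n}'$, so $\mathbf{m}',\mathbf{n}'\in\{\mathbf{m},\mathbf{n}\}$.

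Finally I would dispatch the three possibilities for the unordered pair $\{\mathbf{m}',\mathbf{n}'\}$ using $\mathbf{m}'+\mathbf{n}'\le(n_1,m_2)$. The choice $\mathbf{m}'=\mathbf{n}'=\mathbf{m}$ gives $2m_2\le m_2$, forcing $m_2=0$, impossible; the choice $\mathbf{m}'=\mathbf{n}'=\mathbf{n}$ gives $2n_1\le n_1$, forcing $n_1=0$, impossible. The remaining choice $\{\mathbf{m}',\mathbf{n}'\}=\{\mathbf{m},\mathbf{n}\}$ gives $m_1+n_1\le n_1$ and $m_2+n_2\le m_2$, that is $m_1=0$ and $n_2=0$, which means precisely $\operatorname{Supp}(\mathbf{m})\cap\operatorname{Supp}(\mathbf{n})=\emptyset$ and contradicts the hypothesis. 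Since every case is impossible, $\mathbf{m}\vee\mathbf{n}$ admits no factorization into two nonzero elements of $S$ and is therefore an atom. It is worth noting that the gap absorbing assumption is used only through the fact that $S$ is an ideal extension of $\mathbb{N}^2$, so the support hypothesis is exactly what is needed to rule out the mixed case $\{\mathbf{m},\mathbf{n}\}$.
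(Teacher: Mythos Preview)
Your proof is correct and follows the same overall strategy as the paper: assume $\mathbf{m}\vee\mathbf{n}$ is not an atom, pick minimal generators below the two summands, and use the covering hypothesis together with the antichain structure of $\mathcal{M}(S)$ to reach a contradiction ending in the disjoint-support case.

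The execution differs in a way worth noting. You first isolate the clean structural fact that $\mathcal{M}(S)\cap\operatorname{B}(\mathbf{m}\vee\mathbf{n})=\{\mathbf{m},\mathbf{n}\}$, using only that on a two-dimensional antichain the second coordinate strictly decreases as the first increases; after that, the three cases for $\{\mathbf{m}',\mathbf{n}'\}$ are dispatched by elementary coordinate inequalities. The paper instead threads lexicographic inequalities throughout: it shows the lex-smaller minimal $\mathbf{a}$ must equal $\mathbf{m}$ via an argument passing through $\operatorname{B}(\mathbf{m}\vee\mathbf{n})\setminus(\operatorname{B}(\mathbf{m})\cup\operatorname{B}(\mathbf{n}))$, and for the other minimal $\mathbf{b}$ never pins it down to $\{\mathbf{m},\mathbf{n}\}$ but uses $\mathbf{n}\preceq\mathbf{b}$ together with a mixed $\le/\preceq$ chain to force $\mathbf{m}+\mathbf{n}=\mathbf{m}\vee\mathbf{n}$. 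Your route is shorter and more transparent, and your final remark is apt: the gap absorbing hypothesis is used only through $S$ being an ideal extension of $\mathbb{N}^2$.
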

\begin{proof}
     Write $\mathbf{m}=(m_1,m_2)$ and $\mathbf{n}=(n_1,n_2)$. As $\mathbf{m}\preceq \mathbf{n}$, we have that $m_1\neq n_1$, since otherwise $\mathbf{m}\le \mathbf{n}$ and this is impossible as both $\mathbf{m}$ and $\mathbf{n}$ are in $\mathcal{M}(S)$. Thus, $m_1<n_1$.

    Notice that $\mathbf{m}\vee \mathbf{n}\le \mathbf{m}+\mathbf{n}\in \mathcal{A}(S)+ \mathcal{A}(S)$. If $\mathbf{m}\vee\mathbf{n}$ is not an atom, then $\mathbf{m}\vee\mathbf{n} = \mathbf{c}+\mathbf{d}$ for some $\mathbf{c},\mathbf{d}\in S^*$. Take $\mathbf{a},\mathbf{b}\in \mathcal{M}(S)$ such that $\mathbf{a}\le \mathbf{c}$ and $\mathbf{b}\le\mathbf{d}$. We may suppose without loss of generality that $\mathbf{a}\preceq \mathbf{b}$. Write $\mathbf{a}=(a_1,a_2)$. Observe that $\mathbf{a}+\mathbf{b}\le \mathbf{m}\vee\mathbf{n}\le \mathbf{m}+\mathbf{n}$, and in particular $\mathbf{a}+\mathbf{b}\preceq \mathbf{m}+\mathbf{n}$. If $\mathbf{n}\preceq \mathbf{a}$, then $\mathbf{n}+\mathbf{b}\preceq \mathbf{a}+\mathbf{b}$, and as $\mathbf{m}\prec \mathbf{b}$, we obtain $\mathbf{m}+\mathbf{n}\prec \mathbf{n}+\mathbf{b}$, which by transitivity leads to $\mathbf{m}+\mathbf{n}\prec \mathbf{a}+\mathbf{b}$, which contradicts $\mathbf{a}+\mathbf{b}\preceq \mathbf{m}+\mathbf{n}$. Thus $\mathbf{a}\prec \mathbf{n}$, and as $\mathbf{n}$ covers $\mathbf{m}$, we have that $\mathbf{a}\preceq \mathbf{m}$; in particular, $a_1\le m_1$. 

    If $\mathbf{a}\neq \mathbf{m}$, then $\mathbf{a}\in \operatorname{B}(\mathbf{m}\vee\mathbf{n})\setminus (\operatorname{B}(\mathbf{m})\cup \operatorname{B}(\mathbf{n})) = \llbracket \mathbf{m}\wedge \mathbf{n},\mathbf{m}\vee\mathbf{n}\rrbracket$. 
    Hence, $a_1\le m_1\le a_1\le n_1$, which forces $a_1=m_1$. From $\mathbf{a}\preceq \mathbf{m}$ we deduce that $a_2\le m_2$, and then $\mathbf{a}< \mathbf{m}$, which is impossible since $\mathbf{a}, \mathbf{m}\in \mathcal{M}(S)$.
    
    Thus, $\mathbf{m}=\mathbf{a}$. Then, either $\mathbf{a}=\mathbf{m}\prec \mathbf{n}\preceq \mathbf{b}$ or $\mathbf{a}=\mathbf{b}= \mathbf{m}\prec \mathbf{n}$. If $\mathbf{a}=\mathbf{b}= \mathbf{m}\prec \mathbf{n}$, then $2\mathbf{m}=\mathbf{a}+\mathbf{b}\le \mathbf{m}\vee\mathbf{n}\le \mathbf{m}+\mathbf{n}$, and consequently $2\mathbf{m}\le \mathbf{m}+\mathbf{n}$, which forces $\mathbf{m}\le \mathbf{n}$, a contradiction. Thus, $\mathbf{a}=\mathbf{m}\prec \mathbf{n}\preceq \mathbf{b}$. Hence, $\mathbf{a}+\mathbf{b}\le \mathbf{m}\vee\mathbf{n}\le \mathbf{m}+\mathbf{n}\preceq \mathbf{m}+\mathbf{b}=\mathbf{a}+\mathbf{b}$, which leads to $\mathbf{m}+\mathbf{n}=\mathbf{m}\vee\mathbf{n}$, that is, $\mathbf{m}$ and $\mathbf{n}$ have disjoint support. 
\end{proof}

\begin{proposition}
    Let $S\subseteq \mathbb{N}^2$ be a gap absorbing monoid. Then $\mathbf{s}\in \mathrm{Betti}(S)$ if and only if $\mathbf{s}\in 2\mathcal{A}(S)$ and $|\mathsf{Z}(\mathbf{s})|\ge 2$.
\end{proposition}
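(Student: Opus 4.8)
The statement splits into two implications, and the sufficiency is immediate. If $\mathbf{s}\in 2\mathcal{A}(S)$ then $\ell(\mathbf{s})\le 2$, while $|\mathsf{Z}(\mathbf{s})|\ge 2$ forces $\mathbf{s}$ not to be an atom, so $\ell(\mathbf{s})=2$; Proposition~\ref{prop:l2-denumerant2} then yields that $\mathbf{s}$ is a Betti element. For the necessity, suppose $\mathbf{s}\in\operatorname{Betti}(S)$. The condition $|\mathsf{Z}(\mathbf{s})|\ge 2$ is immediate from the fact recalled in Section~\ref{sec:catenary-degree} that the number of connected components of $\mathbf{G}_{\mathbf{s}}$ equals the number of $\mathcal{R}$-classes of factorizations of $\mathbf{s}$: since $\mathbf{G}_{\mathbf{s}}$ is disconnected it has at least two components, each a nonempty $\mathcal{R}$-class, so $\mathbf{s}$ has at least two factorizations. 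Thus the real content is to prove $\mathbf{s}\in 2\mathcal{A}(S)$.

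By Theorem~\ref{thm:lge4-not-betti} we have $\mathbf{s}\in 2\mathcal{A}(S)\cup 3\mathcal{A}(S)$, so it suffices to rule out $\ell(\mathbf{s})=3$, and I would argue by contradiction, assuming $\ell(\mathbf{s})=3$ and contradicting Corollary~\ref{cor:antichain-of-supatoms}. The plan is to analyse $\mathcal{M}(S)\cap\operatorname{B}(\mathbf{s})$. Since $\mathcal{M}(S)$ is an antichain in $\mathbb{N}^2$, listing it in lexicographic order $\mathbf{m}^{(1)}\prec\mathbf{m}^{(2)}\prec\cdots$ makes the first coordinates strictly increasing and the second coordinates strictly decreasing. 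Writing $\mathbf{m}^{(j)}=(p_j,q_j)$ and $\mathbf{s}=(s_1,s_2)$, the condition $\mathbf{m}^{(j)}\le\mathbf{s}$ reads $p_j\le s_1$ and $q_j\le s_2$; the first holds for an initial block of indices and the second for a final block, so $\mathcal{M}(S)\cap\operatorname{B}(\mathbf{s})$ is a contiguous interval of this staircase. In particular, consecutive elements of $\mathcal{M}(S)\cap\operatorname{B}(\mathbf{s})$ cover one another in $\mathcal{M}(S)$.

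Next I distinguish two cases according to whether some consecutive pair in $\mathcal{M}(S)\cap\operatorname{B}(\mathbf{s})$ has disjoint support. If every consecutive pair $\mathbf{m}^{(j)},\mathbf{m}^{(j+1)}$ has intersecting support, then Lemma~\ref{lem:sup-is-atom} gives $\mathbf{m}^{(j)}\vee\mathbf{m}^{(j+1)}\in\mathcal{A}(S)$ for each such pair; concatenating these consecutive elements produces, for any $\mathbf{m},\mathbf{n}\in\mathcal{M}(S)\cap\operatorname{B}(\mathbf{s})$, precisely the chain required by Corollary~\ref{cor:antichain-of-supatoms}, whence $\mathbf{s}$ is not a Betti element, a contradiction. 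The delicate case is when some consecutive pair has disjoint support. In $\mathbb{N}^2$ a consecutive pair $\mathbf{m}^{(j)}=(p_j,q_j)\prec\mathbf{m}^{(j+1)}=(p_{j+1},q_{j+1})$ has disjoint support exactly when $p_j=0$ and $q_{j+1}=0$, that is, $\mathbf{m}^{(j)}=(0,a)$ and $\mathbf{m}^{(j+1)}=(b,0)$ with $a,b>0$. Here $(0,a)$ is lexicographically smallest in $\mathcal{M}(S)$ and $(b,0)$ is lexicographically largest; moreover any minimal element strictly between them would have both coordinates positive and, using $(0,a)\le\mathbf{s}$ and $(b,0)\le\mathbf{s}$, would automatically lie below $\mathbf{s}$, contradicting consecutiveness. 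Hence $\mathcal{M}(S)=\{(0,a),(b,0)\}=\{a\mathbf{e}_2,b\mathbf{e}_1\}$, so $S=\{\mathbf{0}\}\cup(\{a\mathbf{e}_2,b\mathbf{e}_1\}+\mathbb{N}^2)$ is of the form treated in Proposition~\ref{simple-case}, and Remark~\ref{rem:betti-base-case} gives $\operatorname{Betti}(S)\subseteq 2\mathcal{A}(S)$; in particular $\ell(\mathbf{s})=2$, again contradicting $\ell(\mathbf{s})=3$.

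The main obstacle is exactly this disjoint-support case, where Lemma~\ref{lem:sup-is-atom} and hence the chaining argument break down. The key observation making it tractable is that in $\mathbb{N}^2$ this degeneracy can occur only when $S$ has a generator on each coordinate axis and no generator with both coordinates positive, which pins $S$ down to the base family of Proposition~\ref{simple-case}, for which the desired conclusion is already established in Remark~\ref{rem:betti-base-case}. Combining the two cases rules out $\ell(\mathbf{s})=3$ and shows $\mathbf{s}\in 2\mathcal{A}(S)$, completing the proof.
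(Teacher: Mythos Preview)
Your proof is correct and follows essentially the same approach as the paper's: both reduce to Corollary~\ref{cor:antichain-of-supatoms} by walking along the lexicographic chain in $\mathcal{M}(S)\cap\operatorname{B}(\mathbf{s})$, invoking Lemma~\ref{lem:sup-is-atom} on consecutive pairs with intersecting support, and dispatching the disjoint-support case (equivalently $\mathcal{M}(S)=\mathcal{E}(S)$) via Remark~\ref{rem:betti-base-case}. The only difference is cosmetic: the paper excludes $\mathcal{M}(S)=\mathcal{E}(S)$ up front, whereas you treat it as the degenerate subcase arising from a disjoint-support pair, and you spell out more explicitly why $\mathcal{M}(S)\cap\operatorname{B}(\mathbf{s})$ is a contiguous block of the staircase.
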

\begin{proof}
\emph{Sufficiency}. By Proposition~\ref{prop:l2-denumerant2} we know that if $\mathbf{s}\in 2\mathcal{A}(S)$ and has more than one factorization, then $\mathbf{s}\in \mathrm{Betti}(S)$.

\emph{Necessity}. In light of Theorem~\ref{th:bound-catenary}, we know that $\ell(\mathbf{s})\le 3$. Suppose $\ell(\mathbf{s})=3$. By Coro\-llary~\ref{cor:antichain-of-supatoms} it suffices to prove that for every $\mathbf{m},\mathbf{n}\in \mathcal{M}(S)\cap \operatorname{B}(\mathbf{s})$, there exists $\mathbf{m}_1,\ldots,\mathbf{m}_{h} \in \mathcal{M}(S)\cap \operatorname{B}(\mathbf{s})$ such that $\mathbf{m}_1=\mathbf{m}, \mathbf{m}_h=\mathbf{n}$ and $\mathbf{m}_i \vee \mathbf{m}_{i+1}\in \mathcal{A}(S)$ for all $i\in \{1,\ldots,h-1\}$.
    Consider the elements $\mathbf{m}_1=\mathbf{m},\mathbf{m}_2, \ldots, \mathbf{m}_{r-1},\mathbf{m}_{r}=\mathbf{n}\in \mathcal{M}$ such that $\mathbf{m}_i \preceq \mathbf{m}_{i+1}$ and $\mathbf{m}_i$ covers $\mathbf{m}_{i+1}$ in $\mathcal{M}(S)$ with respect to $\preceq$, for all $i\in \{1,\ldots,r-1\}$. Observe that, for all $i\in \{1,\ldots,r-1\}$, the second coordinate of $\mathbf{m}_i$ is smaller than the second coordinate of $\mathbf{m}$ and the first coordinate of $\mathbf{m}_i$ is smaller than the first coordinate of $\mathbf{n}$, so $\mathbf{m}_i\in \operatorname{B}(\mathbf{s})$. 
    We can also assume that $\mathcal{M}(S)\neq \mathcal{E}(S)$, otherwise we obtain our claim by Remark~\ref{rem:betti-base-case}. This implies that for all $i\in \{1,\ldots,r-1\}$, we have $\mathbf{m}_i \cdot \mathbf{m}_{i+1}\neq 0$, and by Lemma~\ref{lem:sup-is-atom},  $\mathbf{m}_i \vee \mathbf{m}_{i+1}$ is an atom. Therefore, by Corollary~\ref{cor:antichain-of-supatoms} we can conclude that $\mathbf{s}$ is not a Betti element, forcing $\ell(\mathbf{s})=2$. As $\mathbf{s}$ is a Betti element, we also have that $|\mathsf{Z}(\mathbf{s})|\ge 2$.
\end{proof}

As a consequence of Corollary~\ref{cor:catenary-three}, we deduce that if $S$ is a gap absorbing submonoid of $\mathbb{N}^2$, then the catenary degree of $S$ is at most three.

\section{Further research}

We briefly summarize some of the open questions that arose in the preceding sections. Some were already stated in the form of conjectures.

\begin{enumerate}[({Q}1)]
    \item Is every ideal extension gap absorbing? (see Conjectures~\ref{conj:ideal-implies-gap-absorbing} and \ref{conj:ideal-2A-closed-intervals}). 
    \item For every ideal extension, is the minimal length of a Betti degree at most two? We prove in Theorem~\ref{thm:lge4-not-betti} that the minimal length of a Betti degree in a gap absorbing monoid is at most three. We have the impression that this also holds for ideal extensions of free monoids. Furthermore, we did not find any example where a Betti degree has minimal length equal to three.
    \item Is the catenary degree of an ideal extension at most three? Theorem~\ref{th:bound-catenary} states that the catenary degree of a gap absorbing monoids is at most four. We have not found any ideal extension of a free numerical semigroup with catenary degree equal to four. The fact that every ideal extension of a free commutative monoid has at most catenary degree three would also prove \cite[Conjecture~4.16]{Baeth}: for every element $\mathbf{s}$ on a finite-complement ideal $S$ of a free-monoid, $\mathsf{L}(\mathbf{s})$ is an interval. We believe that Theorem~\ref{th:Ls-interval} also holds for ideal extensions of free commutative monoids. Also, notice that in light of Corollary~\ref{cor:catenary-three}, (Q2) implies (Q3) for gap absorbing monoids.

    \item For the $\omega$-primality, if $S$ is an ideal extension we know by Theorem~\ref{thm:upper-bound-omega-norm1} that $\omega(S)$ is upper bounded by the supremum of 1-norms of its atoms plus one. We have not found any example where this upper bound is attained. 

    \item In Section~\ref{sec:back-slash}, given a sequence $\lambda=(\lambda_i)_{i\in I}$ of non-negative integers and a numerical semigroup $T$, we introduce the class of monoids $S_\lambda^I(T)$, focusing on the case $\lambda_j=1$ for all $j\in J$ and $\lambda_i=0$ for all $i\in I\setminus J$, with $J$ a subset of $I$. In this particular case, by Proposition~\ref{prop:lengths-S_J^I(T)} we know that $S_\lambda^I(T)$ inherits from $T$ the values of elasticity and lenght-density. In the case $T$ is an ordinary numerical semigroup, we also study catenary degree and $\omega$-primality. We ask if it is possible to find the values of these two invariants in the case $T$ is not ordinary. In general, it could be interesting to study monoids of kind $S_\lambda^I(T)$ also for different occurrences of the sequence $\lambda$, asking if some properties and invariants of $S_\lambda^I(T)$ can be expressed in terms of properties and invariants of $T$.
\end{enumerate}

\section*{Acknowledgements}

The first author acknowledges support from the Institute of Mathematics of the University of Granada (IMAG) through the program of Visits of Young Talented Researchers and from Istituto Nazionale di Alta Matematica (INDAM) through the program Concorso a n. 30 mensilità di borse di studio per l’estero per l’a.a. 2022-2023.

The second and third authors are partially supported by the grant number ProyExcel\_00868 (Proyecto de Excelencia de la Junta de Andalucía) and by the Junta de Andaluc\'ia Grant Number FQM--343. The second author acknowledges financial support from the grant PID2022-138906NB-C21 funded by MCIN/ AEI/10.13039/501100011033 and by ERDF ``A way of making Europe'', and from the Spanish Ministry of Science and Innovation (MICINN), through the ``Severo Ochoa and María de Maeztu Programme for Centres and Unities of Excellence'' (CEX2020-001105-M).

The authors would like to thank S. T. Chapman for his comments and suggestions, and A. Geroldinger for drawing our attention to \cite[Theorem~7.6.9]{g-hk}.

\end{document}